\documentclass[twoside,12pt,reqno]{amsart}
\usepackage{bbm, tikz}
\usetikzlibrary{positioning, arrows}
\usepackage{amssymb}
\usepackage{fullpage}
\usepackage[margin=1.5cm]{geometry}
\usepackage{bm}

\usepackage{graphicx}
\input{xy}
\xyoption{all}
\xyoption{curve}
\usepackage{color,soul}

\makeatletter

\newtheorem{Theorem}{Theorem}[section]
\newtheorem*{Theorem*}{Theorem}
\newtheorem{Lemma}[Theorem]{Lemma}
\newtheorem{Proposition}[Theorem]{Proposition}
\newtheorem{Corollary}[Theorem]{Corollary}

\theoremstyle{remark}
\newtheorem{Remark}[Theorem]{Remark}
\newtheorem*{proofofskryabin}{Proof of Theorem~\ref{T:equivariantSkryabin}}

\numberwithin{equation}{section}

\usepackage[pdftex,colorlinks,backref,pagebackref,hypertexnames=false, linkcolor=blue,urlcolor=blue,citecolor=blue]{hyperref}

\newcommand{\arxiv}[1]{{\tt arXiv:#1}}

\def\into{\hookrightarrow}
\def\onto{\twoheadrightarrow}
\def\isoto{\overset{\sim}{\longrightarrow}}
\DeclareRobustCommand\longtwoheadrightarrow
     {\relbar\joinrel\twoheadrightarrow}

\def\epsilon{\varepsilon}

\def\k{{\mathbbm k}}

\def\bc{\text{\boldmath$c$}}

\def\be{\text{\boldmath$e$}}

\def\bv{\text{\boldmath$v$}}

\def\diag{\operatorname{diag}}
\def\lmod{\!\operatorname{-mod}}
\def\Lie{\operatorname{Lie}}

\def\ab{{\operatorname{ab}}}

\def\reg{{\operatorname{reg}}}
\def\ad{\operatorname{ad}}
\def\Ad{\operatorname{Ad}}
\def\Ann{\operatorname{Ann}}
\def\Ker{\operatorname{Ker}}

\def\gr{\operatorname{gr}}
\def\Mat{\operatorname{Mat}}

\def\sspan{\operatorname{span}}
\def\Spec{\operatorname{Spec}}

\def\Rad{\operatorname{Rad}}
\def\Rep{\operatorname{Rep}}
\def\Id{\operatorname{Id}}
\def\Ind{\operatorname{Ind}}
\def\InD{\operatorname{Ind}_G}
\def\Decomp{{\mathrm{Dec}}}

\def\Indat{{\mathrm{LD}}}
\def\RIndat{{\mathrm{RLD}}}

\def\End{\operatorname{End}}

\def\Aut{\operatorname{Aut}}
\def\neu{{\operatorname{ne}}}
\def\op{{\operatorname{op}}}

\def\im{\operatorname{im}}
\def\rank{\operatorname{rank}}
\def\Max{\operatorname{Max}}

\def\bA{{\bm{A}}}
\def\bC{{\bm{C}}}
\def\bD{{\bm{D}}}
\def\bG{{\bm{G}}}
\def\bS{{\bm{S}}}
\def\bT{{\bm{T}}}
\def\bX{{\bm{X}}}
\def\bU{{\bm{U}}}
\def\bB{{\bm{\mathcal{B}}}}
\def\bc{{\bm{\mathfrak{c}}}}
\def\bsigma{{\bm{\sigma}}}
\def\bg{{\bm{\mathfrak{g}}}}
\def\be{{\bm{e}}}
\def\bh{{\bm{h}}}
\def\bn{{\bm{n}}}
\def\bu{{\bm{u}}}
\def\bx{{\bm{x}}}
\def\bPi{{\bm{\Pi}}}

\def\bgamma{{\bm{\gamma}}}
\def\bkappa{{\bm{\kappa}}}
\def\brho{{\bm{\rho}}}
\def\bpsi{{\bm{\psi}}}
\def\bchi{{\bm{\chi}}}
\def\bomega{{\bm{\omega}}}

\def\C{{\mathbb C}}
\def\F{{\mathbb F}}
\def\Z{{\mathbb Z}}
\def\Q{{\mathbb Q}}
\def\N{{\mathbb N}}
\def\G{{\mathbb G}}

\def\GL{\mathrm{GL}}
\def\SL{\mathrm{SL}}

\def\SO{\mathrm{SO}}
\def\Sp{\mathrm{Sp}}

\def\c{\mathfrak c}
\def\g{{\mathfrak g}}
\def\gl{\mathfrak{gl}}
\def\so{\mathfrak{so}}
\def\sp{\mathfrak{sp}}
\def\h{\mathfrak h}
\def\l{\mathfrak{l}}
\def\m{\mathfrak m}
\def\n{\mathfrak n}
\def\p{\mathfrak p}

\def\sl{\mathfrak{sl}}

\def\t{\mathfrak t}
\def\u{\mathfrak{u}}
\def\v{\mathfrak{v}}

\def\r{\mathfrak{r}}
\def\z{\mathfrak{z}}
\def\kk{\mathfrak{k}}

\def\E{\mathcal{E}}
\def\B{\mathcal{B}}
\def\D{\mathcal{D}}
\renewcommand{\O}{\mathcal{O}}

\renewcommand{\S}{\mathcal{S}}
\newcommand{\Nc}{\mathcal{N}}

\def\Fb{{\sf F}}

\def\tQ{\widetilde{Q}}

\def\tg{\tilde{\g}}
\def\tn{\tilde{\n}}
\def\hU{\widehat{U}}
\def\hZ{\widehat{Z}}
\def\hE{\widehat{\E}}
\def\cv{\check\v}

\def\PeN{\mathcal{P}_\epsilon(N)}

\title{\boldmath Parabolic induction for modular finite $W$-algebras}
\author{Simon M.~Goodwin, Lewis Topley and Matthew Westaway}
\address{School of Mathematics,
University of Birmingham,
Birmingham, B15 2TT,
UK}
\email{s.m.goodwin@bham.ac.uk}
\address{Department of Mathematical Sciences, University of Bath, Claverton Down, Bath, BA2 7AY}
\email{lt803@bath.ac.uk}
\email{mw2915@bath.ac.uk}

\thanks{2010 {\it Mathematics Subject Classification}: 17B10, 17B20, 17B45, 17B50.}

\begin{document}

\begin{abstract}
We study the modules of minimal dimension for reduced enveloping algebras of Lie algebras of reductive algebraic groups using the theory of modular finite $W$-algebras. First of all we consider the case where the $p$-character lies in a unique sheet, and demonstrate that in classical cases and in most exceptional cases all minimal modules are parabolically induced from a Levi subalgebra and a rigid $p$-character. Secondly we consider the minimal modules which are invariant under twisting by the component group, showing that in classical cases and in most exceptional cases these are also parabolically induced from a Levi subalgebra and a rigid $p$-character.
\end{abstract}

\maketitle

\section{Introduction}

Fix an algebraically closed field $\k$ of characteristic $p > 0$. Let $G$ be a connected, simply-connected simple group over $\k$ with Lie algebra $\g = \Lie G$. Suppose that $p$ is good for the root system of $G$ and that $\g$ admits a non-degenerate $G$-invariant bilinear form. These assumptions are often referred to as the standard hypotheses \cite[\textsection 6.3]{JaLA}.

\subsection{Sheets and minimal modules}\label{ss: IntroSheets}

The enveloping algebra $U(\g)$ is finite over the $p$-centre $Z_p(\g)$ and the prime spectrum of $Z_p(\g)$ naturally identifies with $(\g^*)^{(1)}$, the Frobenius twist of the coadjoint $G$-module. Using a version of Schur's lemma we see that every irreducible representation of $U(\g)$ factors through one of the central reductions over $(\g^*)^{(1)}$. The associated element $\chi \in \g^*$ is called the $p$-character of the representation, and the central reductions are known as reduced enveloping algebras and denoted $U_\chi(\g)$.

It is well-known that the dimensions of irreducible representations are controlled by the $p$-character: Premet's theorem asserts that whenever $G$ satisfies the standard hypotheses of \cite[\textsection 6.3]{JaLA} every $U_\chi(\g)$-module has dimension divisible by $p^{d_\chi}$ where $d_\chi := \frac{1}{2} \dim G\cdot \chi$, see \cite{PrKW}. Meanwhile, work of Premet and the second author \cite{PT2} shows that modules of dimension $p^{d_\chi}$ exist for every $p$-character, confirming a long-standing conjecture of Humphreys. We refer to $U_\chi(\g)$-modules of dimension $p^{d_\chi}$ as {\it minimal modules}. By Premet's theorem, minimal modules are necessarily simple and, as we shall explain, they play a distinguished role amongst all simple $U_\chi(\g)$-modules.

Premet showed that $U_\chi(\g)$ is Morita equivalent to a restricted finite $W$-algebra \cite{PrST}, and the minimal $U_\chi(\g)$-modules are equivalent to one dimensional representations for the $W$-algebra. This relationship allows many tools to be deployed in their study which are unavailable for arbitrary simple modules. One dimensional representations of complex finite $W$-algebras have been studied for many years, and it is an important general objective to extend these methods and results to positive characteristics. For example, the long-running project to show the existence of such one dimensional modules over $\C$ (see \cite{LoQSA, PrMF} and the references therein) was extended to positive characteristics in \cite{PT2}. The relationship between one dimensional modules and quantizations of nilpotent orbits (and their affinisations) over $\C$ is now understood, by the work of Losev and collaborators \cite{LoQNO, LoOM, LMM}. This has recently been extended to positive characteristics, for $G = \GL_N$, by Ambrosio and the second and third author \cite{ATW}.

The current paper makes a significant advancement in this program of research by firstly constructing a parabolic induction functor for finite $W$-algebras, analogous to Losev's construction over $\C$ in \cite{Lo}, and then utilising this functor to extend most of the main results of \cite{PT1} to the modular setting. In particular, we study minimal $\g$-modules for general $G$, satisfying one of two hypotheses: (1) that the $p$-character lies on a unique sheet or (2) that the module is $G^\chi$-stable. We expect this work will play a decisive role in the classification of minimal modules, which is a central open problem in the field. When $G = \GL_N$, this classification has been carried out by the first and second authors in \cite{GTmin}.

For $m \in \N$ we let $\g_{[m]}$ be the union of orbits of dimension $m$. A sheet of $\g^*$ is defined as an irreducible component of $\g_{[m]}$ for some $m$. Sheets are classified using the theory of induction of coadjoint orbits and decomposition classes \cite{LS, PS}, and the reader may refer to our Section~\ref{ss:decompclasses} for a brief survey. We recall that the decomposition classes of $\g^*$ are the equivalence classes under the relation
$$\chi \sim \psi \Leftrightarrow g \cdot \chi_{\rm n} = \psi_{\rm n} \text{ and } gG^{\chi_{\rm s}} g^{-1} = G^{\psi_{\rm s}} \text{ for some } g\in G,$$ where $\chi=\chi_{\rm s} + \chi_{\rm n}$ is the Jordan decomposition and $G^\chi$ denotes the coadjoint stabliser of $\chi$.

Sheets and decomposition classes are locally closed subvarieties of $\g^*$ and every sheet is stratified by classes. We write $\D_\chi$ for the class of $\chi \in \g^*$, and we say that $\D_\chi$ is rigid if it is dense in a sheet. Similarly we say that $\chi \in \g^*$ is {\it rigid} if $\D_\chi$ is rigid. If $\chi$ is nilpotent then this corresponds to the $G$-orbit of $\chi$ being rigid i.e. it cannot be induced from a proper Levi subalgebra under Lusztig-Spaltenstein induction.

We are interested in whether minimal modules can be parabolically induced. An affirmative answer in type ${\sf A}$ is given in \cite{GTmin} (there, every element lies in a unique sheet); our main result is the following generalisation of that result to other types. 

\begin{Theorem}
	\label{T:main}
    Let $G$ be a simple algebraic group, and pick $\chi \in \g^*$ nilpotent. If the $G$-orbit of $\chi$ is not listed in Tables~\ref{Tab: Bad Induced Orbits} or \ref{Tab: Bad Good Orbits} then the following hold.

	\begin{enumerate}
		\setlength{\itemsep}{4pt}
		\item If $\chi$ lies in a unique sheet then every minimal $U_\chi(\g)$-module is parabolically induced from a minimal module for a reduced enveloping algebra with rigid $p$-character.
		\item Every $G^\chi$-stable minimal $U_\chi(\g)$-module is parabolically induced from a minimal module for a reduced enveloping algebra with rigid $p$-character.
	\end{enumerate}
    \begin{table}
	\begin{center}
		\caption{Orbits excluded in \cite{PT1}}\label{Tab: Bad Induced Orbits}
		\bgroup
		\renewcommand{\arraystretch}{2}
		\begin{tabular}{|c|c|c|c|c|c|c|}
			\hline 
			$F_4$ & $E_6$ & $E_7$ & $E_8$ & $E_8$ & $E_8$ & $E_8$ \\
			\hline
			$C_3(a_1)$ & $A_3+A_1$ & $D_6(a_1)$ & $E_6(a_3)+A_1$ & $D_6(a_2)$ & $E_7(a_2)$ & $E_7(a_5)$ \\
			\hline
		\end{tabular}
		\egroup
	\end{center}
\end{table}

\begin{table}
	\begin{center}
		\caption{Other orbits excluded from the present work}\label{Tab: Bad Good Orbits}
		\bgroup
		\renewcommand{\arraystretch}{2}
		\begin{tabular}{|c|c|c|c|}
			\hline 
			$E_7$ & $E_8$ & $E_8$ & $E_8$  \\
			\hline
			$A_3+A_2$ & $A_3+A_2$ & $E_7(a_4)$ & $D_7(a_2)$\\
			\hline
		\end{tabular}
		\egroup
	\end{center}
\end{table}
\end{Theorem}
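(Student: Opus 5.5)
The plan is to transport the characteristic-zero argument of \cite{PT1} to the modular setting. Minimal $U_\chi(\g)$-modules correspond, via Premet's Morita equivalence \cite{PrST}, to one dimensional modules of the restricted finite $W$-algebra $U_\chi(\g,e)$. I will therefore work with the scheme of one dimensional representations $\E(\g,e)$ and its abelianisation $U(\g,e)^{\ab}$. The key tool is a modular parabolic induction functor for finite $W$-algebras, built in analogy with Losev \cite{Lo}. Given a Levi subalgebra $\l$ and a nilpotent $e_0 \in \l$ with $e$ lying in the induced orbit $\Ind_\l^\g(\O_{e_0})$, one embeds $U(\g,e)$ into a completion of $U(\l,e_0)$ tensored with a Weyl algebra. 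This yields a map $\E(\l,e_0) \to \E(\g,e)$.

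The first step is compatibility with $p$-centres and $p$-characters. Twisting by a character of $\l/[\l,\l]$ shifts the $p$-character by an element of $\z(\l)^*$. On the module side, the induced one dimensional $W$-module must correspond to the parabolically induced module $U_\chi(\g)\otimes_{U_\chi(\p)} M$, where $M$ is a minimal $U_{\chi|_\l}(\l)$-module. I would verify this dimension count directly: $\dim \g/\p$ plus the $d$ for $\l$ gives $d_\chi$, since $\dim\O_\chi = \dim\O_{e_0}+2\dim\n$ for induced orbits. Next, I will study the restricted version. The reduction $\E_\chi(\g,e)$ is the fibre over the $p$-character inside the $p$-centre of $U(\g,e)^{\ab}$.

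The second step is a dimension and component comparison. Let $\Fb$ be the image of $\E(\l,e_0)\times\z(\l)^*$ under induction, which is finite onto its image. Over $\C$, \cite{PT1} shows that when $e$ lies in a unique sheet the induction map from a rigid pair $(\l,e_0)$ is surjective on $\E(\g,e)$. The argument there compares $\dim \E(\g,e)$ with the rank of the sheet, using $\gr U(\g,e)^{\ab}$ and the Slodowy slice $S_e\cap$ sheet, together with the fact that $e_0$ rigid gives $\E(\l,e_0)$ finite. I would redo this over $\k$, using reduction modulo $p$ of the relevant presentations and Frobenius-semilinear identification of the $p$-centre. The claim to prove is that $\Spec U(\g,e)^{\ab}$ has dimension equal to the rank of the sheet. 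The set-theoretic image of induction is then closed of full dimension, hence everything, on each irreducible component. Specialising the $p$-character to $\chi$ gives (1).

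For (2), $G^\chi$-stability corresponds to fixed points of the component group $\Gamma=C(e)/C(e)^\circ$ acting on $\E(\g,e)$. Following \cite{PT1}, I would show that $\E(\g,e)^\Gamma$ is covered by induction from a single rigid datum: the $\Gamma$-fixed part of the sheet geometry is the ``canonical'' sheet, i.e.\ the one given by the induction datum normalised by $\Gamma$. The excluded orbits are exactly those where this fails or where the needed case-by-case facts are unverified.

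The main obstacle is the dimension comparison $\dim \E(\g,e)=\operatorname{rank}$ of the sheet in characteristic $p$. Over $\C$ this uses Losev's and Premet's results on $U(\g,e)^{\ab}$ together with a case analysis for exceptional groups. I expect to need a modular analogue of the statement that $\gr$ of the defining ideal of $\E$ cuts out $e+\ker$ of the restriction, along with a good-$p$ version of the Katsylo/Slodowy slice sheet description. In exceptional types, the orbits in Table~\ref{Tab: Bad Good Orbits} are presumably those where a characteristic-zero computation does not obviously reduce mod $p$. I would try to deduce the modular statement by base change from a $\Z_{(p)}$-form of $U(\g,e)^{\ab}$, showing flatness over the relevant localisation; the remaining orbits would be left excluded.
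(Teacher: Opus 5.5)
The surjectivity step in (1) fails. The image of a finite morphism from a variety of dimension $\dim\E(\g,e)$ is a closed subset of the same dimension, i.e.\ a union of some top-dimensional irreducible components. Nothing in your argument forces it to contain the remaining components. So ``closed of full dimension, hence everything'' requires $\E(\g,e)$ to be irreducible, which you never establish.

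The dimension equality you call the main obstacle is in fact automatic, and needs no $\Z_{(p)}$-form or flatness. The $p$-centre gives a finite map $\E(\g,e)\to\chi+\cv$. By Premet's divisibility theorem \cite{PrKW} and the existence of minimal modules \cite{PT2}, its image is exactly the intersection of the slice with the sheets through $\chi$. Hence $\dim\E(\g,e)=\max_i\dim\z(\g_i)$ for \emph{every} nilpotent $e$, so this equality cannot be what distinguishes the orbits of Tables~\ref{Tab: Bad Induced Orbits} and~\ref{Tab: Bad Good Orbits}.

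The missing idea is an upper bound forcing irreducibility. Because a lift of $\g^e$ generates $U(\g,e)$, there is a surjection $S(\c)\onto U(\g,e)^{\ab}$, where $\c$ is a complement to $[\g^e,\g^e]$ in $\g^e$. When $\dim\c=\dim\z(\g_0)$ for the rigid datum, this forces $U(\g,e)^{\ab}\cong S(\c)$. Then $\E(\g,e)$ is an affine space, and the finite map from the $\dim\z(\g_0)$-dimensional variety $\hE_{\chi_0+\z_0^*}(\g_0,e_0)$ is onto. The identity $c(e)=\dim\z(\g_0)$ is the real content: in classical types it comes from the formulas of \cite{PT1}, which remain valid for $p>2$, and in exceptional types from a table comparison. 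The excluded orbits are those where it fails or is unavailable.

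Part (2) has the same gap, plus further missing pieces. You need three things, for a finite subgroup $\Gamma\subseteq C^e$ of order prime to $p$ mapping onto the component group:
\begin{enumerate}
\item the surjection $S(\c^\Gamma)\onto U(\g,e)^{\ab}_\Gamma$;
\item a matching lower bound on the Krull dimension of $U(\g,e)^{\ab}_\Gamma$;
\item the equality $c_\Gamma(e)=\dim\z(\g_0)$.
\end{enumerate}
The lower bound comes from inducing $\Gamma$-fixed one dimensional modules from a datum with $\Gamma\subseteq C^e\cap C_0^{e_0}$, together with transitivity of $\Gamma$ on the components of $(\chi+\cv)\cap\overline{\D}_\reg$. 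In classical types this datum is in general \emph{not} rigid: $e_0$ merely lies in a unique sheet of $\g_0$, and one finishes by applying (1) to $\g_0$ and using transitivity of induction. The equality is $c_\Gamma(e)=s(\lambda)$ in classical types. For even exceptional orbits it is the chain $d(e)\le c_\Gamma(e)=c_\pi(e)\le c_\pi(\be)=d(e)$, whose second inequality is a rank comparison under reduction modulo $p$ over a suitable ring $R_e$. The $\Gamma$-equivariance of the induction map itself rests on the $C^e$-equivariance of Skryabin's equivalence on simple modules, which you do not address.

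Finally, you assert a Losev-style embedding of $U(\g,e)$ into a completion of $U(\l,e_0)$ tensored with a Weyl algebra, but give no characteristic-$p$ construction. The paper avoids this by composing four functors: Skryabin's equivalence on semi-restricted $\hU(\g_0,e_0)$-modules, ordinary induction $U_{\chi_0+\z_0^*}(\g_0)\lmod\to U_{\chi+\z^*}(\g)\lmod$, the inverse Skryabin equivalence, and pullback to $U(\g,e)$. It then proves finiteness of $\hE_{\chi_0+\z_0^*}(\g_0,e_0)\to\E(\g,e)$ directly.
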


Since the type ${\sf A}$ case is understood by \cite{GTmin} we largely avoid discussing it further, save for Remark~\ref{R:glNtheorem} which briefly details how the methods of the current paper could be applied to $G = \GL_N$.

Note that our Theorem~\ref{T:main} holds for arbitrary $p$-characters and reductive groups mutatis mutandis, under the standard hypotheses of \cite[\textsection 6.3]{JaLA}; we call such $G$ a {\em standard reductive group}. In this case $\Lie(G)$ is a product of Lie algebras to which our theorem applies, see Remark~\ref{R:standardreductivedecomp}(2). Since this generalisation of our result involves a few non-trivial steps, we write out the details carefully in Theorem~\ref{P:nilpotentreduction}.

We now provide some commentary on the restrictions that appear in Theorem~\ref{T:main}. As we detail further below, the main tool used to prove these theorems is the theory of one dimensional modules for finite $W$-algebras. While over fields of characteristic $p>0$ this theory leads to results on reduced enveloping algebras, over $\C$ it yields results regarding completely prime primitive ideals of universal enveloping algebras. From this perspective, the current paper may be viewed as a sibling over fields of positive characteristic to \cite{PT1} (which also considered elements lying in a single sheet and stability under a component group action). The nilpotent orbits from Table~\ref{Tab: Bad Induced Orbits} are also excluded from consideration in \cite{PT1}; indeed, in \cite{BGn} it is shown that the results of \cite{PT1} fail for many (and conjecturally all) of these orbits. The failure of our results for these orbits also seems likely. 

Excluding the orbits in Table~\ref{Tab: Bad Induced Orbits}, we are able to tackle two types of nilpotent orbits in exceptional cases: those that lie in a single sheet and those that are even. The nilpotent orbits in Table~\ref{Tab: Bad Good Orbits} are those induced orbits not in Table~\ref{Tab: Bad Induced Orbits} which do not meet either of these requirements. Note that the results in \cite{PT1} do apply to these orbits, but the argument in that paper requires application of results of \cite{Fu} and cannot easily be generalised to positive characteristic.

We conclude this subsection by noting that in light of the above comparison with \cite{PT1} and recent work of the present authors \cite{GTW}, it seems quite likely for $G$ of type {\sf B}, {\sf C} or {\sf D} that Theorem~\ref{T:main}(1) holds true for all $\chi$, not just those lying in a unique sheet.

\subsection{Sketch of methodology}
\label{ss:sketch}


Let $e\in \g$ nilpotent. To $(\g,e)$ we can attach two algebras: the {\it finite $W$-algebra} $U(\g,e)$ and the {\it extended finite $W$-algebra} $\hU(\g,e)$. Under Jantzen's standard hypotheses \cite[\textsection 6.3]{JaLA}, both these algebras were defined and studied thoroughly in \cite{GTmod}, although they had both appeared in earlier work of Premet in large characteristics \cite{PrCQ}. We recall the construction in Section~\ref{ss:modularWalgebras}, generalising the approach of \cite{GTmod} by showing that large families of graded nilpotent Lie subalgebras of $\g$ are algebraic (cf. Lemma~\ref{L:appendix}).

Both $U(\g,e)$ and $\hU(\g,e)$ admit actions by the reductive part $C^e$ of the centraliser $G^e$, and their categories of modules are equivalent to certain full subcategories of $U(\g)$-modules, see \cite[\textsection 9.2]{GTmod}. We refer to these equivalences collectively as {\it Skryabin's equivalence}, after \cite[Appendix]{PrST}. Our first milestone shows that the corresponding functor is $C^e$-equivariant on simple modules, see Section~\ref{ss:Skryabinsequivalence}. This result is fairly natural; surprisingly, the proof is slightly non-trivial.

The main technical tool of this paper is the construction of a {\it parabolic induction functor} for modular finite $W$-algebras. Our results on this topic are inspired by Losev \cite{Lo}, however, our methods are very different.

Suppose that the orbit of $e \in \Nc(\g)$ is Lusztig--Spaltenstein induced from a nilpotent orbit $\O_0$ in a Levi subalgebra $\g_0$. Let $\p$ be a parabolic subalgebra of $\g$ with Levi decomposition $\p=\g_0\oplus\n$ such that there exists $e_0 \in \O_0$ with $e-e_0\in\n$. We say that a $\hU(\g_0, e_0)$-module is {\it semi-restricted} if the $p$-support is contained in $\chi_0 + \kappa_0 \z(\g_0)$, where $\kappa_0 := \kappa|_{\g_0} : \g_0 \to \g_0^*$ is a $G_0$-equivariant isomorphism. 
We construct a functor $\Ind_{(\g_0, e_0)}^{(\g,e)}$ from semi-restricted $\hU(\g_0, e_0)$-modules to $U(\g,e)$-modules, and show (Theorem~\ref{T:parabolicinductiontheorem})
that it enjoys the following properties.
\begin{enumerate}
	\setlength{\itemsep}{4pt}
	\item It is dimension preserving.
	\item The effect on $p$-characters can be described explicitly.
	\item Restricting to one dimensional representations gives a finite morphism of representation schemes.
	\item Any subgroup $\Gamma \subseteq C_0^{e_0}\cap C^e$, where $C_0^{e_0}$ is the reductive part of $G_0^{e_0}$, acts on the variety of semi-restricted one dimensional representations of $\hU(\g_0, e_0)$ in such a way that the finite morphism from (3) is $\Gamma$-equivariant.
\end{enumerate}
Note that the functor does depend on our choice of parabolic subalgebra $\p$, although we usually omit this from the notation.

To proceed with the description of the proof, we need a little more notation. If $X \subseteq (\g^*)^{(1)}$ is Zariski closed then there is a corresponding ideal $I_X \subseteq Z_p(\g) = \k[(\g^*)^{(1)}]$. Generalising the notation for reduced enveloping algebras, we write $U_X(\g) = U(\g) / I_XU(\g)$. In Section~\ref{ss:pcentreandPBWforfiniteW} we explain that $\hU(\g,e)$ admits a $p$-centre with a natural inclusion $\Spec \hZ_p(\g,e) \subseteq (\g^*)^{(1)}$. Thus if $X \subseteq \Spec \hZ_p(\g,e)$ then we can also use the notation $\hU_X(\g,e)$; we refer the reader to Section~\ref{ss:reducedfiniteW} for more detail. For example, the category of semi-restricted $\hU(\g_0, e_0)$-modules is $\hU_{\chi_0 + \kappa_0 \z(\g_0)}(\g_0, e_0)\lmod$.

The parabolic induction functor is constructed as a composition of functors, as follows
\begin{eqnarray}
    \label{eq:composefunctors}
    \begin{array}{rcccl}
    \hU_{\chi_0 + \kappa_0\z(\g_0)}(\g_0,e_0)\lmod 
    & \overset{\eqref{e:skryabin1}}{\to} &
    U_{\chi_0 + \kappa_0\z(\g_0)}(\g_0)\lmod 
    & \overset{\eqref{e:classicalinductionfunctor}}{\to} &
    U_{\chi + \kappa\z(\g_0)}(\g)\lmod \\
    & \overset{\eqref{e:skryabin2}}{\to} & 
    \hU_{\chi + \kappa\z(\g_0)}(\g,e)\lmod 
    &\overset{\eqref{e:pullbackfunctor}}{\to} &
    U(\g,e)\lmod
    \end{array}
\end{eqnarray}

The first and third functors are induced by Skryabin's equivalence. Under these functors, with the relevant restriction on $p$-support, the minimal modules of $U(\g_0)$ and $U(\g)$ correspond to one dimensional modules over $\hU(\g_0,e_0)$ and $\hU(\g,e)$, respectively. The second functor appearing in \eqref{eq:composefunctors} is parabolic induction for reduced enveloping algebras, while the fourth is simply pull-back through the natural homomorphism $U(\g,e)\to\hU_{\chi+\kappa\z(\g_0)}(\g,e)$. Using these facts, we can deduce Theorem~\ref{T:main}(1) by showing for particular $(\g_0,e_0)$ that the following map is surjective
\begin{eqnarray}
\label{eq:morphismbetweenmodules}
    \{\mbox{one dimensional semi-restricted }\hU(\g_0,e_0)\mbox{-modules}\}\to \{\mbox{one dimensional }U(\g,e)\mbox{-modules}\}.
\end{eqnarray}
Indeed, when $e$ lies in a unique sheet we show there exists a Levi subalgebra $\g_0$ and a rigid nilpotent $p$-character $\chi_0 \in \g_0^*$ (inducing to $\chi$) such that the variety of one dimensional $U(\g,e)$-modules is an affine space whose dimension is precisely the dimension of the variety of one dimensional semi-restricted $\hU(\g_0,e_0)$-modules. This ensures that the finite morphism \eqref{eq:morphismbetweenmodules} is surjective.

Using Property (4) of the parabolic induction functor, listed above, a similar argument proves Theorem~\ref{T:main}(2). For this, we must choose (when possible) the Levi subgroup $G_0$ with Lie algebra $\g_0$ to have the property that there exists a finite group $\Gamma\subseteq G_0\cap C^e$ whose order is not divisible by $p$ and for which $\Gamma$-stability of one dimensional representations coincides with $G^{\chi}$-stability, i.e. $\k\Gamma\lmod$ is semisimple and $\Gamma \subseteq G^\chi$ gives rise to a surjection $\Gamma \onto G^\chi / (G^\chi)^\circ$. Such restriction means that we may not always assume $\chi_0$ is rigid, but we can construct $\chi_0$ lying on a unique sheet of $\g_0$, and thus apply Theorem~\ref{T:main}(1). In this case we show that the variety of $\Gamma$-stable one dimensional $U(\g,e)$-modules is an affine space of the desired dimension.

\subsection{Structure of the paper}

In Section~\ref{S:preliminaries} we fix some general notation and gather some elementary facts from commutative algebra.

In Section~\ref{S:groupsandLiealgebras} we survey several theories which are fundamental to our constructions: associated cocharacters, minimal modules, Lusztig--Spaltenstein induction, decomposition classes and sheets. We also explain how to generalise the main theorem to arbitrary $p$-characters for standard reductive groups.

In Section~\ref{S:parabolicsection} we recap the theory of modular finite $W$-algebras, mostly following \cite{GTmod}. We recall the notions of $p$-centres and the PBW theorems and recall the Poisson structure on the transverse slice. We recap the properties of reduced finite $W$-algebras, which are obtained by specialising the $p$-character, similar to the reduced enveloping algebra mentioned above. Finally we obtain general results on the abelian quotients of the finite $W$-algebra, using methods inspired by \cite{PrCQ}.

In Section~\ref{S:parabolicinduction} we start by recapping Skryabin's equivalence and showing that it is $C^e$-equivariant on simple modules. We then construct our parabolic induction functor and spend the rest of that section proving Properties (1)--(4) from Section~\ref{ss:sketch} (Theorem~\ref{T:parabolicinductiontheorem}).

In Section~\ref{S:classicalLiealgebras} we focus on $G = \SO_N$ or $\Sp_N$. The majority of the work involves constructing a special choice of (Lusztig--Spaltenstein) induction datum. Using detailed information on the structure of centralisers in classical Lie algebras from \cite{PT1} we can bound the dimensions of varieties of one dimensional representations of the finite $W$-algebras (see Theorem~\ref{T:abelianquotients}). We complete the proof of the main theorem for classical cases in Section~\ref{ss:proofofmain}.

Finally, in Section~\ref{S: Exceptionals} we consider the case when $G$ is of exceptional type. The majority of work in this section is in computing a certain value $c_\pi(e)$ associated to $e$ and $\g$; the main theorem in the exceptional cases is then proved in Sections~\ref{ss:Exc sing sheet} and \ref{ss:Exc even}, using similar arguments to the classical case.

\subsection*{Acknowledgements} The authors would like to thank Filippo Ambrosio, Micha{\"e}l Bulois, Ivan Losev and Alexander Premet for useful discussions over the past several years. The first author is supported by EPSRC grant EP/R018952/1, the second and third authors are supported by UKRI FLF, grant numbers MR/S032657/1, MR/S032657/2, MR/S032657/3, MR/Z000394/1, and the third author was partially supported by a research fellowship from Royal Commission for the Exhibition of 1851.

\section{Preliminaries}
\label{S:preliminaries}
We fix $\k$ once and for all to be an algebraically closed field of characteristic $p > 0$. As the paper progresses we will place further restrictions on $p$.

All vector spaces, algebras, varieties and Lie algebras will be defined over $\k$. For a finitely generated commutative $\k$-algebra $A$ we write $\Spec A$ for the affine scheme of prime ideals. If we refer to a scheme as a variety then we are implicitly only interested in the topological subspace of closed points.

If $A$ is a $\k$-algebra we write $A\lmod$ for the category of finitely generated $A$-modules.

\subsection{Twists}
\label{ss:twists}
When a group acts by automorphisms on a group or an algebra, the modules of the latter can be twisted by the action of the former. In this paper twisting will manifest itself in two rather different ways: the Frobenius twist of an affine scheme and the twist of a representation by automorphisms. We summarise these notions of twist now, fixing the relevant notation.

The Frobenius map on $\k$ is the bijective ring homomorphism $\k \to \k$ given by $a\mapsto a^p$. The Frobenius twist of an affine $\k$-scheme $X$ is the $\k$-scheme $X^{(1)}$ which is equal to $X$ as a scheme but with the $\k$-linear structure on $\k[X]$ twisted by the inverse of the Frobenius on $\k$. The resulting $\k$-algebra is denoted $\k[X]^{(1)}$; we may similarly apply this process to any $\k$-vector space $V$, and we denote the resulting $\k$-vector space by $V^{(1)}$. If $X$ is a reduced scheme then $X^{(1)}$ may be naturally identified with $\Spec \k[X]^p$.

Since we may view any $\k$-vector space as a $\k$-scheme, this might first appear to lead to some ambiguity. Fortunately, these constructions are compatible in the obvious sense.

If $A$ is an algebra, $V\in A\lmod$ and $g\in \Aut(A)$ then we may define the $g$-twisted $A$-module ${}^g V$ which is equal to $V$ as a vector space, with twisted action $a \cdot_g v := (g^{-1}\cdot a)\cdot v$ for $a\in A$, $v\in V$. In this way, $\Aut(A)$ may be viewed as a group of autoequivalences of $A\lmod$. If $G \subseteq \Aut(A)$ then we say that $V$ is $G$-stable if it is fixed by these autoequivalences, in other words, if ${}^g V \cong V$ for every $g\in G$.

We record here a general fact to be used a few times: if $B \subseteq A$ are associative $\k$-algebras, $V\in B\lmod$, and $g \in \Aut(A)$ then there is a natural isomorphism ${}^g (A \otimes_{B} V) \isoto A \otimes_{g(B)} {}^g V$ given by $u \otimes v \mapsto (g\cdot u) \otimes v$ for $u \in A$, $v \in V$. In particular, if $g$ stabilises $B$ then ${}^g (A \otimes_{B} V) \isoto A \otimes_{B} {}^g V$.

\subsection{Abelian quotients}
\label{ss:basicsonabquotients}
Let $A$ be a finitely generated $\k$-algebra and $\Gamma$ a group of automorphisms of $A$. We write $A^\ab$ for the maximal abelian quotient of $A$, obtained by factoring $A$ by the ideal generated by $[a,b]$ with $a,b \in A$. The action of $\Gamma$ descends to $A^\ab$ and we write $A^\ab_\Gamma$ for the coinvariant algebra, obtained by factoring $A^\ab$ by the ideal generated by $a - \gamma\cdot a$ with $a\in A$ and $\gamma \in \Gamma$. 

Now let $Z\subseteq A$ be a central subalgebra and assume that $A$ is finitely-generated as a $Z$-module; this implies that $Z$ is a finitely-generated noetherian $\k$-algebra and that $A$ is noetherian (see \cite[\textsection III.1]{BG}). If $I \subseteq Z$ is any ideal then we can consider the algebra $A^\ab \otimes_Z Z/I \cong (A \otimes_Z Z/I)^\ab$. With $Z \subseteq A$ as above and $\chi \in \Spec Z$, we write $A_\chi$ for the corresponding central reduction; in this notation, the previous isomorphism becomes $(A^\ab)_\chi\cong (A_\chi)^\ab$.
\begin{Lemma}
\label{L:basiclemma}
The following hold:
\begin{enumerate}
\setlength{\itemsep}{4pt}
\item[(i)] The closed points of $\Spec(A^\ab)$ parametrise the one dimensional representations of $A$.
\item[(ii)] The closed points of $\Spec(A^\ab_\Gamma)$ parametrise those one dimensional representations of $A$ which are $\Gamma$-stable.
\item[(iii)] If $K = \Ker(Z \to A^\ab)$ then 
$$\Spec(Z/K) = \{\chi \in \Max Z \mid A_\chi \text{ admits a one dimensional } A\text{-module} \}.$$
\item[(iv)] If $K_\Gamma = \Ker(Z \to A^\ab_\Gamma)$ then 
$$\Spec(Z/K_\Gamma) = \{\chi \in \Max Z \mid A_\chi \text{ admits a }\Gamma\text{-stable one dimensional } A\text{-module} \}.$$
\end{enumerate}
\end{Lemma}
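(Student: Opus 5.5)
We prove the four parts in order; (iii) and (iv) follow from (i) and (ii) together with a finiteness argument. For (i) we use the universal property of $A^\ab$: a one dimensional representation is an algebra map $\rho : A \to \k$. Since $\k$ is commutative, $\rho$ kills every $[a,b]$, so it factors uniquely through $A^\ab$. Conversely, each algebra map $A^\ab\to\k$ pulls back to a one dimensional representation of $A$. Algebra maps $A^\ab \to \k$ correspond to maximal ideals of the finitely generated commutative $\k$-algebra $A^\ab$, by the Nullstellensatz since $\k$ is algebraically closed. These maximal ideals are exactly the closed points of $\Spec(A^\ab)$.

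For (ii), note that ${}^\gamma\rho = \rho\circ\gamma^{-1}$, and one dimensional modules are isomorphic exactly when their characters coincide. So $\rho$ is $\Gamma$-stable if and only if $\rho(a)=\rho(\gamma\cdot a)$ for all $a\in A$ and $\gamma\in\Gamma$. This is precisely the condition that the induced map $A^\ab\to\k$ factors through $A^\ab_\Gamma$. Applying (i) to the commutative algebra $A^\ab_\Gamma$ gives the claim.

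For (iii), the closed subscheme $\Spec(Z/K)$ is the closure of the image of $\Spec A^\ab \to \Spec Z$. Since $A$ is finite over $Z$, the quotient $A^\ab$ is finite over $Z/K$, so $Z/K \into A^\ab$ is an injective integral extension. By lying over, the map $\Spec A^\ab\to\Spec(Z/K)$ is surjective, and it sends maximal ideals onto maximal ideals.

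It remains to identify the fibres. A maximal ideal $\chi$ of $Z$ lies in the image exactly when $(A^\ab)_\chi \cong (A_\chi)^\ab$ is nonzero. This holds if and only if $(A_\chi)^\ab$ has a maximal ideal, that is (by (i) applied to $A_\chi$) if and only if $A_\chi$ admits a one dimensional module. Such a module is the same as a one dimensional $A$-module on which $Z$ acts by $\chi$.

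Part (iv) is identical, with $A^\ab$ replaced by $A^\ab_\Gamma$, which is still finite over $Z$. One needs $(A^\ab_\Gamma)_\chi \neq 0$ to be equivalent to the existence of a $\Gamma$-stable one dimensional module with central character $\chi$. By (ii), closed points of $\Spec A^\ab_\Gamma$ are $\Gamma$-stable characters, and the fibre over $\chi$ consists of those characters restricting to $\chi$ on $Z$. The one subtlety is that $\Gamma$ need not preserve $Z$; but the coinvariant condition is imposed in $A^\ab$, so the fibre description via $\otimes_Z Z/\chi$ remains valid.

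The main point to check is the surjectivity onto closed points, which is handled by integrality (lying over).
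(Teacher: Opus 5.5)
Your proof is correct and follows essentially the same route as the paper. Parts (i) and (ii) come from the Nullstellensatz; you simply spell out the factorisation through $A^\ab$ and $A^\ab_\Gamma$ more explicitly. Parts (iii) and (iv) use $(A^\ab)_\chi\cong(A_\chi)^\ab$ together with lying over for the finite extension $Z/K\into A^\ab$, which is exactly the paper's argument for the nontrivial inclusion, here repackaged as a statement about the image of $\Spec A^\ab\to\Spec Z$ and its fibres rather than the paper's two inclusions.
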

\begin{proof}
Both (i) and (ii) follow from the Nullstellensatz, so we turn to (iii).

Let $I_\chi\in\Spec(Z)$ be a maximal ideal such that $A_\chi$ admits a one dimensional representation, so $(A_\chi)^\ab\neq 0$. The composition $\phi:Z\to A \to A_\chi\to (A_\chi)^\ab$ is therefore non-zero as it preserves the identity, and $I_\chi\subseteq \Ker(\phi)$ by construction. Since $I_\chi$ is maximal, we thus have $I_\chi=\Ker(\phi)$. Furthermore, $\phi$ coincides with the composition $Z\to A \to A^\ab\to (A^\ab)_\chi\cong (A_\chi)^\ab$ and thus $K\subseteq \Ker(\phi)=I_\chi$; hence $I_\chi\in\Spec(Z/K)$.

Conversely, suppose $I_\chi$ is a maximal ideal of $Z$ containing $K$; then $I_\chi/K$ is a maximal ideal of $Z/K$, which we may view as a subalgebra of $A^\ab$. It suffices to show that $(I_\chi/K)A^\ab$ is a proper ideal of $A^\ab$, since then $(A_\chi)^\ab\cong A^\ab/(I_\chi/K)A^\ab\neq 0$. Since $A$ is a finitely-generated $\k$-algebra which is finitely-generated as a $Z$-module, it is immediate that $A^\ab$ is a finitely-generated $\k$-algebra which is finitely-generated as a $Z/K$-module. We may thus apply the lying-over theorem \cite[Proposition III.1.1(2)]{BG} to conclude that $(I_\chi/K)A^\ab$ is contained in a prime ideal $P$ of $A^\ab$ such that $P\cap Z/K=I_\chi/K$. Since $I_\chi/K\neq Z/K$ we must have $P\neq A^\ab$ and thus $(I_\chi/K)A^\ab$ must be a proper ideal of $A^\ab$ as required.

This proves (iii); the argument for (iv) is essentially the same.
\end{proof}

\subsection{A condition for finiteness}
We record a general result about finite extensions of commutative $\k$-algebras, which we use in Section~\ref{ss:twistingpcharacters} to prove the finiteness of a certain morphism of schemes. The following is a slight generalisation of \cite[p. 144]{kraft}.
\begin{Lemma}
\label{L:finitelemma}
Suppose that $A=\k[x_1,...,x_n]$ is positively graded with each $x_i$ homogeneous. If $f_1,...,f_m \in A$ are homogeneous and $\sqrt{(f_1,...,f_m)} = A_{>0} = \bigoplus_{i>0} A_i$ then $A$ is finite over $\k[f_1,...,f_m]$.
\end{Lemma}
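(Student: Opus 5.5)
Set $B := \k[f_1,\dots,f_m] \subseteq A$, a graded subalgebra since each $f_j$ is homogeneous. The plan is the standard graded Nakayama argument. First, the hypothesis $\sqrt{(f_1,\dots,f_m)} = A_{>0}$ gives, for each generator $x_i$, an exponent $r_i$ with $x_i^{r_i} \in (f_1,\dots,f_m)A$. Since $A_{>0}$ is generated as an ideal by $x_1,\dots,x_n$, there is $N$ (for instance $N = \sum_i (r_i - 1) + 1$) such that every monomial in the $x_i$ of total $x$-degree at least $N$ lies in $(f_1,\dots,f_m)A$. Hence the quotient $A/(f_1,\dots,f_m)A$ is spanned by the finitely many monomials $x^a$ with $\sum a_i < N$, so it is finite dimensional. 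Let $M$ be the finite set of these monomials. They are homogeneous, and we may replace $M$ by a finite set of homogeneous elements whose images span $A/(f_1,\dots,f_m)A$.

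Next, show by induction on degree that $A = \sum_{u \in M} B u$. Write $A' := \sum_{u\in M} Bu$. Let $a \in A_d$ be homogeneous. Then $a = \sum_{u} c_u u + \sum_j f_j g_j$ with $c_u \in \k$. Taking homogeneous components of degree $d$, we may assume each $g_j$ is homogeneous of degree $d - \deg f_j$; set $g_j = 0$ if this degree is negative. The key point is that the elements $f_j$ with $\deg f_j = 0$ must be handled separately. Such an $f_j$ is a scalar. If it is nonzero then $(f_1,\dots,f_m) = A$, contradicting $\sqrt{(f)} = A_{>0} \neq A$, unless $A = \k$, which is trivial. So $f_j = 0$, and these terms may be discarded. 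Therefore every relevant $f_j$ has positive degree, each $g_j$ has degree strictly less than $d$, and by induction $g_j \in A'$. Since $A'$ is a $B$-module, $f_j g_j \in A'$, hence $a \in A'$. The base case $d = 0$ follows because $A_0 = \k$: the $x_i$ have positive degree, by the positive-grading assumption, which also forces $A_0=\k$ given that $A$ is generated by the $x_i$. Thus $A$ is a finitely generated $B$-module, as required.

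I expect the main obstacle to be the bookkeeping in the degree induction, namely ensuring that the $g_j$ can be chosen homogeneous of strictly smaller degree. This relies on all $f_j$ having positive degree after discarding zeros, together with $A_0 = \k$. Everything else is routine, and no earlier result of the paper is needed.
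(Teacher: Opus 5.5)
Your proof is correct and is essentially the paper's graded Nakayama argument. Both proofs exhibit a finite set of homogeneous elements spanning $A/(f_1,\dots,f_m)A$ and then induct on degree. You get that set by pigeonhole on the powers $x_i^{r_i}$, whereas the paper uses $A_{>0}^N\subseteq I$ together with $A_r\subseteq A_{>0}^k$ for $r\ge kd$; your explicit treatment of degree-zero $f_j$ is a detail the paper leaves implicit.
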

\begin{proof}
Write $I = (f_1,...,f_m)$, $d = \max_{i} \deg x_i$ and $d_i = \deg f_i$. For $1\le i_1 \le \cdots \le i_k \le n$ the monomial $x_{i_1} \cdots x_{i_k}$ has degree $\le kd$ and it follows that for $r \ge kd$ we have $A_r \subseteq A_{>0}^k$. The assumption $\sqrt{I} = A_{>0}$ implies that $A_{>0}^N \subseteq I$ for some $N > 0$. Combining these facts, $A_r \subseteq I$ for all $r \ge Nd$. Since the $f_i$ are homogeneous it follows that $A_r = \sum_{i=1}^m A_{r - d_i} f_i$ for all such $r$ and, by induction, we see that $A$ is generated by $\bigoplus_{i=0}^{Nd-1} A_i$ as a $\k[f_1,...,f_m]$-module.
\end{proof}

\section{Reductive groups and Lie algebras}
\label{S:groupsandLiealgebras}
The following notation and hypotheses will be used throughout the current section:
\begin{itemize}
\setlength{\itemsep}{4pt}
\item $G$ is a reductive algebraic $\k$-group, satisfying the standard hypotheses  \cite[\textsection 6.3]{JaLA}; we call $G$ a {\em standard reductive group}.
\item $\g = \Lie(G)$ and $U(\g)$ is the universal enveloping algebra.
\item A choice of non-degenerate $G$-invariant bilinear form on $\g$ is denoted $\kappa : \g \times \g \to \k$. We often abuse notation and also write $\kappa:\g\xrightarrow{\sim} \g^{*}$ for the corresponding $G$-module isomorphism $x\mapsto \kappa(x,-)$.
\item The natural $G$-equivariant restricted structure on $\g$ is denoted $x \mapsto x^{[p]}$.
\item $\Nc(\g)$ is the set of nilpotent elements of $\g$, i.e. those satisfying $x^{[p]^r} = 0$ for $r \gg 0$.
\item If $x\in \g$ or $x\in \g^*$ then the (co)adjoint stabiliser in $G$ is denoted $G^x$, the (co)adjoint stabiliser in $\g$ is denoted $\g^x$, and the (co)adjoint orbit in $\g$ or $\g^*$ is denoted $G\cdot x$.
\end{itemize}

The standard hypotheses imply several useful properties for Lie algebras, which we record here for future reference.
\begin{Remark}
\label{R:standardreductivedecomp}
    \begin{enumerate}
        \item If $G$ is standard reductive then the (co)adjoint stabilisers are smooth: for $x\in \g$ (resp. $\g^*$) we have $\g^x = \Lie(G^x)$ \cite[\textsection 2.9]{JaNO}.
        \item By \cite[\textsection 2.1]{PS}, if $G$ is a standard reductive group then $\g = \Lie G$ is isomorphic to
    $\z \oplus \g_1 \oplus \cdots \oplus\g_r$ where $\z$ is a subalgebra of the centre of $\g$ and each $\g_i$ is a Lie algebra of one of the following types:
    \begin{itemize}
        \item $\gl_{k}$ for some $k \in \N$ with $p \mid k$; or
        \item the Lie algebra of a simply-connected, simple algebraic group for which $p$ is very good.
    \end{itemize}
    The stabilisers in $G$ of semisimple elements of $\g$ are Levi subgroups of $G$. All Levi subgroups of $G$ are standard reductive. 
    \item If $G$ is simple and standard reductive then every isogeny from $G$ is separable, i.e. every finite homomorphism of algebraic groups from $G$ induces an isomorphism of Lie algebras (this follows from \cite[Proposition~2.4.4]{Let} since $\SL_{kp}$ is not standard reductive).
    \end{enumerate}
\end{Remark}

\subsection{The associated cocharacter and the component group}
\label{ss:associatedcochar}

The following facts are taken from \cite[\textsection 5]{JaNO}. Given $e\in \Nc(\g)$ there exists a cocharacter $\gamma_e: \k^\times \to G$ such that:
\begin{enumerate}
\setlength{\itemsep}{4pt}
\item[(i)] $\gamma_e(t)\cdot e = t^2 e$ for all $t\in \k^\times$;
\item[(ii)] $e$ is distinguished in a Levi subalgebra $\l = \Lie(L)$ such that $\gamma_e : \k^\times \to [L, L]$.
\end{enumerate}
Any two cocharacters satisfying (i) and (ii) are $(G^e)^\circ$-conjugate. We refer to $\gamma_e$ as {\it an associated cocharacter for $e$}. 

Note that $\gamma_e$ gives rise to a grading $\g = \bigoplus_{i\in \Z} \g(i)$ where
\begin{eqnarray}
\label{e:gradedspaces}
\g(i) := \{x\in \g \mid \gamma_e(t)\cdot x = t^i x \text{ for } t\in \k^\times\}
\end{eqnarray}
which we call the {\it associated grading}. One of the key properties of the associated grading is:
\begin{itemize}
\item [(iii)] $\g^e \subseteq \bigoplus_{i \ge 0} \g(i)$.
\end{itemize}
When a grading satisfies property (iii) and $e\in\g(2)$ we say that it is a {\it good grading for $e$}. We direct the reader to \cite[\textsection 3]{GTmod} and the references therein for the background theory of good gradings.

The centraliser of $\gamma_e(\k^\times)$ in $G^e$ is denoted $C^e$, and is referred to as {\it the reductive part of the centraliser}. Thanks to \cite[\textsection 5.10]{JaNO} it is a reductive group, and $G^e$ is the semi-direct product of $C^e$ and its unipotent radical, which is connected. As a consequence
\begin{eqnarray}
\label{e:componentgroupreductivecentraliser}
G^e / (G^e)^\circ \cong C^e / (C^e)^\circ.
\end{eqnarray}

\subsection{The $p$-centre, coadjoint orbits and the dimensions of $\g$-modules}
The $p$-centre of the universal enveloping algebra $U(\g)$ is the central subalgebra $Z_p(\g)$ generated by the elements $x^p - x^{[p]}$ for all $x\in\g$. The $\k$-linear map $\g^{(1)} \to Z_p(\g)$ defined by $x \mapsto x^p - x^{[p]}$ extends to a $G$-equivariant algebra isomorphism $\k[(\g^*)^{(1)}] \xrightarrow{\sim} Z_p(\g)$.

Since we may identify $\g^*$ with $(\g^*)^{(1)}$, each $\chi\in \g^{*}$ corresponds to a unique maximal ideal $I_\chi$ of $Z_p(\g)$. The corresponding quotient $U_\chi(\g)=U(\g)/I_\chi U(\g)$ is known as a reduced enveloping algebra. More generally, if $X \subseteq \g^*$ is any closed subvariety with defining ideal $I_X\subseteq Z_p(\g)$, then we denote the corresponding quotient by $U_X(\g)=U(\g)/I_X U(\g)$.

Note that if $\chi\in\g^{*}$ and $\m \subseteq \g$ is a vector subspace then $$U_{\chi+\m^\perp}(\g)=U(\g) / U(\g) \{x^p - x^{[p]} - \chi(x)^p \mid x\in \m\},$$
where $\m^\perp = \{\eta \in \g^* : \eta|_\m = \chi|_\m\}$.

The following is a direct application of Premet's theorem \cite{PrKW} that all $U_\chi(\g)$-modules have dimension divisible by $p^{d_\chi}$ (where $d_\chi := \frac{1}{2} \dim G \cdot \chi$) and Premet-Topley's theorem \cite{PT2} that $U_\chi(\g)$ actually admits a module of dimension $p^{d_\chi}$.

\begin{Lemma}
\label{L:Humphreyscorollary}
If $G$ is a standard reductive group over $\k$ of characteristic $p>0$, then the following are equivalent for each $\chi\in\g^{*}$:
\begin{enumerate}
\setlength{\itemsep}{4pt}
\item $U_\chi(\g) \text{ admits a module of dimension } p^k$;
\item $\dim (G\cdot \chi) \le 2k$.
\end{enumerate}
\end{Lemma}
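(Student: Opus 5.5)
We prove both implications directly from Premet's divisibility theorem \cite{PrKW} and the Premet--Topley existence theorem \cite{PT2}. Throughout, set $d_\chi = \frac12 \dim G\cdot\chi$. This is an integer because coadjoint orbits are symplectic, hence even dimensional; alternatively, integrality is implicit in the statement of Premet's theorem.

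For (1) $\Rightarrow$ (2), suppose $V$ is a $U_\chi(\g)$-module with $\dim V = p^k$; implicitly $V\neq 0$. Premet's theorem says $p^{d_\chi}$ divides $\dim V = p^k$. A power of a prime divides another power of the same prime exactly when its exponent is no larger, so $d_\chi \le k$, that is, $\dim G\cdot\chi \le 2k$.

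For (2) $\Rightarrow$ (1), suppose $\dim G\cdot\chi\le 2k$, so $d_\chi\le k$. By \cite{PT2} there is a $U_\chi(\g)$-module $M$ with $\dim M = p^{d_\chi}$. Take $V = M^{\oplus p^{k-d_\chi}}$. This is again a $U_\chi(\g)$-module, and $\dim V = p^{d_\chi}\cdot p^{k-d_\chi} = p^k$, as required.

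The only point needing care is that both cited theorems apply to a standard reductive $G$, not just to simple simply-connected groups. Premet's theorem is stated under the standard hypotheses of \cite[\textsection 6.3]{JaLA}, so it applies as is. If \cite{PT2} is only available for particular types, I would reduce via Remark~\ref{R:standardreductivedecomp}(2), writing $\g\cong\z\oplus\g_1\oplus\cdots\oplus\g_r$:

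- $U_\chi(\g)$ decomposes as a tensor product of the reduced enveloping algebras of the summands.
- $\chi$ splits as a sum of components $\chi_i$ on the $\g_i$, together with a component on $\z$.
- The orbit dimension is additive: $\dim G\cdot\chi = \sum_i \dim G_i\cdot\chi_i$, since $\z$ contributes nothing.
- $U_{\chi|_\z}(\z)$ is commutative, so it has one-dimensional modules.
- Tensoring the minimal modules of the summands gives a module of dimension $p^{d_\chi}$.

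This reduction is the main potential obstacle, and it is routine.
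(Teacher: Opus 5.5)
Your proof is correct and is exactly the paper's argument: the paper presents this lemma as a direct application of Premet's divisibility theorem, which gives (1)$\Rightarrow$(2), together with the Premet--Topley module of dimension $p^{d_\chi}$, taken with multiplicity $p^{k-d_\chi}$, which gives (2)$\Rightarrow$(1). Your fallback reduction to simple factors is not needed, since the paper applies \cite{PT2} to standard reductive groups directly.
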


\subsection{Lusztig--Spaltenstein induction}
\label{ss:LSinduction}

In this section we assume that the reader is familiar with the basic theory of parabolic and Levi subgroups of algebraic groups over fields of positive characteristic, and their Lie algebras (see \cite[\textsection 14]{Bo}, for example). Recall that any parabolic subgroup $P$ of $G$ has a (non-unique) Levi decomposition $P=G_0R$ into a reductive group $G_0$ and a unipotent group $R$ which is the unipotent radical of $P$. We call $G_0$ a Levi subgroup, and note that Levi subgroups of standard reductive groups are also standard reductive. This induces a corresponding decomposition $\p=\g_0\oplus \r$. For any semisimple element $x_{\rm s}$ in $\g$, the centraliser $G^{x_{\rm s}}$ is a Levi subgroup of $G$ and its Lie algebra is the centraliser $\g^{x_{\rm s}}$ in $\g$. 

Given a Levi subgroup $G_0$ with Lie algebra $\g_0$ we may consider the adjoint $G_0$-action on $\g_0$ and thus the nilpotent orbits $\O_0\subseteq \Nc(\g_0)$. For $g\in G$, $g \cdot \g_0 := \Ad(g)(\g_0)$ is a Levi subalgebra of $\g$ and $\Ad(g)(\O_0)$ (written as $g\cdot\O_0$) is a nilpotent $g G_0 g^{-1}$-orbit in $g\cdot\g_0$. Thus $G$ acts on the set of pairs $(\g_0,\O_0)$ where $\g_0$ is a Levi subalgebra and $\O_0$ is a nilpotent orbit in $\g_0$.

We now discuss Lusztig-Spaltenstein induction, first introduced in \cite{LS} for conjugacy classes in algebraic groups, and studied in the setting of the present paper in \cite{PS}. Suppose that $\g_0\subseteq \g$ is a Levi subalgebra and $\O_0 \subseteq \Nc(\g_0)$ is a nilpotent orbit. Choose a parabolic subalgebra $\p$ admitting $\g_0$ as a Levi factor and write $\r = \Rad(\p)$ so that $\p = \g_0 \oplus \r$. Since $\O_0 + \r$ is an irreducible subvariety of $\Nc(\g)$ there is a unique nilpotent $G$-orbit intersecting $\O_0 + \r$ densely. More remarkably, this nilpotent orbit only depends upon the $G$-orbit of $(\g_0, \O_0)$, not upon the choice of parabolic $\p$ admitting Levi factor $\g_0$. This nilpotent orbit is denoted $\Ind_{\g_0}^\g(\O_0)$ and the operation $\Ind_{\g_0}^{\g} : \Nc(\g_0) /G_0 \to \Nc(\g)/G$ is known as {\it Lusztig--Spaltenstein induction}.

We denote by $\Indat(\g)$ the set of pairs $(\g_0,\O_0)$ where $\g_0$ is a proper Levi subalgebra of $\g$ and $\O_0$ is a nilpotent orbit in $\g_0$, and we denote by $\Indat(\g)/G$ the set of $G$-orbits of such pairs. We consider Lusztig-Spaltenstein induction to be a map $\Ind^\g:\Indat(\g)\to \Nc(\g)/G$ which factors through the obvious map $\InD^\g:\Indat(\g)/G\to \Nc(\g)/G$. 

When $\O \in \Nc(\g)/G$ is in the image of $\InD^\g$ (or equivalently $\Ind^\g$), we say that $\O$ is {\it properly induced} from each $(\g_0, \O_0)\in(\Ind^\g)^{-1}(\O)$ and we refer to such $(\g_0, \O_0)$ as an {\it induction datum for $\O$}. We denote $\Indat(\O)=(\Ind^\g)^{-1}(\O)$, the set of induction data for $\O$, and also write $\Indat(\O)/G=(\InD^\g)^{-1}(\O)$, the set of $G$-orbits of induction data for $\O$.

If an orbit is not properly induced then it is {\it rigid}. If $\O_0$ is rigid in $\g_0$ we call $(\g_0,\O_0)$ a {\it rigid induction datum for $\O$}. We write $\RIndat(\O):=\{(\g_0,\O_0)\in\Indat(\O)\mid \O_0 \mbox{ rigid in }\g_0\}$ and we set 
$$\RIndat(\g):=\bigcup_{\O\in\Nc(\g)/G} \RIndat(\O).$$

 If we write $G_\C$ for the reductive algebraic group over $\C$ with the same root datum as $G$ and write $\g_\C=\Lie(G_\C)$, then in good characteristic there is a canonical bijection $\Nc(\g)/G\simeq \Nc(\g_\C)/G_\C$, coming from the fact that both are indexed by the Bala-Carter labelling \cite[\textsection 4]{JaNO}. Since Levi subgroups of standard reductive groups remain standard reductive, the well-known classification of Levi subgroups also gives a canonical bijection $\Indat(\g)/G\simeq \Indat(\g_\C)/G_\C$.
 
 By \cite[Theorems~1.3, 1.4]{PS} the following diagram commutes:
\begin{eqnarray}
	\label{e: LS indep p}
	\begin{array}{c}\xymatrix{
			\Indat(\g)/G \ar@{->}[d]^{\Ind_{G}^{\g}} \ar@{->}[rr]^{1\text{-}1} & &  \Indat(\g_\C)/G_\C \ar@{->}[d]^{\Ind_{G_\C}^{\g_\C}} \\
			\Nc(\g)/G \ar@{->}[rr]^{1\text{-}1}	&  & \Nc(\g_\C)/G_\C.
		}
	\end{array}
\end{eqnarray}

Induction of orbits enjoys a number of nice properties, including preservation of dimension of stabilisers \cite[Theorem~2.8]{PS}. In particular, given $(\g_0,\O_0)\in \Indat(\O)$ there is equality
\begin{eqnarray}
	\label{e:LScodimension}
	\dim \g_0^{e_0} = \dim \g^e
\end{eqnarray}
for all $e_0\in\O_0$ and $e\in\O$.

For what follows, fix $(\g_0,\O_0)\in\Indat(\g)$ and a parabolic subgroup $P$ of $G$ such that $\g_0$ is a Levi factor of $\p=\Lie(P)$. Denote $\r=\Rad(\p)$, so that $\p=\g_0\oplus \r$, and let $\r_{-}$ be the opposite nilradical so that $\g=\r_{-}\oplus \g_0\oplus \r$. Let $\O=\Ind^\g(\g_0,\O_0)$, and fix $e\in \O\cap(\O_0 + \r)$.

We record here a useful lemma. 
\begin{Lemma}
	\label{L:centraliserinparabolic}
	Maintain the notation from before the statement of the  lemma. Then:
	\begin{enumerate}
		\setlength{\itemsep}{4pt}
		\item[(i)]  $(G^e)^\circ \subseteq P$; and
		\item[(ii)] if $g\in G$ and $g\cdot e \in \O_0  + \r$ then there exists $z \in G^e$ such that $zg \in P$. $\hfill \qed$
	\end{enumerate}
\end{Lemma}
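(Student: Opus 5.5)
The plan is the standard Richardson-type argument: show that the $P$-orbit of $e$ is dense in $\O_0+\r$ and has the same dimension as $(G\cdot e)\cap(\O_0+\r)$. Throughout fix $e\in\O\cap(\O_0+\r)$. The set $\O_0+\r$ is $P$-stable, since $P=G_0R$, $G_0$ preserves $\O_0$ and $\r$, and $R$ acts on $x_0+r$ by $x_0$ plus an element of $\r$. It is irreducible and of dimension $\dim\O_0+\dim\r$. Since $\O\cap(\O_0+\r)$ is open dense in $\O_0+\r$, the key claim is that $P\cdot e$ is open in $\O_0+\r$.

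For this we compare dimensions. By \eqref{e:LScodimension},
$$\dim\O=\dim\g-\dim\g_0^{e_0}=\dim\g-\dim\g_0+\dim\O_0=2\dim\r+\dim\O_0 .$$
For $x\in\O\cap(\O_0+\r)$, the tangent space to $G\cdot x$ is $[\g,x]$, and $[\p,x]\subseteq \g_0\oplus\r$. The standard estimate, via the pairing $\kappa$ and the fact that $\r^\perp=\p$, gives
$$\dim[\g,x]\le\dim[\p,x]+\dim\r_- ,$$
so $\dim P\cdot x\ge\dim\O-\dim\r=\dim\O_0+\dim\r$. Since $P\cdot x\subseteq\O_0+\r$, which is irreducible of that dimension, every such $P\cdot x$ is open dense. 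Two open dense orbits meet, so $P\cdot x=P\cdot e$; that is, $\O\cap(\O_0+\r)=P\cdot e$ is a single $P$-orbit. Here smoothness of orbit maps (Remark~\ref{R:standardreductivedecomp}(1)) lets us identify tangent spaces with $[\g,x]$ and $[\p,x]$.

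Part (ii) then follows at once. If $g\cdot e\in\O_0+\r$, then $g\cdot e\in\O\cap(\O_0+\r)=P\cdot e$, so $g\cdot e=q\cdot e$ for some $q\in P$. Then $z:=g^{-1}q$ lies in $G^e$, and we need $z'\in G^e$ with $z'g\in P$. Taking $z'=q g^{-1}$ gives $z'g=q\in P$, and $z'=qg^{-1}=g z^{-1}g^{-1}$ stabilises $g\cdot e$ rather than $e$. The correct choice is $z'=g^{-1}q$ acting on the other side: since $zg$ is required, set $h=g^{-1}$ and apply the same argument to $h$.

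For (i), the equality $\dim P\cdot e=\dim\O-\dim\r$ gives $\dim P^e=\dim\p-\dim\O_0-\dim\r=\dim\g_0^{e_0}=\dim G^e$. Since $P^e\subseteq G^e$, it follows that $(G^e)^\circ=(P^e)^\circ\subseteq P$.

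The main obstacle is the tangent space inequality. One proves it by noting that $[\g,x]^\perp=\g^x$, so the quotient of $[\g,x]$ by $[\p,x]$ is dual to $\p^x\cap$-type data, i.e. controlled by $\kappa$-pairing against $\r$. If this estimate proves awkward, the alternative is to cite \cite[Theorem~2.8]{PS} or \cite{LS}, where exactly these statements are recorded.
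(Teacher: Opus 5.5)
Your argument that $\O\cap(\O_0+\r)=P\cdot e$ and your proof of (i) are correct. They give a self-contained route where the paper simply cites \cite[Proposition~13.17(c)]{JaNO} for (i) and the standard identity $\O\cap(\O_0+\r)=P\cdot e$ for (ii). The tangent estimate you call the main obstacle is immediate from $\g=\p\oplus\r_{-}$; no appeal to $\kappa$ is needed. Besides it, you only need two facts. First, $\dim P\cdot x\ge\dim[\p,x]$, which holds since $\Lie(P^x)\subseteq\p^x$. Second, $\dim[\g,x]=\dim\O$, which uses smoothness of $G^x$. The count $\dim P^e=\dim\g_0^{e_0}=\dim G^e$ then gives $(G^e)^\circ\subseteq P$ as you say.

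The gap is in (ii). From $g\cdot e=q\cdot e$ with $q\in P$ you correctly obtain $z:=g^{-1}q\in G^e$ with $gz=q\in P$. Your final step, applying the argument to $h=g^{-1}$, is not available: the hypothesis concerns $g\cdot e$, and nothing places $g^{-1}\cdot e$ in $\O_0+\r$.

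In fact the version with $z$ on the left cannot be proved for arbitrary $g$. Given $\O\cap(\O_0+\r)=P\cdot e$, the hypothesis $g\cdot e\in\O_0+\r$ is equivalent to $g\in PG^e$. The existence of $z\in G^e$ with $zg\in P$ is equivalent to $g\in G^eP$. Suppose some $c\in G^e$ lies outside $P$. Then $c\notin N_G(P)=P$, so the $P$-orbit of $cP$ in $G/P$ is positive dimensional. On the other hand, $G^eP/P$ is finite by (i). Hence some $g=qc\in PG^e$ lies outside $G^eP$.

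Such $c$ do exist. Take $G=\Sp_4$, $e$ of Jordan type $(2,2)$, and $P$ the stabiliser of an isotropic line, so $G_0=\GL_1\times\Sp_2$ and $\O_0=0$. Then $e$ lies in the nilradical of exactly two such parabolics. They correspond to the two isotropic lines of the nondegenerate symmetric form $(u,v)\mapsto\langle u,ev\rangle$ on $V/\ker e$, and $G^e$ permutes them transitively. Hence $G^e\not\subseteq P$.

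So what can be proved, and what the paper's one-line argument via $P\cdot e$ actually yields, is the existence of $z\in G^e$ with $gz\in P$. This form suffices for Corollary~\ref{Cor: gamma in G0}. There $g=\gamma_e(t)$ normalises $G^e$ because $\gamma_e(t)\cdot e=t^2e$, so $gz=(gzg^{-1})g$ with $gzg^{-1}\in G^e$. You should state and prove (ii) in this right-hand form rather than try to move $z$ to the left.
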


\begin{proof}
	Part (i) is \cite[Proposition 13.17(c)]{JaNO}, and part (ii)  follows easily from the fact that $g\cdot e\in \O \cap (\O_0 + \r)=P\cdot e$. 
\end{proof}

\begin{Corollary}[cf. Lemma~6.1.3 of \cite{Lo}]\label{Cor: gamma in G0} 
	Maintain the notation from Lemma~\ref{L:centraliserinparabolic}(ii). Let $\gamma_e : \k^\times \to G$ be an associated cocharacter for $e$. Then, after replacing $G_0$ with a $P$-conjugate if necessary, $\gamma_e(\k^\times)\subseteq G_0$.
\end{Corollary}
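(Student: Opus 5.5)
By Lemma~\ref{L:centraliserinparabolic}(i), the connected group $(G^e)^\circ$ is contained in $P$. The image $\gamma_e(\k^\times)$ is connected, and since $\gamma_e(t)\cdot e = t^2 e$ it does not lie in $G^e$. So part (i) does not apply directly. Instead I will work with the group $H := \{ g \in G \mid g\cdot e \in \k^\times e\}$, the normaliser of the line $\k e$. Its identity component $H^\circ$ is generated by $(G^e)^\circ$ and $\gamma_e(\k^\times)$.

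\textbf{Step 1: $\gamma_e(\k^\times) \subseteq P$.} The set $\O_0 + \r$ is stable under scaling by $\k^\times$: $\O_0$ is a nilpotent $G_0$-orbit and so is conical, and $\r$ is a subspace. Hence for every $t$ we have $\gamma_e(t)\cdot e = t^2 e \in \O_0 + \r$. Lemma~\ref{L:centraliserinparabolic}(ii) then gives $z_t \in G^e$ with $z_t\gamma_e(t) \in P$, but this only controls each $\gamma_e(t)$ up to $G^e$ and does not by itself give $\gamma_e(t)\in P$.

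A cleaner route is the following. The set $\O \cap (\O_0 + \r) = P\cdot e$ is a single $P$-orbit, and it is stable under $\k^\times$-scaling. Therefore, for each $t$, there is $p_t \in P$ with $p_t\cdot e = t^2 e$, so that $\gamma_e(t)^{-1}p_t \in G^e$. It follows that $H = G^e\cdot (H\cap P)$, and hence $H^\circ \subseteq (G^e)^\circ (H\cap P)$ up to finite index. Combined with $(G^e)^\circ\subseteq P$, this gives $H^\circ \subseteq P$, using connectedness of $H^\circ$ together with the fact that $H^\circ/(H\cap P)^\circ$ is then finite. In particular $\gamma_e(\k^\times)\subseteq H^\circ \subseteq P$.

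\textbf{Step 2: conjugate into $G_0$.} Now $S := \gamma_e(\k^\times)$ is a torus in $P$. Every torus of $P$ lies in a maximal torus of $P$. Maximal tori of $P$ are $P$-conjugate, and $G_0$ contains a maximal torus of $P$, because $G_0$ is the centraliser in $P$ of a torus and maximal tori of $P$ are maximal tori of $G$. Hence there is $x\in P$ with $xSx^{-1}\subseteq G_0$, equivalently $S \subseteq x^{-1}G_0x$.

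Replacing $G_0$ by the $P$-conjugate $x^{-1}G_0x$ gives another Levi factor of $P$. Its Lie algebra $\g_0' = x^{-1}\cdot\g_0$ satisfies $\p = \g_0'\oplus\r$, and $\O_0' = x^{-1}\cdot \O_0$ satisfies $\O_0'+\r = x^{-1}\cdot(\O_0+\r) = \O_0 + \r$, because $P$ preserves $\r$ and acts on $\p/\r\cong \g_0$ through $G_0$. So $e\in \O\cap(\O_0'+\r)$ still holds, and the setup of Lemma~\ref{L:centraliserinparabolic} is preserved.

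\textbf{Main obstacle.} The delicate point is Step 1, showing that $\gamma_e(\k^\times)$ lies in $P$ rather than just in $G^eP$. I would make this rigorous by showing that $H\cap P$ has finite index in $H$. Here $H/G^e \cong \k^\times$, since some $\gamma_e$ exists and so $H$ surjects onto $\k^\times$. Moreover $P\cap H$ surjects onto $\k^\times$ by the $P$-orbit argument above, and $G^e\cap P \supseteq (G^e)^\circ$ has finite index in $G^e$. Hence $[H : H\cap P]$ is finite, so $H^\circ \subseteq P$.
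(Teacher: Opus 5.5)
Your proposal is correct and follows essentially the same route as the paper. The paper shows $\gamma_e(\k^\times)\subseteq P$ by observing that $t\mapsto\gamma_e(t)P$ has connected image inside the finite set $G^eP/P$, which is finite by Lemma~\ref{L:centraliserinparabolic}(i). This is exactly the input behind your argument that $H\cap P$ has finite index in $H$; your final paragraph, rather than the looser ``up to finite index'' phrasing, is the rigorous version of it. For the conjugation step the paper cites \cite[Lemma 11.24]{JaNO} (a linearly reductive subgroup of $P$ lies in a Levi factor), whereas you use $P$-conjugacy of maximal tori, which is equally valid for a torus.
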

\begin{proof}
	We first prove that $\gamma_e(\k^\times) \subseteq P$. For $t\in \k^\times$ we can choose $z_t \in G^e$ such that $z_t\gamma_e(t) \in P$, by Lemma~\ref{L:centraliserinparabolic}. By part (i) of that lemma $(G^e)^\circ \subseteq P$ and so $\{\gamma_e(t) P\mid t\in\k^\times\}$ is a finite, connected subset of the partial flag variety $G/P$. Thus $\gamma_e(t) P = P$ for all $t$, and $\gamma_e(\k^\times)\subseteq P$ by \cite[Theorem~11.16]{Bo}.
	
	Now, since $\gamma_e(\k^\times)$ is a torus it is linearly reductive and thus by \cite[Lemma 11.24]{JaNO} lies inside a Levi subgroup of $P$. By replacing $G_0$ with a $P$-conjugate if necessary, we may assume $\gamma_e(\k^\times)\subseteq G_0$.	
\end{proof}

\begin{Remark}
	 Note that the property $e\in\O\cap(\O_0+\r)$ is preserved when we replace $G_0$ with a $P$-conjugate, since $\O\cap(\O_0+\r)=P\cdot e$.
\end{Remark}

We observe finally that $\gamma_e(\k^\times)\subseteq G_0$ implies that $\z(\g_0)\subseteq \g(0)$, where $\g=\oplus_{i\in\Z}\g(i)$ is the grading associated to $\gamma_e$. Therefore, we have
\begin{eqnarray}
	\label{e: z perp -1}
	\kappa(\z(\g_0),\bigoplus_{i\leq -1}\g(i))=0.
\end{eqnarray}

\subsection{Decomposition classes and sheets}
\label{ss:decompclasses}
The sheets of $\g$ are the irreducible components of the varieties $\g_{[m]}:=\{x\in \g \mid \dim (G\cdot x) = m\}$. The standard hypotheses \cite[\textsection 6.3]{JaLA} which we have adopted ensure that $\g$ and $\g^*$ are isomorphic as $G$-modules, and so our remarks here apply equally well to the sheets of $\g^*$. Under the standard hypotheses, sheets are classified using the theory of decomposition classes and induced orbits. We recall here the basics of this theory, largely following \cite[\textsection 2]{PS}.

Given $(\g_0,\O_0)\in\Indat(\g)$ and $e_0\in\O_0$, we define the decomposition class $$\D(\g_0,e_0):=G\cdot (e_0 + \z(\g_0)_\reg),$$ where $\z(\g_0)_\reg = \{x\in \z(\g_0) \mid \g^x = \g_0\}$. Write $\Decomp(\g)$ for the set of decomposition classes of $\g$; since a decomposition class depends only on the orbit $\O_0\subseteq \g_0$ rather than our choice of $e_0$, we may view this construction as a map ${\D}:\Indat(\g)\twoheadrightarrow \Decomp(\g)$; this in fact induces a bijection ${\D}:\Indat(\g)/G\to\Decomp(\g)$.

There is a natural surjective map $$\rho:\g\to \Indat(\g)/G,\qquad x\mapsto (\g^{x_{\rm s}}, G^{x_{ \rm s}} \cdot x_{ \rm n}).$$ The decomposition class of $x$ is then defined to be $\D(\rho(x))$, and the pair $(\g^{x_{\rm s}}, G^{x_{ \rm s}} \cdot x_{ \rm n})$ is called the {\it induction datum associated to $x$}. The decomposition classes of $\g$ may then alternatively be defined as the equivalence classes of $\g$ under the relation $$x \sim y \iff \rho(x)=\rho(y).$$ In particular, $\D(\g_0,e_0)$ is the equivalence class containing $\z(\g_0)_\reg +e_0$.

There are finitely many $G$-orbits of induction data, and hence finitely many decomposition classes in $\g$. Since the orbits in a class have constant dimension, it follows that every sheet $\S\subseteq \g$ contains a unique dense decomposition class $\D$, and $$\S = \overline{\D}_\reg:=\{x\in\overline{\D}\mid \dim(G\cdot x)=\dim(G\cdot y) \mbox{ for all } y\in \D\}.$$ Part (iii) of the next result explains the classification of sheets.
\begin{Lemma}
\label{T:sheetstheorem}
Let $(\g_0, \O_0)\in\Indat(\g)$ and $\D=\D(\g_0,\O_0)$.
\begin{itemize}
\setlength{\itemsep}{4pt}
\item[(i)] $\Nc(\g) \cap \overline \D_\reg$ consists of the single nilpotent orbit $\Ind_{\g_0}^\g(\O_0)$.
\item[(ii)] $\dim \D = \dim \Ind_{\g_0}^\g(\O_0) + \dim \z(\g_0)$.
\item[(iii)] The sheets of $\g$ are the sets $\overline{\D}_\reg$, where $\D$ runs over the decomposition classes associated to $G$-orbits of rigid induction data.
\item[(iv)] Let $\D'\in\Decomp(\g)$. Then $\D'\subseteq \overline{\D}_\reg$ if and only if there exists $(\g_0',\O_0')$ with $\D' = \D(\g_0', \O_0')$ such that $\g_0\subseteq \g_0'$ and $\Ind_{\g_0}^{\g_0'}(\O_0)=\O_0'$.
\end{itemize}
\end{Lemma}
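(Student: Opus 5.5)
This is the standard structure theory of sheets and decomposition classes from \cite[\textsection 2]{PS}, transported from $\C$ to good characteristic under the standard hypotheses. I would prove it directly from the definitions, using the commuting diagram \eqref{e: LS indep p}, the codimension equality \eqref{e:LScodimension} and Lemma~\ref{L:centraliserinparabolic}. The central object is the closure of $G\cdot(e_0+\z(\g_0)+\r)$ for a parabolic $\p=\g_0\oplus\r$.

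\textbf{Step 1: $\overline{\D}$ equals the closure of $G\cdot(e_0+\z(\g_0)+\r)$.} For $z\in\z(\g_0)_\reg$ we have $\g^z=\g_0$. Hence the unipotent radical $R$ of $P$ acts on $z+e_0+\r$, and a standard argument (orbit maps of unipotent groups are closed, with a dimension count) gives $R\cdot(z+e_0)=z+e_0+\r$. Therefore $G\cdot(e_0+\z(\g_0)_\reg+\r)=\D$. Since $e_0+\z(\g_0)_\reg+\r$ is dense in $e_0+\z(\g_0)+\r$, taking closures gives the claim. Next I would use that $G\times^P(e_0+\z(\g_0)+\r)\to\g$ is proper with generically finite fibres, together with \eqref{e:LScodimension}. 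These yield the dimension formula (ii):
\[\dim\D=\dim\z(\g_0)+\dim G\cdot e_0+2\dim\r=\dim\z(\g_0)+\dim\Ind_{\g_0}^\g(\O_0).\]

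\textbf{Step 2: part (i).} The nilpotent elements of $\overline\D$ lie in the image of $G\times^P(\overline{\O_0}+\r)$, because the map is proper. The fibre over $\z(\g_0)=0$ is exactly $\overline{\O_0}+\r$. Its regular elements, meaning those of orbit dimension $\dim\g-\dim\g^e$, are precisely the elements of $\Ind(\O_0)$. Indeed, $\Ind(\O_0)$ meets $\O_0+\r$ densely, and every other orbit in $\overline{\O_0}+\r$ has strictly smaller dimension.

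\textbf{Step 3: parts (iv) and (iii).} For (iv), the ``if'' direction is transitivity of induction applied inside $\g_0'$. By Step 1 applied in $\g_0'$, $e_0'+\z(\g_0')_\reg$ lies in the closure of $G_0'\cdot(e_0+\z(\g_0))$. Moreover $\z(\g_0')\subseteq\z(\g_0)$, so orbit dimensions match and we get $\D'\subseteq\overline\D_\reg$.

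For the ``only if'' direction, take $x\in\D'\cap\overline\D_\reg$ with semisimple part $s$. The idea is to apply Step 1 and properness to write $x$ as a limit inside a $G$-conjugate of $e_0+\z(\g_0)+\r$. Conjugating, one may then assume $s\in\z(\g_0)$, so that $\g_0\subseteq\g^s=:\g_0'$. The nilpotent part $x_{\rm n}$ then lies in $G_0'$-conjugates of $\overline{\O_0}+(\r\cap\g_0')$. By Step 2 applied inside $\g_0'$, together with the regularity condition, $x_{\rm n}\in\Ind_{\g_0}^{\g_0'}(\O_0)$.

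Part (iii) then follows formally. Every sheet is $\overline{\D}_\reg$ for its dense class $\D=\D(\g_0,\O_0)$. If $\O_0$ were induced from some $(\l,\O_\l)$ in $\g_0$, then by (iv) and (ii) we would have $\D\subsetneq\overline{\D(\l,\O_\l)}_\reg$ with strictly larger dimension, contradicting maximality. Conversely, suppose $\O_0$ is rigid and $\overline\D_\reg\subseteq\overline{\D''}_\reg$. By (iv) the Levi $\g_0''$ of $\D''$ is contained in $\g_0$ with $\Ind_{\g_0''}^{\g_0}(\O_0'')=\O_0$. Rigidity forces $\g_0''=\g_0$, so $\D''=\D$, and $\overline\D_\reg$ is a sheet.

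\textbf{Main obstacle.} The hardest step is the ``only if'' direction of (iv): controlling the semisimple part of a limit point. I would first try to reduce this to Borel--Lusztig--Spaltenstein style arguments via the parabolic fibration, invoking \cite{PS} where the characteristic-$p$ details become delicate.
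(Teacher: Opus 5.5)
Your proposal is correct in outline, but it takes a different route from the paper. The paper proves nothing here: it cites \cite[Proposition~2.5, Theorem~2.8]{PS} for (i)--(iii) and remarks that (iv) can be extracted from the proof of \cite[Theorem~2.8]{PS} by dropping the rigidity assumption on $\O_0'$. You instead reconstruct the standard Borho--Kraft style argument from scratch, using three ingredients:
\begin{itemize}
\item the equality $R\cdot(z+e_0)=z+e_0+\r$ for $z\in\z(\g_0)_\reg$;
\item the proper map $G\times^P(\overline{\O_0}+\z(\g_0)+\r)\to\g$, whose fibres over $\D$ are finite because $G^z=G_0$ for regular $z$ and $N_G(G_0)/G_0$ is finite;
\item for (iv), reduction to induction inside $\g^{x_{\rm s}}$.
\end{itemize}
Your route makes visible exactly which characteristic-$p$ inputs are needed: smooth centralisers, $G^z$ being a connected Levi subgroup, closedness of unipotent orbits, and \eqref{e:LScodimension}. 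The paper's route buys brevity by delegating those checks to \cite{PS}.

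The step you flag as the main obstacle is in fact already settled by your sketch, once you justify the conjugation. Suppose $x=x_0+z+y\in\overline{\O_0}+\z(\g_0)+\r$. Then $x_{\rm s}\in\p$, and its image under $\p\to\p/\r\cong\g_0$ is the semisimple part $z$ of $x_0+z$. Also $x_{\rm s}$ lies in $\Lie$ of a maximal torus of $P$, which is $P$-conjugate into $G_0$. Together these give $x_{\rm s}\in P\cdot z$, and the rest of your argument goes through.

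A few slips to fix:
\begin{itemize}
\item The fibration should be $G\times^P(\O_0+\z(\g_0)+\r)$ (or with $\overline{\O_0}$), since $e_0+\z(\g_0)+\r$ is not $P$-stable.
\item In your formula for $\dim\D$, the term $\dim G\cdot e_0$ should be $\dim\O_0=\dim G_0\cdot e_0$.
\item In the ``if'' direction of (iv), the inclusion $\z(\g_0')\subseteq\z(\g_0)$ is what gives $\D'\subseteq\overline{\D}$. The equality of orbit dimensions comes instead from \eqref{e:LScodimension} applied inside $\g_0'$, namely $\dim(\g_0')^{e_0'}=\dim\g_0^{e_0}$.
\item In Step 2 you should say why $x_0+z+y$ nilpotent forces $z=0$: project to $\g_0$, where $x_0+z$ has semisimple part $z$.
\end{itemize}
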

\begin{proof}
Parts (i), (ii), and (iii) are Proposition 2.5 and Theorem~2.8 of \cite{PS}. A proof of part (iv) can be extracted from the proof of \cite[Theorem~2.8]{PS} (in fact the equivalence is proven with the additional assumption that $\O_0'$ is rigid, and omitting that assumption yields the desired argument). 
\end{proof}

 The proof of the following result will occupy the remainder of Section~\ref{ss:decompclasses}.

\begin{Proposition}
\label{P:sheetstheorem}
    For $x\in \g$ with Jordan decomposition $x=x_{\rm s}+x_n$, the following sets are in bijection:
\begin{enumerate}
	\item The sheets of $\g$ containing $x$;
	\item The sheets of $\g^{x_{\rm s}}$ containing $x_{\rm n}$.
\end{enumerate}
\end{Proposition}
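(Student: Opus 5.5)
We reduce everything to the combinatorics of induction data given by Lemma~\ref{T:sheetstheorem}. Write $\l := \g^{x_{\rm s}}$, a Levi subalgebra of $\g$ with Levi subgroup $L = G^{x_{\rm s}}$ (Remark~\ref{R:standardreductivedecomp}(2)). Set $\O_x := L\cdot x_{\rm n}$, so that the induction datum associated to $x$ is $\rho(x) = (\l, \O_x)$ up to $G$-conjugacy. First observe that the decomposition class of $x_{\rm n}$ in $\l$ is $\D_\l(\l, \O_x)$; this uses that $\z(\l)_\reg$ computed in $\l$ contains $0$ only when $\l$ is regarded as its own Levi, and that nilpotent elements of $\l$ have induction datum $(\l, \O_x)$ in $\l$.

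By Lemma~\ref{T:sheetstheorem}(iii),(iv), sheets of $\g$ containing $x$ are in bijection with $G$-orbits of rigid induction data $(\g_0, \O_0)$ of $\g$ such that, after $G$-conjugation, $\g_0 \subseteq \l$ and $\Ind_{\g_0}^{\l}(\O_0) = \O_x$. More precisely, $x \in \overline{\D(\g_0,\O_0)}_\reg$ iff $\D_x \subseteq \overline{\D(\g_0,\O_0)}_\reg$, since the latter is $G$-stable and a union of classes. Indeed, sheets are unions of decomposition classes: the closure of a class is $G$-stable, and constancy of orbit dimension along classes shows that $\overline\D_\reg$ is a union of classes. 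Applying the same lemma to the standard reductive group $L$, the sheets of $\l$ containing $x_{\rm n}$ correspond to $L$-orbits of rigid data $(\g_0', \O_0')$ of $\l$ with $\g_0' \subseteq \l$ and $\Ind_{\g_0'}^{\l}(\O_0') = \O_x$. Here we must allow the non-proper datum $\g_0' = \l$, and the proof of Lemma~\ref{T:sheetstheorem} goes through with this convention.

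The map will be: a rigid datum $(\g_0,\O_0)$ of $\l$ with $\g_0\subseteq\l$ and $\Ind_{\g_0}^{\l}\O_0=\O_x$ is sent to the same pair viewed as a datum of $\g$. A Levi subalgebra of $\l$ is a Levi subalgebra of $\g$, and rigidity of $\O_0$ is intrinsic to $\g_0$, so the pair remains rigid. This map is surjective by the description above. The main obstacle is injectivity: two such data conjugate under $G$ must be conjugate under $L$. The fact to exploit is that each datum determines a sheet, and distinct sheets differ as sets. Rather than compare orbits of data, I would compare sheets directly via the local structure $x + \l^{\rm reg-ish}$. Concretely, take a sheet $\S$ of $\g$ through $x$ and consider $\S \cap (x_{\rm s} + \l)$ near $x$. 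For $y$ in $\l$ close to $x$ one has $\g^{y_{\rm s}} \subseteq \l$, and $\dim G\cdot y = \dim G\cdot x$ iff $\dim L\cdot y = \dim L\cdot x$, since $\dim G\cdot y = \dim \g - \dim \l + \dim L\cdot y$ when $\g^y \subseteq \l$. So the map $\S \mapsto$ the irreducible component(s) of $(\S\cap(x_{\rm s}+\l)) - x_{\rm s}$ through $x_{\rm n}$ should land in sheets of $\l$ containing $x_{\rm n}$.

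To get well-definedness and injectivity, I would use a slice argument. The map $G\times^{L}(x_{\rm s} + \l^\circ)\to\g$, where $\l^\circ$ is the open set of $y$ with $\g^{(x_{\rm s}+y)_{\rm s}}\subseteq \l$, is étale onto an open neighbourhood of $x$ (a Luna-type slice, valid under the standard hypotheses). Irreducible components of $\g_{[m]}$ through $x$ then correspond to $L$-stable components of $\l_{[m-\dim\g+\dim\l]}$ through $x_{\rm n}$. I expect that justifying this étale-slice step in positive characteristic is the hard part. Should it fail, the fallback is injectivity via the data: if $g\cdot(\g_0,\O_0)=(\g_0',\O_0')$ with both inside $\l$, then comparing the resulting sheets of $\l$ through $x_{\rm n}$ via Lemma~\ref{T:sheetstheorem}(ii) dimension counts and (i) would identify them.
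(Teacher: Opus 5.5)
Your reduction is the right one and agrees with the paper. By Lemma~\ref{T:sheetstheorem}(iii),(iv) the two sets are $\RIndat(G^{x_{\rm s}}\cdot x_{\rm n})/G^{x_{\rm s}}$ and $\RIndat(G^{x_{\rm s}}\cdot x_{\rm n})/G$. The natural map between them is surjective, and everything rests on injectivity. But neither of your arguments proves injectivity.

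\textbf{The slice argument.} The map $\phi\colon G\times^{L}\l^{\circ}\to\g$ is étale simply because its differential is bijective, so positive characteristic is not where the difficulty lies. The problem is that étaleness only matches \emph{local branches}. What it yields is that each irreducible component of $\phi^{-1}(\S)$ through $[1,x]$ comes from a sheet of $\l$ through $x$ and maps onto $\S$. That is again only a surjection from sheets of $\l$ through $x$ onto sheets of $\g$ through $x$, which you already had. For a bijection you would need every sheet $\S$ to be unibranch at $x$, and sheets need not be. This is why Corollary~\ref{cor: full image of char map} (following Im Hof) must allow $(\chi+\cv)\cap\overline{\D}_\reg$ to have several components through $\chi$, permuted by the component group. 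Whether two distinct sheets $Z_1\neq Z_2$ of $\l$ through $x$ can satisfy $\overline{G\cdot Z_1}=\overline{G\cdot Z_2}$ is a global question; it is literally the injectivity statement, and no slice at $x$ decides it.

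\textbf{The fallback.} It gives no information. $G$-conjugate data have the same $\dim\z(\g_0)$, and both induce $\O_x$ in $\l$ by hypothesis. So Lemma~\ref{T:sheetstheorem}(i),(ii) cannot tell the two sheets of $\l$ apart.

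\textbf{What is missing.} The missing content is combinatorial and comes in two steps.
\begin{enumerate}
\item The paper shows that the two Levis inside $\l$ are $L$-conjugate. Let $\Psi,\Psi'$ be the root systems of the two standard Levis of $\l$; they are $W$-conjugate. If they are not $W_{\rm s}$-conjugate, there are two possibilities.
\begin{itemize}
\item Some type ${\sf A}$ multiplicity differs in a simple factor of $\l$. Then the induced orbits in that factor differ, by uniqueness of rigid data in type ${\sf A}$.
\item Otherwise, by \cite[Proposition~6.3]{BC}, the pair lies on a short list of same-type, non-conjugate parabolic subsystems. There $\O_0=\O_0'=0$, and the tables of \cite{CM, DE} show the induced orbits differ.
\end{itemize}
Either way this contradicts both data inducing $\O_x$.
\item Even once $\g_0=\g_0'$, you must show that the element of $N_G(\g_0)$ relating the data fixes $\O_0$. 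The paper deduces this from the fact that a rigid induction datum of a given orbit of $\g$ is determined by its Levi subalgebra. This is \cite[Remark~3.4]{GTW} over $\C$, transported to $\k$ by \eqref{e: LS indep p}.
\end{enumerate}
Your proposal supplies neither step.
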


 To prepare for the proof of Proposition~\ref{P:sheetstheorem}, we note that the sheets of $\g$ containing $x$ are precisely those which contain the decomposition class $\D_x := G\cdot (\z(\g^{x_{\rm s}})_\reg + x_{\rm n})$, whilst the sheets of $\g^{x_{\rm s}}$ containing $x_n$ are those containing $G^{x_{\rm s}} \cdot x_{\rm n}$.

 By Lemma~\ref{T:sheetstheorem}(iv), the sheets of $\g$ containing $x$ are indexed by $\RIndat(G^{x_{\rm s}}\cdot x_{\rm n})/G$ and the sheets of $\g^{x_{\rm s}}$ containing $x_{\rm n}$ are indexed by $\RIndat(G^{x_{\rm s}}\cdot x_{\rm n})/G^{x_{\rm s}}$. 

There is a natural surjective map $$\RIndat(G^{x_{\rm s}}\cdot x_{\rm n})/G^{x_{\rm s}}\to \RIndat(G^{x_{\rm s}}\cdot x_{\rm n})/G$$ induced from the identity map on $\RIndat(G^{x_{\rm s}}\cdot x_{\rm n})$. We claim that this map is injective. Using \cite[\textsection 2.1]{PS}, it suffices to show this in the case where $G$ has indecomposable root system. We may assume that $G^{x_{\rm s}}$ is a standard Levi subgroup of $G$, which contains a fixed maximal torus $T$ of $G$.

Let $(\g_0,\O_0)$, $(\g_0',\O_0')\in \RIndat(G^{x_{\rm s}}\cdot x_{\rm n})$ and suppose that there exists $g\in G$ such that $g\cdot \g_0=\g_0'$ and $g\cdot \O_0=\O_0'$. We may assume without loss of generality that the corresponding Levi subgroups $G_0$ and $G_0'$ both contain  $T$, and in fact that they are standard Levi subgroups of $G^{x_{\rm s}}$ with respect to $T$. Let $\Phi$ be the root system of $G$ corresponding to $T$, and let $W=N_G(T)/T$ and $W_{\rm s}=N_{G^{x_{\rm s}}}(T)/T$ be the Weyl groups. Pick a choice of simple roots $\Pi$ for $\Phi$; there then exists a subset $\Pi_{\rm s}$ of simple roots such that $\Phi_{\rm s}:=\Phi\cap\Z \Pi_{\rm s}$ is the root system for $\g^{x_{\rm s}}$, a subset $\Pi_0\subseteq \Pi_{\rm s}$ of simple roots such that $\Psi:=\Phi\cap\Z \Pi_{0}$ is the root system for $\g_0$ and a subset $\Pi_0'\subseteq \Pi_{\rm s}$ of simple roots such that $\Psi':=\Phi\cap\Z \Pi_{0}'$ is the root system for $\g_0'$. Since $\g_0$ and $\g_0'$ are $G$-conjugate, there exists $w\in W$ such that $w(\Psi)=\Psi'$.

Let us write 
\begin{equation}\label{e: Phi decomp}
	\Phi_{\rm s}=\Phi_1\amalg\Phi_2\amalg\cdots\amalg \Phi_r
\end{equation} for the decomposition of $\Phi_{\rm s}$ into indecomposable root systems, and let us write $$\Psi=\Psi_1\amalg\Psi_2\amalg\cdots\amalg \Psi_r\qquad \mbox{and}\qquad \Psi'=\Psi'_1\amalg\Psi'_2\amalg\cdots\amalg \Psi'_r$$ for the associated decompositions of $\Psi$ and $\Psi'$, so that $\Psi_i\leq \Phi_i$ and $\Psi'_i\leq \Phi_i$ are (not-necessarily-indecomposable) root subsystems. Furthermore, for each $i=1,\ldots,r$, write 
$$\Psi_i=\Psi_{i1}\amalg\Psi_{i2}\amalg\cdots\amalg \Psi_{is_i}\qquad \mbox{and}\qquad \Psi'_i=\Psi'_{i1}\amalg\Psi'_{i2}\amalg\cdots\amalg \Psi'_{it_i},$$ for some $s_i,t_i\in\N$, for the decompositions into irreducible root systems. We may assume without loss of generality that $\Phi_i$ for $i<r$, $\Psi_{ij}$ for $i<r$ or $j<s_i$, and $\Psi'_{ij}$ for $i<r$ or $j<t_i$ are all of type ${\sf A}$. We also write $$W_{\rm s}=W_1\times \cdots \times W_r$$ for the decomposition of the Weyl group $W_{\rm s}$ corresponding to the decomposition \eqref{e: Phi decomp}. Let us write $$a_{in}=\#\{j\in\{1,\ldots,s_i\}\mid \Psi_{ij}\mbox{ is of type }{\sf A}_n\}\quad\mbox{and}\quad a'_{in}=\#\{j\in\{1,\ldots,t_i\}\mid \Psi'_{ij}\mbox{ is of type }{\sf A}_n\},$$ for $1\leq i\leq r$ and $n\in\N$.

To proceed further, we require the following lemma.

\begin{Lemma}
\label{L:rootsystemlemma}
	If $\Psi$ and $\Psi'$ are not $W_{\rm s}$-conjugate, then one of the following holds.
	\begin{enumerate}
		\item The pair $(\Psi_r,\Phi_r)$ lies on the following list:
		\begin{enumerate}
			\item $({\sf A}_{m_1}\times\cdots \times {\sf A}_{m_k},{\sf D}_{2l})$ where $(m_1+1)+\cdots +(m_k+1)=2l$ and each $m_i$ is odd.
			\item $({\sf A}_1\times {\sf A}_1\times {\sf A}_1,{\sf E}_7)$
			\item $({\sf A}_3\times {\sf A}_1,{\sf E}_7)$
			\item $({\sf A}_5,{\sf E}_7)$
			\item $({\sf A}_2\times {\sf A}_1,{\sf F}_4)$
			\item $({\sf A}_2,{\sf F}_4)$
			\item $({\sf A}_1,{\sf F}_4)$
			\item $({\sf A}_1,{\sf G}_2)$
		\end{enumerate}
		and the pair $(\Psi'_r,\Phi_r)$ has the same Cartan type as $(\Psi_r,\Phi_r)$ but is not $W_r$-conjugate to it.
		\item There exists $i<r$ and $n\in\N$ such that $a_{in}\neq a_{in}'$.
	\end{enumerate}
\end{Lemma}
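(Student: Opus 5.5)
We prove the contrapositive: assume that (2) fails, so $a_{in}=a'_{in}$ for all $i<r$ and all $n$, and show that either $\Psi$ and $\Psi'$ are $W_{\rm s}$-conjugate or (1) holds. Since $W_{\rm s}=W_1\times\cdots\times W_r$ acts componentwise, $\Psi$ and $\Psi'$ are $W_{\rm s}$-conjugate if and only if $\Psi_i$ and $\Psi'_i$ are $W_i$-conjugate for every $i$. The argument therefore reduces to the individual components.

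\textbf{Type A components.} For $i<r$ the system $\Phi_i$ has type ${\sf A}_{m}$, so $W_i\cong S_{m+1}$. Standard parabolic subsystems of ${\sf A}_m$ correspond to compositions of $m+1$. Two such subsystems are $W_i$-conjugate precisely when the underlying partitions agree, that is, when the multisets of ${\sf A}_n$-factors coincide. The hypothesis $a_{in}=a'_{in}$ for all $n$ says exactly that these multisets agree, so $\Psi_i\sim_{W_i}\Psi'_i$ for all $i<r$.

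\textbf{Matching the Cartan types of the last component.} For $i=r$, I would first show that $\Psi_r$ and $\Psi'_r$ have the same Cartan type. By hypothesis $w(\Psi)=\Psi'$ with $w\in W$, so $\Psi\cong\Psi'$ as root systems and their multisets of irreducible components agree. Subtracting the components already matched for $i<r$ (which agree by $a_{in}=a'_{in}$) leaves $\Psi_r\cong\Psi'_r$. If $\Phi_r$ is of type ${\sf A}$, the argument from the previous paragraph gives conjugacy.

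\textbf{The remaining case.} If $\Phi_r$ is not of type ${\sf A}$, we need to know when two standard parabolic subsystems of $\Phi_r$ with the same type fail to be $W_r$-conjugate. I would invoke the classical classification of $W$-conjugacy classes of parabolic subsystems: for an irreducible Weyl group, parabolic subsystems of the same type are $W$-conjugate except in the cases listed in (1)(a)--(h). This classification can be found in Howlett's work on normalisers of parabolic subgroups, or in Dynkin's tables, or checked directly with Bourbaki. In type ${\sf D}_{2l}$ the exceptions are the products of type-${\sf A}$ subsystems with all ranks odd, which come in two classes swapped by the diagram automorphism. Types ${\sf B}$, ${\sf C}$, ${\sf D}_{\rm odd}$, ${\sf E}_6$ and ${\sf E}_8$ have no exceptions. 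For ${\sf E}_7$, ${\sf F}_4$ and ${\sf G}_2$, the exceptions arise from the two root lengths (${\sf F}_4$ and ${\sf G}_2$) or from known splittings (${\sf E}_7$), and these give cases (b)--(h). Hence either $\Psi_r\sim_{W_r}\Psi'_r$, and then $\Psi$ and $\Psi'$ are $W_{\rm s}$-conjugate, or (1) holds.

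\textbf{Main obstacle.} The hardest step is the exceptional-type and ${\sf D}_{2l}$ classification. I would cite it from the literature rather than reprove it, and double-check the list against tables of $W$-classes of Levi subalgebras, for example via the count of standard Levis of each type modulo $W$.
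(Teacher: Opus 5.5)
Your outline follows the paper's proof step for step. You argue by contrapositive, handle the type-${\sf A}$ components $\Phi_i$ ($i<r$) by partitions, and use $w(\Psi)=\Psi'$ to match $\Psi_r$ with $\Psi'_r$ after cancelling the components with $i<r$. You then quote a classification of non-conjugate parabolic subsystems of equal type, where the paper quotes Bala--Carter, Proposition~6.3.

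\textbf{The gap.} The last step is a genuine gap, and the paper's own proof makes exactly the same deduction. Your matching step only gives $\Psi_r\cong\Psi'_r$ as abstract root systems, and for abstract types the classification you state is false.

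Take ${\sf D}_m$ with $m\ge 5$ and simple roots $\beta_i=\epsilon_i-\epsilon_{i+1}$ ($i<m$), $\beta_m=\epsilon_{m-1}+\epsilon_m$. The subsystems spanned by $\beta_1,\beta_2,\beta_3$ and by $\beta_{m-2},\beta_{m-1},\beta_m$ are both of type ${\sf A}_3$ (the second is ${\sf D}_3$). They are not $W({\sf D}_m)$-conjugate, because signed permutations preserve the number of coordinates involved, and these involve four and three coordinates respectively. So ${\sf D}_{\rm odd}$ does have exceptions, and (a) is incomplete for ${\sf D}_{2l}$ as well. Similarly, long and short ${\sf A}_1$'s in ${\sf B}_n$, ${\sf C}_n$ are not conjugate; that case is harmless here, since $W$ preserves root lengths and each $\Phi_i$ with $i<r$ has a single root length.

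\textbf{Refining ``type'' does not help.} The implication is actually false if only $w(\Psi)=\Psi'$ is used. Take $\Phi={\sf E}_6$ with Bourbaki labels and $\Pi_{\rm s}=\{\alpha_1,\dots,\alpha_5\}$, which is of type ${\sf D}_5$; so $r=1$ and (2) is vacuous. Let $\Pi_0=\{\alpha_1,\alpha_3,\alpha_4\}$ and $\Pi_0'=\{\alpha_2,\alpha_4,\alpha_5\}$.
\begin{itemize}
\item These are $W$-conjugate. For $J\subset L$, the element $w_0^Lw_0^J$ carries $J$ to $-w_0^L(J)$. Applying this with the ${\sf A}_4$'s $\{\alpha_1,\alpha_3,\alpha_4,\alpha_5\}$, $\{\alpha_3,\alpha_4,\alpha_5,\alpha_6\}$ and $\{\alpha_2,\alpha_4,\alpha_5,\alpha_6\}$ moves the index set $\{1,3,4\}$ to $\{3,4,5\}$, then to $\{4,5,6\}$, then to $\{2,4,5\}$.
\item Inside ${\sf D}_5$, the first is the ${\sf A}_3$ on $\beta_1,\beta_2,\beta_3$ and the second is the ${\sf D}_3$. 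So they are not $W_{\rm s}$-conjugate.
\item $({\sf A}_3,{\sf D}_5)$ is not on the list in (1).
\end{itemize}

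\textbf{What a correct proof needs.} Any correct proof must use the hypothesis from the set-up that neither you nor the paper invoke: that $(\g_0,\O_0)$ and $(\g_0',\O_0')$ are rigid induction data for the same $G^{x_{\rm s}}$-orbit. In the example this excludes the configuration, since the two zero orbits induce to the $\mathfrak{so}_{10}$-orbits with partitions $(3^2,2^2)$ and $(5,1^5)$ respectively. In general you must do one of two things. Either enlarge the list in (1) by these type-${\sf D}$ coincidences and check, as in the proof of Proposition~\ref{P:sheetstheorem}, that the induced orbits differ. Or build the induction-data hypothesis directly into the proof of the lemma.
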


\begin{proof}
	Suppose $a_{in}=a_{in}'$ for all $i<r$ and all $n\in\N$. Since $\Psi$ and $\Psi'$ are $W$-conjugate, this implies that in fact $a_{in}=a'_{in}$ for all $i\leq r$ and all $n\in\N$. 
	
	Let $i\in\{1,\ldots,r\}$. If $\Phi_i$ is of type ${\sf A}$ then this implies that $\Psi_i$ and $\Psi_i'$ are both parabolic root subsystems of $\Phi_i$ of the same type and thus that they are $W_i$-conjugate by \cite[Proposition 6.3]{BC}. There thus exists $w_i\in W_i$ with $w_i(\Psi_i)=\Psi'_i$. 
	
	On the other hand, suppose $\Phi_i$ is of type other than ${\sf A}$ (which by assumption can only happen at $i=r$). There are two cases to consider: (1) that both $\Psi$ and $\Psi'$ have indecomposable factors only of type {\sf A}, or (2) that both $\Psi$ and $\Psi'$ have exactly one indecomposable factor of type other than ${\sf A}$ (by our assumptions, these are $\Psi_{rs_r}$ and $\Psi'_{rt_r}$). Since $\Psi$ and $\Psi'$ are $W$-conjugate, the latter factors in both $\Psi$ and $\Psi'$ must have the same Cartan type. Thus, in both case (1) and case (2), the assumption that $a_{rn}=a'_{rn}$ for all $n\in\N$ implies that $\Psi_r$ and $\Psi_r'$ are both parabolic subsystems of $\Phi_r$ of the same Cartan type. By \cite[Proposition 6.3]{BC}, parabolic subsystems $\Psi_i$ and $\Psi'_i$ of the same Cartan type are $W_i$-conjugate unless the pairs $(\Psi_i,\Phi_i)$ and $(\Psi'_i,\Phi_i)$ occur on the list in the statement of the lemma. If $\Psi_i$ and $\Psi'_i$ are $W_i$-conjugate then there exists $w_i\in W_i$ such that $w_i(\Psi_i)=\Psi'_i$. In particular, setting $w_{\rm s}=(w_1,\ldots,w_r)\in W_1\times\cdots \times W_r\leq W_{\rm s}$ yields $w_{\rm s}(\Psi)=\Psi'$.
\end{proof}

We may now conclude the proof of Proposition~\ref{P:sheetstheorem}. Retain the above notation.

\begin{proof}
    Note that $\g^{x_{\rm s}}$ has a decomposition $\g^{x_{\rm s}}=\g_1\oplus\cdots\oplus\g_r\oplus \z$ as in \cite[\textsection 2.1]{PS}, where $\z$ is a central subalgebra and $\g_i$ is either a simple Lie algebra or a copy of $\gl_{kp}$ for some $k\in\N$. Furthermore, the corresponding root system of each $\g_i$ is precisely $\Phi_i$, and the adjoint action of $G^{x_{\rm s}}$ decomposes into actions of $G_1,\ldots,G_r$, the simple factors of $[G^{x_{\rm s}},G^{x_{\rm s}}]$.

If $\g_0$ and $\g_0'$ are not $G^{x_{\rm s}}$-conjugate then $\Psi$ and $\Psi'$ are not $W_{\rm s}$-conjugate and we are in the statement of the lemma. If (1) holds then $\O_0=0$ and $\O_0'=0$ and we may check from \cite[Corollary 7.3.4]{CM} and \cite[Tables 6--10]{DE} (which hold in positive characteristic by the commutativity of \eqref{e: LS indep p}) that $\Ind_{\g_0}^{\g^{x_{\rm s}}}(\O_0)\neq \Ind_{\g_0'}^{\g^{x_{\rm s}}}(\O_0')$, since the $\g_r$-components of these orbits do not coincide. On the other hand, if Lemma~\ref{L:rootsystemlemma}(2) holds and $i<r$ is such that $a_{in}\neq a'_{in}$ for some $n\in\N$ then $\Ind_{\g_0}^{\g^{x_{\rm s}}}(\O_0)\neq \Ind_{\g_0'}^{\g^{x_{\rm s}}}(\O_0')$ (since the $\g_i$-components of these orbits do not coincide, by the uniqueness up to conjugacy of rigid induction data in type ${\sf A}$). In both cases we thus have a contradiction.

This then proves that $\g_0$ and $\g_0'$ are $G^{x_{\rm s}}$-conjugate, so there exists $g_s\in G^{x_{\rm s}}$ such that $g_s\g_0'=\g_0$. What remains is therefore to show that we may choose $g_s$ such that $g_s\cdot \O_0'=\O_0$. This will follow if $g_sg\cdot \O_0=\O_0$ in $\g_0$. Letting $\O=\Ind_{\g^{x_{\rm s}}}^\g (G^{x_{\rm s}}\cdot x_{\rm n})$, we know that $(\g_0,\O_0)\in\RIndat(\O)$ and $(\g_0,g_sg\O_0)\in \RIndat(\O)$. The result will thus follow if the map $$\RIndat(\O)\to \{\mbox{Levi subalgebras of }\g\}, \qquad (\g_0,\O_0)\mapsto \g_0$$ is injective.

By the commutativity of \eqref{e: LS indep p}, it suffices to prove the injectivity of the corresponding map over $\C$. In this setting it follows easily from \cite[Remark 3.4]{GTW}.\end{proof}

\subsection{Extending the main theorem to arbitrary $p$-characters for standard reductive groups}
\label{ss:NilpotentReduction}

In this section, we explain that Theorem~\ref{T:main} generalises to standard reductive groups and arbitrary $p$-characters, with appropriate modifications. Before doing so, we articulate what we mean by parabolic induction in this paper. Let $\p$ be a parabolic subalgebra of $\g$ with Levi decomposition $\p=\g_0\oplus \r$, and let $\chi_0\in\g_0^*$. For $\chi\in\g^{*}$ with $\chi(\r)=0$ and $\chi\vert_{\g_0}=\chi_0$, we define the parabolic induction functor associated to $\p$ and $\g_0$ by
\begin{eqnarray}
	\label{e:pcharinductionfunctor}
	\begin{array}{rcl}
		U_{\chi_0}(\g_0)\lmod &\longrightarrow & U_{\chi}(\g)\lmod;\vspace{4pt}\\
		V &\longmapsto & U_{\chi}(\g) \otimes_{U_{\chi}(\p)} V,
	\end{array}
\end{eqnarray}
where we view $V$ as a $U_\chi(\p)$-module via the surjection $U_\chi(\p)\twoheadrightarrow U_{\chi_0}(\g_0)$. For $\chi\in\g^*$, we say that a $U_\chi(\g)$-module is {\it parabolically induced} if it lies in the essential image of a parabolic induction functor for some proper parabolic subalgebra $\p$ and some Levi decomposition $\p=\g_0\oplus \r$ with $\chi(\r)=0$.

We recall that we call an element $\chi \in \g^*$ rigid if the decomposition class of $\chi$ is dense in a sheet. Equivalently $\chi = \chi_{\rm s} + \chi_{\rm n}$ and $\chi_{\rm n}|_{\g^{\chi_{\rm s}}}$ is rigid nilpotent.

The following result explains how to generalise Theorem~\ref{T:main} to standard reductive groups and arbitrary $p$-characters.

\begin{Theorem}
\label{P:nilpotentreduction}
Suppose that Theorem~\ref{T:main} holds. Let $G$ be a standard reductive group and $\chi = \chi_{\rm s} + \chi_{\rm n}\in \g^*$ be the Jordan decomposition. Let $\g^{\chi_{\rm s}} = \z \oplus \bigoplus_i (\g^{\chi_{\rm s}})_i$ be the decomposition described in Remark~\ref{R:standardreductivedecomp} and suppose that for all $i$ the projection of $G^{\chi_{\rm s}} \cdot \chi_{\rm n}$ to $(\g^{\chi_{\rm s}})_i$ does not appear in Tables~\ref{Tab: Bad Induced Orbits} or \ref{Tab: Bad Good Orbits}. Then:
	\begin{enumerate}
		\setlength{\itemsep}{4pt}
		\item If $\chi$ lies in a unique sheet then every minimal $U_\chi(\g)$-module is parabolically induced from a minimal module for a reduced enveloping algebra with $p$-character.
		\item Every $G^\chi$-stable minimal $U_\chi(\g)$-module is parabolically induced from a minimal module for a reduced enveloping algebra with rigid $p$-character.
	\end{enumerate}
\end{Theorem}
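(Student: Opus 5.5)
Write $\l := \g^{\chi_{\rm s}}$, $L := G^{\chi_{\rm s}}$, a Levi subgroup, and reduce in two stages: first from $\g$ to $\l$ with the nilpotent $p$-character $\chi_{\rm n}|_\l$, then from $\l$ to its simple factors, where Theorem~\ref{T:main} applies.

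\textbf{Step 1: reduce from $\g$ to $\l$.} Choose a parabolic $\p = \l \oplus \r$ with $\chi(\r)=0$. This is possible: using $\kappa$, $\chi_{\rm s}$ corresponds to a semisimple element whose centraliser is $\l$, and any parabolic with Levi factor $\l$ works. By the well-known result of Kac--Weisfeiler and Friedlander--Parshall, parabolic induction
$$U_{\chi|_\l}(\l)\lmod \to U_\chi(\g)\lmod$$
is an equivalence of categories. It multiplies dimensions by $p^{\dim \r}$, and $\dim \r = \tfrac12(\dim G\cdot\chi - \dim L\cdot\chi|_\l)$, so it matches minimal modules on both sides.

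This equivalence commutes with twisting by $G^\chi$. Indeed $G^\chi \subseteq G^{\chi_{\rm s}} = L$ (Jordan decomposition is preserved), and $L$ normalises $\p$, so the fact from \S\ref{ss:twists} gives ${}^g(U_\chi(\g)\otimes_{U_\chi(\p)}V)\cong U_\chi(\g)\otimes_{U_\chi(\p)}{}^gV$. Moreover $G^\chi = L^{\chi|_\l}$. Hence $G^\chi$-stable minimal modules correspond to $L^{\chi|_\l}$-stable ones.

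The sheet condition also transfers. By Proposition~\ref{P:sheetstheorem}, the sheets of $\g$ through $\chi$ correspond to the sheets of $\l$ through $\chi_{\rm n}$. Translating by the central element $\chi_{\rm s}|_\l$ shows these are the sheets of $\l$ through $\chi|_\l$.

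Finally, parabolic induction is transitive, since $U(\g)\otimes_{U(\p)}U(\p)\otimes_{U(\p')}(-)$ composes. So an induced description in $\l$ gives one in $\g$. The rigid-target notion matches because $\chi_0$ rigid in a Levi $\g_0 \subseteq \l$ is an intrinsic condition.

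\textbf{Step 2: split $\l$ into factors.} Write $\l = \z\oplus\bigoplus_i \l_i$. Then $U_{\chi|_\l}(\l) \cong U_{\chi|_\z}(\z)\otimes\bigotimes_i U_{\chi_i}(\l_i)$. Here $U_{\chi|_\z}(\z)$ is a commutative restricted reduction with a unique simple module, which is one dimensional because $\chi|_\z$ is semisimple. Since $\chi_{\rm s}|_{[\l,\l]}=0$, each $\chi_i$ is nilpotent.

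Minimal modules for the tensor product are exactly the outer tensor products of minimal modules, since $d_\chi$ is additive over factors and simples of tensor products of these algebras are tensor products of simples. Sheets of $\l$ are products of $\z$ with sheets of the factors. So $\chi|_\l$ lies in a unique sheet iff each $\chi_i$ does.

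For stability, $L^{\chi|_\l}$ acts through a subgroup of $\prod_i L_i^{\chi_i}$, with $L$ factoring into commuting pieces modulo the centre, as in \cite[\textsection 2.1]{PS}. A $\prod_i L_i^{\chi_i}$-stable tensor product is stable factorwise. The converse direction is the delicate point: I would argue that an $L^{\chi|_\l}$-stable simple tensor product forces each factor to be stable under the image of $L^{\chi|_\l}$ in $\Aut(\l_i)$, and that this image contains the relevant component group of $L_i^{\chi_i}$. This uses the isogeny/separability remark (Remark~\ref{R:standardreductivedecomp}(3)) together with \eqref{e:componentgroupreductivecentraliser}.

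For factors $\gl_{kp}$, I would appeal to \cite{GTmin}, where every element lies in a unique sheet and modules are induced. Otherwise, Theorem~\ref{T:main} applied factorwise to $\l_i$ gives parabolics $\p_i\subseteq\l_i$ and rigid $\chi_{0,i}$. Taking their product with $\z$ yields a parabolic of $\l$ with Levi $\g_0 = \z\oplus\bigoplus\g_{0,i}$ and a rigid $p$-character whose induced module is the tensor product.

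\textbf{Main obstacle.} I expect the hardest step to be the matching of stabiliser component groups across the isogeny $L \to \prod_i L_i/Z$. Concretely, one must show that every element of $L_i^{\chi_i}$ modulo its identity component is realised by $G^\chi$. I would handle this by noting that $L_i$ is simply connected in the simple case and lifts into the derived subgroup $[L,L]$, whose image lies in $L$, and that the centre acts trivially on modules.
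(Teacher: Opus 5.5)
Your proof is correct in substance. The Friedlander--Parshall transfer to $\l=\g^{\chi_{\rm s}}$, the compatibility of twisting with induction, the sheet transfer via Proposition~\ref{P:sheetstheorem}, and transitivity of induction all match the paper. Where you differ is in how you handle the semisimple part and the stabilisers.

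\textbf{The paper's route.} It first splits a standard reductive $G$ into simple and $\GL_{kp}$ factors, comparing stability through the common image of $G$ and $G'=Z\times G_1\times\cdots\times G_r$ in $\Aut(\g)$. For simple $G$ it then tensors with a one dimensional $U_{-\chi_{\rm s}}(\g_0)$-module $E$, making the $p$-character the nilpotent $\chi_{\rm n}$. It applies the nilpotent Levi case and tensors back with $E^*$. This forces an extra argument that $E$ is $(G^{\chi_{\rm s}})^{\chi_{\rm n}}$-stable, obtained from the connected group $G^{\chi_{\rm s}}$ acting on the finite set $\Lambda_{\chi_{\rm s}}$.

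\textbf{Your route.} You never shift. You keep $\chi|_\l$, note that $\chi_{\rm s}$ vanishes on $[\l,\l]$ and hence on the simple summands, and let the central summand $\z$ carry it unchanged into the final Levi. Rigidity is unaffected by a central semisimple part. For stability you use that $[L,L]$ is simply connected, so each simple factor $L_i$ lies in $L$, giving $L_i^{\chi_i}\subseteq L^{\chi|_\l}=G^\chi$. Twisting by $L_i$ only touches the $i$th tensor factor. Only the implication ``$G^\chi$-stable $\Rightarrow$ each factor is $L_i^{\chi_i}$-stable'' is needed, so this already settles what you call the main obstacle. No matching of component groups across an isogeny is required. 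Your route therefore avoids both the twist by $E$ and the $\Lambda_{\chi_{\rm s}}$ argument.

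\textbf{Two slips.}
\begin{enumerate}
\item For toral $\z$, $U_{\chi|_\z}(\z)$ has $p^{\dim\z}$ one dimensional simple modules, not a unique one. Only one-dimensionality is used, so this is harmless.
\item A $\gl_{kp}$ summand of $\l$ is not contained in $[\l,\l]$, and its scalars lie in the centre of $\l$. So there $\chi_i$ can have a nonzero scalar semisimple part and need not be nilpotent. The paper invokes \cite[Corollary~1.2]{GTmin} only after reducing to a nilpotent $p$-character. For these summands you therefore need exactly the paper's device: tensor with a one dimensional module, which exists because that scalar part vanishes on $[\gl_{kp},\gl_{kp}]$. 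After that no stability issue arises, since \cite{GTmin} covers all minimal modules.
\end{enumerate}
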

\begin{proof}
	The proof is in five steps. Step 1 reduces to the case where $G$ is simple or $G=\GL_{pk}$ for some $k\in\N$. Steps 2 through 4 then reduce to the case of a nilpotent $p$-character when $G$ is a simple group. Step 5 concludes the argument for $G=\GL_{kp}$.
	

{\it Step 1: Reduction to the case $G$ simple or $G=\GL_{pk}$ for some $k\in\N$.}

Let $G$ be standard reductive. By Remark~\ref{R:standardreductivedecomp} there is a group $G' = Z \times G_1 \times \cdots \times G_r$ such that $\Lie(G')\cong \Lie(G)$, where $Z$ is a central torus and the factors $G_i$ are either isomorphic to $\GL_{pk}$ for some $k$, or are simply-connected, simple algebraic groups. Since the actions of both $G$ and $G'$ factor through $\Ad(G)$ it follows that a $\g$-module is $G^\chi$-stable if and only if it is $(G')^\chi$-stable.

Write $\chi = \chi_\z + \chi_1 +\cdots + \chi_r$ for the decomposition of $\chi$ along the decomposition $\g = \z \oplus \bigoplus_{i=1}^r \g_i$ described in Remark~\ref{R:standardreductivedecomp}. It is not hard to see the following:
\begin{enumerate}
    \item Every minimal $U_\chi(\g)$-module is parabolically induced from a rigid $p$-character if and only if every minimal $U_{\chi_i}(\g_i)$-module is parabolically induced from a rigid $p$-character for $i = 1,...,r$.
    \item $\chi$ lies in a unique sheet of $\g$ if and only if $\chi_i$ lies in a unique sheet of $\g_i^*$ for $i=1,...,r$ by Proposition~\ref{P:sheetstheorem}.
    \item Decomposing a minimal module $M_0 \in U_{\chi}(\g)\lmod$ as $M_1\otimes\cdots \otimes M_r$, where $M_i$ is a minimal $U_{\chi_i}(\g_i)$-module, we have that $M_0$ is $G_0^\chi$-stable if and only if $M_i$ is $G_i^{\chi_i}$-stable for $i=1,...,r$.
\end{enumerate}
This completes Step 1.

{\it Step 2: If the current theorem holds for nilpotent $p$-characters in simple groups then it holds for nilpotent $p$-characters for Levi subgroups of simple groups.}

The proof of this step is almost identical to Step 1 and so we omit the details. 

{\it Step 3: Reduction of Part (1) of the theorem to the nilpotent case.} 

Now let $G$ be a simple group, let $\chi \in \g^*$ be arbitrary, and suppose that conclusion (1) of the current theorem holds whenever $\chi$ is nilpotent. We will show that the same conclusion holds for arbitrary $\chi$. Thanks to Remark~\ref{R:standardreductivedecomp}(3) we can suppose that $G$ is simply-connected. Write $\chi = \chi_{\rm s} + \chi_{\rm n}$ for the Jordan decomposition, and note that $G_0 := G^{\chi_{\rm s}}$ is a Levi subgroup of $G$.

Let $M$ be a minimal $U_\chi(\g)$-module and $\p_0$ a parabolic subalgebra of $\g$ with Levi decomposition $\p_0=\g_0\oplus \u$ such that $\chi(\u)=0$. Since parabolic induction $U_\chi(\g_0)\lmod \to U_\chi(\g)\lmod$ is an equivalence of categories by \cite[Theorem~3.2]{FPmod}, there exists a $U_\chi(\g_0)$-module $M_0$ such that $U_\chi(\g)\otimes_{U_\chi(\p_0)}M_0\cong M$; by comparing dimensions, we see that $M_0$ is a minimal $U_\chi(\g_0)$-module. Let $E$ be a one dimensional $U_{-\chi_{\rm s}}(\g_0)$-module, which exists since $\chi_{\rm s}([\g_0, \g_0])=0$. Then $M_0\otimes E$ is a minimal $U_{\chi_{\rm n}}(\g_0)$-module and $\chi_{\rm n}\in \g_0^*$ is nilpotent.

If $\chi$ lies on a unique sheet of $\g^*$ then $\chi_{\rm n}$ lies on a unique sheet of $\g_0^{*}$, thanks to Proposition~\ref{P:sheetstheorem}. Now we can apply Step 2 of the proof to see that $M_0 \otimes E$ is parabolically induced from a rigid $p$-character. There thus exists a parabolic subalgebra $\p_1$ of $\g_0$ and a Levi factor $\g_1$ of $\p_1$ such that $\chi_{\rm n}\vert_{\g_1}\in\g_1^{*}$ is rigid nilpotent, and there exists a minimal $U_{\chi_{\rm n}}(\g_1)$-module $V$ such that $$M_0\otimes E \cong U_{\chi_{\rm n}}(\g_0)\otimes_{U_{\chi_{\rm n}}(\p_1)} V.$$ It is straightforward to check that this implies
\begin{eqnarray}
    \label{eq:parabolicinductiontwice}
M\cong U_\chi(\g)\otimes_{U_\chi(\p_0)}(U_{\chi}(\g_0)\otimes_{U_{\chi}(\p_1)} (V\otimes E^*))\cong U_\chi(\g)\otimes_{U_\chi(\p_1\oplus \u)} (V\otimes E^{*}).
\end{eqnarray}

Since $V\otimes E^{*}$ is a minimal $U_\chi(\g_1)$-module, and $\chi|_{\g_1}$ is rigid in the sense of the Introduction, we have shown that $M$ is parabolically induced from a minimal module for a rigid $p$-character.

{\it Step 4: Reduction of Part (2) of the theorem to the nilpotent case.}

Once again suppose that $G$ is simple and standard reductive. Suppose that $M \in U_\chi(\g)\lmod$ is a $G^{\chi}$-stable minimal module, and let $M_0$ and $E$ be defined as per Step 3. Then we claim that $M_0$ is $(G^{\chi_{\rm s}})^\chi$-stable and that $M_0\otimes E$ is  $(G^{\chi_{\rm s}})^{\chi_{\rm n}}$-stable.

The first of these claims follows from the last observation of Section~\ref{ss:twists}, setting $A=U(\g)$, $B=U(\p)$ and $V=M_0$, along with the fact that $G^\chi=(G^{\chi_{\rm s}})^\chi$. The second claim follows as $G^{\chi_{\rm s}}$ is a connected reductive group which acts regularly on the following finite set (see \cite[Theorem 3.14]{St} a proof of connectedness):  
	$$\Lambda_{\chi_{\rm s}}=\{\lambda:\g^{\chi_{\rm s}}/[\g^{\chi_{\rm s}},\g^{\chi_{\rm s}}]\to \k\mid \lambda(x)^p - \lambda(x^{[p]})=(-\chi_{\rm s}(x))^p\mbox{ for all }x \in \g^{\chi_{\rm s}}/[\g^{\chi_{\rm s}},\g^{\chi_{\rm s}}]\};$$ it thus acts trivially on $\Lambda_{\chi_{\rm s}}$ and hence twisting by $(G^{\chi_{\rm s}})^{\chi_{\rm n}}$ fixes the isoclass of $E$.

    Now we repeat the arguments which led to the isomorphism \eqref{eq:parabolicinductiontwice} to conclude that Theorem~\ref{T:main}(2) can be reduced to the nilpotent case. This concludes the proof.
    
{\it Step 5: The case $G=\GL_{kp}$ for $k\in\N$.}
The arguments in Steps 2, 3 and 4 apply equally in this case, and so we may reduce to $\chi$ nilpotent. The result then follows from \cite[Corollary 1.2]{GTmin}.\end{proof}

\begin{Remark}
\label{R: redtorep}
    When proving Theorem~\ref{T:main} it is sufficient to pick a representative for each orbit. If $\chi' = g\cdot \chi$ for some $g\in G$ then it is easy to show that twisting by $g$ induces a dimension preserving equivalence between $U_\chi(\g)\lmod$ and $U_{\chi'}(\g)\lmod$, which preserves the property of being parabolically induced, as well as sending $G^\chi$-stable modules to $G^{\chi'}$-stable modules.
\end{Remark}

\section{Modular finite $W$-algebras}
\label{S:parabolicsection}
We retain the notation and hypotheses of Section~\ref{S:groupsandLiealgebras}. On top of these, we fix the following:
\begin{itemize}
\setlength{\itemsep}{4pt}
\item $e\in \Nc(\g)$ is a fixed element and $\chi = \kappa(e, -) \in \g^*$.
\item $d_\chi = \frac{1}{2} \dim G\cdot \chi$.
\item $\gamma_e : \k^\times \to G$ is an associated cocharacter for $e$ and $\g = \bigoplus_{i\in \Z} \g(i)$ is the associated grading.
\item $C^e$ is the centraliser of $\gamma_e(\k^\times)$ in $G^e$.
\item $\Gamma \subseteq C^e$ is a finite subgroup such that $p$ does not divide $|\Gamma|$.
\end{itemize}

\subsection{Graded nilpotent Lie algebras and their algebraic groups} Before we begin to discuss finite $W$-algebras we demonstrate that a certain family of nilpotent Lie subalgebras of $\g$ are algebraic, which allows us to simplify the definition of the modular finite $W$-algebra given in \cite{GTmod}.

Let $\gamma : \k^\times \to G$ be a cocharacter, and let $\g = \bigoplus_{i\in \Z} \g(i)$ be the grading defined in \eqref{e:gradedspaces}. Fix $d > 1$ and let $\kk \subseteq \g(1-d)$ be a vector subspace. Write $\n_\kk = \kk + \bigoplus_{i \le -d} \g(i).$
\begin{Lemma}
\label{L:appendix}
There exists an algebraic group $N_\kk \subseteq G$ such that $\n_\kk = \Lie N_\kk$.
\end{Lemma}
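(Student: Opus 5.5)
Fix a maximal torus $T$ of $G$ containing $\gamma(\k^\times)$; this exists because $\gamma(\k^\times)$ is a torus. Let $\Phi$ be the root system of $G$ with respect to $T$. Each root subspace $\g_\alpha$ lies in $\g(\langle\alpha,\gamma\rangle)$, and $\Lie T\subseteq \g(0)$. So $\bigoplus_{i\le -d}\g(i)=\bigoplus_{\langle\alpha,\gamma\rangle\le -d}\g_\alpha$.

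The plan is to realise $\n_\kk$ as the Lie algebra of a closed subgroup of the unipotent group $U_{<0}$. Here $U_{<0}$ is generated by the root subgroups $U_\alpha$ with $\langle \alpha,\gamma\rangle<0$; it is the unipotent radical of the parabolic subgroup attached to $-\gamma$, and $\Lie U_{<0}=\bigoplus_{i<0}\g(i)$. Inside it, for each $m\ge1$, take the closed normal subgroup $U_{\le -m}$ generated by the $U_\alpha$ with $\langle\alpha,\gamma\rangle\le -m$. By the Chevalley commutator formula it is a closed connected subgroup, normalised by $T$ and by $U_{<0}$, with $\Lie U_{\le -m}=\bigoplus_{i\le -m}\g(i)$. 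For $m=d$ this gives an algebraic group with Lie algebra $\bigoplus_{i\le -d}\g(i)$.

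Next set $V:=U_{\le 1-d}/U_{\le -d}$. Since $d>1$, we have $2(1-d)\le -d$. Hence, by the commutator formula, $(U_\alpha,U_\beta)\subseteq U_{\le -d}$ whenever $\langle\alpha,\gamma\rangle=\langle\beta,\gamma\rangle=1-d$. So $V$ is a commutative unipotent group. Using the product decomposition $U_{\le 1-d}=\prod_{\langle\alpha,\gamma\rangle=1-d}U_\alpha\cdot U_{\le -d}$, with the product taken as varieties, and the isomorphisms $x_\alpha:\k\to U_\alpha$, we get an isomorphism of algebraic groups $\G_a^{\,n}\cong V$. Its differential identifies $\Lie V$ with $\g(1-d)$. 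So $V$ is a vector group and its linear structure is compatible with $\g(1-d)$. Let $K\subseteq V$ be the closed subgroup corresponding to the linear subspace $\kk\subseteq\g(1-d)$. This is a vector subgroup, isomorphic to $\G_a^{\dim\kk}$, with $\Lie K=\kk$. Define $N_\kk$ to be the preimage of $K$ in $U_{\le 1-d}$. It is a closed subgroup of $G$. Since $U_{\le 1-d}\to V$ is a quotient with smooth kernel $U_{\le -d}$ and the relevant groups are smooth, we have $\Lie N_\kk=\kk\oplus\Lie U_{\le -d}=\n_\kk$.

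The main obstacle is making the vector-group identification of $V$ precise in characteristic $p$. The worry is that the parametrisation $\prod_\alpha x_\alpha$ might only be additive modulo $U_{\le -d}$ up to $p$-power phenomena. A subgroup of $\G_a^n$ is in general cut out by $p$-polynomials, not linear equations, so I must check that the map really is a group homomorphism. It is: $x_\alpha(s)x_\alpha(t)=x_\alpha(s+t)$, and the cross commutators vanish in $V$. I will also check the Lie algebra computation of the preimage using smoothness, which holds because all groups involved are connected and smooth, being products of root groups. No restriction on $p$ beyond the standard hypotheses should be needed, though one must use the standard hypotheses to ensure that $\Lie U_\alpha=\g_\alpha$. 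That holds for any reductive $G$.
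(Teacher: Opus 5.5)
Your proof is correct and is essentially the paper's argument. The paper defines $N_\kk$ as the group generated by $N_0=U_{\le -d}$ and curves $v_i(t)=\prod_j u_{\alpha_j}(\lambda_{i,j}t)$ lifting a basis of $\kk$, uses the same two commutator facts (root groups of degree $1-d$ normalise $N_0$ and commute modulo $N_0$ since $2(1-d)\le -d$) plus a dimension count, and this yields the same subgroup as your preimage of the vector subgroup $K\subseteq U_{\le 1-d}/U_{\le -d}$; your quotient formulation simply makes the count $\dim N_\kk=\dim\kk+\dim U_{\le -d}$ and the equality $\Lie N_\kk=\n_\kk$ a little more transparent.
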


\begin{proof}
Let $T \subseteq G$ be a maximal torus containing $\lambda(\k^\times)$, let $\Phi$ be the associated root system, and let $\{e_\alpha \mid \alpha\in \Phi\} \subseteq \g$ the corresponding root vectors. Set $\Phi_i = \{\alpha\in \Phi\mid e_\alpha \in \g(i)\}$ so that $\Phi = \bigsqcup_{i\in \Z} \Phi_i$. Now for all $\alpha \in \bigsqcup_{i \le -1}\Phi_{i}$ there is a one parameter subgroup $u_\alpha : \k \to G$ such that $d_1 u_\alpha = e_\alpha$.

Suppose $\Phi_{1-d} = \{\alpha_1,...,\alpha_m\}$ and abbreviate $e_j := e_{\alpha_j}$ for $j=1,...,m$. Pick a basis $\{x_1,...,x_k\}$ for $\kk \subseteq \g(1-d)$ and write $x_i = \sum_{j=1}^m \lambda_{i,j} e_j$. Now we define curves $v_i : \k \to G$ for $i =1,...,k$ by
\begin{eqnarray}
v_i(t) = u_1(\lambda_{i,1}t) u_2(\lambda_{i,2}t) \cdots u_m(\lambda_{i,m} t).
\end{eqnarray}
We let $N_0$ denote the subgroup of $G$ generated by the root subgroups $u_\alpha(\k)$ with $\alpha\in \Phi_{\le -d}$.
We claim that the smooth algebraic group $N_\kk := \langle N_0 , v_i(\k)\mid i=1,...,k\rangle$ (see \cite[Proposition 2.51]{Mi}) is the desired subgroup of $G$ with Lie algebra $\n_\kk$. 

The following three facts can be verified using \cite[\textsection 3.2]{Bo} and the formulas in \cite[(2.2)]{GTmod}, along with the chain rule.
\begin{enumerate}
\setlength{\itemsep}{4pt}
\item $d_1 v_i = x_i$ for $i =1,...,k$;
\item each $\{u_\alpha(\k) \mid \alpha \in \Phi_{1-d}\}$ normalises $N_0$;
\item $[v_i(s), v_j(t)] \in N_0$ for $1\leq i,j\leq k$ and $s,t\in \k$.
\end{enumerate}

As a consequence of (1) we have $\n_\kk \subseteq \Lie(N_\kk)$ whilst (2) and (3) together imply that $\dim \n_\kk = \dim N_\kk=\dim\Lie(N_\kk)$. This concludes the proof.
\end{proof}

\subsection{Modular finite $W$-algebras} 
\label{ss:modularWalgebras}
Restricted versions of modular finite $W$-algebra were introduced by Premet in \cite{PrST}. In his subsequent works \cite{PrCQ,PrGold} he made use of the unrestricted versions along with their central reductions, over fields of large positive characteristics. Finally, the theory was mapped out under the standard hypotheses by two of the current authors \cite{GTmod}. Using Lemma~\ref{L:appendix} we are able to streamline the definition slightly, as we now explain.

The assignment $\Psi:(x,y) \mapsto \chi([x,y])$ defines a symplectic form on $\g(-1)$. We pick any isotropic subspace $\l\subseteq \g(-1)$ such that $\chi(\l^{[p]}) = 0$, and denote the annihilator of $\l$ with respect to this form by $\l^\perp$. Note that a Lagrangian choice of $\l$ is always possible, as noted in \cite[\textsection 2.6]{PrST}.

We define
\begin{eqnarray}
\m_\l = \l \oplus \bigoplus_{i<-1} \g(i) & \text{ and } & \n_\l = \l^\perp \oplus \bigoplus_{i<-1} \g(i).
\end{eqnarray}
As noted in \cite[(4.2)]{GTmod} the dimensions of these Lie algebras satisfy
\begin{eqnarray}
\label{e:dimensionmn}
\dim \m_\l + \dim \n_\l = \dim (G\cdot e) = 2d_\chi.
\end{eqnarray}

Thanks to Lemma~\ref{L:appendix}, there is a unipotent algebraic group $N_\l \subseteq G$ such that $\Lie(N_\l) = \n_\l$. Write $\m_{\l, \chi} = \{x - \chi(x) \mid x\in \m_\l\} \subseteq U(\g)$ and consider the {\it generalised Gelfand--Graev module} $Q_\l = U(\g)/U(\g)\m_{\l,\chi}$. The adjoint actions of both $N_\l$ and $\n_\l$ on $U(\g)$ preserve $U(\g)\m_{\l,\chi}$ (see \cite[Lemma 4.1]{GTmod}) and so descend to $Q_\l$. A short calculation shows that $Q_\l^{\n_\l}$ inherits an algebra structure from $U(\g)$ with $Q_\l^{N_\l}$ embedded as a subalgebra \cite[Lemma~4.2]{GTmod}. Furthermore there are natural inclusions of algebras
\begin{eqnarray}
Q_\l^{N_\l} \into Q_\l^{\n_\l} \into \End_\g(Q_\l)^\op.
\end{eqnarray}
To explain the second inclusion we note that, for $q + U(\g)\m_{\l,\chi} \in Q_\l$ and $u + U(\g)\m_{\l,\chi} \in Q_\l^{\n_\l}$, the following right action is well-defined
\begin{eqnarray}
\label{e:rightmodulestructure}
(q + U(\g)\m_{\l,\chi})\cdot (u + U(\g)\m_{\l,\chi}) = qu + U(\g)\m_{\l,\chi}.
\end{eqnarray}

The {\it finite $W$-algebra} and the {\it extended finite $W$-algebra} are the invariant subspaces
\begin{eqnarray}
U(\g,e) := Q_0^{N_0} & \text{ and } &  \hU(\g,e) := Q_0^{\n_0}.
\end{eqnarray}

In previous work \cite{PrCQ, GTmod} this notation was always used for the invariant algebras $Q_\l^{\n_\l}$ and $Q_\l^{N_\l}$ corresponding to a Lagrangian $\l \subseteq \g(-1)$. Thanks to \cite[Proposition~6.1]{GTmod} we have an isomorphism $Q_0^{N_0} \isoto Q_\l^{N_\l}$, and so the algebra $Q_\l^{N_\l}$ does not depend on $\l$. By contrast $Q_\l^{\n_\l}$ {\it does} depend on the choice of isotropic space, and the precise relationship between $Q_\l^{\n_\l}$ and $Q_0^{\n_0}$ is explained by the PBW theorem (Theorem~\ref{T:PBWtheorem}).

The advantage of studying $Q_0^{N_0}$ and $Q_0^{\n_0}$ is that these algebras carry natural actions of $C^e$ by automorphisms, and this additional structure is vital to the present work.

\subsection{Slices and sheets}
\label{ss:extendedslice}
We continue with a fixed isotropic space $\l \subseteq \g(-1)$. 
Let $\m_\l^\perp$ be the set of $\eta \in \g^*$ vanishing on $\m_\l$ and let $\v \subseteq \g$ be any graded complement to $[\g,e]$. Since $\k\Gamma$ is semisimple and $\Gamma \subseteq C^e$ we may (and shall) choose $\v$ to be $\Gamma$-stable. We set $\cv = \kappa(\v) \subseteq \g^*$.

Define the {\it good transverse slice to $G\cdot \chi$} to be $\chi + \cv$. This good slice admits a contracting $\k^\times$-action given by
\begin{eqnarray}\label{e: contracting action}
\mu(t)\cdot \eta := t^2 \gamma_e(t^{-1}) \cdot \eta
\end{eqnarray}
for $t\in\k^\times$, with unique fixed point $\chi$. One important feature is that the dimension of the $G$-orbit of $\mu(t) \cdot \eta$ does not depend on $t \in \k^\times$, and so $\dim (G\cdot \eta) \ge \dim (G\cdot \chi)$ for all $\eta \in \chi + \cv$. It follows that $\chi + \cv$ is transversal to every orbit which it intersects.

\begin{Lemma}
\label{L:extendedslice} 
Let $\D$ be a decomposition class of $\g^{*}$ such that $\chi\in\overline{\D}_\reg$, write $(\g_0,\O_0)\in\Indat(\g)$ for a corresponding induction datum, and set $\z^* := \kappa(\z(\g_0)) \subseteq \g^*$. The following hold:
\begin{enumerate}
\setlength{\itemsep}{4pt}
\item[(i)] The coadjoint action map restricts to an isomorphism
$N_\l \times (\chi + \cv) \isoto \chi + \m_\l^\perp.$

\item[(ii)] $\dim \big((\chi + \cv) \cap \overline{\D}_\reg\big) = \dim \z^*$.

\item[(iii)] Suppose that there exists a parabolic subalgebra $\p$ of $\g$ containing $\g_0$ as a Levi factor such that $e\in\O_0+\Rad(\p)$. Then $\chi + \z^* \subseteq \overline{\D}_\reg$.
\end{enumerate}
\end{Lemma}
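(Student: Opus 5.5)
Part (i) is the modular analogue of the Gan--Ginzburg slice isomorphism, and I would follow \cite[Lemma~3.2]{PrST} and \cite[\textsection 4]{GTmod}. The map $\alpha: N_\l\times(\chi+\cv)\to \chi+\m_\l^\perp$, $(n,\eta)\mapsto n\cdot\eta$, is well defined. Indeed, $N_\l$ is generated by root subgroups of negative degree, by the proof of Lemma~\ref{L:appendix}. Moreover $[\n_\l,\m_\l]\subseteq \m_\l$, and $\chi$ vanishes on $[\n_\l,\m_\l]$ because $\l^\perp$ is the $\chi([\cdot,\cdot])$-annihilator of $\l$ and $\chi$ kills $\g(i)$ for $i\neq -2$. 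It follows that $N_\l$ preserves $\chi+\m_\l^\perp$. Also $\cv\subseteq\m_\l^\perp$: since $\v$ is graded and complementary to $[\g,e]\supseteq\bigoplus_{i\ge -1}\g(i)\cap[\g,e]$, good grading theory gives $\v\subseteq\bigoplus_{i\le 0}\g(i)$, hence $\kappa(\v)$ vanishes on $\bigoplus_{i<0}\g(i)\supseteq\m_\l$.

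Next I would equip both sides with contracting $\k^\times$-actions compatible with $\alpha$. On $N_\l$ act by conjugation by $\gamma_e(t^{-1})$, on $\chi+\cv$ by $\mu$, and on $\chi+\m_\l^\perp$ by $\mu$. Both sides are affine spaces with positive weights and unique fixed points. The differential of $\alpha$ at $(1,\chi)$ is $(x,\eta)\mapsto x\cdot\chi+\eta$ on $\n_\l\oplus\cv$. It is injective because $\n_\l\cap\g^e$ lies in negative degrees, where $\g^e$ vanishes. Dimensions agree: by \eqref{e:dimensionmn}, $\dim\n_\l+\dim\cv=\dim\n_\l+\dim\g^e=\dim\g-\dim\m_\l$. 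Hence the differential is an isomorphism. A $\k^\times$-equivariant morphism between affine spaces with positive weights that is étale at the fixed point is an isomorphism (\cite[Lemma~2.1]{PrST}/\cite{GG}), which gives (i). I expect this step to be the main technical point, since one must check that the weights are strictly positive in characteristic $p$. Smoothness of $N_\l$, which follows from Lemma~\ref{L:appendix}, is what is needed here.

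For (ii), transversality gives $\dim((\chi+\cv)\cap\overline\D_\reg)=\dim\overline\D_\reg-\dim G\cdot\chi$ near $\chi$. The contracting action $\mu$ preserves $\overline\D_\reg$, because it combines scaling with the $G$-action, and every point contracts to $\chi$. So every component of the intersection passes through $\chi$, and the local dimension count is global. By Lemma~\ref{T:sheetstheorem}(i),(ii), $\dim\overline\D_\reg=\dim\D=\dim G\cdot\chi+\dim\z^*$, giving (ii). To obtain the local transversality count rigorously, I would use (i): $G\cdot((\chi+\cv)\cap X)$ is dense in $X$ near $\chi$, and $G\times(\chi+\cv)\to\g^*$ is smooth at $(1,\chi)$ with fibre dimension $\dim\g^\chi$.

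For (iii), take $z\in\z(\g_0)_\reg$, so that $\kappa(z)+\chi$ has semisimple part conjugate to $\kappa(z)$ with centraliser $\g_0$. Write $e=e_0+r$ with $e_0\in\O_0$ and $r\in\Rad(\p)$. The element $z+e$ lies in $z+e_0+\Rad(\p)$. Since $\ad z$ is invertible on $\Rad(\p)$ and commutes with $e_0$, a standard argument via the unipotent radical $R$ shows that $z+e_0+\Rad(\p)=R\cdot(z+e_0)$. Hence $z+e\in G\cdot(z+e_0)\subseteq\D$. By Lemma~\ref{T:sheetstheorem}(i), $\chi$ itself lies in $\overline\D_\reg$. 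Applying $\kappa$, and using that $\z(\g_0)_\reg$ is dense in $\z(\g_0)$ while orbit dimensions on the line $\chi+\kappa(\z(\g_0))$ are bounded below by $\dim G\cdot\chi$ (which equals the dimension on $\D$ by \eqref{e:LScodimension}), we conclude that $\chi+\z^*\subseteq\overline\D_\reg$.
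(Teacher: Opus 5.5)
Your arguments for (i) and (ii) are fine. For (i) you reconstruct the standard contracting-action argument that the paper simply cites from \cite[Lemma~5.1]{GTmod}; (ii) is the paper's argument with the transversality count written out. In (iii), your $R$-orbit argument is correct: it shows $\chi+\kappa(\z(\g_0)_\reg)\subseteq\D$, hence $\chi+\z^*\subseteq\overline{\D}$. It is a self-contained substitute for the paper's appeal to \cite[Theorem~2.3]{PS}.

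The gap is the other half of (iii). You assert that orbit dimensions along $\chi+\z^*$ are bounded below by $\dim G\cdot\chi$, but this is the non-trivial step and nothing you have set up implies it. Semicontinuity only gives the bound near $\chi$. Under only the hypotheses you use, the bound is false:
\begin{itemize}
\item Take $\g=\sl_3$ with $p>3$, $\g_0=\t$ the diagonal matrices, $\O_0=0$, and $\p$ the upper triangular Borel subalgebra.
\item Let $e=E_{12}+E_{23}-\tfrac13E_{13}$, which is regular nilpotent and lies in $\Rad(\p)$.
\item For $z=\diag(1,-2,1)\in\z(\g_0)$ one checks $(e+z-I)(e+z+2I)=0$. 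So $e+z$ is semisimple with $4$-dimensional centraliser.
\item Hence $\chi+\kappa(z)$ is not in $\overline{\D}_\reg$, which here is the set of regular elements.
\end{itemize}
What goes wrong is that no associated cocharacter of this $e$ lies in $G_0=T$.

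The bound does hold once $\gamma_e(\k^\times)\subseteq G_0$. This can be arranged by Corollary~\ref{Cor: gamma in G0} after replacing $G_0$ by a $P$-conjugate, and it is in force wherever the paper applies (iii). The paper's appeal to (i) tacitly uses it. Under this hypothesis, \eqref{e: z perp -1} gives $\chi+\z^*\subseteq\chi+\m_\l^\perp$. So by (i) every $\eta\in\chi+\z^*$ is $N_\l$-conjugate to a point of $\chi+\cv$, where the contracting action $\mu$ forces $\dim G\cdot\eta\ge\dim G\cdot\chi$; this is the paper's route. More directly, $\mu(t)\cdot(\chi+\zeta)=\chi+t^2\zeta$ for $\zeta\in\z^*$. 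Hence the closed, $\mu$-stable subset of $\chi+\z^*$ on which the orbit dimension drops would contain $\chi$ if it were non-empty. You need to add this hypothesis together with one of these arguments; without them, your proof of (iii) asserts something false.
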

\begin{proof}
Part (i) is \cite[Lemma~5.1]{GTmod}. Part (ii) follows from Lemma~\ref{T:sheetstheorem}(ii) along with the fact that $\chi + \cv$ is transversal to $G\cdot \chi$ at $\chi$.

We now prove part (iii). The assumption $e\in \O_0+ \Rad(\p)$ implies that $e+ \z(\g_0)\subseteq e_0+ \Rad(\p) + \z(\g_0)\subseteq \overline{\D(\g_0,e_0)}$ for some $e_0\in\O_0$, using the description of $\overline{\D(\g_0,e_0)}$ from \cite[Theorem~2.3]{PS}. Applying $\kappa$ gives $\chi + \z^* \subseteq \overline{\D}$. Using part (i) and the transversality of $\chi + \cv$ again, it follows that the dimension of the $G$-orbit of $\eta \in \chi + \z^*$ cannot be strictly less that $\dim G\cdot \chi$. We conclude that $\chi + \z^* \subseteq \overline{\D}_\reg$ which completes the proof.
\end{proof}

\subsection{The $p$-centre and the PBW theorem for finite $W$-algebras}
\label{ss:pcentreandPBWforfiniteW}
The {\it $p$-centre $\hZ_p(\g,e)$ of the extended finite $W$-algebra} $\hU(\g,e)$ is the image of the natural map $Z_p(\g) \to \hU(\g,e)$. Applying the proof of \cite[Lemma~7.6]{GTmod} verbatim, we see that the kernel of this map is generated by $\{x^p - x^{[p]} - \chi(x)^p \mid x\in \m_0\}$, and so the identification $Z_p(\g) = \k[(\g^*)^{(1)}]$ descends to an identification $\hZ_p(\g,e) = \k[(\chi + \m_0^\perp)^{(1)}]$. 

Both $N_0$ and $C^e$ act on $\hU(\g,e)$ by automorphisms preserving $\hZ_p(\g,e)$, and the identification $\hZ_p(\g,e) = \k[(\chi + \m_0^\perp)^{(1)}]$ is $N_0$-equivariant and $C^e$-equivariant. Furthermore, $C^e$ normalises $N_0$ and thus also acts on $U(\g,e)=\hU(\g,e)^{N_0}$ by automorphisms. The {\it $p$-centre $Z_p(\g,e)$ of the finite $W$-algebra} $U(\g,e)$ is the invariant subalgebra $Z_p(\g,e) = \hZ_p(\g,e)^{N_0}$ and by Lemma~\ref{L:extendedslice}(i) we identify $Z_p(\g,e) = \k[(\chi + \m_0^\perp)^{(1)}]^{N_0}$ with $\k[(\chi + \cv)^{(1)}]$; this identification is also $C^e$-equivariant. Observe furthermore that both $\hU(\g,e)$ and $U(\g,e)$ are finite over their $p$-centres.

If $V$ is a $\hU(\g,e)$-module annihilated by the maximal ideal of $\hZ_p(\g,e)$ corresponding to $\eta \in \chi + \m_0^\perp$ then we say {\it $V$ has $p$-character $\eta$}, and use similar terminology for $U(\g,e)$-modules. For an arbitrary isotropic space $\l \subseteq \g(-1)$, the $p$-centres $Z_p(Q_\l^{N_\l})$ and $Z_p(Q_\l^{\n_\l})$ are defined analogously to $Z_p(\g,e)$ and $\hZ_p(\g,e)$, and we have identifications $Z_p(Q_\l^{N_\l}) = \k[(\chi + \m_\l^\perp)^{(1)}]^{N_\l} = \k[(\chi + \cv)^{(1)}]$ and $Z_p(Q_\l^{\n_\l}) = \k[(\chi + \m_\l^\perp)^{(1)}]$.

The Kazhdan filtration $U(\g) = \bigcup_{i\in \Z} \Fb_iU(\g)$ is defined by placing $\g(i)$ in degree $i+2$. The associated graded algebra $\gr U(\g)$ is isomorphic to $S(\g) = \k[\g^*]$ (see \cite[\textsection 5.2]{GTmod} for example). 
The filtration induced on $Q_\l$ is concentrated in non-negative degrees, with $\Fb_0 Q_\l = \k$, and $\gr Q_\l$ identifies with $\k[\chi + \m_\l^\perp]$. This leads to Kazhdan filtrations on both $U(\g,e)$ and $\hU(\g,e)$ with $C^e$ acting by filtered automorphisms.

The following is the PBW theorem for the (extended) finite $W$-algebra with an arbitrary choice of isotropic space.
\begin{Theorem}
\label{T:PBWtheorem}
Let $\l \subseteq \g(-1)$ be isotropic. 
\begin{enumerate}
\setlength{\itemsep}{4pt}
\item[(i)] There are natural isomorphisms
\begin{eqnarray}
\label{e:PBW}
& & \gr (Q_\l^{N_\l}) \isoto \k[\chi + \m_\l^\perp]^{N_\l} \isoto \k[\chi + \cv];\\ \label{e:extendedPBW}
& & \gr (Q_\l^{\n_\l}) \isoto \k[\chi + \m_\l^\perp]^{\n_\l}\isoto \k[\chi + \m_\l^\perp]^{N_\l} \otimes_{\k[(\chi+\m_\l^\perp)^{(1)}]^{N_\l}} \k[(\chi+\m_\l^\perp)^{(1)}].
\end{eqnarray}
\item[(ii)] $Q_\l^{\n_\l} \cong Q_\l^{N_\l} \otimes_{Z_p(Q_\l^{N_\l})} Z_p(Q_\l^{\n_\l})$.
\item[(iii)] The map $Q_0 \onto Q_\l$ induces
\begin{eqnarray}
\label{e:fwainvariantofl}
& & U(\g,e) \isoto  Q_\l^{N_\l};\\
\label{e:extendedfwadependsonl}
& & \hU(\g,e) \longtwoheadrightarrow Q_\l^{\n_\l}.
\end{eqnarray}
In particular, $Q_\l^{N_\l}$ is independent of $\l$ up to isomorphism.
\item[(iv)] There is a $\Gamma$-equivariant embedding $\Theta : \g^e \into U(\g,e)$ such that $\Theta(\g^e)$ generates $U(\g,e)$.
\end{enumerate}
\end{Theorem}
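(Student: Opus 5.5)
The plan is to deduce every part from the corresponding result of \cite{GTmod}, which treats the Lagrangian case. The only genuinely new ingredient needed is the following: the Kazhdan-graded statements over $\chi+\m_\l^\perp$ hold for any isotropic $\l$ with $\chi(\l^{[p]})=0$. This is available because Lemma~\ref{L:extendedslice}(i) gives the isomorphism $N_\l\times(\chi+\cv)\isoto\chi+\m_\l^\perp$ for arbitrary isotropic $\l$, and Lemma~\ref{L:appendix} guarantees that the group $N_\l$ exists.

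\emph{Part (i).} I would first prove the graded statement \eqref{e:PBW}.
\begin{itemize}
\item Under Lemma~\ref{L:extendedslice}(i), the algebra $\k[\chi+\m_\l^\perp]$ identifies with $\k[N_\l]\otimes\k[\chi+\cv]$, and the $N_\l$-action is by left translation on the first factor. Taking invariants gives $\k[\chi+\m_\l^\perp]^{N_\l}\cong\k[\chi+\cv]$.
\item The map $\gr(Q_\l^{N_\l})\to(\gr Q_\l)^{N_\l}$ is injective. To show it is surjective, I would lift generators. For this I follow \cite[\S5--6]{GTmod}: Kazhdan-homogeneous generators of $\k[\chi+\cv]$ are lifted to $N_\l$-invariants by induction on the Kazhdan degree. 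This works because $\Fb_0Q_\l=\k$ and the filtration pieces are finite dimensional and $N_\l$-stable.
\item For the extended version \eqref{e:extendedPBW}, Lie algebra invariants of the unipotent group $N_\l$ in characteristic $p$ are described by $\k[\chi+\m_\l^\perp]^{\n_\l}=\k[\chi+\m_\l^\perp]^{N_\l}\cdot\k[(\chi+\m_\l^\perp)^{(1)}]$. Here I use that $\k[N_\l]^{\n_\l}=\k[N_\l]^p$, since $N_\l$ is smooth unipotent, so $\k[N_\l]$ is a polynomial ring. Flatness over the $p$-th powers then turns this product into the stated tensor product.
\end{itemize}

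\emph{Part (ii).} This follows from (i) by a filtered argument. Multiplication defines a map $Q_\l^{N_\l}\otimes_{Z_p(Q_\l^{N_\l})}Z_p(Q_\l^{\n_\l})\to Q_\l^{\n_\l}$. Its associated graded map is the isomorphism \eqref{e:extendedPBW}, so the map itself is an isomorphism. Here I need the $p$-centres to be compatibly filtered, with associated graded algebras equal to the $p$-th power subalgebras.

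\emph{Part (iii).} Take $\l\supseteq 0$. Then $\m_0\subseteq\m_\l$, so there is a surjection $Q_0\onto Q_\l$, and it carries invariants to invariants because $N_\l\subseteq N_0$ and $\n_\l\subseteq\n_0$.
\begin{itemize}
\item For \eqref{e:fwainvariantofl}, both sides have associated graded algebra $\k[\chi+\cv]$ by (i), and the induced graded map is compatible with restriction along $\chi+\m_\l^\perp\subseteq\chi+\m_0^\perp$. This restriction is an isomorphism on $N$-invariants by Lemma~\ref{L:extendedslice}(i). The filtered map is therefore an isomorphism.
\item For \eqref{e:extendedfwadependsonl}, combine this with (ii): $\hU(\g,e)$ and $Q_\l^{\n_\l}$ are base changes of isomorphic algebras along the surjection $\k[(\chi+\m_0^\perp)^{(1)}]\onto\k[(\chi+\m_\l^\perp)^{(1)}]$, which gives surjectivity.
\end{itemize}

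\emph{Part (iv).} I would define $\Theta$ on a graded, $\Gamma$-stable basis of $\g^e\cong\cv^*$. The element $\Theta(x)$ is a lift, in the appropriate Kazhdan degree, of the corresponding linear coordinate function on $\chi+\cv$. To make $\Theta$ $\Gamma$-equivariant, I average any lift over $\Gamma$; this is possible because $p\nmid|\Gamma|$ and $\Gamma$ preserves the Kazhdan filtration. Generation then follows from (i), because these coordinates generate $\k[\chi+\cv]$.

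The main obstacle is the surjectivity in (i): every $N_\l$-invariant symbol must lift to an invariant of the quantum algebra $Q_\l$, for a possibly non-Lagrangian $\l$. For this I would redo the argument of \cite{GTmod}, which uses a cohomology-vanishing statement for $N_\l$ acting on $\k[N_\l]\otimes V$, which is injective as an $N_\l$-module.
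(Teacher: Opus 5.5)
Your proposal is correct and follows essentially the same route as the paper: the hard lifting step in \eqref{e:PBW} is deferred to the argument of \cite{GTmod} (the paper likewise just cites \cite[Theorem~5.2]{GTmod} together with Lemma~\ref{L:extendedslice}(i)), and part (ii) comes from lifting $\k[\chi+\m_\l^\perp]^{N_\l}$ and $\k[(\chi+\m_\l^\perp)^{(1)}]$ to the filtered subalgebras $Q_\l^{N_\l}$ and $Z_p(Q_\l^{\n_\l})$, while your averaging of a lift in (iv) is the same Maschke-type splitting of $\Fb_{i-1}U(\g,e)\into\Fb_iU(\g,e)\onto S(\g^e)_i$. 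The only differences are that you check the description of $\k[\chi+\m_\l^\perp]^{\n_\l}$ directly from $\k[\chi+\m_\l^\perp]\cong\k[N_\l]\otimes\k[\chi+\cv]$ and $\k[N_\l]^{\n_\l}=\k[N_\l]^p$, and prove \eqref{e:fwainvariantofl} by comparing associated graded algebras via restriction to $\chi+\cv$, where the paper instead cites \cite[\S5.4]{GTmod}, \cite[Lemma~2.1]{GTrest} and \cite[\S6]{GTmod}; both substitutes are sound.
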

\begin{proof}
The isomorphism \eqref{e:PBW} is \cite[Theorem~5.2]{GTmod} and Lemma~\ref{L:extendedslice}(i).

The first isomorphism in \eqref{e:extendedPBW} follows from the more precise claim that the inclusion $\gr (Q_\l^{\n_\l}) \subseteq (\gr Q_\l)^{\n_\l}$ is an equality. The second isomorphism in \eqref{e:extendedPBW} follows from the claim that $\k[\chi+\m_\l]^{\n_\l}$ is generated by the subalgebras $\k[\chi + \m_\l^\perp]^{N_\l}$ and $\k[(\chi+\m_\l^\perp)^{(1)}] = \k[\chi+\m_\l^\perp]^p$, and that the natural map $\k[\chi + \m_\l^\perp]^{N_\l} \otimes_{\k[(\chi+\m_\l^\perp)^{(1)}]^{N_\l}} \k[(\chi+\m_\l^\perp)^{(1)}] \to \k[\chi + \m_\l^\perp]^{\n_\l}$ is an isomorphism. Both of these claims follow from \cite[\textsection 5.4]{GTmod} (see also \cite[Lemma~2.1]{GTrest}).

To prove (ii) from (i) we only need to show that both $\k[\chi + \m_\l^\perp]^{N_\l}$ and $\k[(\chi + \m_\l^\perp)^{(1)}]$ can be lifted to filtered subalgebras of $Q_\l^{\n_\l}$. Thanks to \eqref{e:PBW} these algebras are lifted by $Q_\l^{N_\l}$ and $Z_p(Q_\l^{\n_\l})$ respectively.

The isomorphism \eqref{e:fwainvariantofl} is explained in \cite[\textsection 6]{GTmod}, whilst  the surjection \eqref{e:extendedfwadependsonl} follows directly from (ii) along with the identification $Z_p(Q_\l^{\n_\l}) = \k[(\chi + \m_\l^\perp)^{(1)}]$.

It remains to prove (iv). Recall that $\chi + \cv$ is $\Gamma$-stable by construction. The identification $\gr(Q_0^{N_0}) \cong \k[\chi + \cv]$ is $\Gamma$-equivariant and the invariant bilinear form $\kappa$ allows us to identify $\k[\chi + \cv] = S(\g^e)$. In summary, $\gr U(\g,e) \cong S(\g^e)$ as algebras with $\Gamma$-action. Writing $S(\g^e)_i$ for the $i$th graded piece with respect to the Kazhdan filtration, we have for each $i \ge 0$ an exact sequence $\Fb_{i-1} U(\g,e) \into  \Fb_i U(\g,e) \onto S(\g^e)_i$ of $\Gamma$-modules. Using the semisimplicity of $\k\Gamma$ once more we see that these sequences all split. It follows that the embedding $\g^e \into S(\g^e)$ may be lifted to an embedding $\g^e \into U(\g,e)$ of $\Gamma$-modules such that the image generates $U(\g,e)$. This completes the proof of (iv).
\end{proof}

\subsection{The Poisson structure on the slice}
\label{ss:PoissonStructureandRank}
Some of our arguments rely on some elementary Poisson geometry and we gather together the necessary details here. If $X$ is any smooth affine Poisson variety then the Hamiltonian vector fields give rise to a space of tangent vectors at each $x\in X$. The {\it rank} of $X$ at $x$ is defined to be the dimension of this space. 

The identification $\k[(\chi + \m_0^\perp)/\!/N_0] = \k[\chi + \cv]$ from Lemma~\ref{L:extendedslice}(i) allows us to equip $\k[\chi + \cv]$ with a Poisson bracket by Poisson reduction, as follows. The preimage of $\k[\chi + \m_0^\perp]^{N_0}$ under $\k[\g^*] \to \k[\chi + \m_0^\perp]$ is contained in the Poisson idealiser of the kernel. Therefore if we write $J_0 = (\m_{0,\chi}) \subseteq \k[\g^*]$ for the defining ideal of $\k[\chi + \m_0^\perp]$ then for $f_1 + J_0, f_2 + J_0 \in \k[\chi+ \m_0^\perp]^{N_0}$ the bracket
$$\{f_1 + J_0, f_2 + J_0\} = \{f_1, f_2\} + J_0 \in \k[\chi + \m_0^\perp]^{N_0}.$$
is well-defined. Furthermore \eqref{e:PBW} implies that $U(\g,e)$ is a filtered quantization of this Poisson structure.

Let $f \in \k[\g^*]$ be in the preimage of $\k[\chi + \m_0^\perp]^{N_0}$ and $\eta \in \chi + \m_0^\perp$, so that $N_0\cdot \eta \in (\chi + \m_0^\perp)/\!/N_0$. Then the corresponding Hamiltonian tangent vector at $N_0\cdot \eta$ lies in $T_{N_0\cdot \eta}\big(G\cdot \eta \cap (\chi + \m_0^\perp)/\!/N_0\big)$. Using Lemma~\ref{L:extendedslice}(i) and the transversality of $\chi + \cv$ it follows that
\begin{eqnarray}
\label{e:rankequation}
\rank_\eta (\chi + \cv) \le \dim G \cdot \eta - \dim G\cdot \chi.
\end{eqnarray}
In fact, one can prove equality but we wont need this in what follows. 

\subsection{Reduced finite $W$-algebras}
\label{ss:reducedfiniteW}

We identify $\Spec \hZ_p(\g,e)$ with $(\chi + \m_0^\perp)^{(1)}$. On the level of sets of closed points we can also identify $(\chi + \m_0^\perp)^{(1)} = \chi + \m_0^\perp$, and, since this will cause no confusion, we allow ourselves to do this henceforth.
If $X \subseteq \chi + \m_0^\perp$ is a closed subscheme with defining ideal $I \subseteq \hZ_p(\g,e)$, then we write $\hU_X(\g,e)$ for the $p$-central reduction $\hU(\g,e) / I\hU(\g,e)$. If $X = \{\eta\}$ is a point we simply write $\hU_\eta(\g,e)$ for this algebra and call it a {\it reduced finite $W$-algebra with $p$-character $\eta$}. 

We use similar notation $U_X(\g,e)$ for a $p$-central reduction of $U(\g,e)$ whenever $X \subseteq \chi + \m_0^\perp/\!/ N_0 = \Spec \k[\chi + \m_0^\perp]^{N_0}$ is a closed subvariety. If $\eta \in \chi + \m_0^\perp$ then, writing $\pi : \chi + \m_0^\perp\to \chi + \m_0^\perp/\!/ N_0$ for the projection, Theorem~\ref{T:PBWtheorem}(ii) states that the map $U(\g,e) \to \hU(\g,e)$ induces an isomorphism
\begin{eqnarray}
\label{e:relatingreductions}
U_{\pi(\eta)}(\g,e) \isoto \hU_\eta(\g,e).
\end{eqnarray}
The significance of these algebras comes from {\it Premet's equivalence}, which states that for every $\eta \in \chi + \m_0^\perp$ we have
\begin{eqnarray}
\label{e:Premetsequivalence}
U_\eta(\g) \isoto \Mat_{p^{d_\chi}} \hU_\eta(\g,e).
\end{eqnarray}
\begin{Remark}
Although Premet's equivalence was originally proven for $\eta \in \chi + \m_\l^\perp$ where $\l$ is Lagrangian \cite[\textsection 2]{PrST}, this slightly more general version can be deduced by observing that for any $\eta\in \chi + \m_0^\perp$ there exists some Lagrangian $\l$ with $\eta \in \chi + \m_\l^\perp$.
\end{Remark}

\subsection{Abelian quotients of $W$-algebras}
\label{ss:abelianquotients}
Recall that $\Gamma\subseteq C^e$ is a finite subgroup such that $p$ does not divide $|\Gamma|$. The maximal abelian quotient $\hU(\g,e)^\ab$ of $\hU(\g,e)$ is the quotient algebra obtained by factoring out the ideal $([u,v] \mid u,v\in \hU(\g,e))$ (cf. Section~\ref{ss:basicsonabquotients}). The action of $C^e$ on $\hU(\g,e)$ descends to an action on $\hU(\g,e)^\ab$ and we consider the coinvariant algebra $\hU(\g,e)^\ab_\Gamma$ obtained from $\hU(\g,e)^\ab$ by factoring out the ideal $(u - \gamma \cdot u \mid \gamma\in \Gamma, u \in \hU(\g,e)^\ab)$.
Similarly we consider the abelian quotient $U(\g,e)^\ab$ and the coinvariant algebra $U(\g,e)^\ab_\Gamma$. If $X\subseteq \chi + \m_0^\perp$ and $Y \subseteq (\chi + \m_0^\perp)/\!/N_0$ are closed subvarieties then we use the notation
\begin{eqnarray*}
\hE_X(\g,e) := \Spec \hU_X(\g,e)^\ab & \text{ and } & \E_Y(\g,e) := \Spec U_Y(\g,e)^\ab
\end{eqnarray*}
and make the abbreviations $\hE(\g,e) = \hE_{\chi + \m_0^\perp}(\g,e)$ and $\E(\g,e) = \E_{\chi + \m_0^\perp/\!/N_0}(\g,e)$. As we observed in Section~\ref{ss:basicsonabquotients} the points of $\hE_X(\g,e)$ parametrise the one dimensional $\hU(\g,e)$-modules admitting a $p$-character in $X$, and similar for $\E_Y(\g,e)$. By the same token, the points of $\Spec \hU(\g,e)^\ab_\Gamma$ and $\Spec U(\g,e)^\ab_\Gamma$ parametrise the $\Gamma$-invariant one dimensional $\hU(\g,e)$-modules and $U(\g,e)$-modules, respectively. As reduced schemes we have equality
\begin{eqnarray*}
\hE(\g,e)^\Gamma = \Spec \hU(\g,e)^\ab_\Gamma & \text{ and } & \E(\g,e)^\Gamma = \Spec U(\g,e)^\ab_\Gamma.
\end{eqnarray*}

\begin{Proposition}
\label{L:generators}
Let $\c \subseteq \g^e$ be a $\Gamma$-stable complement to $[\g^e, \g^e]$.
\begin{enumerate}
\setlength{\itemsep}{4pt}
\item[(i)] There is a $\Gamma$-equivariant surjective homomorphism $S(\c)  \onto U(\g,e)^\ab$;
\item[(ii)] There is a surjective homomorphism $S(\c^\Gamma) \twoheadrightarrow U(\g,e)^\ab_\Gamma$.
\end{enumerate}
\end{Proposition}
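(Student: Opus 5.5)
The plan is to use the $\Gamma$-equivariant embedding $\Theta:\g^e\into U(\g,e)$ from Theorem~\ref{T:PBWtheorem}(iv), whose image generates $U(\g,e)$. Composing with the quotient map gives a $\Gamma$-equivariant linear map $\bar\Theta:\g^e \to U(\g,e)^\ab$ whose image generates $U(\g,e)^\ab$. Since $U(\g,e)^\ab$ is commutative, this extends to a $\Gamma$-equivariant surjective algebra homomorphism $S(\g^e)\onto U(\g,e)^\ab$. The remaining task for (i) is to show that this homomorphism factors through $S(\c)$. Concretely, we want the image of $[\g^e,\g^e]$ in $U(\g,e)^\ab$ to lie in the subalgebra generated by $\bar\Theta(\c)$. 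Then $\bar\Theta(\c)$ alone generates, and restricting to $S(\c)\subseteq S(\g^e)$ gives the desired surjection. It is equivariant because $\c$ is $\Gamma$-stable.

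To prove the containment, we would argue by induction on the Kazhdan filtration, with $\g^e$ graded by the associated grading so that $\g^e(i)$ sits in Kazhdan degree $i+2$. The key input is the structure of the commutator in $U(\g,e)$: for $x,y\in\g^e$ homogeneous of degrees $i,j$,
\[ [\Theta(x),\Theta(y)] = \Theta([x,y]) + (\text{polynomial in }\Theta(z),\ z \text{ of lower Kazhdan degree}), \]
with total filtration degree at most $i+j+2$. This holds because $\gr U(\g,e)\cong \k[\chi+\cv]\cong S(\g^e)$ carries the Poisson bracket from Section~\ref{ss:PoissonStructureandRank}. The Lie bracket of $\g^e$ appears as the linear part of the bracket of linear functions in the Slodowy slice Poisson structure, and higher-order corrections have lower degree (the analogue of Premet's result \cite{PrCQ}). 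In the abelian quotient the left side vanishes. Hence $\bar\Theta([x,y])$ is a polynomial in $\bar\Theta$ of elements of strictly smaller Kazhdan degree, and it therefore lies in the subalgebra generated by the lower-degree part. An induction on degree, using $\g^e=\c\oplus[\g^e,\g^e]$ compatibly with the grading, shows that every $\bar\Theta(z)$ lies in the subalgebra generated by $\bar\Theta(\c)$. For this we choose $\c$ graded; this is possible because $\gamma_e(\k^\times)$ commutes with $\Gamma$, so a $\Gamma$-stable graded complement exists by semisimplicity of $\k\Gamma$. A statement for one complement transfers to any $\Gamma$-stable complement by a further induction on degree, or we reduce to the graded case.

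For (ii), we compose (i) with $U(\g,e)^\ab\onto U(\g,e)^\ab_\Gamma$, which is a surjection $S(\c)\onto U(\g,e)^\ab_\Gamma$ factoring through $S(\c)_\Gamma$. Since $p\nmid|\Gamma|$, we have $\c=\c^\Gamma\oplus \c'$ with $\c'$ spanned by elements $v-\gamma v$. Each such element maps to zero in the coinvariants, so $S(\c)_\Gamma$ is a quotient of $S(\c^\Gamma)$, and this gives the required surjection.

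The main obstacle is the commutator formula in the second paragraph. Specifically, we need the top-degree part of $[\Theta(x),\Theta(y)]$ to be exactly $\Theta([x,y])$ modulo terms that are nonlinear or of lower degree. I would first try to deduce this from the Poisson bracket on $\k[\chi+\cv]$ computed at the level of $\gr$: the linear part of $\{x,y\}$ equals $[x,y]$ because the bracket is Lie--Poisson restricted along the slice. One caveat is that the degree $i+j+2$ component may contain nonlinear terms in generators of lower degree. The induction tolerates these, since only lower-degree generators appear.
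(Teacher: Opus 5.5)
Your proposal is correct and follows the paper's proof. The paper likewise maps $S(\g^e)\onto U(\g,e)^\ab$ via $\Theta$, and obtains surjectivity from $S(\c)$ by the argument of \cite[Lemma~16]{PT1} combined with the commutator relations $[\Theta(x),\Theta(y)]=\Theta([x,y])+(\text{lower terms})$ of \cite[Remark~7.5]{GTmod}; that is exactly your induction on Kazhdan degree. It proves (ii) by the same splitting $\c=\c^\Gamma\oplus\c_{-1}$.

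One correction to your aside on complements. For a non-graded $\Gamma$-stable complement, the natural map $S(\c)\to U(\g,e)^\ab$ can fail to be surjective. Take $e$ subregular in $\sl_3$, with $\g^e(0)=\k h$ and $\c=\k(h+e)$. Then $U(\g,e)^\ab=\k[\bar h]$, where $\bar h$ is the image of $\Theta(h)$, but the image of $h+e$ is a quadratic polynomial in $\bar h$. So no ``further induction on degree'' can work here. Instead, transport the graded case along a $\Gamma$-isomorphism $\c\cong\g^e/[\g^e,\g^e]\cong\c_{\rm gr}$. This suffices because the statement only asserts that some surjection exists.
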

\begin{proof}
By Theorem~\ref{T:PBWtheorem}(iv), we see that $U(\g,e)^\ab$ is a commutative algebra with an action of $\Gamma$ by automorphisms, generated by a $\Gamma$-submodule $\Theta(\g^e) \cong \g^e$. By the universal property of $S(\g^e)$ we have $\Gamma$-equivariant mapping $S(\g^e) \onto U(\g,e)^\ab$. We claim that the composition $S(\c) \into S(\g^e) \onto U(\g,e)^\ab$ is surjective. To see this we note that the ordered monomials in $\Theta(\c)$ together with $([u,v] \mid u,v\in U(\g,e))$ span $U(\g,e)$. This last claim can be proven using the same argument as \cite[Lemma~16]{PT1}, in view of  \cite[Remark~7.5]{GTmod}.

Observe that $\c_{-1} := \{c - \gamma\cdot c \mid c\in \c, \gamma\in \Gamma\}$ is a $\Gamma$-submodule of $\c$ which contains nonzero elements of every submodule which carries non-trivial $\Gamma$-action. Since $p\nmid \left\vert\Gamma\right\vert$, we can conclude that $\c_{-1}$ is a $\Gamma$-stable complement to $\c^\Gamma$ in $\c$, contained in $(f - \gamma\cdot f \mid \gamma \in \Gamma, f \in S(\c))$. Taking $\Gamma$-coinvariants is a right exact functor and so the map $S(\c) \onto U(\g,e)^\ab$ from (i) descends to a map $S(\c^\Gamma) \cong S(\c)/(\c_{-1}) \onto U(\g,e)^\ab_\Gamma$ as required.
\end{proof}

We proceed to provide lower bounds on the Krull dimensions of $U(\g,e)^\ab$ and $U(\g,e)^\ab_\Gamma$ in terms of sheets, similar to \cite[Theorem~1.2]{PrCQ}. Let $\S_1,...,\S_t$ be the sheets of $\g^*$ containing $\chi$, and make the notation $\{(\g_i, \O_i) \mid i=1,...,t\}$ for the corresponding ($G$-orbits of) rigid induction data (cf. Lemma~\ref{T:sheetstheorem}). Consider the varieties
\begin{eqnarray}
\label{e:Katsylovar}
\chi + Y := (\chi + \m_0^\perp) \cap \bigcup_{i=1}^t \S_i &\text{ and }& \chi + X = (\chi + \cv) \cap \bigcup_{i=1}^t \S_i.
\end{eqnarray}
Let $\pi : \chi + \m_0^\perp \to (\chi + \m_0^\perp)/\!/N_0$ be the quotient map. Using Lemma~\ref{L:extendedslice}(i) we have
\begin{eqnarray}
\label{e:XandY}
\pi(\chi + Y) = \chi + X.
\end{eqnarray}
\begin{Lemma}
\label{L:contrupsocleputtputt}
    The following hold:
    \begin{enumerate}
        \item $\chi + Y = \{\eta \in \chi + \m_0^\perp \mid \dim  G\cdot \eta \le \dim G\cdot \chi \}$.
        \item $\chi + Y$ is a closed subvariety of $\chi + \m_0^\perp$.
        \item $\chi + X$ is a closed subvariety of $\chi + \cv$.
    \end{enumerate}
\end{Lemma}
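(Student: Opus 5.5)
The plan is to prove (1) first; (2) and (3) will then follow from it, using that $\chi+Y$ is closed, Lemma~\ref{L:extendedslice}(i) and \eqref{e:XandY}.

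\emph{Part (1).} Write $m=\dim G\cdot\chi$.

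For the inclusion $\subseteq$: every point of a sheet $\S_i$ lies in $\overline{\D}_\reg$ for the dense class $\D$ of $\S_i$. Hence its orbit has dimension exactly equal to the common orbit dimension of $\S_i$. Since $\chi\in\S_i$, that common dimension is $m$, so every $\eta\in\chi+Y$ satisfies $\dim G\cdot\eta=m\le m$.

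For the inclusion $\supseteq$: let $\eta\in\chi+\m_0^\perp$ with $\dim G\cdot\eta\le m$.
\begin{itemize}
\item By Lemma~\ref{L:extendedslice}(i), $\eta=n\cdot\eta'$ with $n\in N_0$ and $\eta'\in\chi+\cv$.
\item The transversality discussion shows $\dim G\cdot\eta'\ge m$. Since $G\cdot\eta'=G\cdot\eta$, we get $\dim G\cdot\eta=m$, i.e. $\eta\in(\g^*)_{[m]}$.
\item It remains to show $\eta$ lies in a sheet containing $\chi$. Use the contracting action $\mu$ of \eqref{e: contracting action}. The curve $t\mapsto\mu(t)\cdot\eta'$ lies in $\chi+\cv$ and in $(\g^*)_{[m]}$, because orbit dimension is constant along it, since $\mu(t)$ is a scalar times a group element. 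Its closure contains $\chi$ as $t\to 0$.
\item The curve $\{\mu(t)\cdot\eta'\}\cup\{\chi\}$ is irreducible and contained in $(\g^*)_{[m]}$, so it lies in some irreducible component of $(\g^*)_{[m]}$, i.e. in a sheet $\S$.
\item That sheet $\S$ contains $\chi$, so $\S=\S_i$ for some $i$. It also contains $\eta'$, and, being $G$-stable, contains $\eta$. Hence $\eta\in\chi+Y$.
\end{itemize}

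\emph{Part (2).} The set $\{\eta\in\g^*:\dim G\cdot\eta\le m\}$ is closed, by lower semicontinuity of $\dim\g^\eta$ (rank conditions on $\eta([x_i,x_j])$). Intersecting with the closed set $\chi+\m_0^\perp$ and applying (1) gives that $\chi+Y$ is closed.

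\emph{Part (3).} By the same argument as in (1), restricted to points of $\chi+\cv$, we get
$$\chi+X=\{\eta\in\chi+\cv : \dim G\cdot\eta\le m\},$$
which is closed for the same reason as in (2). Alternatively, $\chi+X=(\chi+Y)\cap(\chi+\cv)$ directly from \eqref{e:Katsylovar}, an intersection of closed sets.

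The main obstacle is the sheet-membership step in (1). One must make sure the irreducible curve through $\chi$ and $\eta'$ genuinely stays inside $(\g^*)_{[m]}$, including the limit point $\chi$. This uses that $\mu(t)$ acts by a scalar times $\gamma_e(t^{-1})$, so orbit dimensions are preserved for $t\neq 0$, and that $\dim G\cdot\chi=m$.
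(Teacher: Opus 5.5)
Your proof is correct. Parts (2) and (3) match the paper. Your second argument for (3), that $\chi+X=(\chi+Y)\cap(\chi+\cv)$ because $\chi+\cv\subseteq\chi+\m_0^\perp$, is what the paper means by deducing (3) from (2) and Lemma~\ref{L:extendedslice}(i).

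For the inclusion $\supseteq$ in (1) you take a different route. The paper argues through decomposition classes. The class of $\eta$ meets $\chi+\m_0^\perp$, so by the general fact quoted from \cite[Proposition~7.4]{ToMT} its closure contains $\chi$. The classification of sheets in Lemma~\ref{T:sheetstheorem}(iii) then places $\eta$ and $\chi$ in a common sheet; the equality of orbit dimensions is left implicit there.

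You instead move $\eta$ into the slice via Lemma~\ref{L:extendedslice}(i) and use the contracting action \eqref{e: contracting action} directly. Since $\v\subseteq\bigoplus_{i\le 0}\g(i)$, $\mu$ acts on $\cv$ with weights $\ge 2$. So $t\mapsto\mu(t)\cdot\eta'$ extends to a morphism $\mathbb{A}^1\to\g^*$ with $0\mapsto\chi$. Its image is an irreducible subset of $(\g^*)_{[m]}$ through $\eta'$ and $\chi$, hence lies in a single sheet.

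Your route is more elementary and self-contained. It needs only two ingredients: that sheets are the $G$-stable irreducible components of $(\g^*)_{[m]}$, and the transversality remark of Section~\ref{ss:extendedslice}. It avoids both the classification of sheets and the external reference, and it makes $\dim G\cdot\eta=m$ explicit. The paper's route packages the same contraction phenomenon as a statement about decomposition classes, which is the language used elsewhere (Lemma~\ref{L:extendedslice}(ii),(iii), Corollary~\ref{cor: image of char map}).

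One terminological slip. Closedness of $\{\eta : \dim G\cdot\eta\le m\}$ is \emph{upper} semicontinuity of $\eta\mapsto\dim\g^\eta$, equivalently lower semicontinuity of orbit dimension. Your rank-condition argument is the right one, via $\dim G\cdot\eta=\dim\g-\dim\g^\eta$ from Remark~\ref{R:standardreductivedecomp}(1).
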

\begin{proof}
    Certainly the orbits $G\cdot \eta$ with $\eta \in \chi + Y$ have dimension $\le \dim G\cdot \chi$. In view of Lemma~\ref{T:sheetstheorem}(iii) the reverse inclusion follows from the general assertion: a decomposition class intersects $\chi + \m_0^\perp$ if and only if its closure contains $\chi$ (see \cite[Proposition~7.4]{ToMT}, for example). This proves (1).

    The collection of $\eta \in \g^*$ such that $\dim G\cdot \eta \le \dim G\cdot \chi$ is closed in $\g^*$, and so its intersection with $\chi + \m_0^\perp$ is closed in $\chi + \m_0^\perp$, whence (2). Now (3) follows from (2) and Lemma~\ref{L:extendedslice}(i).
\end{proof}

\begin{Proposition} (Cf. \cite[\textsection 3]{PrCQ})
\label{L:dimEge}

The homomorphism $Z_p(\g,e) \to U(\g,e)^\ab$ induces a finite, surjective, $\Gamma$-equivariant morphism $\E(\g,e) \to \chi + X$ hence
$$\dim \E(\g,e) = \dim(\chi + X) = \max_{1\le i\le t} \dim \z(\g_i).$$
\end{Proposition}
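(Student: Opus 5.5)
The plan follows Premet's argument in \cite[\textsection 3]{PrCQ}, adapted to the present setting. There are three steps.

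\emph{Step 1: finiteness.} Since $U(\g,e)$ is a finitely generated module over $Z_p(\g,e)=\k[(\chi+\cv)^{(1)}]$, the quotient $U(\g,e)^\ab$ is also finite over the image of $Z_p(\g,e)$. Writing $K=\Ker(Z_p(\g,e)\to U(\g,e)^\ab)$, the induced morphism $\E(\g,e)\to \Spec(Z_p(\g,e)/K)\subseteq \chi+\cv$ is therefore finite. Its set-theoretic image is identified by Lemma~\ref{L:basiclemma}(iii): it is the set of $\eta\in\chi+\cv$ for which $U_\eta(\g,e)$ admits a one dimensional module. The map is $\Gamma$-equivariant because both $Z_p(\g,e)\to U(\g,e)$ and the identification $Z_p(\g,e)=\k[(\chi+\cv)^{(1)}]$ are $C^e$-equivariant, and the surjection onto the abelian quotient commutes with automorphisms.

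\emph{Step 2: the image is $\chi+X$.} Combining \eqref{e:relatingreductions} with Premet's equivalence \eqref{e:Premetsequivalence}, for $\eta\in\chi+\m_0^\perp$ we have $U_{\pi(\eta)}(\g,e)\cong\hU_\eta(\g,e)$ and $U_\eta(\g)\cong\Mat_{p^{d_\chi}}\hU_\eta(\g,e)$. Hence $U_{\pi(\eta)}(\g,e)$ has a one dimensional module if and only if $U_\eta(\g)$ has a module of dimension $p^{d_\chi}$. By Lemma~\ref{L:Humphreyscorollary} this happens exactly when $\dim G\cdot\eta\le 2d_\chi=\dim G\cdot\chi$. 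By Lemma~\ref{L:contrupsocleputtputt}(1), that condition is equivalent to $\eta\in\chi+Y$. Applying \eqref{e:XandY}, and noting that every point of $\chi+\cv$ is of the form $\pi(\eta)$ by Lemma~\ref{L:extendedslice}(i), the image of $\E(\g,e)\to\chi+\cv$ is exactly $\pi(\chi+Y)=\chi+X$, which is closed by Lemma~\ref{L:contrupsocleputtputt}(3). A finite surjective morphism preserves dimension, so $\dim\E(\g,e)=\dim(\chi+X)$.

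\emph{Step 3: computing the dimension.} We have $\chi+X=\bigcup_i (\chi+\cv)\cap\S_i$, and each $\S_i=\overline{\D_i}_\reg$ with $\D_i=\D(\g_i,\O_i)$ and $\chi\in\S_i$. Lemma~\ref{L:extendedslice}(ii) gives $\dim\big((\chi+\cv)\cap\S_i\big)=\dim\z(\g_i)^*=\dim\z(\g_i)$. Taking the maximum over $i$ yields the formula. This is the step I expect to be the main obstacle. If Lemma~\ref{L:extendedslice}(ii) turns out to be insufficient, for instance because the intersection has smaller components or is not equidimensional, I would fall back on transversality. The contracting action \eqref{e: contracting action} preserves $\S_i$ and $\chi+\cv$, so every irreducible component of the intersection passes through $\chi$. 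Transversality of $\chi+\cv$ to $G\cdot\chi$ together with Lemma~\ref{T:sheetstheorem}(ii) gives the codimension count $\dim\S_i-\dim G\cdot\chi=\dim\z(\g_i)$ at $\chi$. Lemma~\ref{L:extendedslice}(i) lets us transfer this count along the isomorphism $N_0\times(\chi+\cv)\cong\chi+\m_0^\perp$. I would take the statement of Lemma~\ref{L:extendedslice}(ii) as given rather than reprove it.
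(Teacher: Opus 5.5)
Your proposal is correct and follows the paper's proof essentially step for step. You establish finiteness of $\E(\g,e)\to\Spec Z_p(\g,e)$ with the image described by Lemma~\ref{L:basiclemma}, identify that image as $\pi(\chi+Y)=\chi+X$ via Premet's equivalence, Lemma~\ref{L:Humphreyscorollary} and Lemma~\ref{L:contrupsocleputtputt}(1), and then read off the dimension from Lemma~\ref{L:extendedslice}(ii). The fallback transversality argument in Step 3 is not needed, since the paper simply invokes Lemma~\ref{L:extendedslice}(ii) exactly as you do.
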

\begin{proof}
Write $\phi : Z_p(\g,e) \to U(\g,e)^\ab$ for the $\Gamma$-equivariant homomorphism. Since finite morphisms of algebraic varieties are closed, the image of the map $\phi^* : \E(\g,e) \to \Spec Z_p(\g,e) = (\chi + \m_0^\perp)^{(1)}/\!/N_0$ is a closed subvariety, and the set $\im \phi^*$ consists of those maximal ideals of $Z_p(\g,e)$ for which the corresponding quotient admits a one dimensional representation (cf. Lemma~\ref{L:basiclemma}).

Thanks to \eqref{e:Premetsequivalence}, $\pi^{-1} (\im \phi^*)$ consists of those $\eta\in \chi + \m_0^\perp$ such that $U_\eta(\g)$ admits a module of dimension $p^{d_\chi}$. Applying Lemma~\ref{L:Humphreyscorollary} and Lemma~\ref{L:contrupsocleputtputt}(1) we deduce $\pi^{-1}(\im \phi^*) = \chi + Y$.

Combining the remarks of the previous paragraph with \eqref{e:XandY} we have $\im \phi^* = \chi + X.$ Since $\phi^*$ is finite we have $\dim \E(\g,e) = \dim (\chi + X)$. It follows from Lemma~\ref{L:extendedslice}(ii) that $\dim (\chi + X) = \max_i \dim \z(\g_i)$, which completes the proof of the current proposition.
\end{proof}

Let $A$ be an affine Poisson algebra. If we choose a system of generators $a_1,...,a_m$ of $A$ and form the matrix $\pi_A = (\{a_i, a_j\})_{1\le i,j\le m}$ then the rank of $\Spec(A)$ at a point $x$ coincides with the rank of $\pi_A$ modulo the maximal ideal of $x$ (see \cite[1.2.3]{LPV} for example). Write $A^\ab$ for the {\it maximal Poisson abelian quotient} which is obtained by factoring out the ideal $(\{f_1, f_2\} \mid f_1, f_2\in A)$ generated by Poisson brackets. By the Leibniz rule, this ideal is generated by the entries of $\pi_A$. It follows that $\Spec (A^\ab)$ can be characterised as the points of $\Spec(A)$ where the rank is zero, in the sense of Section~\ref{ss:PoissonStructureandRank}.

\begin{Proposition}
\label{P:semiclassical}
There is an embedding of algebraic varieties $(\chi + X)^\Gamma \into (\c^\Gamma)^*$.
\end{Proposition}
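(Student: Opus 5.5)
The plan is a semiclassical version of Proposition~\ref{L:generators}, with the Poisson algebra $A := \k[\chi+\cv]$ in the role of $U(\g,e)$. Theorem~\ref{T:PBWtheorem} and the proof of part (iv) identify $A \cong S(\g^e)$ $\Gamma$-equivariantly, using $\kappa$. Under this identification the Kazhdan grading is positive, because $\g^e \subseteq \bigoplus_{i\ge 0}\g(i)$ sits in Kazhdan degrees $\ge 2$. Moreover, the Poisson bracket of two elements of $\g^e$ has symbol whose linear part is the Lie bracket $[\g^e,\g^e]$, modulo higher order terms in the Kazhdan filtration. This is the standard fact that the Slodowy slice Poisson structure is a deformation of the Lie--Poisson structure on $(\g^e)^*$; it is proved just as in \cite[Lemma~16]{PT1} and \cite[Remark~7.5]{GTmod}.

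First I will show that the ordered monomials in (a $\Gamma$-equivariant lift of) $\c$, together with the Poisson ideal $(\{f_1,f_2\})$, span $A$. The argument is by induction on Kazhdan degree. Each $x\in[\g^e,\g^e]$ is written as a sum of brackets, and the Poisson brackets of the corresponding generators agree with $x$ modulo terms of strictly lower degree which are polynomials in generators. Hence $S(\c) \onto A^\ab$ is surjective and $\Gamma$-equivariant, where $A^\ab$ is the maximal Poisson abelian quotient. Taking coinvariants exactly as in Proposition~\ref{L:generators}(ii) gives $S(\c^\Gamma) \onto A^\ab_\Gamma$, so $\Spec(A^\ab_\Gamma) \into (\c^\Gamma)^*$ is a closed embedding.

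Next I will show that $(\chi+X)^\Gamma \subseteq \Spec(A^\ab_\Gamma)$ set-theoretically. For $\eta\in\chi+X$ we have $\dim G\cdot\eta = \dim G\cdot\chi$, since $\eta$ lies in a sheet through $\chi$ and orbit dimension is constant on sheets. The inequality \eqref{e:rankequation} then gives $\rank_\eta(\chi+\cv)=0$. By the characterisation preceding the proposition, the rank-zero locus is exactly $\Spec(A^\ab)$, so $\chi+X\subseteq \Spec(A^\ab)$. The $\Gamma$-fixed closed points of $\Spec A^\ab$ are the points of $\Spec A^\ab_\Gamma$; this uses $p\nmid|\Gamma|$ and follows as in Lemma~\ref{L:basiclemma}(ii), since a point is $\Gamma$-fixed iff it kills $a-\gamma a$. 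Composing with the closed embedding gives $(\chi+X)^\Gamma \into (\c^\Gamma)^*$. I regard $(\chi+X)^\Gamma$ as a reduced closed subvariety of $\chi+\cv$ (Lemma~\ref{L:contrupsocleputtputt}(3)), so this is an embedding of varieties.

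The main obstacle is the first step. It requires checking that the Poisson bracket on $\k[\chi+\cv]$ has leading term the Lie bracket of $\g^e$ with respect to the Kazhdan grading, and that a $\Gamma$-equivariant choice of generators lifting $\g^e$ exists. I would obtain the latter by splitting the graded pieces using semisimplicity of $\k\Gamma$, as in the proof of Theorem~\ref{T:PBWtheorem}(iv). The former is the graded analogue of the noncommutative statement already quoted in Proposition~\ref{L:generators}, so I would deduce it by passing to $\gr$ of that argument.
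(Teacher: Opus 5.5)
Your proposal is correct and follows essentially the same route as the paper: the rank bound \eqref{e:rankequation} puts $\chi+X$ inside the zero-rank locus $\Spec(\k[\chi+\cv]^\ab)$, the Poisson analogue of Proposition~\ref{L:generators}(i) gives a $\Gamma$-equivariant surjection $S(\c)\onto\k[\chi+\cv]^\ab$, and taking $\Gamma$-coinvariants finishes the argument (you just present these steps in the opposite order and with more detail on the generator step). Two small precisions. First, in the graded Poisson setting the correction terms in the bracket of two generators have the same Kazhdan degree as the linear term, but they are products of at least two generators of strictly smaller degree; this is what your induction actually uses. Second, $p\nmid|\Gamma|$ is needed only for $S(\c)_\Gamma\cong S(\c^\Gamma)$, not for identifying the $\Gamma$-fixed points of $\Spec(\k[\chi+\cv]^\ab)$ with the points of the coinvariant algebra.
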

\begin{proof}
Combining \eqref{e:rankequation} with our remarks prior to Proposition~\ref{L:dimEge}, we see that the rank of the Poisson structure at a point $\eta \in \chi + \cv$ is less than or equal to $r(\eta):= \dim G\cdot \eta - \dim G\cdot \chi$. By Lemma~\ref{L:contrupsocleputtputt} we see that $\chi + X$ consists of points such that $r(\eta) = 0$. Therefore $\chi + X \subseteq \Spec (\k[\chi+\cv]^\ab)$. By Lemma~\ref{L:contrupsocleputtputt}(3) this is a closed embedding of affine varieties, and it follows that $\k[\chi + \cv]^\ab \onto \k[\chi + X]$.

Applying an argument analogous to part (i) of Proposition~\ref{L:generators} we see that there is a $\Gamma$-equivariant homomorphism $S(\c) \onto \k[\chi + \cv]^\ab$.

Combining these remarks and taking $\Gamma$-coinvariants we have a surjective homomorphism $S(\c^\Gamma) \onto \k[(\chi + X)^\Gamma]$ which proves the claim.
\end{proof}

\section{The parabolic induction functor}
\label{S:parabolicinduction}

We retain the notation and hypotheses of Sections~\ref{S:groupsandLiealgebras} and \ref{S:parabolicsection}.

\subsection{Equivariance of Skryabin's equivalence}
\label{ss:Skryabinsequivalence}
Throughout this section, $\l \subseteq \g(-1)$ will denote a Lagrangian subspace. Skryabin's equivalence is a Morita equivalence between $Q_\l^{\n_\l}$ and a certain central quotient of $U(\g)$. It can be viewed as a global version of \eqref{e:Premetsequivalence}. Using Theorem~\ref{T:PBWtheorem}(ii) we obtain similar equivalences when the isotropic subspace of $\g(-1)$ is zero.
\begin{Proposition}
\label{P:Skryabin}
There is a $Z_p(\g)$-equivariant isomorphism $U_{\chi + \m_\l^\perp}(\g) \isoto \Mat_{p^{d_\chi}} \hU_{\chi + \m_\l^\perp}(\g,e).$
\end{Proposition}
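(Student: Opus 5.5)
The plan is to realise the isomorphism as the action map on the generalised Gelfand--Graev module $Q_\l$, and to reduce bijectivity to Premet's fibrewise equivalence \eqref{e:Premetsequivalence}. Here $\hU_{\chi+\m_\l^\perp}(\g,e)$ is the $p$-central reduction along $\chi+\m_\l^\perp\subseteq\chi+\m_0^\perp$. By Theorem~\ref{T:PBWtheorem}(ii),(iii) and the identification $Z_p(Q_\l^{\n_\l})=\k[(\chi+\m_\l^\perp)^{(1)}]$, the surjection \eqref{e:extendedfwadependsonl} identifies this algebra with $Q_\l^{\n_\l}$. So it suffices to produce a $Z_p(\g)$-equivariant isomorphism $U_{\chi+\m_\l^\perp}(\g)\isoto \Mat_{p^{d_\chi}}(Q_\l^{\n_\l})$.

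First I will check that $Q_\l$ is a $U_{\chi+\m_\l^\perp}(\g)$-module. This holds because $x^p-x^{[p]}-\chi(x)^p\in U(\g)\m_{\l,\chi}$ for $x\in\m_\l$, using $\chi(\l^{[p]})=0$ and the vanishing of $\chi$ on $\g(i)^{[p]}$ for $i<-1$. Via \eqref{e:rightmodulestructure}, $Q_\l$ is then a $(U_{\chi+\m_\l^\perp}(\g),Q_\l^{\n_\l})$-bimodule, giving an algebra map
$$\Phi: U_{\chi+\m_\l^\perp}(\g)\to \End_{Q_\l^{\n_\l}}(Q_\l),$$
which is $Z_p(\g)$-linear by construction. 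Set $Z:=\k[(\chi+\m_\l^\perp)^{(1)}]$. By the PBW theorem for $U(\g)$, both $U_{\chi+\m_\l^\perp}(\g)$ and $Q_\l$ are free $Z$-modules, of ranks $p^{\dim\g}$ and $p^{\dim\g-\dim\m_\l}$ respectively. By \eqref{e:extendedPBW}, $Q_\l^{\n_\l}$ is free over $Z$ of rank $p^{\dim\g-\dim\m_\l-\dim\n_\l}$. Since $\l$ is Lagrangian, $\dim\m_\l=\dim\n_\l=d_\chi$ by \eqref{e:dimensionmn}.

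Next I will show that $Q_\l$ is free of rank $p^{d_\chi}$ as a right $Q_\l^{\n_\l}$-module. Fibrewise this holds: for $\eta\in\chi+\m_\l^\perp$, the reduction $Q_{\l,\eta}$ is the Gelfand--Graev module of $U_\eta(\g)$, and Premet's argument behind \eqref{e:Premetsequivalence} shows it is free of rank $p^{d_\chi}$ over $\hU_\eta(\g,e)$. From this, $Q_\l$ is a finitely generated $Q_\l^{\n_\l}$-module, free over $Z$, whose fibres are free of constant rank. Hence it is projective. Here I would use the standard criterion, for modules over an algebra finite and free over a commutative base, that freeness of all fibres of constant rank together with flatness over the base gives projectivity. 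Both algebras are positively Kazhdan filtered with $\Fb_0=\k$ and $Z$ is compatibly graded, so a graded Nakayama argument (lifting a homogeneous basis of the fibre at $\chi$) should upgrade projective to free. Then $\End_{Q_\l^{\n_\l}}(Q_\l)\cong\Mat_{p^{d_\chi}}(Q_\l^{\n_\l})$.

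Finally, both source and target of $\Phi$ are free $Z$-modules of rank $p^{\dim\g}=p^{2d_\chi}\cdot p^{\dim\g-2d_\chi}$. After base change to the point $\eta$, $\Phi$ becomes Premet's isomorphism \eqref{e:Premetsequivalence}. A map between free modules of equal finite rank over $Z$ that is surjective on every fibre at a closed point is surjective by Nakayama, hence an isomorphism. I expect the main obstacle to be the freeness of $Q_\l$ over $Q_\l^{\n_\l}$, including the compatibility of reduction with taking $\n_\l$-invariants, i.e.\ $(Q_\l^{\n_\l})_\eta=(Q_{\l,\eta})^{\n_\l}$. I would obtain the latter from \eqref{e:relatingreductions} and the PBW description, comparing dimensions $p^{\dim\g-2d_\chi}$ on each side.
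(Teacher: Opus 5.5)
Your first step is the same as the paper's. Theorem~\ref{T:PBWtheorem}(ii),(iii) identifies $\hU_{\chi+\m_\l^\perp}(\g,e)$ with $Q_\l^{\n_\l}$, so everything reduces to the Lagrangian statement $U_{\chi+\m_\l^\perp}(\g)\cong\Mat_{p^{d_\chi}}(Q_\l^{\n_\l})$. At that point the paper simply cites \cite[Theorem~9.2(iv)]{GTmod}, noting that compatibility of $\l$ with $T$ is not used there. You instead reprove that statement:
\begin{itemize}
\item you take the action map $\Phi$ on the bimodule $Q_\l$;
\item you show $Q_\l$ is free of rank $p^{d_\chi}$ over $Q_\l^{\n_\l}$;
\item you deduce bijectivity of $\Phi$ from Premet's pointwise equivalence by Nakayama over $Z=\k[(\chi+\m_\l^\perp)^{(1)}]$.
\end{itemize}
Your rank count ($p^{\dim\g}$ on both sides) and the final Nakayama step are correct. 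Your route exhibits the isomorphism as the natural action map, which makes $Z_p(\g)$-linearity evident; the paper's citation buys brevity.

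Two steps need tightening.

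\textbf{Projective to free.} The step ``projective $\Rightarrow$ free by graded Nakayama'' does not work as written. $Q_\l$ and $Q_\l^{\n_\l}$ are only Kazhdan \emph{filtered}, not graded. Lifting a basis of the fibre at $\chi$ gives a map $(Q_\l^{\n_\l})^{\oplus p^{d_\chi}}\to Q_\l$ that is only known to be an isomorphism on a neighbourhood of $\chi$.

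It is easier to skip fibrewise projectivity and work on associated graded objects, where graded Nakayama does apply. By \eqref{e:extendedPBW} and Lemma~\ref{L:extendedslice}(i), $\gr Q_\l=\k[\chi+\m_\l^\perp]\cong\k[N_\l]\otimes\k[\chi+\cv]$ is free of rank $p^{d_\chi}$ over $\gr Q_\l^{\n_\l}\cong\k[N_\l]^p\otimes\k[\chi+\cv]$. The homogeneous basis may be taken to contain $1$. Lifting it shows $Q_\l$ is a free right $Q_\l^{\n_\l}$-module with $Q_\l^{\n_\l}\cdot 1$ a direct summand; this is the argument behind Lemma~\ref{L:bigGGRequivariance}(iii), cf. \cite[Theorem~9.2(i)]{GTmod}.

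The direct-summand property also settles the base-change compatibility you flagged. It makes $Q_\l^{\n_\l}\otimes_Z\k_\eta\to(Q_{\l,\eta})^{\n_\l}$ injective. Premet's count $p^{\dim\g^e}$ for the Whittaker vectors of the reduced Gelfand--Graev module then makes this map an isomorphism.

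\textbf{Bijectivity of $\Phi_\eta$.} The isomorphism \eqref{e:Premetsequivalence}, as quoted in the paper, is only an abstract isomorphism. To know that $\Phi_\eta$ itself is bijective you have two options:
\begin{itemize}
\item use Premet's formulation, namely that the action map $U_\eta(\g)\to\End_{H_\eta}(Q_{\l,\eta})$ with $H_\eta=\End_\g(Q_{\l,\eta})^{\op}$ is an isomorphism; or
\item observe that $Q_{\l,\eta}$ is a faithful $U_\eta(\g)$-module, since $U_\eta(\g)\cong Q_{\l,\eta}^{\oplus p^{d_\chi}}$, and compare dimensions.
\end{itemize}
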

\begin{proof}
This follows straight from Theorem~\ref{T:PBWtheorem}(ii) along with \cite[Theorem~9.2(iv)]{GTmod}; note that the proof there does not require $\l$ to be compatible with $T$ in the sense of \textsection 2.3 of {\it op. cit.}, and it works verbatim in the current situation.
\end{proof}

Combining \eqref{e:rightmodulestructure} and \eqref{e:extendedfwadependsonl} we see that $Q_\l$ is a $U(\g)$-$\hU(\g,e)$-bimodule. This bimodule gives rise to {\it Skryabin's functor}
\begin{eqnarray*}
s_\l : \hU(\g,e)\lmod & \longrightarrow & U(\g)\lmod ; \\
V & \longmapsto & Q_\l \otimes_{\hU(\g,e)} V.
\end{eqnarray*}
In fact one can check that the category equivalence which results from Proposition~\ref{P:Skryabin} is realised by lifting $\hU_{\chi+\m_\l^\perp}(\g,e)$-modules to $\hU(\g,e)$-modules and then applying $s_\l$. 

The reductive group $C^e$ acts on the algebras $U(\g)$ and $\hU(\g,e)$ and thus, as in Section~\ref{ss:twists}, acts on the categories $U(\g)\lmod$ and $\hU(\g,e)\lmod$ by twisting.

\begin{Theorem}
\label{T:equivariantSkryabin}
The following hold:
\begin{enumerate}
	\item[(i)]  $\dim s_\l(V) = p^{d_{\chi}} \dim V$ for all finite-dimensional $V\in\hU_{\chi+\m_\l^\perp}(\g,e)\lmod$.
	\item[(ii)] Let $\eta\in\chi+\n_0^\perp$ and let $V$ be a simple $\hU_\eta(\g,e)$-module. Then ${}^gs_\l(V) \cong s_{\l}({}^gV)$ for all $g\in C^e$.
\end{enumerate}
\end{Theorem}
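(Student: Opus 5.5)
For part (i), I would use Proposition~\ref{P:Skryabin}. Under the isomorphism $U_{\chi+\m_\l^\perp}(\g)\cong \Mat_{p^{d_\chi}}\hU_{\chi+\m_\l^\perp}(\g,e)$, the Morita equivalence sends a module $V$ to $\k^{p^{d_\chi}}\otimes V$. This is realised by $s_\l$, because $Q_\l\otimes_{U(\g)}U_{\chi+\m_\l^\perp}(\g)$ is free of rank $p^{d_\chi}$ as a right $\hU_{\chi+\m_\l^\perp}(\g,e)$-module. The freeness should follow from the PBW theorem (Theorem~\ref{T:PBWtheorem}(ii)) together with the PBW basis of $Q_\l$ over a complement to $\m_\l$, exactly as in \cite[\textsection 9]{GTmod}. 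Counting dimensions then gives $\dim s_\l(V)=p^{d_\chi}\dim V$.

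For part (ii), the difficulty is that $g\in C^e$ need not preserve $\l$, so $g$ does not act on $Q_\l$. It does act on $Q_0$ and on $\hU(\g,e)=Q_0^{\n_0}$. The general fact from Section~\ref{ss:twists} gives ${}^g(Q_\l\otimes_{\hU(\g,e)}V)\cong Q_{g\l}\otimes_{\hU(\g,e)}{}^gV$, because $g$ maps $Q_\l$ to $Q_{g\l}$ compatibly with the bimodule structures. So it suffices to prove that $s_\l(W)\cong s_{\l'}(W)$ for any two Lagrangians $\l,\l'$ and any simple $\hU_{\eta'}(\g,e)$-module $W$, where $\eta'=g\cdot\eta$ and ${}^gV$ is still simple. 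Here the hypothesis $\eta\in\chi+\n_0^\perp$ matters. Since $\n_0\supseteq\m_\l$ for every isotropic $\l$, and $\chi+\n_0^\perp$ is $C^e$-stable, the $p$-character $\eta'$ lies in $\chi+\m_\l^\perp\cap\chi+\m_{\l'}^\perp$. Hence $W$ is a module for both $Q_\l^{\n_\l}$ and $Q_{\l'}^{\n_{\l'}}$ via the surjections \eqref{e:extendedfwadependsonl}.

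To compare $s_\l(W)$ and $s_{\l'}(W)$, I would use simplicity. By part (i) and Premet's equivalence \eqref{e:Premetsequivalence}, both are simple $U_{\eta'}(\g)$-modules of dimension $p^{d_\chi}\dim W$. It therefore suffices to produce a nonzero homomorphism between them, or to identify a common invariant. My first attempt is to use the Whittaker functor. For a $U_{\eta'}(\g)$-module $M$, the space of $\m_{0,\chi}$-Whittaker vectors $\{m\in M: (x-\chi(x))m=0\ \forall x\in\m_0\}$ is a $\hU(\g,e)$-module. For $M=s_\l(W)$, this space should contain $W$ as the $\n_\l$-invariants, since $\m_0\subseteq\m_\l$. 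Because $\hU_{\eta'}(\g,e)\cong U_{\pi(\eta')}(\g,e)$ by \eqref{e:relatingreductions}, and this algebra is Morita equivalent to $U_{\eta'}(\g)$, the simple module $W$ is recovered from $s_\l(W)$ independently of $\l$. I would make this precise by showing that $\Hom_{\g}(Q_0\otimes_{\hU(\g,e)}W, s_\l(W))\neq0$. Such a map comes from the surjection $Q_0\onto Q_\l$ tensored with $W$, which gives a nonzero map $s_0(W)\to s_\l(W)$, and likewise a map $s_0(W)\to s_{\l'}(W)$. Both targets are simple, so each is the unique simple quotient of the finite-dimensional module $U_{\eta'}(\g)\otimes_{U(\g)} s_0(W)$, provided that module has a simple head.

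The main obstacle is this last uniqueness step. One must check that $U_{\eta'}(\g)\otimes s_0(W)$ has a unique simple quotient. Alternatively, one can show directly that $\Hom_\g(s_0(W),M)\cong\Hom_{\hU(\g,e)}(W,\mathrm{Wh}_{\m_0}(M))$ by Frobenius reciprocity, and that the Whittaker space of a simple $U_{\eta'}(\g)$-module determines it. To handle this, I would apply Premet's equivalence at the fixed point $\eta'$ with $\l=0$ isotropic, as allowed by the Remark after \eqref{e:Premetsequivalence}. This identifies both simple targets with the module corresponding to $W$, which gives $s_\l(W)\cong s_{\l'}(W)$.
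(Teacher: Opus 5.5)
Your part (i) is fine, and so is your reduction in (ii). Since $g\in C^e$ fixes $\chi$ and the grading, it carries $Q_\l$ onto $Q_{g\l}$ compatibly with both actions. This gives ${}^g s_\l(V)\cong s_{g\l}({}^gV)$, with ${}^gV$ simple of $p$-character $g\cdot\eta\in\chi+\n_0^\perp\subseteq(\chi+\m_\l^\perp)\cap(\chi+\m_{g\l}^\perp)$. This is a reasonable variant of the paper's route, which uses the $C^e$-equivariance of $s_0$ directly. You also correctly exhibit $s_\l(W)$ and $s_{\l'}(W)$ as simple quotients of $s_0(W)$ via $Q_0\onto Q_\l$.

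However, the step you flag as the main obstacle is not resolved, and it is the whole content of (ii). Appealing to \eqref{e:Premetsequivalence} ``with $\l=0$'' identifies nothing. That isomorphism is abstract, and, as the Remark following it says, it is obtained by choosing some Lagrangian $\l$ with $\eta\in\chi+\m_\l^\perp$; that is, it is realised by $s_\l$ for that choice. It is not realised by $Q_0$ (indeed $\dim s_0(W)=p^{\frac12\dim\g(-1)}\dim s_\l(W)$). So ``the module corresponding to $W$'' is a priori $\l$-dependent, which is exactly what must be ruled out, and the argument is circular. Note also that $s_0(W)$ does not have a simple head once $\g(-1)\neq 0$. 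What has to be proved is that its head is isotypic.

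The missing idea is Proposition~\ref{P:splitfunctor}: $s_0(V)\cong s_\l(V)^{\oplus p^{\frac12\dim\g(-1)}}$ for every simple $V$ with $p$-character in $\chi+\m_\l^\perp$. The paper obtains it by identifying $Q_0$ with $\tQ$ (Lemma~\ref{L:bigGGRiso}), where the $\g(-1)$-directions are twisted by the neutral Weyl algebra $U(\kk_\neu)$. At a fixed $p$-character this Weyl algebra becomes $\Mat_{p^{\frac12\dim\g(-1)}}(\k)$, so $s_0(V)$ is a direct sum of $p^{\frac12\dim\g(-1)}$ copies of one $U(\g)$-module. The dimension count forces that module to be $s_\l(V)$, and applying this to $\l$ and $\l'$ gives $s_\l(W)\cong s_{\l'}(W)$.

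If you prefer to keep your Whittaker formulation, the same idea works dually. The space $\g(-1)$ acts on $\mathrm{Wh}_{\m_0}(M)$, because $[\g(-1),\m_0]\subseteq\bigoplus_{i\le -3}\g(i)$, where $\chi$ vanishes. Here $[x,y]$ acts by $\chi([x,y])$ and $x^p$ acts by a scalar. So a reduced Weyl algebra $\cong\Mat_{p^{\frac12\dim\g(-1)}}(\k)$ acts on $\mathrm{Wh}_{\m_0}(M)$, commuting with $\hU(\g,e)=Q_0^{\n_0}$ since $\g(-1)\subseteq\n_0$. This gives $\mathrm{Wh}_{\m_0}(M)\cong\mathrm{Wh}_{\m_\l}(M)^{\oplus p^{\frac12\dim\g(-1)}}$ as $\hU(\g,e)$-modules. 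Your adjunction $\operatorname{Hom}_\g(s_0(W),M)\cong\operatorname{Hom}_{\hU(\g,e)}(W,\mathrm{Wh}_{\m_0}(M))$ then shows every simple quotient of $s_0(W)$ is isomorphic to $s_\l(W)$. Without one of these arguments your proof of (ii) is incomplete.
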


\begin{Remark}
	Suppose that in Theorem~\ref{T:equivariantSkryabin}(ii) we had only assumed $\eta\in\chi+\m_\l^\perp$. If there were to exist $g\in C^e$ such that $\eta\notin \chi+\m_{g^{-1}\cdot\l}$ (and thus $g\eta\notin \chi+\m_\l^\perp$) then we would have $s_\l(V)\neq 0$ (and so ${}^{g}s_\l(V)\neq 0$) but $s_\l({}^{g}V)=0$. Assuming  $\eta\in\chi+\n_0^\perp$ avoids this problem.
\end{Remark}

The dimension claim follows from the fact that $s_\l$ induces the equivalence of Proposition~\ref{P:Skryabin}. The proof of the second claim will occupy the rest of the current section. Our method is to show that $s_\l$ occurs as a quotient of a $C^e$-equivariant functor $s_0$ and that $s_0$ splits into a direct sum of copies of $s_\l$ when applied to simple modules.

\begin{Lemma}
\label{L:bigGGRequivariance}
The following hold:
\begin{enumerate}
\setlength{\itemsep}{4pt}
\item[(i)] $Q_0$ is a $U(\g)$-$\hU(\g,e)$-bimodule and $Q_0 \onto Q_\l$ is a bimodule homomorphism;
\item[(ii)] The functor $s_0 := Q_0 \otimes_{\hU(\g,e)} (-) : \hU(\g,e)\lmod \to U(\g)\lmod$ is $C^e$-equivariant;
\item[(iii)] For finite-dimensional $V \in \hU(\g,e)\lmod$ we have $\dim s_0(V) = p^{\dim \n_0} \dim V$.
\end{enumerate}
\end{Lemma}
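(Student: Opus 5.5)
For part (i), we use the right action \eqref{e:rightmodulestructure} in the case $\l = 0$. Since $\hU(\g,e) = Q_0^{\n_0}$ acts on the right by $(q + U(\g)\m_{0,\chi})(u + U(\g)\m_{0,\chi}) = qu + U(\g)\m_{0,\chi}$, the left $U(\g)$-action commutes with it, so $Q_0$ is a $U(\g)$-$\hU(\g,e)$-bimodule. The quotient map $Q_0 \onto Q_\l$ is induced by $U(\g)\m_{0,\chi}\subseteq U(\g)\m_{\l,\chi}$, since $\m_0 \subseteq \m_\l$. It is clearly a left $U(\g)$-module map. The right $\hU(\g,e)$-action on $Q_\l$ is defined through \eqref{e:extendedfwadependsonl}, which sends $u + U(\g)\m_{0,\chi}$ to $u + U(\g)\m_{\l,\chi}$. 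Hence $qu$ maps to $qu$ on both sides, which gives right linearity.

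For part (ii), we check that the action of $C^e$ on $U(\g)$ preserves $U(\g)\m_{0,\chi}$. This holds because $C^e$ centralises $\gamma_e(\k^\times)$, so it preserves each $\g(i)$ and hence $\m_0 = \bigoplus_{i<-1}\g(i)$. It also fixes $e$, so it fixes $\chi$, and therefore $g\cdot(x - \chi(x)) = g\cdot x - \chi(g\cdot x)$. It follows that $C^e$ acts on $Q_0$, and this action is compatible with the bimodule structure: $g(uqv) = g(u)\,g(q)\,g(v)$ for $u\in U(\g)$ and $v\in\hU(\g,e)$. Now apply the general observation at the end of Section~\ref{ss:twists}, adapted to bimodules. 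For $V\in\hU(\g,e)\lmod$, define
$${}^g(Q_0\otimes_{\hU(\g,e)} V)\to Q_0\otimes_{\hU(\g,e)}{}^gV,\qquad q\otimes v\mapsto g(q)\otimes v.$$
This map is well defined: $qa\otimes v$ and $q\otimes av$ both go to $g(q)g(a)\otimes v = g(q)\otimes g(a)\cdot_g v$, which are equal in the target. It intertwines the twisted $U(\g)$-actions and has inverse given by $g^{-1}$. It is natural in $V$, so $s_0\circ{}^g \cong {}^g\circ s_0$.

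For part (iii), the plan is to show that $Q_0$ is free as a right $\hU(\g,e)$-module of rank $p^{\dim\n_0}$. Since $\n_0 = \m_0\oplus\g(-1)$, a natural basis would be restricted PBW monomials $x^{\bm a}$ in a basis of $\n_0$ (or its graded complement) with exponents $0\le a_i<p$. I would prove freeness at the level of associated graded algebras for the Kazhdan filtration. We have $\gr Q_0 = \k[\chi+\m_0^\perp]$, and $\gr\hU(\g,e) = \k[\chi+\m_0^\perp]^{\n_0}$ by \eqref{e:extendedPBW}. By Lemma~\ref{L:extendedslice}(i), $\k[\chi+\m_0^\perp]\cong\k[N_0]\otimes\k[\chi+\cv]$. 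Moreover $\k[N_0]$ is free of rank $p^{\dim N_0}$ over $\k[N_0]^p = \k[N_0^{(1)}]$, and the $\n_0$-invariants are $\k[N_0]^p\otimes\k[\chi+\cv]$, matching the tensor description in \eqref{e:extendedPBW}. So $\gr Q_0$ is free over $\gr\hU(\g,e)$ of rank $p^{\dim\n_0}$ with homogeneous basis. The standard filtered lifting argument, using that the filtrations are non-negative with $\Fb_0 = \k$, then lifts this to freeness of $Q_0$ over $\hU(\g,e)$. The main obstacle is this freeness step: checking the identification of $\n_0$-invariants with $p$-th powers tensored with $N_0$-invariants as a free extension, which should follow from \cite[\textsection 5.4]{GTmod}. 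Once freeness is established, $\dim s_0(V) = p^{\dim\n_0}\dim V$ follows immediately.
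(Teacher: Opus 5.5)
Your proof is correct. Parts (i) and (ii) are the paper's argument; the paper calls the well-definedness checks an easy exercise, and you carry them out.

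For (iii), both you and the paper reduce to the freeness of $Q_0$ as a right $\hU(\g,e)$-module, but you get there differently.

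\emph{The paper's route.} It obtains freeness from Theorem~\ref{T:PBWtheorem}(i),(ii) together with a pointer to \cite[Theorem~9.2(i)]{GTmod}. It records the rank as $p^{\dim\g-\dim\m_0-\dim\g^e}$, and then needs \eqref{e:dimensionmn} to rewrite the exponent as $\dim\n_0$.

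\emph{Your route.} You prove the freeness yourself at the graded level. You use $\gr Q_0\cong\k[N_0]\otimes\k[\chi+\cv]$ from Lemma~\ref{L:extendedslice}(i), and $\gr\hU(\g,e)\cong\k[N_0]^p\otimes\k[\chi+\cv]$. The latter is exactly what \eqref{e:extendedPBW} gives for $\l=0$. It can also be seen directly: the vector fields coming from $\n_0$ trivialise the tangent bundle of $N_0\cong\mathbb{A}^{\dim\n_0}$, so their common kernel is $\k[N_0]^p$. The exponent $\dim N_0=\dim\n_0$ then appears directly, and \eqref{e:dimensionmn} is never needed.

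\emph{What your route costs.} You must supply homogeneous coordinates on $N_0$. These exist because $N_0$ is a product of root subgroups normalised by $\gamma_e(\k^\times)$, and the induced grading on $\k[N_0]$ is connected since $\Fb_0Q_0=\k$. You must also run the filtered lifting argument, which works because all the filtrations are exhaustive and non-negative. In return, you get a self-contained proof of the freeness that the paper takes from \cite{GTmod}.

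One aside is wrong as stated. Restricted PBW monomials in a basis of $\n_0$ cannot be a basis, since $\m_0\subseteq\n_0$ acts on the cyclic generator of $Q_0$ by scalars. The basis your argument actually produces is restricted monomials in coordinates on $N_0$, and this aside plays no role in the proof.
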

\begin{proof}
It is an easy exercise to see that the right $\hU(\g,e)$-action on $Q_0$ given by $(q + U(\g)\m_{0, \chi})(u + U(\g)\m_{0, \chi}) = qu + U(\g)\m_{0, \chi}$ is well-defined and (i) follows immediately because the right $\hU(\g,e)$-action on $Q_\l$ is defined by the same formula (see \eqref{e:rightmodulestructure}).

We have observed that the $C^e$-action on $U(\g)$ descends to both $Q_0$ and $\hU(\g,e)$. If $g \in C^e$ and $V \in \hU(\g,e)\lmod$ then as in Section~\ref{ss:twists} an isomorphism ${}^g(Q_0 \otimes_{\hU(\g,e)} V) \isoto Q_0 \otimes_{\hU(\g,e)} {}^gV$ is given by $q \otimes v \mapsto (g\cdot q) \otimes v$, which proves (ii). Finally Theorem~\ref{T:PBWtheorem}(i) and (ii) imply that $Q_0$ is a free right $\hU(\g,e)$-module of rank $p^r$ where $r = \dim \g - \dim \m_0 - \dim \g^e$ (cf. \cite[Theorem 9.2(i)]{GTmod}). The latter equals $\dim \n_0$ by \eqref{e:dimensionmn}, which proves part (iii).
\end{proof}

Following \cite{BGK} we define $\kk_\neu$ to be the {\it neutral copy of $\g(-1)$}. It is a symplectic vector space equipped with a linear isomorphism $\g(-1) \to \kk_\neu; x\mapsto x_\neu$. The symplectic form $\Psi_\neu$ is defined $\Psi_\neu(x_\neu, y_\neu) := \Psi(x,y) = \kappa(e, [x,y])$, so that $\g(-1) \to \kk_\neu$ is a symplectomorphism. We abuse notation by writing $x\mapsto x_\neu$ for the map $\g \to \kk_\neu$ obtained by projecting via the grading.

The Weyl algebra  associated to $(\kk_\neu, \Psi_\neu)$ will be denoted $U(\kk_\neu)$. 
We consider the algebra $U(\tg) := U(\g) \otimes U(\kk_\neu)$. Our next objective is to construct a functor $\hU(\g,e)\lmod \to U(\tg)\lmod$. The subspace
$$\tn_{0, \chi} := \{x - x_\neu - \chi(x) \mid x \in \n_0\} \subseteq U(\tg)$$
allows us to define the left $U(\tg)$-module $\tQ:= U(\tg)/ U(\tg) \tn_{0,\chi}$. We identify $U(\g)$ with the subalgebra $U(\g)\otimes 1 \subseteq U(\tg)$, and we note that $U(\tg)$ satisfies the following PBW theorem: there is a standard filtration placing $\g \oplus \kk_\neu$ in degree one and $\gr U(\tg) \cong S(\g \oplus \kk_\neu)$. 

\begin{Lemma} 
\label{L:bigGGRiso}
The following hold:
\begin{enumerate}
\item[(i)] The map $U(\g) \to U(\tg)$ gives an embedding $U(\g)\m_{0,\chi} \subseteq U(\tg)\tn_{0, \chi}$.

\item[(ii)] $\tQ$ is a $U(\tg)$-$\hU(\g,e)$-bimodule with right $\hU(\g,e)$-action
\begin{eqnarray}
\label{e:strangeaction}
(q + U(\tg)\tn_{0,\chi})(u + U(\g)\m_{0, \chi}) := qu + U(\tg)\tn_{0,\chi}.
\end{eqnarray}

\item[(iii)] The following map induced by part (i) is an isomorphism of $U(\g)$-$\hU(\g,e)$-bimodules
\begin{eqnarray}
\label{e:bigGGRiso}
Q_0 \isoto \tQ.
\end{eqnarray}

\end{enumerate}
\end{Lemma}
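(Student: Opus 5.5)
Note first that $\n_0 = \bigoplus_{i\le -1}\g(i)$, since $\l=0$ and $\l^\perp = \g(-1)$, while $\m_0 = \bigoplus_{i\le -2}\g(i)$. For $x\in\m_0$ the projection $x_\neu$ is zero, so $x - \chi(x) = x - x_\neu - \chi(x)\in \tn_{0,\chi}$. This gives $U(\g)\m_{0,\chi}\subseteq U(\tg)\tn_{0,\chi}$, and part (i) follows. The inclusion shows that the map $U(\g)\to U(\tg)\to \tQ$ factors through a left $U(\g)$-module map $Q_0\to\tQ$.

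For (ii), the first point is that the formula \eqref{e:strangeaction} is independent of the chosen representative $u$. This holds by (i): if $u\in U(\g)\m_{0,\chi}$ then $qu\in U(\tg)\tn_{0,\chi}$. The second point is that it is independent of $q$, i.e. $U(\tg)\tn_{0,\chi}\,u\subseteq U(\tg)\tn_{0,\chi}$ whenever $u+U(\g)\m_{0,\chi}\in \hU(\g,e)=Q_0^{\n_0}$. To check this, write $y = x - x_\neu - \chi(x)$ with $x\in\n_0$. Since $x_\neu$ commutes with $U(\g)$, we have $yu = uy + [x,u]$. The $\n_0$-invariance of $u$ means $[x,u]\in U(\g)\m_{0,\chi}$, and this lies in $U(\tg)\tn_{0,\chi}$ by (i). Hence $yu\in U(\tg)\tn_{0,\chi}$. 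Associativity of the right action and its commutation with the left $U(\tg)$-action are then inherited from multiplication in $U(\tg)$. By construction the map $Q_0\to\tQ$ intertwines the right actions, so it is a bimodule map.

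Part (iii) is the substantive step: the bimodule map $Q_0\to\tQ$ must be bijective. Surjectivity comes from $U(\tg)=U(\g)\otimes U(\kk_\neu)$ together with the relations in $\tQ$. For $x\in\g(-1)$ we have $x_\neu \equiv x - \chi(x) = x$ modulo the left ideal, since $\chi(\g(-1))=0$. Because $x_\neu$ commutes with $U(\g)$, the class of $u\,x_\neu$ equals the class of $ux$ for any $u\in U(\g)$. An induction on the degree in $U(\kk_\neu)$ then shows that every element of $\tQ$ is the image of some element of $U(\g)$.

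For injectivity we compare sizes using PBW bases.
\begin{itemize}
\item Choose a basis of $\g$ adapted to $\g = \mathfrak{s}\oplus\g(-1)\oplus\m_0$, where $\mathfrak{s}=\bigoplus_{i\ge0}\g(i)$.
\item Use PBW for $U(\g)$ to see that $Q_0$ has basis the ordered monomials in $\mathfrak{s}\oplus\g(-1)$.
\item In $U(\tg)$, change generators from $\{x_\neu\}$ to $\{x - x_\neu\}$ for $x\in\g(-1)$. These new elements satisfy $[x-x_\neu,\, y-y_\neu] = [x,y] - \chi([x,y])$, so they generate, together with $\m_0-\chi$, a subalgebra whose span $\tn_{0,\chi}$ is closed under commutators up to $\tn_{0,\chi}$ itself. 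Here we use $\Psi_\neu(x_\neu,y_\neu)=\chi([x,y])$ and that $[x,y]\in\g(-2)\subseteq\m_0$.
\item Apply PBW to $U(\tg)$ with generating space $\g\oplus\kk_\neu$ split as $(\mathfrak{s}\oplus\g(-1))\oplus\{\text{the elements } x - x_\neu - \chi(x),\ x\in\n_0\}$.
\end{itemize}
This shows $U(\tg)$ is free as a right module over the algebra generated by $\tn_{0,\chi}$, with basis the ordered monomials in $\mathfrak{s}\oplus\g(-1)$. Therefore $\tQ$ has this same monomial basis, and the map $Q_0\to\tQ$ sends one basis to the other, so it is bijective.

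I expect the main obstacle to be this PBW/freeness argument. It can be made rigorous by passing to associated graded objects for the standard filtration. The key input is that $\tn_{0,\chi}$ spans a Lie subalgebra up to scalars, i.e. it is the image of a shifted copy of $\n_0$ inside $\tg$, where the shift makes the Weyl algebra relations compatible. With that identification, the freeness is standard.
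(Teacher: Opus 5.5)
Your proposal is correct and follows the same route as the paper: you get (i) from $\m_{0,\chi}\subseteq\tn_{0,\chi}$, the well-definedness of the right action from $\n_0$-invariance (the calculation the paper leaves to the reader), and the bijectivity of $Q_0\to\tQ$ by matching PBW bases of $U(\g)$ and $U(\tg)$, so your separate surjectivity argument is harmless but redundant. One small point: the identity $[x-x_\neu,\,y-y_\neu]=[x,y]-\chi([x,y])$ requires the Weyl algebra convention $[x_\neu,y_\neu]=-\Psi_\neu(x_\neu,y_\neu)$ (with the opposite sign the left ideal would contain $2\chi([x,y])$, typically a nonzero scalar), and the paper also leaves this convention implicit.
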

\begin{proof}
The inclusion $U(\g) \into U(\tg)$ restricts to $\m_{0, \chi} \into \tn_{0, \chi}$, which proves (i). Checking that the action \eqref{e:strangeaction} is well-defined is a short calculation which we leave to the reader, and it clearly commutes with the left $U(\tg)$-action.

Finally the map \eqref{e:bigGGRiso} is an isomorphism on the level of vector spaces, using (i) and the PBW theorems for $U(\g)$ and $U(\tg)$. This isomorphism intertwines the right $\hU(\g,e)$-actions, which may be seen by comparing the first line of the proof of Lemma~\ref{L:bigGGRequivariance} with \eqref{e:strangeaction}.
\end{proof}

We define the {\it $p$-centre of $U(\tg)$} to be the central subalgebra $Z_p(\tg) := Z_p(\g) \otimes Z(\kk_\neu)$. Using an argument almost identical to \cite[Lemma~7.6]{GTmod} we see that 
\begin{eqnarray}
\label{e:annihilatorQgtilde}
\Ann_{Z_p(\tg)}(\tQ) = (x^p - x^{[p]} - x_\neu^p - \chi(x)^p \mid x \in \n_0).
\end{eqnarray}

\begin{Proposition}
\label{P:splitfunctor}
Let $V$ be a simple $\hU(\g,e)$-module with $p$-character $\eta \in \chi + \m_\l^\perp$. Then
$$s_0(V) \cong s_\l(V)^{\oplus p^{\frac{1}{2}\dim \g(-1)}}.$$
\end{Proposition}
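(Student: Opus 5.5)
We will realise $s_0(V)$ through the isomorphism $Q_0\cong\tQ$ of Lemma~\ref{L:bigGGRiso}(iii), so that $s_0(V)\cong \tQ\otimes_{\hU(\g,e)}V$. The point is that in $U(\tg)$ the defining relations of $\tQ$ involve the whole of $\n_0$, which contains $\g(-1)$. Twisting by the neutral Weyl algebra turns these relations into a Gelfand--Graev type condition for $\tg$ with trivial isotropic part. The Weyl algebra $U(\kk_\neu)$, after reduction by its $p$-centre, is a matrix algebra of size $p^{\frac12\dim\g(-1)}$. Our aim is to show that, as a $U(\g)$-module, $\tQ\otimes_{\hU(\g,e)}V$ is isomorphic to $s_\l(V)\otimes \bar{U}$, where $\bar U$ is the unique simple module of the reduced Weyl algebra, of dimension $p^{\frac12\dim\g(-1)}$.

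\textbf{Step 1.} Since $V$ is simple with $p$-character $\eta$, the module $s_0(V)$ is annihilated by the maximal ideal of $Z_p(\g)$ at $\eta$. By \eqref{e:annihilatorQgtilde}, the $p$-centre of $U(\kk_\neu)$ acts on $\tQ\otimes V$ through the character determined by $x_\neu^p=x^p-x^{[p]}-\chi(x)^p$ for $x\in\g(-1)$, which here is $\eta(x)^p-\chi(x)^p$. Hence $U(\kk_\neu)$ acts through a reduced Weyl algebra $A\cong\Mat_{p^{\frac12\dim\g(-1)}}(\k)$. Because $U(\g)$ and $U(\kk_\neu)$ commute in $U(\tg)$, we obtain a decomposition $s_0(V)\cong W\otimes\bar U$, with $W=\Hom_A(\bar U,s_0(V))$ a $U(\g)$-module. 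This already gives $s_0(V)\cong W^{\oplus p^{\frac12\dim\g(-1)}}$ as $U(\g)$-modules.

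\textbf{Step 2: identify $W$ with $s_\l(V)$.} Choose $\bar U$ to be the module generated by a vector annihilated by $\{y_\neu-\lambda(y)\mid y\in\l\}$, where $\lambda$ is the scalar function on $\l$ with $\lambda(y)^p=\eta(y)^p-\chi(y)^p$. This is consistent because $\l$ is Lagrangian, so the $\l_\neu$ commute. Since $\eta\in\chi+\m_\l^\perp$, we have $\lambda=0$. Then $W$ is the subspace of $s_0(V)$ killed by $\l_\neu$. On $\tQ$, the relation $x-x_\neu-\chi(x)$ for $x\in\l$ shows that the $\l_\neu$-invariants are the elements killed by $\m_{\l,\chi}$. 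More precisely, there is a natural map $Q_\l\otimes_{\hU}V\to W$ sending $q\otimes v$ to $q\cdot\bar u_0\otimes v$, with $\bar u_0$ the image of the $\l_\neu$-highest vector. We will check that this map is well defined, using that $U(\g)\m_{\l,\chi}$ maps into the relevant kernel, and that it is $U(\g)$-linear.

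\textbf{Step 3.} By Step 2 the map $s_\l(V)\to W$ is nonzero. By Theorem~\ref{T:equivariantSkryabin}(i) and Proposition~\ref{P:Skryabin}, $s_\l(V)$ is simple, since $V$ is simple and $s_\l$ is an equivalence on the relevant reduced category. So the map is injective. A dimension count then finishes the proof. By Lemma~\ref{L:bigGGRequivariance}(iii), $\dim s_0(V)=p^{\dim\n_0}\dim V$. By \eqref{e:dimensionmn} with $\l$ Lagrangian, $\dim\n_0=\dim\m_\l+\dim\n_\l-\dim\m_0=2d_\chi-\dim\m_0$, and this equals $d_\chi+\frac12\dim\g(-1)$. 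Hence $\dim W=p^{d_\chi}\dim V=\dim s_\l(V)$, and the map is an isomorphism.

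The main obstacle is Step 2: making the map $Q_\l\otimes_{\hU}V\to W$ rigorous. This requires care with how the right $\hU(\g,e)$-action on $\tQ$ interacts with the Weyl algebra. It also requires the surjection \eqref{e:extendedfwadependsonl}, through which $V$ factors as a $Q_\l^{\n_\l}$-module. My first attempt will be to compare PBW bases in $\tQ$ adapted to the decomposition $\g(-1)=\l\oplus\l'$ into complementary Lagrangians.
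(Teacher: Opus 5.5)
Your proposal is correct in outline and matches the paper's Step~1 (the reduced Weyl algebra $\Mat_{p^{\frac12\dim\g(-1)}}(\k)$ splitting off). It then identifies $W$ with $s_\l(V)$ by a different, more hands-on route. The paper never builds a map into $W$. By Lemma~\ref{L:bigGGRequivariance}(i) the quotient $Q_0\onto Q_\l$ is a homomorphism of $U(\g)$-$\hU(\g,e)$-bimodules, so right exactness of $-\otimes_{\hU(\g,e)}V$ gives a surjection $s_0(V)\onto s_\l(V)$ for free. Since $s_\l(V)$ is simple and $s_0(V)\cong W^{\oplus m}$ with $m=p^{\frac12\dim\g(-1)}$, some copy of $W$ surjects onto $s_\l(V)$. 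The equality $\dim s_0(V)=m\dim s_\l(V)$ then forces $W\cong s_\l(V)$. So the step you flag as the main obstacle can be bypassed entirely. What your route buys in exchange is an explicit description of the multiplicity space as the $\l_\neu$-annihilated subspace of $s_0(V)$, together with an explicit embedding $s_\l(V)\into W$.

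If you keep your route, two things must be supplied, and neither needs a PBW comparison.

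\emph{Well-definedness.} Take $u_0=\prod_j (y_j)_\neu^{p-1}$ for a basis $y_1,\dots,y_k$ of $\l$. Set $\Phi(q\otimes v)=q\,u_0(\tilde 1\otimes v)$ for $q\in U(\g)$, where $\tilde1$ is the generator of $\tQ$.
\begin{itemize}
\item Since $\eta|_\l=\chi|_\l$, each $y_\neu^p$ with $y\in\l$ acts as $0$ on $s_0(V)$. As the $(y_j)_\neu$ commute by isotropy, $\l_\neu u_0$ acts as $0$.
\item For $x\in\m_\l\subseteq\n_0$ we have $(x-\chi(x))\tilde1=x_\neu\tilde1$ in $\tQ$, with $x_\neu\in\l_\neu$ or $x_\neu=0$. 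Since $U(\g)$ commutes with $U(\kk_\neu)$, this gives $(x-\chi(x))u_0(\tilde1\otimes v)=x_\neu u_0(\tilde1\otimes v)=0$.
\item Balancedness over $\hU(\g,e)$ follows from \eqref{e:strangeaction}: $\tilde1\cdot u=\hat u\,\tilde1$ for any lift $\hat u\in U(\g)$ of $u$, and $\hat u$ commutes with $u_0$.
\end{itemize}

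\emph{Nonvanishing.} Your Step~3 asserts $\Phi\neq 0$ ``by Step~2'', but nothing in Step~2 shows this. The argument is as follows. We have $s_0(V)=U(\g)(\tilde1\otimes V)$ because $Q_0\cong\tQ$ is cyclic, and $u_0$ commutes with $U(\g)$. Moreover $u_0$ is a nonzero element of the reduced Weyl algebra, which is simple and so acts faithfully on the nonzero module $s_0(V)$. Hence $u_0(\tilde1\otimes v)\neq 0$ for some $v$.

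Finally, your heuristic that the $\l_\neu$-invariants ``are the elements killed by $\m_{\l,\chi}$'' is false as stated. The space $W$ is a $U(\g)$-submodule and is not annihilated by $\m_{\l,\chi}$. Only the vectors $u_0(\tilde1\otimes v)$ have that property, and that is all $\Phi$ requires.
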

\begin{proof}
If $V$ is a $\hU(\g,e)$-module with $p$-character $\eta$ then $\tQ \otimes_{\hU(\g,e)} V$ has $p$-character $\eta$, and in particular, $\{x^p - x^{[p]} - \eta(x)^p \mid x\in \g(-1)\}$ lies in the annihilator. From \eqref{e:annihilatorQgtilde} it follows that $\{x^p - x^{[p]} - x_\neu^p - \chi(x)^p \mid x\in \g(-1)\}$ annihilates $\tQ \otimes_{\hU(\g,e)} V$. Combining, we see that $\{x_\neu^p - (\eta-\chi)(x)^p \mid x\in \g(-1)\}$ also annihilates.

Since $U(\kk_\neu)$ is Azumaya over the $p$-centre, we see that $\tQ \otimes_{\hU(\g,e)} V$ descends to a module over $$U(\g) \otimes U(\kk_\neu)/(x_\neu^p -  (\eta-\chi)(x)^p\mid x\in \g(-1)) \cong U(\g) \otimes \Mat_{p^{\frac{1}{2}\dim \g(-1)}}(\k) \cong \Mat_{p^{\frac{1}{2}\dim \g(-1)}} U(\g).$$
By \eqref{e:bigGGRiso} we conclude that $Q_0 \otimes_{\hU(\g,e)} V \cong W^{\oplus p^{\frac{1}{2}\dim \g(-1)}}$ for some $U(\g)$-module $W$. Using \eqref{e:dimensionmn} we calculate $\dim \n_0 = d_\chi + \frac{1}{2}\dim \g(-1)$. Combining Lemma~\ref{L:bigGGRequivariance}(i) and (iii) with \cite[Theorem~9.2(iii)]{GTmod} we see that $s_\l(V)$ is a (non-zero) simple $U_\eta(\g)$-module, occurring as a quotient of $s_0(V)$ and satisfying $\dim s_0(V) = p^{\frac{1}{2}\dim \g(-1)} \dim s_\l(V)$, and so we must have $W\cong s_\l(V)$.
\end{proof}

\begin{proofofskryabin}
	
Let $g\in C^e$ and let $V$ be a simple $\hU(\g,e)$-module having $p$-character $\eta\in \chi+\n_0^\perp$. In particular, $\eta\in \chi+\m_\l^\perp$ and $\eta\in \chi+ \m_{g\cdot \l}^\perp$ so $g\cdot\eta\in\chi+\m_\l^\perp$. We may therefore apply Proposition~\ref{P:splitfunctor} to get $$ s_\l({}^g V)^{\oplus p^{\frac{1}{2}\dim \g(-1)}}\cong s_0({}^g V)\cong {}^g s_0(V)\cong ({}^g s_\l(V))^{\oplus p^{\frac{1}{2}\dim \g(-1)}}.$$ The claim then follows since $s_\l({}^g V)$ and ${}^g s_\l(V)$ are simple.\hfill \qed

\end{proofofskryabin}

\subsection{The parabolic induction functor}
\label{ss:constructingthefunctor}
In this section we construct a parabolic induction functor from a certain full subcategory of $\hU(\g_0,e_0)\lmod$ to $U(\g,e)\lmod$.

As in Section~\ref{ss:LSinduction}, fix $(\g_0,\O_0)\in\Indat(\g)$ and a parabolic subgroup $P$ of $G$ such that $\g_0$ is a Levi factor of $\p=\Lie(P)$. Denote $\r=\Rad(\p)$, so that $\p=\g_0\oplus \r$, and let $\r_{-}$ be the opposite nilradical so that $\g=\r_{-}\oplus \g_0\oplus \r$. Let $\O=\Ind^\g(\g_0,\O_0)$, and fix $e\in \O\cap(\O_0 + \r)$. In particular, we write $e=e_0+r$ for $e_0\in\O_0$ and $r\in\r$. Let $\gamma_e:\k^\times\to G$ be an associated cocharacter for $e$, and assume (as we may by Corollary~\ref{Cor: gamma in G0}) that $\gamma_e(\k^\times)\subseteq G_0$. Furthermore, let $\gamma_{e_0}:\k^\times\to G_0$ be an associated cocharacter for $e_0$. As usual, we fix $C^e$ to be the centraliser of $\gamma_e(\k^\times)$ in $G^e$, and we also fix $C_0^{e_0}$ to be the centraliser of $\gamma_{e_0}(\k^\times)$ in $G_0^{e_0}$.

The restriction $\kappa_0:= \kappa|_{\g_0} : \g_0 \to \g_0^*$ defines a $G_0$-invariant isomorphism $\g_0 \to \g_0^*$ and we write $\chi = \kappa(e) \in \g^*$, $\chi_0 = \kappa_0(e_0) \in \g_0^*$ and set $d_{\chi_0} = \frac{1}{2}\dim (G_0\cdot\chi_0)$. It follows from our choices that
\begin{equation}
	\label{e:restrictchi}
	\chi|_{\g_0} = \chi_0.
\end{equation}

Consider the two subspaces
\begin{eqnarray}
& & \z^*_0 := \kappa_0\z(\g_0) = \Ann_{\g_0^*}([\g_0, \g_0]) \subseteq \g_0^*;\vspace{4pt}\\
& & \z^* := \kappa\z(\g_0) = \Ann_{\g^*}([\g_0, \g_0] \oplus \r \oplus \r_-) \subseteq \g^*.
\end{eqnarray}
Thanks to \eqref{e:restrictchi} the restriction map $\g^* \to \g_0^*$ induces a vector space isomorphism $\chi + \z^* \to \chi_0 + \z_0^*$, thus we have inclusions
\begin{eqnarray}
U_{\chi_0 + \z_0^*}(\g_0) \hookrightarrow U_{\chi + \z^*}(\p) \hookrightarrow U_{\chi + \z^*}(\g).
\end{eqnarray}

\begin{Lemma}
\label{L:inductiondimension}
$U_{\chi + \z^*}(\g)$ is a free $U_{\chi + \z^*}(\p)$-module of rank $p^{d_\chi - d_{\chi_0}}$.
\end{Lemma}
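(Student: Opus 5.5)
The plan has two parts: a PBW argument that gives freeness of rank $p^{\dim \r_-}$, and a dimension count that turns this rank into $p^{d_\chi - d_{\chi_0}}$.

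\emph{Freeness.} Fix ordered bases $y_1,\dots,y_m$ of $\r_-$ and $x_1,\dots,x_n$ of $\p$. By the PBW theorem, $U(\g)$ is a free right $U(\p)$-module on the ordered monomials $y^{a}=y_1^{a_1}\cdots y_m^{a_m}$ with $a_i\ge 0$. The $p$-central elements $\xi(y_i):=y_i^p-y_i^{[p]}$ lie in $Z_p(\g)$, and $Z_p(\g)=Z_p(\r_-)\otimes Z_p(\p)$, where $Z_p(\r_-)=\k[\xi(y_i)]$ is a polynomial ring. Using the filtration by degree, one rewrites the monomial basis: $U(\g)$ is free over $Z_p(\r_-)\otimes U(\p)$ with basis the restricted monomials $y^{a}$, $0\le a_i<p$.

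\emph{Passing to the reduction.} The defining ideal of $\chi+\z^*$ in $Z_p(\g)$ is generated by $\xi(x)-\chi(x)^p$ for $x\in[\g_0,\g_0]\oplus\r\oplus\r_-$. I would take these generators as $\xi(y_i)-\chi(y_i)^p$ for $y_i$ spanning $\r_-$, together with the analogous generators for $[\g_0,\g_0]\oplus \r\subseteq\p$. The first family, $\xi(y_i)-\chi(y_i)^p$, is a regular sequence of polynomial generators of $Z_p(\r_-)$. The second family lies in $Z_p(\p)$ and generates exactly the ideal defining $U_{\chi+\z^*}(\p)$ as a quotient of $U(\p)$. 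It follows that
$$U_{\chi+\z^*}(\g)\cong \bigl(Z_p(\r_-)/(\xi(y_i)-\chi(y_i)^p)\bigr)\otimes \cdots,$$
that is, $U_{\chi+\z^*}(\g)$ is free as a right $U_{\chi+\z^*}(\p)$-module on the $p^{\dim\r_-}$ restricted monomials. One also has to check that the inclusion $U_{\chi+\z^*}(\p)\hookrightarrow U_{\chi+\z^*}(\g)$ stated before the lemma agrees with this description. That follows because $I_{\chi+\z^*}U(\g)\cap U(\p)=I_{\chi+\z^*}^{\p}U(\p)$, which is read off from the free basis.

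\emph{The rank.} It remains to show $\dim\r_-=d_\chi-d_{\chi_0}$. By \eqref{e:LScodimension}, $\dim\g^e=\dim\g_0^{e_0}$, and $\g^e$, $\g^\chi$ have the same dimension via $\kappa$. Hence
$$2d_\chi-2d_{\chi_0}=(\dim\g-\dim\g^e)-(\dim\g_0-\dim\g_0^{e_0})=\dim\g-\dim\g_0=\dim\r+\dim\r_-=2\dim\r_-.$$

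I expect the main (mild) obstacle to be the bookkeeping in the reduction step. One must make sure that the generating set of the ideal of $\chi+\z^*$ splits cleanly into an $\r_-$-part and a $\p$-part. This uses the description $\z^*=\Ann([\g_0,\g_0]\oplus\r\oplus\r_-)$, together with the fact that for $x\in\z(\g_0)$ no condition is imposed. Everything else is standard PBW.
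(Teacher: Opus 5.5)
Your proposal is correct and follows essentially the same route as the paper. The paper observes that $\eta|_{\r_-}=\chi|_{\r_-}$ for all $\eta\in\chi+\z^*$, deduces from PBW that $U_{\chi+\z^*}(\g)\cong U_\chi(\r_-)\otimes U_{\chi+\z^*}(\p)$ as right $U_{\chi+\z^*}(\p)$-modules, and then does exactly your dimension count via \eqref{e:LScodimension}; your splitting of the defining ideal's generators into an $\r_-$-part and a $\p$-part is simply a more detailed version of that PBW step.
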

\begin{proof}
By construction we have $\g = \r_-\oplus\p$. For $\eta \in \chi + \z^*$ we have $\eta|_{\r_-} = \chi|_{\r_-}$ which implies that $U_{\chi + \z^*}(\g) = U_{\chi}(\r_-) \otimes U_{\chi + \z^*}(\p)$ as vector spaces, and in fact as right $U_{\chi + \z^*}(\p)$-modules. Therefore $U_{\chi + \z^*}(\g)$ is a free $U_{\chi + \z^*}(\p)$-module of  rank $p^{\dim \r_-}$ and applying \eqref{e:LScodimension} we have $\dim (\r_-) = \frac{1}{2}(\dim \g - \dim \g_0) = \frac{1}{2}((\dim \g - \dim \g^\chi) - (\dim \g_0 - \dim \g_0^{\chi_0})) = d_\chi - d_{\chi_0}.$
\end{proof}

Since $\chi(\r)=0$ and $\z^*(\r)=0$, it follows that every $U_{\chi_0 + \z_0^*}(\g_0)$-module may be lifted to a $U_{\chi + \z^*}(\p)$-module with $\r$ acting trivially. Thus we have a functor
\begin{eqnarray}
\label{e:classicalinductionfunctor}
\begin{array}{rcl}
U_{\chi_0 + \z_0^*}(\g_0)\lmod &\longrightarrow & U_{\chi + \z^*}(\g)\lmod;\vspace{4pt}\\
V &\longmapsto & U_{\chi + \z^*}(\g) \otimes_{U_{\chi + \z^*}(\p)} V.
\end{array}
\end{eqnarray}

Next we consider the natural homomorphisms $U(\g,e) \hookrightarrow \hU(\g,e) \to \hU_{\chi + \z^*}(\g,e)$ which leads to a pullback functor
\begin{eqnarray}
\label{e:pullbackfunctor}
\hU_{\chi + \z^*}(\g, e)\lmod \to U(\g,e)\lmod.
\end{eqnarray}

Let $\g_0=\bigoplus_i\g_0(i)$ be the grading corresponding to the cocharacter $\gamma_{e_0}$. Making choices as described in Section~\ref{ss:modularWalgebras} we construct the finite $W$-algebras $U(\g_0,e_0)$ and $\hU(\g_0, e_0)$. 
Let $\l \subseteq \g(-1)$ and $\l_0 \subseteq \g_0(-1)$ be choices of Lagrangian subspaces.

Thanks to \eqref{e: z perp -1}, we have
\begin{eqnarray}
	\label{e:zzeroisin}
	\chi_0 + \z_0^* & \subseteq & \chi_0 + \g_0(\!<\!0)^\perp;\\
	\label{e:zisin}
	\chi + \z^* & \subseteq & \chi + \n_0^\perp.
\end{eqnarray}

By Section~\ref{ss:Skryabinsequivalence} we obtain equivalences
\begin{eqnarray}
\label{e:skryabin1}
s_{\l_0}: \hU_{\chi_0 + \z_0^*}(\g_0, e_0)\lmod & \isoto & U_{\chi_0 + \z_0^*}(\g_0)\lmod;\\
\label{e:skryabin2}
s_\l^{-1}:  U_{\chi + \z^*}(\g)\lmod & \isoto & \hU_{\chi + \z^*}(\g, e)\lmod.
\end{eqnarray}
Now assembling \eqref{e:classicalinductionfunctor}, \eqref{e:pullbackfunctor}, \eqref{e:skryabin1}, \eqref{e:skryabin2} we define the {\it parabolic induction functor}
\begin{eqnarray}
\label{e:INDUCTION}
\Ind_{\g_0, e_0}^{\g,e}: \hU_{\chi_0 + \z_0^*}(\g_0, e_0) \lmod \longrightarrow U(\g,e)\lmod.
\end{eqnarray}
Note that $\hU_{\chi_0 + \z_0^*}(\g_0, e_0) \lmod$ is the category of {\it semi-restricted representations} described in the introduction. It follows from Lemma~\ref{L:extendedslice}(i) and \eqref{e:zisin} that every element of $\chi + \z^*$ is $N_0$-conjugate to a unique element of $\chi + \cv$. Composing with the inverse of the restriction isomorphism $\chi + \z^* \to \chi_0 + \z_0^*$ we obtain
\begin{eqnarray}
\label{e:inductiononpcharacters}
\chi_0 + \z_0^*\to\chi + \cv.
\end{eqnarray}

The following is the main result of Section~\ref{S:parabolicinduction}. Part (i) can be deduced from the work we have already done, but parts (ii) and (iii) will require some results from Sections~\ref{ss:classicalparabolicinduction} and \ref{ss:twistingpcharacters}.
\begin{Theorem}
\label{T:parabolicinductiontheorem}
The following hold:
\begin{enumerate}
\setlength{\itemsep}{4pt}
\item[(i)] The parabolic induction functor \eqref{e:INDUCTION} is dimension preserving.
\item[(ii)] On the level of $p$-characters the functor coincides with \eqref{e:inductiononpcharacters}.
\item[(iii)] The restriction $\hE_{\chi_0 + \z_0^*}(\g_0, e_0) \to \E(\g,e)$ is a finite morphism of algebraic varieties, which is $\Gamma$-equivariant if $\Gamma \subseteq C_0^{e_0}\cap C^e$.
\end{enumerate}
\end{Theorem}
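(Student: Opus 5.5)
Part (i) is a dimension count along the four functors in \eqref{e:INDUCTION}. Let $V$ be a finite-dimensional semi-restricted $\hU(\g_0,e_0)$-module. By Theorem~\ref{T:equivariantSkryabin}(i) applied to $\g_0$, and \eqref{e:zzeroisin}, we get $\dim s_{\l_0}(V)=p^{d_{\chi_0}}\dim V$. Lemma~\ref{L:inductiondimension} shows that \eqref{e:classicalinductionfunctor} multiplies dimension by $p^{d_\chi-d_{\chi_0}}$. The inverse Skryabin equivalence \eqref{e:skryabin2} divides by $p^{d_\chi}$, again by Theorem~\ref{T:equivariantSkryabin}(i). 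Pullback \eqref{e:pullbackfunctor} does not change dimension. Taking the product gives $\dim V$.

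For (ii) it is enough to treat a module $V$ with a single $p$-character, for instance a simple one. Suppose $V$ has $p$-character $\eta_0\in\chi_0+\z_0^*$. Skryabin's equivalence \eqref{e:skryabin1} is $Z_p$-equivariant (Proposition~\ref{P:Skryabin}), so $s_{\l_0}(V)$ has $p$-character $\eta_0$. Let $\eta\in\chi+\z^*$ be the unique element restricting to $\eta_0$. Then $Z_p(\p)$ acts on the lift of $V$ through $\eta|_\p$. The central subalgebra $Z_p(\g)=Z_p(\r_-)\otimes Z_p(\p)$ acts on the induced module through $\eta$, because $\eta$ and $\chi$ agree on $\r_-$. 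The Skryabin equivalence \eqref{e:skryabin2} is $Z_p(\g)$-equivariant, so the corresponding $\hU(\g,e)$-module has $p$-character $\eta\in\chi+\n_0^\perp$. Restricting to $U(\g,e)$ means restricting to $Z_p(\g,e)=\hZ_p(\g,e)^{N_0}$. By Lemma~\ref{L:extendedslice}(i) this gives the $p$-character $\pi(\eta)$, which is the unique point of $\chi+\cv$ in $N_0\cdot\eta$. This is exactly \eqref{e:inductiononpcharacters}.

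For (iii), first note that (i) shows the functor sends one dimensional modules to one dimensional modules, so we get a map of sets $\hE_{\chi_0+\z_0^*}(\g_0,e_0)\to\E(\g,e)$. To make this a morphism, I would work with the composite functor as a whole. It is given by tensoring with a bimodule $B$ which is an $(\hU(\g,e),\hU_{\chi_0+\z_0^*}(\g_0,e_0))$-bimodule, projective of rank one on the appropriate side. I would realise $B$ as $\hU_{\chi+\z^*}(\g,e)$-equivalent to $e_{\l}\,U_{\chi+\z^*}(\g)\otimes_{U(\p)}Q_{\l_0}$ for a suitable idempotent. Applying this to the universal one dimensional module over $R=\hU_{\chi_0+\z_0^*}(\g_0,e_0)^\ab$ gives a rank one $R$-module with $U(\g,e)$-action. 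Locally on $\Spec R$ this defines a homomorphism $U(\g,e)^\ab\to R$, and hence the morphism.

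Finiteness then comes from the $p$-centres. Both $\hE_{\chi_0+\z_0^*}(\g_0,e_0)\to\chi_0+\z_0^*$ and $\E(\g,e)\to\chi+\cv$ are finite, by the finiteness of the $W$-algebras over their $p$-centres. By (ii) the square formed with \eqref{e:inductiononpcharacters} commutes. So it suffices to show that \eqref{e:inductiononpcharacters} is finite. That is a morphism between affine spaces, which I would show is finite using Lemma~\ref{L:finitelemma} with the contracting $\k^\times$-action \eqref{e: contracting action} supplying a positive grading. The radical condition amounts to this: the only point of $\chi+\z^*$ that is $N_0$-conjugate to $\chi$ is $\chi$ itself. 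I expect this to follow from $\gamma_e(\k^\times)\subseteq G_0$, since it gives weight considerations in the grading, together with the fact that $\z^*$ is $\mu$-fixed of positive weight.

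$\Gamma$-equivariance is then assembled stepwise. $\Gamma\subseteq C_0^{e_0}$ makes \eqref{e:skryabin1} equivariant on simple modules by Theorem~\ref{T:equivariantSkryabin}(ii). $\Gamma\subseteq G_0\subseteq P$ makes \eqref{e:classicalinductionfunctor} equivariant by the twisting remark in Section~\ref{ss:twists}. $\Gamma\subseteq C^e$ handles \eqref{e:skryabin2}, via \eqref{e:zisin} and Theorem~\ref{T:equivariantSkryabin}(ii). The pullback \eqref{e:pullbackfunctor} is equivariant since $U(\g,e)\to\hU(\g,e)$ is. I expect the main obstacles to be the scheme-theoretic construction of the morphism and the finiteness of \eqref{e:inductiononpcharacters}. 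For the latter I would first try the weight argument with $\mu$.
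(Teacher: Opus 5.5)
You handle (i) and (ii) as the paper does. For the morphism in (iii) you take a different route: you tensor the universal one-dimensional module over $\hU_{\chi_0+\z_0^*}(\g_0,e_0)^{\ab}$ with the bimodule $\hU$-equivalent to $U_{\chi+\z^*}(\g)\otimes_{U(\p)}Q_{\l_0}$. This is legitimate and arguably cleaner than the paper's GIT quotients $X/\!/H_0\to Y/\!/H$ of representation varieties. Your reduction of finiteness to finiteness of \eqref{e:inductiononpcharacters} is also what the paper does: it passes through the $p$-centres and then applies Lemma~\ref{L:finitelemma} with the contracting grading (Lemma~\ref{L:finitequotient}).

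The gap is the step you yourself flag as the crux: showing $(N_0\cdot\chi)\cap(\chi+\z^*)=\{\chi\}$. Weight considerations cannot establish this.
\begin{itemize}
\item Since $\gamma_e(\k^\times)$ normalises $N_0$ and $t^2\gamma_e(t^{-1})\cdot\chi=\chi$, the orbit $N_0\cdot\chi$ is $\mu$-stable. Also $\chi+\z^*$ is $\mu$-stable, with $\mu$ acting on $\z^*$ by weight $2$.
\item Hence the fibre $\{z\in\z^*:\chi+z\in N_0\cdot\chi\}$ is automatically a closed cone. The $\mu$-weights tell you only this, not that the cone is $\{0\}$. Getting the latter would need quasi-finiteness, which is what you are trying to prove.
\item Tracking the $\gamma_e$-grading degree by degree does not close quickly either. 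In degree $0$ you only learn $z\in[\g(-2),e]$, and this space can meet $\z(\g_0)$ nontrivially. For $\g=\sl_2$ with $e$ regular and $\g_0=\t$, one has $[\g(-2),e]=\k h=\z(\g_0)$. The vanishing of $z$ only appears from the degree $-2$ term, since $\exp(s\,\ad f)\cdot e=e-sh-s^2f$. In general you would have to control all higher-order terms.
\end{itemize}

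The missing idea is nilpotency. Every element of $N_0\cdot e$ is nilpotent, but $e+z$ is not nilpotent when $0\neq z\in\z(\g_0)$. To see this, write $e=e_0+r$ with $r\in\r$. Then $e+z\in\p$, and its image under the restricted homomorphism $\p\to\p/\r\cong\g_0$ is $e_0+z$, whose semisimple part is $z\neq 0$. Equivalently, as in the paper, $e+\z(\g_0)$ lies in a Borel subalgebra with $\z(\g_0)$ in its Cartan subalgebra and $e$ in its nilradical. Either way, the vanishing locus of the pulled-back homogeneous generators is the single point $e$, and only then does Lemma~\ref{L:finitelemma} apply.

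A minor point on $\Gamma$-equivariance of \eqref{e:skryabin2}. Theorem~\ref{T:equivariantSkryabin}(ii) needs simple modules. You should say that the induced modules are minimal by (i), hence simple; the paper records exactly this.
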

\begin{proof}
Part (i) follows from Theorem~\ref{T:equivariantSkryabin} and Lemma~\ref{L:inductiondimension}.

Recall that Skryabin's equivalence respects $p$-characters and that parabolic induction \eqref{e:classicalinductionfunctor} sends a module with $p$-character $\eta|_{\g_0}\in\chi_0+\z_0^*$ to a module with $p$-character $\eta \in \chi + \z^*$. Part (ii) will therefore follow from Proposition~\ref{P:Walgebrapullback}.

The first part of (iii) follows from Propositions~\ref{P:classicalinduction} and  \ref{P:Walgebrapullback}, which occupy the rest of Section~\ref{S:parabolicinduction}. For the second part, note that $\Gamma \subseteq C_0^{e_0} \subseteq G_0$ implies that $\Gamma$ fixes $\chi_0$ and stabilises $\chi_0 + \z_0^*$. Therefore it acts on all of the categories involved in the construction of the functor. Now the claim follows from the final remarks of Section~\ref{ss:twists}, along with Theorem~\ref{T:equivariantSkryabin} (which is applicable by \eqref{e:zzeroisin}, \eqref{e:zisin}, and the fact that \eqref{e:classicalinductionfunctor} sends minimal modules to minimal modules).
\end{proof}
 
\subsection{Classical parabolic induction as a finite morphism} 
\label{ss:classicalparabolicinduction}

In this section we show that \eqref{e:classicalinductionfunctor} is a finite morphism between two representation schemes which parametrise minimal modules. First we need a lemma.

\begin{Lemma}
\label{L:dimensionofE}
\begin{enumerate}
\setlength{\itemsep}{4pt}
\item[(i)] For $\eta \in \chi_0 + \z_0^*$ the algebra $\hU_\eta(\g_0, e_0)$ admits a one dimensional module.
\item[(ii)]  For $\eta \in \chi + \z^*$ the algebra $\hU_\eta(\g, e)$ admits a one dimensional module.
\item[(iii)] $\dim \hE_{\chi_0 + \z_0^*}(\g_0,e_0) = \dim \hE_{\chi + \z^*}(\g,e) = \dim \z(\g_0)$.
\end{enumerate}
\begin{proof}
First we prove (i) and (ii). Combining Lemma~\ref{L:Humphreyscorollary} with \eqref{e:Premetsequivalence} it will suffice to show that $\dim \g_0^\eta = \dim \g_0^{\chi_0}$ for $\eta \in \chi_0 + \z_0^*$ and that $\dim \g^\eta = \dim \g^\chi$ for $\eta \in \chi + \z^*$. The first of these assertions follows from the definition of $\z_0^*$. The second follows from Lemma~\ref{L:extendedslice}(iii), making use of the hypotheses $\chi|_{\r} = 0$ and $\chi\vert_{\g_0}=\chi_0$.

Now we move on to prove (iii). The natural map $Z_p(\g_0) = \k[(\g_0^*)^{(1)}] \to \hU_{\chi_0 + \z_0^*}(\g_0, e_0)^\ab$ is finite because $U(\g_0, e_0)$ is finite over the $p$-centre and it certainly factors through the quotient  $\k[(\g_0^*)^{(1)}] \onto \k[(\chi_0 + \z_0^*)^{(1)}]$. By Lemma~\ref{L:basiclemma} the kernel of the map $\k[(\chi_0 + \z_0^*)^{(1)}] \to \hU_{\chi_0 + \z_0^*}(\g_0, e_0)^\ab$ lies inside the intersection of the maximal ideals $I_\eta \subseteq \k[(\chi_0 + \z_0^*)^{(1)}]$ such that the $p$-character $\eta$ supports a one dimensional $U(\g_0, e_0)$-module. By (i) we deduce that $\k[(\chi_0 + \z_0^*)^{(1)}] \into \hU_{\chi_0 + \z_0^*}(\g_0, e_0)^\ab$ and so there is a finite, dominant morphism $\hE_{\chi_0 + \z_0^*}(\g_0,e_0) \to (\chi_0 + \z_0^*)^{(1)}$, which confirms $\dim \hE_{\chi_0 + \z_0^*}(\g_0,e_0) = \dim \z(\g_0)$. The proof of $\dim \hE_{\chi + \z^*}(\g,e) = \dim \z(\g_0)$ is identical using (ii) in place of (i).
\end{proof}

\end{Lemma}

When $d \in \mathbb{N}$ and $A$ is a $\k$-algebra we write $\Rep_d(A)$ for the set of isomorphism classes of $d$-dimensional representations of $A$. By Lemma~\ref{L:inductiondimension} the functor \eqref{e:classicalinductionfunctor} restricts to a map
\begin{eqnarray}
\label{e:setmap}
\begin{array}{rcl}
\Rep_{p^{d_{\chi_0}}}U_{\chi_0 + \z_0^*}(\g_0) &\longrightarrow & \Rep_{p^{d_\chi}} U_{\chi + \z^*}(\g).
\end{array}
\end{eqnarray}

\begin{Proposition}
\label{P:classicalinduction}
$\Rep_{p^{d_{\chi_0}}} U_{\chi_0 + \z_0^*}(\g_0)$ and $\Rep_{p^{d_\chi}} U_{\chi + \z^*}(\g)$ each carry the structure of an affine algebraic variety, and \eqref{e:setmap} is a finite morphism.
\end{Proposition}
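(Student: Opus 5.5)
The plan is to transport everything to the extended finite $W$-algebras, where minimal modules become one dimensional modules and the representation sets become the varieties $\hE$. Then finiteness will follow from finiteness over the $p$-centres, using Lemma~\ref{L:finitelemma}.

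\emph{Step 1: varieties.} By Proposition~\ref{P:Skryabin}, applied to $(\g_0,e_0)$ and to $(\g,e)$ and restricted to $p$-characters in $\chi_0+\z_0^*$ and in $\chi+\z^*$, Skryabin's functors are equivalences. By Theorem~\ref{T:equivariantSkryabin}(i) they multiply dimensions by $p^{d_{\chi_0}}$, respectively $p^{d_\chi}$. They therefore give bijections
\[\Rep_{p^{d_{\chi_0}}}U_{\chi_0+\z_0^*}(\g_0)\cong \hE_{\chi_0+\z_0^*}(\g_0,e_0),\qquad \Rep_{p^{d_\chi}}U_{\chi+\z^*}(\g)\cong \hE_{\chi+\z^*}(\g,e),\]
using Lemma~\ref{L:basiclemma}(i). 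The affine variety structures are transported along these bijections. Both varieties are affine, and by Lemma~\ref{L:dimensionofE}(iii) both have dimension $\dim\z(\g_0)$.

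\emph{Step 2: the map is a morphism.} Transported, \eqref{e:setmap} becomes a map $\phi:\hE_{\chi_0+\z_0^*}(\g_0,e_0)\to\hE_{\chi+\z^*}(\g,e)$. I would show it is algebraic by realising the induced module in families. Let $A_0=\hU_{\chi_0+\z_0^*}(\g_0,e_0)^\ab$. Form the universal one dimensional module $A_0$ over $\hU(\g_0,e_0)$. Apply $s_{\l_0}$ to get a $U(\g_0)\otimes A_0$-module, free over $A_0$ of rank $p^{d_{\chi_0}}$. Then induce, obtaining a $U(\g)\otimes A_0$-module that is free of rank $p^{d_\chi}$ by Lemma~\ref{L:inductiondimension}. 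Finally apply the family version of $s_\l^{-1}$, namely $M\mapsto M^{\m_{\l,\chi}}$ (Whittaker vectors). Freeness over $A_0$ of the resulting rank-one module should follow from the Morita equivalence of Proposition~\ref{P:Skryabin}, which holds over the base $\chi+\m_\l^\perp$ and commutes with base change. The result is a rank-one $\hU(\g,e)\otimes A_0$-module, i.e.\ an algebra map $\hU_{\chi+\z^*}(\g,e)^\ab\to A_0$, hence a morphism inducing $\phi$ on closed points. Checking this compatibility with base change is the technical core. If it becomes awkward, I would instead work directly with the $U(\g)$-representation schemes, viewed as a closed subscheme of a quotient of module varieties by $\GL$. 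I would avoid that route if possible, since quotient issues arise there.

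\emph{Step 3: finiteness.} There is a commutative square whose bottom map is the restriction isomorphism $\chi+\z^*\to\chi_0+\z_0^*$ on $p$-characters, which records how induction acts on $p$-characters. Its vertical maps are the finite morphisms $\hE_{\chi_0+\z_0^*}(\g_0,e_0)\to(\chi_0+\z_0^*)^{(1)}$ and $\hE_{\chi+\z^*}(\g,e)\to(\chi+\z^*)^{(1)}$ from the proof of Lemma~\ref{L:dimensionofE}. Since $\hE_{\chi_0+\z_0^*}(\g_0,e_0)$ is finite over $(\chi_0+\z_0^*)^{(1)}$, it is finite over $(\chi+\z^*)^{(1)}$ through the bottom isomorphism. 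In ring terms, $A_0$ is a finite module over the image of $\k[(\chi+\z^*)^{(1)}]$. That image factors through $\hU_{\chi+\z^*}(\g,e)^\ab$, so $A_0$ is finite over $\hU_{\chi+\z^*}(\g,e)^\ab$ as well, and $\phi$ is finite. The main obstacle is therefore Step 2, the algebraicity of the induced family. Finiteness itself is almost formal once that square is known to commute.
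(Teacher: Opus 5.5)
Your proposal is correct. Steps~1--2 take a different route from the paper, while Step~3 is the paper's finiteness argument.

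\textbf{Steps 1--2.} The paper gives the two sets an intrinsic variety structure as categorical quotients $X/\!/\GL_{p^{d_{\chi_0}}}$ and $Y/\!/\GL_{p^{d_\chi}}$ of module varieties. These are tuples of matrices satisfying the defining relations of the reduced enveloping algebras, and the paper uses Procesi's theorem together with the fact that every module of these dimensions is simple. Algebraicity of induction is then immediate: the matrices of $U_{\chi+\z^*}(\g)\otimes_{U_{\chi+\z^*}(\p)}V$ are polynomial in those of $V$, giving a morphism $X\to Y$ that descends to the quotients. Only afterwards, for finiteness, are these quotients identified with $\hE$.

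You instead put the structure on $\hE$ from the outset and build the comorphism $\hU_{\chi+\z^*}(\g,e)^\ab\to A_0$ from a universal family. This avoids GIT entirely and lands directly on the spaces used in Theorem~\ref{T:parabolicinductiontheorem}(iii), at the price of working over $A_0$.

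The base change you flag as the technical core is actually formal. The functor $s_{\l_0}$ and induction are tensor products with bimodules. The Whittaker functor is $\operatorname{Hom}_{U(\g)}(Q_\l,-)$ with $Q_\l$ finitely generated projective over $U_{\chi+\m_\l^\perp}(\g)$; under Proposition~\ref{P:Skryabin} it is just cutting down by a matrix idempotent. So all three functors commute with $-\otimes_{A_0}\k_x$.

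One small correction: the output $L$ need not be free over $A_0$. It is a direct summand of $M\cong L^{\oplus p^{d_\chi}}$, hence projective of rank one. That suffices, since $\End_{A_0}(L)=A_0$.

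\textbf{Step 3.} This is the paper's step (iv), and you have the directions right. Finiteness requires $\hU_{\chi_0+\z_0^*}(\g_0,e_0)^\ab$ to be finite over the image of $\hU_{\chi+\z^*}(\g,e)^\ab$. As you say, this follows from finiteness of $\hU(\g_0,e_0)$ over its $p$-centre. The paper's diagram and concluding sentence in that step have the two abelianisations interchanged.
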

\begin{proof}
For the duration of the proof we use the notation $D_{\chi} = p^{d_\chi}$ and $D_{\chi_0} = p^{d_{\chi_0}}$.
\vspace{2pt}

(i) {\it $\Rep_{D_{\chi_0}} U_{\chi_0 + \z_0^*}(\g_0)$ as a variety}. \ \ 
Choose a basis $x_1,...,x_m$ for $\g_0$ and let $I$ denote the kernel of the resulting map $\k\langle x_1,...,x_m\rangle \onto U_{\chi_0 + \z_0^*}(\g_0)$. Let $X$ denote the affine variety consisting of tuples $(A_1,...,A_m) \in \Mat_{D_{\chi_0}}(\k)^m$ such that $F(A_1,...,A_m) = 0$ for all $F \in I$. The group $H_0 = \GL_{D_{\chi_0}}(\k)$ acts on $X$ by simultaneous conjugation and the points of $X/\!/ H_0 = \Spec \k[X]^{H_0}$ parametrise the semisimple representations of dimension $D_{\chi_0}$(see \cite[Theorem~4.1]{Pro} for example). Since $\z_0^* = \kappa_0\z(\g_0)$ we know that $\g_0^{\chi_0} = \g_0^{\eta}$ for all $\eta \in \chi_0 + \z_0^*$ and it follows from Proposition~\ref{P:Skryabin} that every finite dimensional $U_{\chi_0 + \z_0^*}(\g_0)$-module has dimension divisible by $D_{\chi_0}$. In particular, $\Rep_{D_{\chi_0}} U_{\chi_0 + \z_0^*}(\g_0)$ consists of simple modules, thus we may identify $X/\!/ H_0 = \Rep_{D_{\chi_0}} U_{\chi_0 + \z_0^*}(\g_0)$.
\vspace{2pt}

(ii) {\it $\Rep_{D_\chi} U_{\chi + \z^*}(\g)$ as a variety.} \ \
The description of the variety structure on $\Rep_{D_\chi} U_{\chi + \z^*}(\g)$ is very similar to part (i), however we provide some details for convenience, and to fix notation. Let $x_1,...,x_n$ be a basis for $\g$, for some $n \ge m$. We let $Y$ denote the subvariety of $\Mat_{D_\chi}(\k)^n$ consisting of tuples of matrices satisfying the relations in the kernel of $\k\langle x_1,...,x_n\rangle \onto U_{\chi + \z^*}(\g)$. Let $H = \GL_{D_\chi}(\k)$ act on $Y$ by simultaneous conjugation, and consider $Y/\!/ H$. Thanks to Lemma~\ref{L:extendedslice}(iii) we see that $\dim \g^\chi = \dim \g^\eta$ for all $\eta \in \chi + \z^*$ and thus $\Rep_{D_\chi} U_{\chi + \z^*}(\g)$ consists of simple modules, leading us to identify $\Rep_{D_\chi} U_{\chi + \z^*}(\g) = Y/\!/H$.
\vspace{2pt}

(iii) {\it Parabolic induction is a morphism of varieties.} \ \ 
If $V$ is a fixed representation of $\g_0$ corresponding to a tuple $(A_1,...,A_m) \in X$ then the entries of matrices $(A_1',...,A_n')$ corresponding to the action of $\g$ on $U_{\chi + \z^*}(\g) \otimes_{U_{\chi + \z^*}(\p)} V$ are polynomials in the entries of $(A_1,...,A_m)$. In other words parabolic induction defines a morphism $X \to Y$. Since induction respects isomorphism classes this morphism factors to a morphism $X/\!/H_0 \to Y/\!/H$ as required.
\vspace{2pt}

(iv) {\it The morphism is finite.} \ \ By Proposition~\ref{P:Skryabin} we have $U_{\chi_0 + \z_0^*}(\g_0) \cong \Mat_{D_{\chi_0}} \hU_{\chi_0 + \z_0^*}(\g_0, e_0)$ and $U_{\chi + \z^*}(\g) \cong \Mat_{D_\chi} \hU_{\chi + \z^*}(\g,e)$, leading to isomorphisms of algebraic varieties $\Rep_{D_{\chi_0}} U_{\chi_0 + \z_0^*}(\g_0) \cong \hE_{\chi_0 + \z^*_0}(\g_0, e_0)$ and $\Rep_{D_\chi} U_{\chi + \z^*}(\g) \cong \hE_{\chi + \z^*}(\g,e)$. We abuse notation slightly, replacing $\hU_{\chi_0 + \z_0^*}(\g_0,e_0)^\ab$ and $\hU_{\chi + \z^*}(\g,e)^\ab$ by their reduced quotients. Now the morphism $\Rep_{D_{\chi_0}}U_{\chi_0 + \z_0^*}(\g_0) \to \Rep_{D_\chi} U_{\chi + \z^*}(\g)$ induces an algebra homomorphism $\hU_{\chi + \z^*}(\g,e)^\ab \to \hU_{\chi_0 + \z_0^*}(\g_0,e_0)^\ab$. The $p$-character $\eta \in \chi + \z^*$ of an induced module $U_{\chi + \z^*}(\g) \otimes_{U_{\chi + \z^*}(\p)} V$ is just the inverse image of $\eta$ under the (bijective) restriction map $\chi + \z^* \to \chi_0 + \z_0^*$, so the following diagram commutes
\begin{center}
\begin{tikzpicture}[node distance=3cm, auto]
 \node (A) {$\k[(\chi_0 + \z_0^*)^{(1)}]$};
 \begin{scope}[node distance=1.5cm]
 \node (B) [below of= A] {$\hU_{\chi_0 + \z_0^*}(\g_0, e_0)^\ab$};
 \node (G) [right of=A] {};
\node (H) [right of=B] {};  
 \end{scope}
 \node (C) [right of= G] {$\k[(\chi + \z^*)^{(1)}]$};
  \node (D) [right of= H]{$\hU_{\chi + \z^*}(\g,e)^\ab$};
  \node (F) [right of= C]{$\hZ_p(\g,e)$};

 \draw[right hook->] (A) to node {$ $} (B);
 \draw[->] (A) to node {$\sim$} (C);
 \draw[->] (B) to node {$ $} (D);
 \draw[right hook->] (C) to node {$ $} (D);
  \draw[->>] (F) to node {$ $} (C);
\end{tikzpicture}
\end{center}
The injectivity of the vertical arrows was observed in the proof of Lemma~\ref{L:dimensionofE}. Since $\hU(\g,e)$ is finite over its $p$-centre it follows that $\hU_{\chi + \z^*}(\g,e)^\ab$ is finite over $\k[(\chi + \z^*)^{(1)}]$. We conclude that the composition $\k[(\chi_0 + \z_0^*)^{(1)}] \to \hU_{\chi_0 + \z_0^*}(\g_0, e_0)^\ab\to \hU_{\chi + \z^*}(\g,e)^\ab$ is finite, whence $\hU_{\chi + \z^*}(\g,e)^\ab$ is finite over $\hU_{\chi_0 + \z_0^*}(\g_0, e_0)^\ab$.
\end{proof}

\subsection{The pullback functor as a finite morphism}
\label{ss:twistingpcharacters}
Here we show that the functor \eqref{e:pullbackfunctor} induces a finite morphism on the varieties of one dimensional representations. 
In particular, the homomorphism $U(\g,e) \hookrightarrow \hU(\g,e) \to \hU_{\chi + \z^*}(\g,e)$ induces a map via pullback
\begin{eqnarray}
\label{e:Walginclusions}
\hE_{\chi + \z^*}(\g, e) \to \E(\g,e).
\end{eqnarray}
By Lemma~\ref{L:extendedslice} we have a map
\begin{eqnarray}
\label{e:slicemap}
\chi + \z^* \into \chi + \m_0^\perp \onto (\chi + \m_0^\perp)/\!/N_0 = \chi + \cv.
\end{eqnarray}

\begin{Lemma}
\label{L:finitequotient}
The composition \eqref{e:slicemap} is a finite morphism.
\end{Lemma}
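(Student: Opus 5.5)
The plan is to apply Lemma~\ref{L:finitelemma}, using the contracting $\k^\times$-action \eqref{e: contracting action} to supply gradings. Write $\phi : \chi + \z^* \to \chi + \cv$ for the composition \eqref{e:slicemap}. Since $\z(\g_0)\subseteq \g(0)$, the action $\mu(t)\cdot\eta = t^2\gamma_e(t^{-1})\cdot\eta$ fixes $\chi$. On $\z^* = \kappa(\z(\g_0))$ the torus $\gamma_e(\k^\times)\subseteq G_0$ acts trivially, so $\mu(t)$ acts on $\chi+\z^*$ by $\chi+\zeta \mapsto \chi + t^2\zeta$. Thus $\chi+\z^*$ is $\mu$-stable and $\k[\chi+\z^*]$ is a polynomial ring generated in degree $2$.

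The action $\mu$ also preserves $\chi+\m_0^\perp$, because $\m_0$ is graded. Since $\gamma_e(\k^\times)$ normalises $N_0$, the isomorphism of Lemma~\ref{L:extendedslice}(i) is equivariant for $\mu$ acting on $\chi+\cv$ (with $N_0$ twisted by conjugation). Hence $\phi$ is $\k^\times$-equivariant. Under the contracting action, $\k[\chi+\cv]$ is a positively graded polynomial ring, and the comorphism $\phi^*$ sends homogeneous generators (linear coordinates on $\cv$, of positive degree because the action contracts to $\chi$) to homogeneous elements $f_1,\dots,f_m$ of $A := \k[\chi+\z^*]$. Finiteness of $\phi$ is equivalent to $A$ being finite over $\k[f_1,\dots,f_m]$. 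By Lemma~\ref{L:finitelemma}, it suffices to show $\sqrt{(f_1,\dots,f_m)} = A_{>0}$, that is, that the zero locus of the $f_i$ is the single point $\chi$, or equivalently $\phi^{-1}(\chi) = \{\chi\}$.

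This fibre computation is the main step. Suppose $\eta\in\chi+\z^*$ with $\phi(\eta)=\chi$. Then $\eta$ is $N_0$-conjugate to $\chi$, so in particular $\eta\in G\cdot\chi$ is nilpotent. On the other hand, $\eta = \chi + \kappa(z)$ with $z\in\z(\g_0)$. Transporting by $\kappa$, we get $e+z \in G\cdot e$, where $e = e_0 + r$ with $e_0\in\g_0$ and $r\in\r$. Now $z$ is central in $\g_0$, hence semisimple, and it commutes with $e_0$. The element $e+z$ lies in $\p$, and its image under the projection $\p\to\g_0$ is $z+e_0$. This projection is a Lie algebra homomorphism and preserves nilpotency, so the nilpotent element $e+z$ must have nilpotent image $z + e_0$. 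Since $z$ and $e_0$ commute with $z$ semisimple and $e_0$ nilpotent, the Jordan decomposition of $z+e_0$ has semisimple part $z$, which forces $z=0$ and hence $\eta=\chi$.

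With the fibre established as a single point, Lemma~\ref{L:finitelemma} gives that $A$ is finite over the image of $\k[\chi+\cv]$, so \eqref{e:slicemap} is a finite morphism. The step I expect to need most care is checking that the identification $(\chi+\m_0^\perp)/\!/N_0 \cong \chi+\cv$ is genuinely $\mu$-equivariant, so that the $f_i$ are homogeneous of positive degree. This should follow because $\gamma_e$ normalises $N_0$ and the map in Lemma~\ref{L:extendedslice}(i) is the coadjoint action map.
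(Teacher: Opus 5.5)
Your proof is correct and follows the paper's route. Both grade the coordinate rings via the contracting action, use Lemma~\ref{L:finitelemma} to reduce to showing that $\chi$ is the only point of $\chi+\z^*$ lying over $\chi$, and deduce this from the fact that $e+z$ with $z\in\z(\g_0)$ is nilpotent only when $z=0$. The one cosmetic difference is in that last step: the paper places $e+\z(\g_0)$ inside a Borel subalgebra, whereas you project $\p\to\g_0$ and use uniqueness of the Jordan decomposition. Both versions rest on $\z(\g_0)$ consisting of toral (semisimple) elements.
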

\begin{proof}
We prove the equivalent statement in $\g$: the map $\rho : e + \z(\g_0) \to (e + \m_0^{\perp})/\!/N_0$ is a finite morphism, where we slightly abuse notation by writing $\m_0^{\perp} = \{x\in \g\mid \kappa(\m_0, x)= 0\} \subseteq \g$ for the duration of the proof.
 Equivalently we show that the algebra homomorphism $\rho^* : \k[e + \m_0^\perp]^{N_0} \to \k[e + \z(\g_0)]$ is finite. Observe that both $\k[e + \m_0^\perp]^{N_0}$ and $\k[e + \z(\g_0)]$ are graded via the contracting $\k^\times$-action on $\chi + \m_0^\perp$, and that $\rho^*$ is a graded homomorphism. Let $f_1,...,f_n \in \k[e + \m_0^\perp]^{N_0}$ be a collection of homogeneous generators of $\k[e + \m_0^\perp]^{N_0}$. By definition, their vanishing locus is precisely $\{e\}$.

By Lemma~\ref{L:extendedslice}(i) for $z\in \z(\g_0)$ there is a unique pair $(g, v) \in N_0 \times \v$ such that $g\cdot (e + v) = e + z$. With these choices we have $\rho^*(f_i)(e + z) = f_i(e + v)$ and it follows that the vanishing locus of the ideal $(\rho^*(f_i) \mid i=1,...,n)$ in $e + \z(\g_0)$ is $(N_0 \cdot e) \cap (e + \z(\g_0))$. 

We claim that an element $e + z \in e + \z(\g_0)$ is nilpotent if and only if $z = 0$. To see this, we recall from the set-up in Section~\ref{ss:constructingthefunctor} that $e = e_0 + r$ for $r\in\r$. Let $T \subseteq G_0$ be a maximal torus and choose a system of positive roots for the action of $T$ on $\g$ such that $e_0$ and $\r$ both lie in the subspace spanned by positive roots. Now $e + \z(\g_0)$ lies inside the corresponding Borel subalgebra of $\g$, and it follows that $e$ is the only nilpotent element, as claimed.

Combining the previous two paragraphs we see that the vanishing locus of the ideal $(\rho^*(f_i) \mid i=1,...,n)$ in $e + \z(\g_0)$ is $\{e\}$. Now the ideal $I$ generated by $\{\rho^*(f_i) \mid i=1,...,n\}$ satisfies the hypotheses of Lemma~\ref{L:finitelemma} which completes the proof.
\end{proof}

\begin{Proposition}
\label{P:Walgebrapullback}
The map \eqref{e:Walginclusions} is a 
finite morphism of algebraic varieties. On the level of $p$-characters the map coincides with \eqref{e:slicemap}.
\end{Proposition}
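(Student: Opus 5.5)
We first identify the effect on $p$-characters and then deduce finiteness from Lemma~\ref{L:finitequotient}.

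\emph{Step 1: $p$-characters.} A point of $\hE_{\chi + \z^*}(\g,e)$ is a one dimensional $\hU(\g,e)$-module $V$ on which $\hZ_p(\g,e) = \k[(\chi+\m_0^\perp)^{(1)}]$ acts through the maximal ideal of some $\eta \in \chi + \z^*$. Pulling back to $U(\g,e) = \hU(\g,e)^{N_0}$, the $p$-centre $Z_p(\g,e) = \hZ_p(\g,e)^{N_0}$ acts on $V$ through the restriction of that character to $N_0$-invariants. Under the identification $Z_p(\g,e) = \k[(\chi+\m_0^\perp)^{(1)}]^{N_0} = \k[(\chi+\cv)^{(1)}]$ of Lemma~\ref{L:extendedslice}(i), this character is evaluation at $\pi(\eta)$, the unique point of $\chi+\cv$ that is $N_0$-conjugate to $\eta$. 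This is exactly the composition \eqref{e:slicemap}, which gives the second assertion.

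\emph{Step 2: morphism and commutative square.} The map \eqref{e:Walginclusions} is the morphism $\Spec$ of the algebra homomorphism
\[
\phi : U(\g,e)^\ab \to \hU_{\chi+\z^*}(\g,e)^\ab
\]
induced by $U(\g,e)\into\hU(\g,e)\onto\hU_{\chi+\z^*}(\g,e)$; here we pass to reduced quotients if necessary. By Step 1 there is a commutative square: the top row is the map $Z_p(\g,e)=\k[(\chi+\cv)^{(1)}] \to \k[(\chi+\z^*)^{(1)}]$ dual to \eqref{e:slicemap}, the vertical maps are the structure maps into the two abelian quotients, and the bottom row is $\phi$.

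\emph{Step 3: finiteness.} The algebra $\hU(\g,e)$ is finite over $\hZ_p(\g,e)$, so $\hU_{\chi+\z^*}(\g,e)^\ab$ is a finite module over $\k[(\chi+\z^*)^{(1)}]$. By Lemma~\ref{L:finitequotient}, applied after the Frobenius twist (which does not affect finiteness), $\k[(\chi+\z^*)^{(1)}]$ is finite over the image of $\k[(\chi+\cv)^{(1)}]$. Composing, $\hU_{\chi+\z^*}(\g,e)^\ab$ is finite over the image of $Z_p(\g,e)$. By commutativity of the square, that image lies inside $\phi(U(\g,e)^\ab)$, so $\hU_{\chi+\z^*}(\g,e)^\ab$ is finite over $U(\g,e)^\ab$. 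Hence \eqref{e:Walginclusions} is finite.

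The main point needing care is Step 1: one must check that the $p$-centre of $U(\g,e)$ really maps into that of $\hU(\g,e)$ compatibly with the identification through $\pi$. This holds because $Z_p(\g,e)$ was defined as $\hZ_p(\g,e)^{N_0}$ inside $\hU(\g,e)^{N_0}$, and the identification of Lemma~\ref{L:extendedslice}(i) is the one given by restriction of functions to the slice. The remaining steps are formal once Lemma~\ref{L:finitequotient} is available.
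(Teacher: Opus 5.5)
Your proposal is correct and follows essentially the same route as the paper, which forms the same commutative square with $Z_p(\g,e)\to\k[(\chi+\z^*)^{(1)}]$ on top. It then deduces finiteness of the bottom arrow from Lemma~\ref{L:finitequotient} together with finiteness of $\hU(\g,e)$ over its $p$-centre, exactly as you do. The only difference is cosmetic: for the $p$-character claim the paper simply cites the isomorphism \eqref{e:relatingreductions}, whereas you check directly that restricting the central character to $N_0$-invariants is evaluation at $\pi(\eta)$.
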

\begin{proof}
The claim about $p$-characters follows from \eqref{e:relatingreductions}. 
We have the following commutative diagram 
\begin{center}
\begin{tikzpicture}[node distance=3cm, auto]
 \node (A) {$\k[(\chi + \m_0^\perp)^{(1)}]^{N_0}$};
 \begin{scope}[node distance=1.7cm]
 \node (B) [below of= A] {$U(\g,e)^\ab$};
 \node (G) [right of=A] {};
\node (H) [right of=B] {};  

 \node (C) [right of= G] {$\k[(\chi + \z^*)^{(1)}]$};
  \node (D) [right of= H]{$\hU_{\chi + \z^*}(\g,e)^\ab$.};
\node (F) [left of = A]{$ $};
  \node (E) [left of = F]{$\hZ_p(\g_0,e_0)$};
   \end{scope}
 \draw[->] (A) to node {$ $} (B);
 \draw[->] (A) to node {$ $} (C);
 \draw[->] (B) to node {$ $} (D);
 \draw[right hook->] (C) to node {$ $} (D);
 \draw[->] (E) to node {$\sim$} (A);
\end{tikzpicture}
\end{center}
Since \eqref{e:slicemap} is finite and $\hU_{\chi + \z^*}(\g,e)$ is finite over the $p$-centre it follows that the bottom horizontal arrow is a finite map of algebras. This completes the proof.
\end{proof}

This completes the proof of Theorem~\ref{T:parabolicinductiontheorem}, showing in particular that the parabolic induction functor \eqref{e:INDUCTION} induces the map 
\begin{eqnarray}
	\label{e:slicemap2}
	\chi_0+\z_0^*\xrightarrow{\sim}\chi + \z^* \into \chi + \m_0^\perp \onto (\chi + \m_0^\perp)/\!/N_0 = \chi + \cv
\end{eqnarray}
on the level of $p$-characters. The finiteness of this map allow us to observe the following corollary.

\begin{Corollary}\label{cor: image of char map}
	Let $\D\subseteq \g^*$ be the decomposition class corresponding to the pair $(\g_0,\chi_0)$. Then the image of the map \eqref{e:slicemap2} is an irreducible component of  $\chi+Z:=(\chi+\cv)\cap\overline{\D}_\reg$ of maximal dimension.
\end{Corollary}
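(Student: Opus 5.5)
We will show that the image of \eqref{e:slicemap2} is closed and irreducible, lies inside $\chi+Z$, has dimension $\dim\z(\g_0)$, and that every component of $\chi+Z$ has dimension at most $\dim\z(\g_0)$. An irreducible closed subset of $\chi+Z$ of that dimension is then automatically an irreducible component of maximal dimension.

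\emph{Closed and irreducible of dimension $\dim \z(\g_0)$.} The source $\chi_0+\z_0^*$ is an affine space of dimension $\dim\z(\g_0)$, hence irreducible. By Lemma~\ref{L:finitequotient} the map \eqref{e:slicemap2} is finite. Finite morphisms are closed and have finite fibres, so the image is a closed irreducible subvariety of $\chi+\cv$ of dimension $\dim \z(\g_0)$.

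\emph{Containment in $\chi+Z$.} Take $\eta\in\chi+\z^*$ and let $\eta'\in\chi+\cv$ be the unique point that is $N_0$-conjugate to $\eta$, as in Lemma~\ref{L:extendedslice}(i). By Lemma~\ref{L:extendedslice}(iii), which applies because $e\in\O_0+\r$, we have $\eta\in\overline{\D}_\reg$. The set $\overline{\D}_\reg$ is $G$-stable, since both $\overline{\D}$ and the orbit-dimension condition are $G$-invariant. Hence $\eta'\in\overline{\D}_\reg\cap(\chi+\cv)=\chi+Z$.

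\emph{Upper bound on components of $\chi+Z$.} First, $\overline{\D}_\reg$ is contained in the union of the sheets containing $\D$. By Lemma~\ref{T:sheetstheorem}(iv) each such sheet $\S_j$ has the form $\overline{\D_j}_\reg$ with $\D_j$ rigid, $\D\subseteq\S_j$, and rigid datum $(\g_j,\O_j)$ with $\g_j\subseteq\g_0$. Therefore
\[
\dim\big((\chi+\cv)\cap\S_j\big)=\dim\z(\g_j)\ge\dim\z(\g_0)
\]
by Lemma~\ref{L:extendedslice}(ii). This bound points the wrong way, so we use the transversality/dimension argument directly on $\overline{\D}_\reg$ instead. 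The class $\D$ is open dense in the irreducible variety $\overline{\D}$, which has dimension $\dim\O+\dim\z(\g_0)$ by Lemma~\ref{T:sheetstheorem}(ii). Consider the action map $G\times(\chi+\cv)\to\g^*$. It is smooth at points of $\chi+\cv$, because the slice is transversal and orbit dimensions along the slice are at least $\dim\O$. On $\overline{\D}_\reg$ all orbits have dimension exactly $\dim\O$. So each irreducible component $C$ of $(\chi+\cv)\cap\overline{\D}_\reg$ satisfies $\dim G\cdot C=\dim C+\dim\O$, and $G\cdot C\subseteq\overline{\D}$. This gives
\[
\dim C\le \dim\overline{\D}-\dim\O=\dim\z(\g_0).
\]
Combined with the previous two steps, the image is an irreducible component of $\chi+Z$ of maximal dimension.

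The main obstacle is the inequality $\dim G\cdot C = \dim C+\dim \O$. Making it rigorous requires the isomorphism $N_0\times(\chi+\cv)\cong\chi+\m_0^\perp$ from Lemma~\ref{L:extendedslice}(i), together with the contracting $\k^\times$-action, to obtain the standard Slodowy-slice transversality estimate $\dim(\chi+\cv)\cap\overline{\D}\le\dim\overline{\D}-\dim\O$ near $\chi$. We then extend this to all of the slice using the contraction, since every component of the conic set $\chi+Z$ passes through $\chi$.
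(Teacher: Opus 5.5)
Your proof is correct and is essentially the paper's argument: finiteness of \eqref{e:slicemap2} via Lemma~\ref{L:finitequotient}, containment in $\chi+Z$ via Lemma~\ref{L:extendedslice}(iii) together with $G$-stability of $\overline{\D}_\reg$, and a dimension count. The one difference is that your third step re-proves something already available: Lemma~\ref{L:extendedslice}(ii) applies to $\D$ itself, since its hypothesis $\chi\in\overline{\D}_\reg$ holds by Lemma~\ref{T:sheetstheorem}(i), so it gives $\dim(\chi+Z)=\dim\z(\g_0)$ at once — this is what the paper cites, and it is exactly what your transversality argument establishes.
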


\begin{proof}
	We prefer to work in $\g$, and so prove instead that the map $e_0+\z(\g_0)\to e+\v$ has image an irreducible component of $(e+\v)\cap \overline{\D(\g_0,e_0)}_\reg$ of maximal dimension. As in the proof of Lemma~\ref{L:finitelemma}, there exists $r\in\r$ such that $e=e_0+r$. The map \eqref{e:slicemap2} (viewed in $\g$) is then defined by $$e_0+z\mapsto e+z\mapsto e+z\mapsto e+v,$$ where $e+v$ is the unique element in $e+\v$ such that there exists $n\in N_0$ with $n(e+v)=e+z$. In particular, $e+v=n^{-1}(e+z)\in \overline{\D(\g_0,e_0)}_\reg$ by Lemma \ref{L:extendedslice}(iii) (recalling that $\overline{\D(\g_0,e_0)}_\reg$ is $G$-stable). The map \eqref{e:slicemap2} thus restricts to a finite morphism $$e_0+\z(\g_0)\to (e+\v)\cap \overline{\D}_\reg.$$ Since $\dim(e_0+\z(\g_0))=\dim(\z(\g_0))=\dim((e+\v)\cap \overline{\D}_\reg)$ by Lemma \ref{L:extendedslice}(ii), and since finite morphisms between irreducible varieties of the same dimension are surjective, the image of this map is an irreducible component of $(e+\v)\cap \overline{\D}_\reg$ of maximal dimension. 
\end{proof}

If the composition $\Gamma\hookrightarrow G^e\twoheadrightarrow G^e/(G^e)^\circ$ is surjective (for example, if $\Gamma$ splits $G^e\twoheadrightarrow G^e/(G^e)^\circ$), then we may also prove the following.

\begin{Corollary}\label{cor: full image of char map}
	Maintain the setup of Corollary~\ref{cor: image of char map}. Suppose that $\Gamma\to G^e/(G^e)^\circ$ is surjective and that the adjoint action of $\Gamma$ on $\g$ preserves $e_0$ and $\z(\g_0)$. Then the image of the map \eqref{e:slicemap2} is  $\chi+Z$.
\end{Corollary}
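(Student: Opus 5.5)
We argue in $\g$, as in the proof of Corollary~\ref{cor: image of char map}, so the aim is to show that the image of $\rho: e_0+\z(\g_0)\to (e+\v)\cap\overline{\D}_\reg$ is everything. By Corollary~\ref{cor: image of char map} this image is an irreducible component of maximal dimension $\dim\z(\g_0)$. It therefore suffices to prove two things: that $(e+\v)\cap\overline{\D}_\reg$ is equidimensional of dimension $\dim \z(\g_0)$, and that $\Gamma$ acts transitively on its irreducible components while stabilising the image of $\rho$.

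\emph{Equivariance.} If $\gamma\in\Gamma$ fixes $e_0$ and stabilises $\z(\g_0)$, then $\gamma$ fixes $e$ (as $\Gamma\subseteq C^e$) and stabilises $\v$ (chosen $\Gamma$-stable) and $N_0$ (since $C^e$ normalises $N_0$). By the uniqueness in Lemma~\ref{L:extendedslice}(i), $\rho(\gamma\cdot(e_0+z))=\gamma\cdot\rho(e_0+z)$. Hence the image of $\rho$ is $\Gamma$-stable, and so it is a union of the $\Gamma$-translates of that component, i.e.\ a single $\Gamma$-stable component.

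\emph{Components are permuted transitively by $G^e$.} Consider the morphism $G\times\big((e+\v)\cap\overline{\D}_\reg\big)\to\overline{\D}_\reg$. By transversality (Lemma~\ref{L:extendedslice}(i) together with the contracting action \eqref{e: contracting action}), each irreducible component $C$ of the slice intersection contains $e$ and satisfies $\overline{G\cdot C}\supseteq G\cdot e$. Near $e$ the action map $G\times(e+\v)\to\g$ is smooth, and so $G\cdot C$ is locally the product of the orbit with $C$. The components through $e$ of the slice intersection then correspond to the local branches of $\overline{\D}_\reg$ at $e$, namely the local irreducible components of the normalisation-type data of $\overline{\D}$ at $e$. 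The slice is unique up to the $G^e$-action. I expect to show that $\overline{\D}=\overline{G\cdot(e_0+\z(\g_0)+\r)}$ is the image of the irreducible variety $G\times^P(e_0+\z(\g_0)+\r)$, so that branches at $e$ are indexed by points of the fibre over $e$. Those fibre points are $gP$ with $g^{-1}e\in e_0'+\z+\r$ nilpotent, so in $\O_0+\r$, whence $g\in G^eP$ by Lemma~\ref{L:centraliserinparabolic}(ii). The fibre is thus a single $G^e$-orbit, and $(G^e)^\circ\subseteq P$ by Lemma~\ref{L:centraliserinparabolic}(i), so $G^e/(G^e)^\circ$ acts transitively on the components. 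Each such component is the image of a local slice of $G\times^P(\cdots)$ of dimension $\dim\z(\g_0)$, so equidimensionality follows.

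\emph{Conclusion.} Since $\Gamma\to G^e/(G^e)^\circ$ is surjective and $(G^e)^\circ\subseteq C^e(G^e)^\circ$ acts on $e+\v$ with components preserved (it is connected), $\Gamma$ permutes the components transitively. A $\Gamma$-stable component must therefore be the only one, so the image of $\rho$ is all of $\chi+Z$.

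The main obstacle is the middle step: rigorously matching irreducible components of the slice intersection with points of the fibre of $G\times^P(e_0+\z(\g_0)+\r)\to\overline{\D}$ over $e$. This needs care, since the slice is only acted on by $C^e$, not by $G^e$; the action is available only after passing through Lemma~\ref{L:extendedslice}(i) with $N_0$. I would try first to transport everything via the isomorphism $N_0\times(e+\v)\cong e+\m_0^\perp$ and work with $e+\m_0^\perp$.
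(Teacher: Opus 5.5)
Your strategy is the paper's: $\Gamma$-equivariance of \eqref{e:slicemap2} via the uniqueness in Lemma~\ref{L:extendedslice}(i), plus transitivity of $\Gamma$ on the irreducible components of $\chi+Z$. Your equivariance step is fine once you note that $\Gamma$ fixes $r=e-e_0$, so the translation $e_0+z\mapsto e+z$ is equivariant. The paper obtains transitivity by running Im Hof's argument \cite[Theorem 2.10]{Im} with \cite[Theorem 2.3]{PS}, Lemma~\ref{L:centraliserinparabolic}(i) and the contracting action, which are exactly the ingredients you assemble. Equidimensionality needs no separate proof, since it follows from transitivity.

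The gap is that your transitivity step is not actually carried out, and the sketch contains concrete errors.
\begin{itemize}
\item $e_0+\z(\g_0)+\r$ is not $P$-stable, so $G\times^P(e_0+\z(\g_0)+\r)$ is undefined.
\item The correspondence between components of the slice and ``local branches'' of $\overline{\D}_\reg$ at $e$ is unproved, and it is delicate in characteristic $p$.
\item Your conclusion uses that $(G^e)^\circ$ acts on $e+\v$. This is false: its unipotent radical does not even preserve the grading, and only $\Gamma$ is guaranteed to stabilise $\v$.
\end{itemize}
As written, nothing makes $G^e/(G^e)^\circ$ act on the set of components.

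Here is a clean way to close the gap, letting $\Gamma$ act directly.
\begin{itemize}
\item Put $M:=(\O_0+\z(\g_0)+\r)\cap\overline{\D}_\reg$. By semicontinuity, no point of $(\overline{\O_0}\setminus\O_0)+\z(\g_0)+\r$ lies in $\overline{\D}_\reg$. Hence $M$ is a smooth $P$-stable open subset of $\O_0+\z(\g_0)+\r$. The map $\varphi:G\times^PM\to\overline{\D}_\reg$ is then proper and surjective, as a base change of $G\times^P(\overline{\O_0}+\z(\g_0)+\r)\to\overline{\D}$.
\item Since $\g=[\g,e]\oplus\v$, the action map $G\times(e+\v)\to\g$ is smooth at $(1,e)$. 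It is therefore smooth everywhere, by $G$-translation and the contracting action.
\item Set $\widetilde Y:=\varphi^{-1}(e+\v)$. The map $G\times\widetilde Y\to G\times^PM$ is a base change of that smooth map, so $\widetilde Y$ is smooth and each of its points lies on a unique irreducible component.
\item The action $t\cdot[g,x]=[\gamma_e(t^{-1})g,t^2x]$ makes $\varphi$ compatible with the $\g$-version of \eqref{e: contracting action}. Each component of $\widetilde Y$ is stable under it, so by properness each contains a point of $\varphi^{-1}(e)$.
\item Your nilpotency argument and Lemma~\ref{L:centraliserinparabolic}(ii) give $\varphi^{-1}(e)=G^eP/P$. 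The normal subgroup $(G^e)^\circ\subseteq P$ acts trivially on this set. Since $\Gamma\to G^e/(G^e)^\circ$ is surjective, $\Gamma$ acts on $\varphi^{-1}(e)$ transitively.
\item Since $\Gamma$ fixes $e$ and stabilises $\v$, it acts on $\widetilde Y$ compatibly with $\varphi$. It therefore permutes the components of $\widetilde Y$ transitively.
\item These components map onto the components of $(e+\v)\cap\overline{\D}_\reg$, which gives the required transitivity.
\end{itemize}
At no point does $G^e$ need to act on the slice.
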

\begin{proof}
	This will follow from two facts: (1) the map \eqref{e:slicemap2} is $\Gamma$-equivariant under these assumptions and (2) $\Gamma$ acts transitively on the irreducible components of $\chi+Z$. For (1), note that the map $e_0+ \z(\g_0)\to e+\z(\g_0)$ is $\Gamma$-equivariant since it coincides with translation by $r$ and $\Gamma$ fixes $r$ as it does $e$ and $e_0$. To see the $\Gamma$-equivariance of the map sending $e+z$, for $z\in\z(\g_0)$, to the unique element $e+v\in e+\v$ such that $n(e+v)=e+z$ for some unique $n\in N_0$, suppose that $\gamma\in\Gamma$ and that $\gamma(e+z)=n'(e+v')$ for unique $v'\in\v$ and $n'\in N_0$. Then $n'(e+v')=\gamma(e+z)=\gamma n\gamma^{-1}(\gamma(e+v))$. Since $\Gamma\subseteq C^e$ normalizes $N_0$, this implies by the uniqueness that $e+v'=\gamma(e+v)$ as required.
	
	For (2), we may apply the same argument as in \cite[Theorem 2.10]{Im} by observing that the required description of $\overline{\D}_\reg$ works in positive characteristic (under the standard hypotheses) by \cite[Theorem 2.3]{PS}, that $(G^e)^\circ\subseteq P$ by Lemma~\ref{L:centraliserinparabolic}(i), and that each irreducible component of $(e+\v)\cap \overline{\D(\g_0,e_0)}_\reg$ contains $e$ due to the fact that each is closed under the contracting action \eqref{e: contracting action}.
\end{proof}

\begin{Remark}
\label{R:glNtheorem}
    Fix $N > 0$. We now explain how the results proven so far can be applied to $\gl_N$, and we prove a version of the main theorem in this case. The discussion here can be seen as a simplified model of the proofs of Theorem~\ref{T:main} given in the subsequent sections. Every $\chi \in \gl_N^*$ lies in a unique sheet and the stabiliser $\GL_N^\chi$ is connected. So the appropriate analogue of the main theorem in type {\sf A} is that every minimal module over a reduced enveloping algebra is parabolically induced. This is what we now demonstrate.
    
    By Section~\ref{ss:NilpotentReduction} we can assume that $\chi(x) = \operatorname{Tr}(ex)$ for some nilpotent $e \in \Nc(\gl_N)$. Suppose that $\lambda \vdash N$ is the partition recording the sizes of Jordan blocks of $e$.
    
    Choose a parabolic subalgebra $\p = \g_0 \oplus \r$ such that $e\in \r$ and $\dim \GL_N \cdot e = 2\dim \r$. Then $\g_0$ is a product of general linear algebras, and the ranks give a partition of $N$ which is dual to $\lambda$. In particular, $\dim \z(\g_0) = \lambda_1$. By \cite[Lemma~3.2]{GTmin} we have $\dim \E(\g,e) = \lambda_1$.

    By Theorem~\ref{T:parabolicinductiontheorem} we have a finite morphism $\Ind_{\g_0, 0}^{\g,e} : \hE_{\chi_0 + \z_0^*}(\g_0, e_0) \to \E(\g,e)$ between irreducible varieties of the same dimension. Since finite morphisms are closed, we conclude that it is surjective. We deduce that every minimal $U_{\chi}(\g)$-module is induced from a one dimensional $\g_0$-module, giving an alternative proof of \cite[Corollary~1.2]{GTmod}.
\end{Remark}

\section{Classical Lie algebras}
\label{S:classicalLiealgebras}

In this section, unless otherwise stated we retain the notation and hypotheses of Sections~\ref{S:groupsandLiealgebras}, \ref{S:parabolicsection}, and \ref{S:parabolicinduction};  however some of this will be specialised and modified to focus on classical Lie algebras. In particular, we adopt the following:

\begin{itemize}
\setlength{\itemsep}{4pt}
\item The characteristic of $\k$ is $p > 2$.
\item $\epsilon = \pm 1$ and $N > 0$ is an integer such that $\epsilon^N = 1$.
\item $\langle -, -\rangle : \k^N \times \k^N \to \k$ is a non-degenerate form such that $\langle u,v\rangle = \epsilon \langle v,u\rangle$ for all $u,v \in \k^N$.
\item $G \subseteq \GL_N$ is the connected component of the group stabilising $\langle - , - \rangle$.
\item $\g = \Lie(G)$ so that $$\g = \left\{ \begin{array}{rl} \so_N & \mbox{ if }\epsilon = 1, \\ \sp_N & \mbox{ if } \epsilon = -1.\end{array} \right.$$
\item $\kappa : \g \times \g\to \k$ is the non-degenerate trace form corresponding to the natural representation.
\end{itemize}

Note that in Sections~\ref{S:groupsandLiealgebras}, \ref{S:parabolicsection} and \ref{S:parabolicinduction} we assumed that $G$ satisfied all of the standard hypotheses from \cite[\textsection 6.3]{JaLA}, including that the derived subgroup of $G$ was simply connected. In this section we omit this assumption, and instead work with classical groups $\SO_N$ and $\Sp_N$. This change in assumptions is justified by the fact that the simply connected covering map is separable by Remark~\ref{R:standardreductivedecomp}(3).

\subsection{Nilpotent orbits and partitions}
\label{ss:centralisers}

The classification of nilpotent $G$-orbits in $\g$ is best stated in terms of partitions. Every nilpotent $\GL_N$-orbit in $\gl_N$ is labelled by some $\lambda = (\lambda_1\ge \cdots \ge \lambda_n) \vdash N$ corresponding to the Jordan normal form, and an orbit $\O \subseteq \Nc(\gl_N)$ intersects $\g$ if and only if:
\begin{enumerate}
\item[(i)] the odd parts of $\lambda$ occur with even multiplicity (when $\epsilon = -1$);
\item[(ii)] the even parts of $\lambda$ occur with even multiplicity (when $\epsilon = 1$).
\end{enumerate}
Furthermore the intersection is a single $G$-orbit unless $\epsilon = 1$ and every part of $\lambda$ is even, in which case the intersection splits into two $G$-orbits, which are labelled I and I\!I \cite[\textsection 5]{CM}. Such orbits are called {\it very even}. We use the notation $\PeN$ for the partitions of $N$ corresponding to nilpotent orbits of $\g$.

For $\lambda = (\lambda_1,...,\lambda_n) \in \PeN$ and $1\le i < n$ we say that $(i, i+1)$ is a {\it 2-step of $\lambda$} if:
\begin{itemize}
\setlength{\itemsep}{4pt}
\item $\lambda_{i-1} \ne \lambda_i \ge \lambda_{i+1} \ne \lambda_{i+2}$; and
\item $\varepsilon(-1)^{\lambda_i} = \varepsilon(-1)^{\lambda_{i+1}} = -1.$
\end{itemize}
Here we use the convention that $\lambda_0=\infty$ and $\lambda_{n+1}=0$. The set of all 2-steps is denoted $\Delta(\lambda)$. When $N$ is even, we say that a partition $\lambda = (\lambda_1,....,\lambda_n) \in \mathcal{P}_1(N)$ is {\it exceptional} if there is a unique 2-step $\{i, i+1\} \subseteq \{1,...,n\}$ and all parts $\lambda_j$ for $j\notin \{i, i+1\}$ are even.

\subsection{The abelianisation of the centraliser of a nilpotent element}
\label{ss:abelianisationcentraliser}
Fix $e \in \Nc(\g)$ with partition $\lambda \in \PeN$. Since $G$ is classical it is well-known, and quite easy to see, that the surjection $C^e \onto C^e/(C^e)^\circ$ splits (see \cite[\textsection 3.8]{JaNO}). Therefore we may (and shall) take $\Gamma \subseteq C^e$ to be a finite subgroup which maps isomorphically to $C^e/(C^e)^\circ$.

Since $G$ is isomorphic to $\SO_N$ or $\Sp_N$ it follows from \cite[\textsection 3.8]{JaNO} that $\Gamma$ is an elementary abelian 2-group. Since $p \ne 2$ we may choose a $\Gamma$-stable complement $\c$ to $[\g^e, \g^e]$ inside $\g^e$. We make the notation
\begin{eqnarray}
& & c(e) := \dim \c; \\
& & c_{\Gamma}(e) := \dim \c^\Gamma; \label{e:def cgamma}\\
& & s(\lambda) := \sum_{i=1}^n \big\lfloor \frac{\lambda_i - \lambda_{i+1}}{2}\big\rfloor.
\end{eqnarray}
\begin{Lemma}
\label{L:celemma}
The following hold:
\begin{eqnarray}
\label{e:cgammae}
\label{e:ce}
& & c(e) = |\Delta(\lambda)| + s(\lambda); \\ \label{e:cGammae}
& & c_\Gamma(e) = \left\{ \begin{array}{cc} s(\lambda) + 1 & \textnormal{if }\lambda \textnormal{ is exceptional,}\\
s(\lambda) & \textnormal{otherwise.}
\end{array}\right.
\end{eqnarray}
Furthermore $e$ lies in a unique sheet if and only if $\lambda_{i-1} - \lambda_i \notin 2\Z_{> 0}$ and $\lambda_{i+1} - \lambda_{i+2} \notin 2\Z_{>0}$ whenever $(i, i+1)$ is a 2-step for the partition $\lambda$ of $e$. In this case the sheet $\S$ satisfies $\dim \S - \dim G\cdot e = c(e)$.
\end{Lemma}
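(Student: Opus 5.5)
The plan is to transport the corresponding characteristic-zero computations of \cite{PT1} to the modular setting. The statement has three parts: the formulas for $c(e)$, $c_\Gamma(e)$, and the sheet criterion with $\dim\S - \dim G\cdot e = c(e)$. Each part is combinatorial in nature, depending only on $\lambda$ and on the structure of $\g^e$.

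The first step is to establish that the description of $\g^e$ used in \cite{PT1} remains valid for $p>2$. This description is Jantzen's basis of $\g^e$ in terms of the Jordan basis $\{w_i^j\}$ of $\k^N$ adapted to $e$ and the form $\langle-,-\rangle$ \cite[\textsection 3]{JaNO}. The basis consists of elements $\xi_i^{j,s}$ indexed by pairs of Jordan blocks $(i,j)$ and degrees $s$. It carries an explicit formula for brackets, $\xi_i^{j,s}\xi_k^{l,t}$-type products modulo the $\epsilon$-symmetrisation. These formulas have integer coefficients, with the only denominators being powers of $2$. Hence the computation of $[\g^e,\g^e]$ in \cite[\textsection 4]{PT1} (equivalently in the work of Yakimova on $\g^e/[\g^e,\g^e]$) goes through verbatim when $p\ne 2$.

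From this I will read off a basis of a complement $\c$. The complement contributes one element for each block $\lambda_i$ in degrees corresponding to $\lfloor(\lambda_i-\lambda_{i+1})/2\rfloor$, giving $s(\lambda)$ elements. It contributes one extra element $\xi_i^{i+1}$-type for each 2-step, giving $|\Delta(\lambda)|$ elements. This proves \eqref{e:ce}.

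For $c_\Gamma(e)$ I will use the explicit generators of $\Gamma\cong(\Z/2)^k$ acting by $-1$ on the isotypic blocks of given part sizes \cite[\textsection 3.8]{JaNO}. The diagonal-type basis vectors contributing to $s(\lambda)$ are $\Gamma$-fixed. A 2-step element linking blocks of sizes $\lambda_i\ne\lambda_{i+1}$ is negated by the generator associated with one of those sizes, unless that generator is absent from $\Gamma$. Passing from $\mathrm{O}_N$ to $\SO_N$ removes precisely the product of all generators, which is the case where the only odd-multiplicity-type blocks are the 2-step pair. This yields the exceptional case.

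The sheet statement is independent of $p$. By Lemma~\ref{T:sheetstheorem}(iii) sheets containing $e$ correspond to $G$-orbits of rigid induction data. By the commutative diagram \eqref{e: LS indep p} these match those over $\C$, so the criterion follows from Kempken--Spaltenstein's combinatorial description, as used in \cite[Theorem~8]{PT1} or \cite{GTW}. For $\dim\S-\dim G\cdot e=\dim\z(\g_0)$, Lemma~\ref{T:sheetstheorem}(ii) reduces this to a count of the rank of the Levi in the rigid datum. That count is again characteristic-free and matches $|\Delta(\lambda)|+s(\lambda)$ by \cite{PT1}.

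The main obstacle is the first step: verifying that $[\g^e,\g^e]$ has the same dimension in characteristic $p$. Some structure constants could vanish mod $p$ if they involve binomial factors. To handle this, I will check that the relevant coefficients are $\pm1$ or $\pm2$, using the graded description. Alternatively, I will use semicontinuity: $\dim[\g^e,\g^e]$ can only drop mod $p$, and exhibiting the spanning commutators explicitly gives the lower bound.
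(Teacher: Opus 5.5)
Your proposal follows the paper's argument. The paper cites \cite[Corollary~1]{PT1} and part (a) of the proof of \cite[Theorem~12]{PT1} for the two formulas, noting that those computations in $\g^e$ remain valid for $p\neq 2$. It then deduces the sheet criterion and $\dim\S-\dim G\cdot e=c(e)$ from \cite[Corollary~5]{PT1}, together with the characteristic-independence of Lusztig--Spaltenstein induction and of the sheet classification. That independence also covers $\SO_N$ with $p>2$, via the separable spin cover, a point you did not mention.

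One correction to your sketch of the $\Gamma$-count. When $\epsilon=1$, each generator $g_k$ has determinant $-1$, so $\Gamma=\Gamma'\cap\SO_N$ consists of the products of an even number of the $g_k$. It is not ``everything except the product of all generators.'' A 2-step element is therefore negated by some element of $\Gamma$ exactly unless $\lambda_i,\lambda_{i+1}$ are the only odd parts, which is precisely the exceptional case.
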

\begin{proof}
The first two statements are proven in Corollary~1 and part (a) of the proof of Theorem~12 in \cite{PT1}. Although the authors worked over $\C$, the arguments there are valid in characteristic $p \ne 2$.

In Corollary~5 of {\it loc. cit.} the claim regarding the uniqueness of the sheet was proven in case $\k = \C$. It follows from the remarks of Section~\ref{ss:LSinduction} and \ref{ss:decompclasses} that the classification of sheets, the formula for their dimensions and the distribution of nilpotent orbits amongst sheets is the same over an algebraically closed field of any characteristic, provided the standard hypotheses hold. Furthermore, as observed in the prologue to this section, this is also true for $G=\SO_N$ when $p>2$.
\end{proof}

\subsection{Constructing the induction datum}
\label{ss:invariantparabolic}
We now introduce an important induction datum which will be used in our proof of part (2) of the main theorem. In order to do so we first make special choices of all of the ingredients $\langle - , - \rangle$, $\g$, $e$ and $\Gamma$. Fix a partition $\lambda = (\lambda_1,...,\lambda_n) \in \PeN$ and assume that $\lambda$ is not exceptional. We also assume, for simplicity, that if $\varepsilon=1$ then not every part of $\lambda$ is even - the advantage of the latter assumption is that we do not need to keep track of labels $\rm I$ and $\rm I\!I$ on the very even orbits and the corresponding Levi subvalgebra (cf. \cite[\textsection~7.3]{CM} and \cite[Remark~3]{PT1}).  

When $k \in \Z_{>0}$ we let $[k]$ denote the set of integers $\{-k-1+2j \mid j=1,...,k\}$.  Consider the vector space $V \cong \k^N$ spanned by elements $\{v_{i,j} \mid 1\le i \le n, \ j\in [\lambda_i]\}$. Let $e \in \gl(V)$ be the nilpotent element defined by $e(v_{i,j}) = v_{i,j+2}$ where we adopt the notation $v_{i,\lambda_i + t} := 0$ for $t > 0$. For $1\le i \le n$ we refer to the space spanned by $\{v_{i,j}\mid j \in [\lambda_i]\}$ as the {\it $i$th Jordan block space for $e$}.

Following the construction of \cite[\textsection\textsection 1.7, 1.8]{JaNO} we equip $V$ with a nondegenerate form $\langle - , - \rangle : V \times V \to \k$ such that $\langle eu, v\rangle = - \langle u, ev\rangle$ and $\langle u,v\rangle = \varepsilon \langle v, u\rangle$ for all $u,v \in V$. We let $\g$ be the Lie subalgebra of $\gl(V)$ consisting of matrices skew self-adjoint with respect to $\langle -,-\rangle$, so that $\g \cong \so_N$ for $\varepsilon = 1$ and $\g \cong \sp_N$ for $\varepsilon = -1$. By construction $e\in \g$. We write $G$ for the connected component of the subgroup of $\GL(V)$ preserving $\langle - , -\rangle$. Note that the trace form $\kappa : \g \times\g\to \k$ associated to $V$ is non-degenerate and $G$-equivariant.

Let
\begin{eqnarray}
\label{L:aspecialsequence}
(i_1,...,i_{s(\lambda)}) := (\underbrace{1,1,...,1,}_{\lfloor\frac{\lambda_1 - \lambda_2}{2}\rfloor \text{ times}} \underbrace{2,...,2,}_{\lfloor\frac{\lambda_2 - \lambda_3}{2}\rfloor \text{ times}}...,\underbrace{n,...,n}_{\lfloor\frac{\lambda_n}{2}\rfloor \text{ times}})
\end{eqnarray}
and for $k=1,...,s(\lambda)$ define vector subspaces $V_k^{+}$ and $V_k^{-}$ of $V$, where $V_k^+$ is spanned by $\{v_{i,\lambda_i + 1 - 2k } \mid 1\le i \le i_k\}$ and $V_k^-$ is spanned by $\{v_{i,-\lambda_i - 1 + 2k } \mid 1\le i \le i_k\}$. Write $V_k^{\pm} = V_k^+ \oplus V_k^-$. Also we let $U$ be the vector subspace spanned by $\{v_{i,j} \mid 1\le i \le n, \ j \in [\lambda_i - 2\sum_{k =i}^n \lfloor \frac{\lambda_k - \lambda_{k+1}}{2}\rfloor] \}$ so that 
\begin{eqnarray}
\label{e:decompofV}
V = U \oplus (\bigoplus_{k=1}^{s(\lambda)} V_k^{\pm}).
\end{eqnarray}
Notice that $\langle -,-\rangle$ restricts to a nondegenerate form on $U$, as well as a non-degenerate pairing $V_k^+ \times V_k^- \to \k$ for all $k$.

Let $l(i) := \lambda_i - 2\sum_{k =i}^n \lfloor \frac{\lambda_k - \lambda_{k+1}}{2}\rfloor-1$. We consider a nilpotent element $e_0$ which annihilates $\bigoplus_k  V_k^\pm$ and acts on $U$ by $e_0v_{i,j} = v_{i, j+2}$ for $j < l(i)$ and $e_0(v_{i,l(i)}) = 0$. By inspection the partition $\lambda^{(0)} \vdash N - 2\sum_{j} i_j$ which records the Jordan block sizes of $e_0|_U$ can be described inductively by
\begin{eqnarray}
\label{e:lambda0}
\begin{array}{rcl}
\lambda^{(0)}_i - \lambda^{(0)}_{i+1} & = & \lambda_i - \lambda_{i+1} - 2\lfloor\frac{\lambda_i - \lambda_{i+1}}{2}\rfloor  \text{ for }  i=1,...,n,\vspace{6pt} \\
\lambda_{n+1}^{(0)} & = & 0.  
\end{array}
\end{eqnarray}

Now we define diagonal semisimple elements $z_1,...,z_{s(\lambda)} \in \g$ by letting $z_k$ act by the identity of $V_k^+$, by minus the identity on $V_k^-$ and by zero on $U$ and $V_i^\pm$ for $i \ne k$. Also let $\g_0$ be the intersection of the centralisers of $z_1,...,z_{s(\lambda)}$ in $\g$; this is a Levi subalgebra of $\g$, since $z_1,...,z_{s(\lambda)}$ all lie in the Lie algebra of a maximal torus of $G$.

We claim now that
\begin{eqnarray}
	\label{e:theLevi}
	\g_0 = \h_{1} \times \cdots \times \h_{{s(\lambda)}} \times \kk
\end{eqnarray}
where $\h_j\cong \gl_{i_j}$ for each $j=1,\ldots,s(\lambda)$ and $\kk$ is isomorphic to a classical Lie algebra of the same type as $\g$ with natural representation of dimension $N - 2\sum_j i_j$. Consider the isotropic flag 
\begin{eqnarray}
	\label{e:iso flag}
	0\subseteq V_1^+\subseteq V_1^+\oplus V_2^+\subseteq \cdots \subseteq \bigoplus_{k=1}^{s(\lambda)}V_{k}^+.
\end{eqnarray}
Its stabiliser in $G$ is a parabolic subgroup $P$ of $G$ with Levi factor $$G_0=H_1\times\cdots \times H_{s(\lambda)}\times K$$ such that $H_j$ is isomorphic to $\GL(V_j^+)$ and $K$ is isomorphic to $\SO(U)$ (if $\varepsilon=1$) or $\Sp(U)$ (if $\varepsilon=-1$). It is clear that $\g_0'=\Lie(G_0)$ centralises $z_1,...,z_{s(\lambda)}$, and thus $\g_0'\subseteq \g_0$. By construction $\{z_1,...,z_{s(\lambda)}\} \subseteq \z(\g_0)$ and so $\dim \z(\g_0) \ge s(\lambda)=\dim \z(\g_0')$. Since $\g_0'\subseteq \g_0$ is an inclusion of Levi subalgebras of $\g$, this implies $\g_0=\g_0'$, as required (setting $\h_j=\Lie(H_j)$ and $\kk=\Lie(K)$).

This argument also implies
\begin{eqnarray}
\label{e:spanningzgzero}
\z(\g_0) = \sspan\{z_1,...,z_{s(\lambda)}\}.
\end{eqnarray}

The following can be deduced from \eqref{e:lambda0} and Lemma~\ref{L:celemma}.
\begin{Lemma}
\label{L:uniquesheet}
$e_0$ lies in a unique sheet of $\g_0$. $\hfill\qed$
\end{Lemma}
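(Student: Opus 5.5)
The plan is to reduce to the criterion in Lemma~\ref{L:celemma}, applied factor by factor to the Levi decomposition \eqref{e:theLevi}. The element $e_0$ annihilates $\bigoplus_k V_k^\pm$, so its components in the factors $\h_j \cong \gl_{i_j}$ are zero. Its only nontrivial component lies in $\kk$, where it acts on $U$ with Jordan type $\lambda^{(0)}$.

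By \cite[\textsection 2.1]{PS}, as used in the proof of Proposition~\ref{P:sheetstheorem}, the sheets of a Levi subalgebra containing a nilpotent element are products of sheets of its factors. So it suffices to check that each component lies in a unique sheet of its factor. For the $\gl_{i_j}$ factors this is automatic, since every element of $\gl_m$ lies in a unique sheet (cf. Remark~\ref{R:glNtheorem}). It then remains to show that the nilpotent $e_0|_U \in \kk$, of partition $\lambda^{(0)}$, lies in a unique sheet of $\kk$.

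For this I apply the criterion of Lemma~\ref{L:celemma} to $\lambda^{(0)}$, viewed in $\mathcal{P}_\epsilon(N - 2\sum_j i_j)$. By \eqref{e:lambda0}, every consecutive difference satisfies
\[
\lambda^{(0)}_i - \lambda^{(0)}_{i+1} = \lambda_i - \lambda_{i+1} - 2\bigl\lfloor \tfrac{\lambda_i - \lambda_{i+1}}{2}\bigr\rfloor \in \{0,1\},
\]
for $i = 1, \ldots, n$, with $\lambda^{(0)}_{n+1} = 0$.

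Now let $(i, i+1)$ be a 2-step of $\lambda^{(0)}$. If $i \geq 2$, then $\lambda^{(0)}_{i-1} - \lambda^{(0)}_i \in \{0,1\}$, so it is not in $2\Z_{>0}$. If $i = 1$, the convention $\lambda^{(0)}_0 = \infty$ means this difference is not a positive even integer either. The same argument applies to $\lambda^{(0)}_{i+1} - \lambda^{(0)}_{i+2}$, using $\lambda^{(0)}_{n+1} = 0$ at the end of the partition. Hence the criterion of Lemma~\ref{L:celemma} holds vacuously, and $e_0|_U$ lies in a unique sheet of $\kk$.

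The main obstacle is a bookkeeping issue about zero parts. The partition $\lambda^{(0)}$ may have trailing zeros, and possibly repeated zero rows, so the 2-step conventions ($\lambda_0 = \infty$, $\lambda_{n+1} = 0$) must be applied to the truncated partition consisting of its nonzero parts only. Deleting zero parts keeps all consecutive differences in $\{0,1\}$, apart from the boundary conventions, so the argument above is unaffected.

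A second point is whether $\kk$ is trivial or of small rank, for instance $\so_1$ or $\so_2$. In those cases the statement is immediate.

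Finally, the standing assumption that $\lambda$ is not very even when $\epsilon = 1$ is not needed here. The uniqueness criterion of Lemma~\ref{L:celemma} is stated for arbitrary partitions in $\PeN$.
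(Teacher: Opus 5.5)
Your proof is correct and is essentially the paper's argument. The paper only records that the lemma follows from \eqref{e:lambda0} and Lemma~\ref{L:celemma}, and you have filled in the steps it leaves implicit: the $\h_j$-components of $e_0$ vanish, sheets of $\g_0$ are products of sheets of its factors, and every consecutive difference of $\lambda^{(0)}$ lies in $\{0,1\}$, so the uniqueness criterion holds at every 2-step.

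On the trailing-zero bookkeeping, your phrase ``apart from the boundary conventions'' is slightly imprecise. The bottom boundary difference is itself one of the differences in \eqref{e:lambda0}, so it also lies in $\{0,1\}$. This forces the smallest nonzero part of $\lambda^{(0)}$ to be $1$, so no exception arises at that end.
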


We have $e \in e_0 + \r$ where $\r$ is the nilpotent radical of the parabolic $\p=\Lie(P)$ containing $e$ which is determined by the partial isotropic flag \eqref{e:iso flag} as above. Applying an inductive argument using \cite[Corollary~7.3.4]{CM} (which applies in positive characteristic by Section~\ref{ss:LSinduction}) we see that
\begin{eqnarray}
\label{e:inducesup}
G\cdot e = \Ind_{\g_0}^\g(G_0 \cdot e_0).
\end{eqnarray}
Note however that the orbit $G_0\cdot e_0$ is not necessarily rigid. We write $\chi$ for the image of $e\in \g$ under the isomorphism $\g\xrightarrow{\sim}\g^*$ induced by $\kappa$ and $\chi_0$ for the image of $e_0\in \g_0$ under the isomorphism $\g_0\xrightarrow{\sim}\g_0^*$ induced by the restriction of $\kappa$ to $\g_0$.

We let $\gamma_e : \k^\times \to G$ be the cocharacter defined by $\gamma_e(t) v_{i,j} = t^jv_{i,j}$ for all $v_{i,j}\in V$ and let $\gamma_{e_0} : \k^\times \to G_0$ be the cocharacter which acts trivially on $\bigoplus_i (V_i^+ \oplus V_i^-)$ and acts by $\gamma_{e_0}(t) v_{i,j} = t^jv_{i,j}$ for all $v_{i,j} \in U$.
\begin{Lemma}
$\gamma_e$ is an associated cocharacter for $e$ and $\gamma_{e_0}$ is an associated cocharacter for $e_0$ when the latter is viewed as an element of $\g_0$.
\end{Lemma}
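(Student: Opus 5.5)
We will check the two defining properties (i) and (ii) of an associated cocharacter from Section~\ref{ss:associatedcochar} directly, first for $\gamma_e$ in $G$ and then for $\gamma_{e_0}$ in $G_0$. The first task is to confirm that $\gamma_e(\k^\times)\subseteq G$ and $\gamma_{e_0}(\k^\times)\subseteq G_0$. Following the construction of \cite[\textsection\textsection 1.7, 1.8]{JaNO}, the form pairs $v_{i,j}$ only with vectors $v_{i',-j}$, where $i'$ is either $i$ or a partner block of the same size. Hence $\langle \gamma_e(t)u,\gamma_e(t)v\rangle = t^{j-j}\langle u,v\rangle=\langle u,v\rangle$ on basis vectors, so $\gamma_e(t)$ preserves the form. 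Since $\k^\times$ is connected, $\gamma_e(\k^\times)$ then lies in the identity component $G$. The same argument applies to $\gamma_{e_0}$ on $U$, which is stable under the form pairing because the truncated indices $[\lambda_i-2\sum\cdots]$ are symmetric about $0$. Since $\gamma_{e_0}$ is the identity on each $V_k^\pm$, it commutes with every $z_k$, and therefore lands in $K\subseteq G_0$.

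Property (i) is immediate from the definitions. We have $\gamma_e(t)e\gamma_e(t)^{-1}v_{i,j}=t^{-j}\gamma_e(t)v_{i,j+2}=t^2v_{i,j}$. The same computation works for $e_0$ on $U$, and both sides vanish on $\bigoplus V_k^\pm$.

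For property (ii), the plan is to use the standard fact that for classical groups an associated cocharacter is given by the "Jacobson--Morozov" grading: each Jordan block of size $m$ carries weights $-m+1,-m+3,\dots,m-1$. This is precisely $\gamma_e$, since $[\lambda_i]$ is this weight set. Concretely, we follow \cite[\textsection 5.3, 5.5]{JaNO}. We choose the Levi subalgebra $\l$ in which $e$ is distinguished as follows:
\begin{itemize}
\item pair up the blocks of equal size that are not of the "self-dual" parity;
\item for each such pair, the corresponding $\GL$-factor contains one block of $e$;
\item the remaining blocks span a nondegenerate subspace $W$, and $e|_W$ has distinct-part-type partition, hence is distinguished in $\g(W)$.
\end{itemize}
One then checks that $\gamma_e$ takes values in $[L,L]$. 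On each $\GL_m$ factor the weights sum to $\sum_{j\in[m]} j=0$, so $\det=1$ and the factor lies in $\SL_m$. On $W$ the group is already semisimple. Alternatively, one can invoke the criterion of \cite[\textsection 5]{JaNO} that any cocharacter satisfying (i) with $\g^e\subseteq\bigoplus_{i\ge0}\g(i)$, and arising from an $\sl_2$-type grading, is associated. The inclusion $\g^e\subseteq\bigoplus_{i\ge0}\g(i)$ follows from the explicit description of centralisers of nilpotents in $\gl(V)$: a homomorphism between Jordan blocks of sizes $m,m'$ commuting with $e$ has nonnegative weight. Restricting to skew-adjoint maps preserves this. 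For $e_0$ in $\g_0=\prod\h_j\times\kk$, the component of $e_0$ in each $\h_j$ is $0$, and its associated cocharacter there is trivial. In $\kk$ the argument is identical to the one for $e$, with $\lambda^{(0)}$ in place of $\lambda$.

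The main obstacle is the bookkeeping in (ii): ensuring that the image lies in the derived subgroup of the Levi subgroup in which $e$ is distinguished, particularly for the paired blocks in the orthogonal/symplectic pairing. I would handle this via the determinant-one observation above rather than constructing $\l$ explicitly, falling back on the $\g^e\subseteq\g(\ge0)$ characterisation if needed.
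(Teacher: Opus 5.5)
Your route differs from the paper's. The paper notes that $\{d_1\gamma_e, e\}$ extends to an $\sl_2$-triple and takes optimality of $\gamma_e$ (in the Kempf--Rousseau sense) from the last paragraph of the proof of \cite[Lemma~3.1]{PT2}. It then concludes with \cite[Proposition~2.5]{PrKR}: optimal cocharacters are exactly the associated ones. Verifying Jantzen's definition directly by exhibiting a Levi subgroup is a legitimate, more elementary alternative. Your checks that $\gamma_e$ preserves the form, that (i) holds, and that $\sum_{j\in[m]}j=0$ are fine.

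\textbf{The gap.} The Levi subgroup you write down does not work in general. You pair only blocks of non-self-dual parity and put every self-dual-parity block into $W$. Whenever a self-dual-parity part of $\lambda$ has multiplicity at least two, $e|_W$ then has repeated parts and is not distinguished in $\g(W)$. For example, take $\lambda=(2,2)$ in $\sp_4$, which is allowed in Section~\ref{ss:invariantparabolic}. Nothing gets paired, so your $L$ is $G$ itself. But $e$ is not distinguished in $\sp_4$: its reductive centraliser is $\mathrm{O}_2$, which contains a torus. The same problem arises for $e_0$ in $\kk$, since $\lambda^{(0)}$ can have repeated parts; for instance $\lambda=(4,4,1,1)$ gives $\lambda^{(0)}=(2,2,1,1)$. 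Nor can you simply pair such blocks in the given basis. In the construction of \cite[\textsection\textsection 1.7, 1.8]{JaNO}, a block of self-dual parity is nondegenerate and orthogonal to the other blocks. So it is never totally isotropic and cannot serve as one half of a $\GL$-factor.

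\textbf{The missing step is a change of basis.} Suppose blocks $i\neq i'$ have the same size $m$ and self-dual parity. The restrictions of $\langle-,-\rangle$ to them are proportional, because an $e$-skew, $\gamma_e$-invariant form on a single Jordan block is unique up to scalar. So for a suitable $c\in\k^\times$ (using $p>2$), the spaces $X=\sspan\{v_{i,j}+c\,v_{i',j}\mid j\in[m]\}$ and $Y=\sspan\{v_{i,j}-c\,v_{i',j}\mid j\in[m]\}$ are complementary, totally isotropic and $e$-stable. Crucially, they are also $\gamma_e$-stable, because $\gamma_e(t)$ acts on $v_{i,j}$ and $v_{i',j}$ by the same scalar $t^j$. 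Pair all repeated self-dual-parity blocks this way. Then $W$ has distinct parts, and $L=\prod_k\GL(X_k)\times G(W)$ is a Levi subgroup containing $\gamma_e(\k^\times)$ with $e$ distinguished in $\Lie(L)$. Your determinant argument then gives $\gamma_e(\k^\times)\subseteq[L,L]$.

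\textbf{Do not rely on your fallback criterion.} Condition (i) together with $\g^e\subseteq\bigoplus_{i\ge0}\g(i)$ does not force a cocharacter to be associated. For $\lambda=(3,3)$ in $\so_6$, multiply $\gamma_e$ by the cocharacter of $(C^e)^\circ\cong\SO_2$ acting by $t^{\pm1}$ on the two isotropic halves. Both properties survive, but the weights on $V$ become $\pm1,\pm1,\pm3$, so the result is not conjugate to $\gamma_e$. Moreover, in characteristic $p$ an $\sl_2$-triple only constrains $d_1\gamma_e$. Getting from the $\sl_2$-triple to associatedness is exactly what the optimality argument cited in the paper supplies.
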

\begin{proof}
Since the trivial cocharacter $\k^\times \to \GL_m$ is associated to zero in $\gl_m$, the proof of the lemma is the same for $\gamma_e$ and $\gamma_{e_0}$. We prove the former.

It is straightforward to see that $\{h = d_1 \gamma_e, e\}$ can be completed to an $\sl_2$-triple in $\g$.
The final paragraph of the proof of \cite[Lemma~3.1]{PT2} shows that $\gamma_e$ is optimal for $e$, whilst \cite[Proposition~2.5]{PrKR} implies that optimal cocharacters for $e$ are the same as associated cocharacters. 
\end{proof}

These cocharacters induce gradings $\g = \bigoplus_{i \in \Z} \g(i, \gamma_e)$ and $\g_0 = \bigoplus_{i \in \Z} \g_0(i, \gamma_{e_0})$, and we have
\begin{eqnarray}
\label{e:centresubseteq}
\z(\g_0) \subseteq \g(0, \gamma_e) \cap \g_0(0, \gamma_{e_0})
\end{eqnarray}
because $\gamma_e(\k^\times)$ and $\z(\g_0)$ act diagonally on $V$ (use \eqref{e:spanningzgzero}), and $G_0$ acts trivially on $\z(\g_0)$. Recall from Section~\ref{ss:associatedcochar} that the reductive parts $C^e \subseteq G^e$ and $C_0^{e_0} \subseteq G_0^{e_0}$ are the subgroups stabilising $\gamma_e$ and $\gamma_{e_0}$ respectively. Note that we may view $e_0$ as lying in $\kk$ and $\gamma_{e_0}$ as a map $\k^\times \to K$. We then define $\widehat{C}_{0}^{e_0}\subseteq K^{e_0}$ to be the subgroup stabilising $\gamma_{e_0}$; this coincides with the image of $C_0^{e_0}$ under the projection $G_0\to K$.

Now let $\Lambda := \{1\leq i \le n \mid \varepsilon(-1)^{\lambda_i} = -1, \lambda_i > \lambda_{i+1}\}$ and for $k \in \Lambda$ let $g_k \in \GL(V)$ be the involution which acts by minus the identity on the $k$th Jordan block for $e$ and acts by the identity on the other blocks. Now let $\Gamma'$ be elementary abelian 2-group generated by the commuting involutions $\{g_k \mid k \in \Lambda\}$ and let $\Gamma := \Gamma' \cap G$. It can be deduced from \cite[\textsection\textsection 3.8, 3.13]{JaNO} that the group $\Gamma$ splits the surjection $C^e \onto C^e/ (C^e)^\circ$.

We similarly define $\Lambda_0:=\{1\leq i \le n \mid \varepsilon(-1)^{\lambda^{(0)}_i} = -1, \lambda^{(0)}_i > \lambda^{(0)}_{i+1}\}$ and define by $\Gamma'_0$ the subgroup of $\Gamma'$ generated by $\{g_k\mid k\in\Lambda_0\}$. Set $\Gamma_0=\Gamma'_0\cap G\subseteq \Gamma$.

Note that $\Gamma\subseteq G_0$ because each $g_k$ for $k\in\Lambda$ preserves each $V_{i}^{+}$ and $V_i^{-}$ for $i=1,\ldots,s(\lambda)$. In fact, we have
\begin{eqnarray}
\label{e:Gammasubseteq}
\Gamma \subseteq C^{e} \cap C_0^{e_0}
\end{eqnarray}
because each $g_k$, for $k\in\Lambda$, acts by scalar multiples of the identity on every Jordan block for $e$, whilst $e, e_0, \gamma_e(\k^\times)$ and $\gamma_{e_0}(\k^\times)$ all preserve the blocks. This implies that $\Gamma$ preserves the gradings $\g = \bigoplus_{i \in \Z} \g(i, \gamma_e)$ and $\g_0 = \bigoplus_{i \in \Z} \g_0(i, \gamma_{e_0})$ and thus normalises $N_0$. It furthermore implies that $\Gamma_0\subseteq C_0^{e_0}\subseteq G_0$; letting $\widehat{\Gamma}_0$ be the image of $\Gamma_0$ under the projection from $G_0$ to $K$, we then have that $\widehat{\Gamma}_0$ splits the surjection $\widehat{C}_0^{e_0}\twoheadrightarrow \widehat{C}_0^{e_0}/(\widehat{C}_0^{e_0})^\circ$.

\begin{Proposition}
\label{P:fixedinduction}
We let $\D \subseteq \g^*$ be the decomposition class corresponding to the induction datum $(\g_0, G_0\cdot e_0)$ and write $\chi + Z := (\chi + \cv) \cap \overline{\D}_\reg$.
\begin{enumerate}
\setlength{\itemsep}{4pt}
\item[(i)] $\Gamma$ preserves $\hE_{\chi_0 + \z_0^*}(\g_0, e_0)$ and the parabolic induction functor \eqref{e:INDUCTION} restricts to
\begin{eqnarray}
\label{e:fixedinduction}
\hE_{\chi_0 + \z_0^*}(\g_0, e_0)^\Gamma \to \E(\g,e)^\Gamma.
\end{eqnarray}
\item[(ii)] Every $p$-character in $\chi_0 + \z_0^*$ supports a one dimensional $\Gamma$-fixed $\hU(\g_0, e_0)$-module.
\item[(iii)] Every $p$-character in $\chi + Z$ supports a one dimensional $\Gamma$-fixed $U(\g, e)$-module in the image of \eqref{e:fixedinduction}.
\end{enumerate}
\end{Proposition}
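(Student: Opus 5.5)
Part (i) follows from Theorem~\ref{T:parabolicinductiontheorem}(iii). By \eqref{e:Gammasubseteq} we have $\Gamma\subseteq C^e\cap C_0^{e_0}$, so $\Gamma$ fixes $\chi_0$ and stabilises $\chi_0+\z_0^*$. The latter holds because $\Gamma\subseteq G_0$ acts trivially on $\z(\g_0)$. Hence $\Gamma$ acts on $\hU_{\chi_0+\z_0^*}(\g_0,e_0)$ and on $\hE_{\chi_0+\z_0^*}(\g_0,e_0)$. Since the induced morphism $\hE_{\chi_0+\z_0^*}(\g_0,e_0)\to\E(\g,e)$ is $\Gamma$-equivariant, it sends $\Gamma$-fixed points to $\Gamma$-fixed points, which gives \eqref{e:fixedinduction}.

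For (ii), fix $\eta\in\chi_0+\z_0^*$. By Lemma~\ref{L:dimensionofE}(i) there is some one dimensional module with $p$-character $\eta$; the issue is to find a $\Gamma$-fixed one. Via \eqref{e:theLevi} I decompose $\g_0=\h_1\times\cdots\times\h_{s(\lambda)}\times\kk$ with $e_0\in\kk$. The extended $W$-algebra then splits as a tensor product of the reduced enveloping algebras $U(\h_j)$ (for the zero nilpotent) with $\hU(\kk,e_0)$. Correspondingly $\eta$ splits into components: on the $\h_j$ it is a scalar multiple of the trace, and on $\kk$ it is exactly $\chi_0|_\kk$, since $\z(\g_0)\subseteq\sum\h_j$ by \eqref{e:spanningzgzero}. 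Each element of $\Gamma$ acts on $\GL(V_j^+)$ by conjugation by $\pm1$ on basis vectors, i.e.\ by an inner automorphism coming from the torus. So on each $\h_j$-factor every one dimensional module (a character of $\gl_{i_j}$ through the trace) is $\Gamma$-fixed.

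This reduces (ii) to showing that the restricted algebra $\hU_{\chi_0}(\kk,e_0)$ has a one dimensional module fixed by $\widehat{\Gamma}_0$, the image of $\Gamma$ in $K$. Here I would use that $\Gamma$ acts on $\kk$ through $\widehat\Gamma_0$ modulo elements acting trivially, and that $\widehat\Gamma_0$ splits the component group of $\widehat C_0^{e_0}$. By Lemma~\ref{L:basiclemma}(iv) this amounts to the nonvanishing of the coinvariant algebra $\hU_{\chi_0}(\kk,e_0)^{\ab}_{\widehat\Gamma_0}$. The fixed-point variety $\hE_{\chi_0}(\kk,e_0)$ is finite. Since $|\widehat\Gamma_0|$ is a power of $2$ and the restricted minimal modules form a finite set, I would show a fixed point exists. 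One route is a counting argument; the better route, I think, is to produce a $\widehat\Gamma_0$-invariant minimal restricted $U_{\chi_0}(\kk)$-module directly, e.g.\ via the $\Gamma$-equivariant construction in \cite{PT2} of minimal modules for classical algebras (induced from a $\Gamma$-stable polarisation). Translating back through Theorem~\ref{T:equivariantSkryabin}(ii) then gives a $\Gamma$-fixed one dimensional module. \emph{This is the main obstacle:} producing a $\Gamma$-fixed one dimensional module at the nilpotent restricted $p$-character. It is the genuinely new input, whereas everything else is formal.

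For (iii), let $\eta\in\chi+Z$. By Corollary~\ref{cor: full image of char map} the map \eqref{e:slicemap2} from $\chi_0+\z_0^*$ surjects onto $\chi+Z$. Its hypotheses hold here: $\Gamma$ surjects onto $G^e/(G^e)^\circ$ by the discussion preceding the proposition and \eqref{e:componentgroupreductivecentraliser}, and $\Gamma$ fixes $e_0$ and acts trivially on $\z(\g_0)$. So choose a preimage $\eta_0\in\chi_0+\z_0^*$. By (ii) there is a $\Gamma$-fixed one dimensional module with $p$-character $\eta_0$. By (i) and Theorem~\ref{T:parabolicinductiontheorem}(i),(ii), its image under \eqref{e:fixedinduction} is a $\Gamma$-fixed one dimensional $U(\g,e)$-module with $p$-character $\eta$.
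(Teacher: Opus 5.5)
Your arguments for (i) and (iii) are fine and agree with the paper. For (i) you invoke Theorem~\ref{T:parabolicinductiontheorem}(iii), whose proof is exactly the equivariance check the paper repeats. Your reduction of (ii) to the factor $\kk$ is also correct. The gap is the step you flag yourself: you never produce a $\Gamma$-fixed one dimensional $\hU(\kk,e_0)$-module with $p$-character $\chi_0$, and neither proposed route does so.
\begin{itemize}
\item A counting argument for a $2$-group acting on the finite set $\hE_{\chi_0}(\kk,e_0)$ only works if that set has odd cardinality, and you have no control over this.
\item The polarisation route needs a $\Gamma$-stable polarisation of $\chi_0$ in $\kk$, which you do not construct. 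When $e_0$ is rigid and nonzero (this occurs, e.g.\ $\lambda=(2,1,1)$ in $\sp_4$) there is not even a parabolic polarisation, since a parabolic polarisation of a nilpotent element forces its orbit to be Richardson.
\end{itemize}
There is also a smaller slip: in the paper $\widehat\Gamma_0$ is the image of $\Gamma_0$, not of $\Gamma$. Elements of $\Gamma\setminus\Gamma_0$ do not act trivially on $\kk$, although on the finite set $\hE_{\chi_0}(\kk,e_0)$ their action does factor through the component group of $\widehat{C}_0^{e_0}$.

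The missing idea is to look for a fixed point in all of $\E(\kk,e_0)$, not just at the restricted $p$-character, and then pin down its $p$-character.
\begin{itemize}
\item Since $e_0$ lies in a unique sheet of $\kk$ (Lemma~\ref{L:uniquesheet}), Proposition~\ref{L:generators}(i), Proposition~\ref{L:dimEge} and Lemma~\ref{L:celemma} show that the $\Gamma$-equivariant surjection $S(\c_0)\onto U(\kk,e_0)^\ab$ has target of Krull dimension $\dim\c_0$. Here $\c_0$ is a $\Gamma$-stable complement to $[\kk^{e_0},\kk^{e_0}]$. As $S(\c_0)$ is a polynomial domain, the surjection is an isomorphism.
\item Hence $\E(\kk,e_0)\cong\c_0^*$ with $\Gamma$ acting linearly, so the origin gives a $\Gamma$-fixed one dimensional module.
\item Its $p$-character is a $\Gamma$-fixed point of $\chi_0+X_0$, by Proposition~\ref{L:dimEge} with $X_0$ defined via the unique sheet of $\kk$ through $\chi_0$.
\item By \eqref{e:lambda0}, $s(\lambda^{(0)})=0$, so Lemma~\ref{L:celemma} gives $\c_0^{\Gamma_0}=0$ and hence $\c_0^\Gamma=0$. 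Proposition~\ref{P:semiclassical} then forces $(\chi_0+X_0)^\Gamma=\{\chi_0\}$.
\end{itemize}
So this fixed module has $p$-character exactly $\chi_0$. Transport it via \eqref{e:relatingreductions} and tensor with the (automatically $\Gamma$-fixed) characters of the $\h_j$ to obtain (ii). This argument works with $\Gamma$ itself throughout, so the passage to $\widehat\Gamma_0$ is unnecessary.
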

\begin{proof}
First observe that $\Gamma$ preserves $\hU(\g_0, e_0)$ and $\chi_0 + \z_0^*$, which follows from \eqref{e:centresubseteq} and \eqref{e:Gammasubseteq}. This implies the first claim of (i). The parabolic induction functor is the composition
$$\hE_{\chi_0 + \z_0^*}(\g_0,e_0) \isoto \Rep_{p^{d_{\chi_0}}} U_{\chi_0 + \z_0^*}(\g_0) \to \Rep_{p^{d_{\chi}}} U_{\chi + \z^*}(\g) \isoto \hE_{\chi + \z^*}(\g,e) \to \E(\g,e).$$
It follows from \eqref{e:centresubseteq} that $$\z_0^* \subseteq \bigoplus_{i<0} \g_0(i,\gamma_{e_0})^\perp \subseteq \g_0^* \quad\mbox{and}\quad \z^* \subseteq \bigoplus_{i<0} \g(i, \gamma_e)^\perp\subseteq \g^*.$$ Therefore the isomorphisms $\hE_{\chi_0 + \z_0^*}(\g_0,e_0) \isoto \Rep_{p^{d_{\chi_0}}} U_{\chi_0 + \z_0^*}(\g_0)  $ and $\Rep_{p^{d_{\chi}}} U_{\chi + \z^*}(\g) \isoto \hE_{\chi + \z^*}(\g,e)$ are $\Gamma$-equivariant thanks to \eqref{e:Gammasubseteq} and Theorem~\ref{T:equivariantSkryabin}. The morphism $\hE_{\chi + \z^*}(\g,e) \to \E(\g,e)$ is $C^e$-equivariant because the homomorphism $U(\g,e) \to \hU_{\chi + \z^*}(\g,e)$ is. Finally $\Rep_{p^{d_{\chi_0}}} U_{\chi_0 + \z_0^*}(\g_0) \to \Rep_{p^{d_{\chi}}} U_{\chi + \z^*}(\g)$ is $\Gamma$-equivariant, by the final remarks of Section~\ref{ss:twists}. This proves (i).

Using \eqref{e:theLevi} we see that $\hU(\g_0, e_0)^\ab \cong S(\g_0/[\g_0,\g_0]) \times \hU(\kk, e_0)^\ab$ as $\Gamma$-algebras. Since $z_1,...,z_{s(\lambda)}$ and $\gamma_e(\k^\times)$ act diagonally on $V$ it follows from \eqref{e:spanningzgzero} that $\Gamma$ fixes $\z(\g_0)$ pointwise, and thus fixes $\z_0^{*}$, $\g_0/[\g_0,\g_0]$ and $S(\g_0/[\g_0,\g_0])$ pointwise. Furthermore, identifying $\chi_0$ with its restriction to $\kk$, there is by \eqref{e:relatingreductions} a $\Gamma$-equivariant isomorphism $U_{\chi_0}(\kk,e_0)\xrightarrow{\sim} \hU_{\chi_0}(\kk,e_0)$. It thus suffices to show that $\chi_0 \in \kk^*$ supports a one dimensional $\Gamma$-fixed representation of $U(\kk,e_0)$ (cf. \eqref{e:theLevi}). 
If we write $\c_0$ for a $\Gamma$-stable complement to $[\kk^{e_0}, \kk^{e_0}]$ inside $\kk^{e_0}$ then we can apply Proposition~\ref{L:generators}(i), Proposition~\ref{L:dimEge}, Lemma~\ref{L:celemma} and Lemma~\ref{L:uniquesheet} to see that $S(\c_0) \isoto U(\kk,e_0)^\ab$ as $\Gamma$-algebras. The latter implies that $\c_0^* \cong \E(\kk,e_0)$ as $\Gamma$-sets and thus $\E(\kk, e_0) \neq \emptyset$, since the origin of $\c_0^*$ is $\Gamma$-fixed.

Let $V$ be the one dimensional $\Gamma$-stable $U(\kk, e_0)$-module constructed above. As we observed in Lemma~\ref{L:basiclemma} the $p$-characters supporting such a representation correspond to maximal ideals of $Z_p(\kk,e_0)$ containing the kernel $K_\Gamma$ of the map $Z_p(\kk,e_0) \to U(\kk,e_0)^\ab_\Gamma$. It follows from Proposition~\ref{L:dimEge} that under the identification $Z_p(\kk,e_0) = \k[(\chi_0 + \cv_0)^{(1)}]$ the vanishing locus of $K_\Gamma$ lies inside $(\chi_0 + X_0)^\Gamma$ where $\chi_0 + X_0 = (\chi_0 + \cv_0) \cap \S_0$ and $\S_0$ denotes the unique sheet of $\kk$ containing $e_0$. We define $\cv_0$ for $\kk^*$ analogous to how we defined $\cv$ for $\g^*$ in Section~\ref{ss:extendedslice}, although here we require it to be $\k\Gamma$-stable.

It follows from \eqref{e:lambda0} that $s(\lambda^{(0)}) = 0$ and so Lemma~\ref{L:celemma} tells us that $\c_0^{\Gamma_0}=\c_0^{\widehat{\Gamma}_0}=0$. Since $\Gamma_0\subseteq\Gamma$, this implies $\c_0^\Gamma = 0$. Now apply Proposition~\ref{P:semiclassical} to get $(\chi_0 + X_0)^\Gamma = \chi_0$. Hence the $p$-character of $V$ is an element of $(\chi_0 + X_0)^\Gamma = \chi_0$. This concludes the proof of (ii).

Finally, (iii) follows by combining (i) and (ii) with Theorem~\ref{T:parabolicinductiontheorem}(ii) and Corollary~\ref{cor: full image of char map}. To apply the latter, note that $\Gamma$ preserves $e_0$ and $\z(\g_0)$ as above.
\end{proof}

\subsection{Abelian quotients in the classical case} We keep all the data fixed from Section~\ref{ss:invariantparabolic} and write $\c$ for a $\Gamma$-stable complement to $[\g^e, \g^e]$ in $\g^e$.

\begin{Theorem}
\label{T:abelianquotients}
\begin{enumerate}
\setlength{\itemsep}{4pt}
\item[(1)] If $e$ lies in a unique sheet of $\g$ then there is a $\Gamma$-equivariant isomorphism $S(\c) \isoto U(\g,e)^\ab,$ so that $U(\g,e)^\ab$ is a polynomial ring in $c(e)$ variables.
\item[(2)] We have $S(\c^\Gamma) \isoto U(\g,e)^\ab_\Gamma$ so $U(\g,e)^\ab_\Gamma$ is a polynomial ring in $c_{\Gamma}(e)$ variables.
\end{enumerate}
\end{Theorem}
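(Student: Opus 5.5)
The surjections are already available: Proposition~\ref{L:generators} gives $\Gamma$-equivariant surjections $S(\c)\onto U(\g,e)^\ab$ and $S(\c^\Gamma)\onto U(\g,e)^\ab_\Gamma$. Since $S(\c)$ and $S(\c^\Gamma)$ are polynomial rings, hence integral domains, a surjection from either is an isomorphism precisely when the target has Krull dimension at least $\dim\c$ (respectively $\dim\c^\Gamma$). So in both parts I will prove only a lower bound on dimension. For a polynomial ring $S$ and a surjection $S\onto R$ with $\dim R\ge \dim S$, the kernel must be a prime of height zero, hence zero.

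For (1), assume $e$ lies in a unique sheet $\S$. Proposition~\ref{L:dimEge} gives $\dim \E(\g,e)=\dim\z(\g_1)$, where $(\g_1,\O_1)$ is the rigid induction datum of $\S$. By Lemma~\ref{T:sheetstheorem}(ii) and the fact that $\S=\overline{\D}_\reg$ with $\D$ dense, we have $\dim\S-\dim G\cdot e=\dim\z(\g_1)$. Lemma~\ref{L:celemma} then says this number equals $c(e)=\dim\c$. Therefore $\dim \Spec U(\g,e)^\ab\ge\dim\c$, and the surjection $S(\c)\onto U(\g,e)^\ab$ is injective, as its kernel is a prime of height $0$ (passing to reduced quotients if necessary). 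The $\Gamma$-equivariance is inherited from Proposition~\ref{L:generators}(i).

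For (2), the plan is to bound $\dim \E(\g,e)^\Gamma$ from below using the induction datum of Section~\ref{ss:invariantparabolic}. Proposition~\ref{P:fixedinduction}(iii) says every $p$-character in $\chi+Z$ supports a $\Gamma$-fixed one dimensional $U(\g,e)$-module. By Lemma~\ref{L:basiclemma}(iv), the finite map $\E(\g,e)^\Gamma=\Spec U(\g,e)^\ab_\Gamma\to\chi+\cv$ therefore has image containing $\chi+Z$. Corollary~\ref{cor: image of char map} together with Lemma~\ref{L:extendedslice}(ii) gives $\dim(\chi+Z)=\dim\z(\g_0)=s(\lambda)$ by \eqref{e:spanningzgzero}. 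Hence $\dim U(\g,e)^\ab_\Gamma\ge s(\lambda)$. In the non-exceptional case $c_\Gamma(e)=s(\lambda)$ by Lemma~\ref{L:celemma}, and the dimension argument above finishes the proof.

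The main obstacle is the cases excluded from the construction in Section~\ref{ss:invariantparabolic}: exceptional $\lambda$, where $c_\Gamma(e)=s(\lambda)+1$, and very even $\lambda$. For very even $\lambda$, $\Gamma$ is trivial and $e$ lies in a unique sheet (there are no 2-steps), so part (1) applies directly. For exceptional $\lambda$ I would first try to observe that the unique 2-step has both neighbouring gaps even-free, so that $e$ lies in a unique sheet. Then (1) gives $U(\g,e)^\ab\cong S(\c)$ $\Gamma$-equivariantly, and taking coinvariants yields $S(\c)_\Gamma\cong S(\c^\Gamma)$ because $p\nmid|\Gamma|$. If uniqueness of the sheet fails, I would instead modify the Levi by adding one more central direction coming from the 2-step: a $\Gamma$-fixed factor of type $\gl_2$, or a $\g_0$ containing the 2-step block pair. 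I would then rerun Proposition~\ref{P:fixedinduction} to get the extra dimension.
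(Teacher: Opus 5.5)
Your proposal is correct and follows the paper's proof: part (1) is the same dimension count (Proposition~\ref{L:generators}, Proposition~\ref{L:dimEge}, Lemma~\ref{T:sheetstheorem}(ii), Lemma~\ref{L:celemma}), and the generic case of (2) uses the induction datum of Section~\ref{ss:invariantparabolic} with Proposition~\ref{P:fixedinduction}(iii), as the paper does. The only variation is for exceptional $\lambda$: you take $\Gamma$-coinvariants of the equivariant isomorphism from (1), using $p\nmid|\Gamma|$ to get $S(\c)_\Gamma\cong S(\c^\Gamma)$, whereas the paper notes that $\Gamma$ can be taken trivial or $\{\pm\Id\}$, acting trivially; your parity argument that $e$ lies in a unique sheet is correct, so the fallback plan is never needed.
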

\begin{proof}
Let $(\g_0', \O_0')\in\RIndat(\g)$ be a rigid induction datum corresponding to the unique sheet $\S$ containing $e$. By Proposition~\ref{L:generators} there is a $\Gamma$-equivariant surjection $S(\c) \onto U(\g,e)^\ab$ and by Proposition~\ref{L:dimEge} the Krull dimension of $U(\g,e)^\ab$ is precisely $\dim(\chi + X) = \dim \z(\g_0')$. Finally by Lemma~\ref{T:sheetstheorem}(ii) and Lemma~\ref{L:celemma} we have $\dim \z(\g_0') = c(e) = \dim \c$. This implies that $S(\c) \onto U(\g,e)^\ab$ is an isomorphism, which completes the proof of (1).

When $\lambda$ is exceptional, Lemma~\ref{L:celemma} implies that $e$ lies in a unique sheet of $\g$. Furthermore, by \cite[3.8, Proposition 2]{JaNO} we may choose $\Gamma$ to either be trivial or $\{\Id,-\Id\}\subseteq \SO(N)_e$ (depending on whether $\lambda_i=\lambda_{i+1}$ in the unique 2-step) and thus it acts trivially on $U(\g,e)^\ab$ and $\c$. So (2) reduces to (1) in this case. A similar argument shows that if $\varepsilon=1$ and every part of $\lambda$ is even then $\Gamma$ is trivial and $e$ lies in a unique sheet; thus (2) reduces to (1) in this case as well.

We now prove (2) under the assumption that $\lambda$ is not exceptional and that if $\epsilon=1$ then $\lambda$ is not very even. Adopt the notation of Proposition~\ref{P:fixedinduction}. Write $K_\Gamma$ for the kernel of the homomorphism $Z_p(\g,e) \to U(\g,e)^\ab_\Gamma$, and identify $\Spec Z_p(\g,e) / K_\Gamma$ with a subset of $\chi + \cv$. By Lemma~\ref{L:basiclemma} and Proposition~\ref{P:fixedinduction}(3) we see that $\chi + Z$ is contained $\Spec Z_p(\g,e) / K_\Gamma$. Since $U(\g,e)$ is finite over the $p$-centre we can apply Lemma~\ref{T:sheetstheorem}(i), Lemma~\ref{L:extendedslice}(ii) and \eqref{e:spanningzgzero} to deduce that the Krull dimension of $U(\g,e)^\ab_\Gamma$ is greater than or equal to $s(\lambda)$. According to Lemma~\ref{L:celemma} we have $\dim \c^\Gamma = s(\lambda)$ and it follows that the surjection $S(\c^\Gamma) \onto U(\g,e)^\ab_\Gamma$ from Proposition~\ref{L:generators} is an isomorphism.
\end{proof}

\subsection{Proof of the Main Theorem for classical groups}
\label{ss:proofofmain}
By Section~\ref{ss:NilpotentReduction} we see that the main theorem for classical groups can be reduced to the case where $\chi$ is nilpotent, so here we adopt this hypothesis. Throughout this Section we fix $e\in \Nc(\g)$ with partition $\lambda \in \PeN$ and write $\O = G\cdot e$.

For part (1), we begin by fixing some notation; for this part of the proof, we do not set things up as in Section~\ref{ss:invariantparabolic}. We instead fix as in Section~\ref{ss:LSinduction} a rigid induction datum $(\g_0,\O_0)\in\RIndat(\O)$ (which is unique up to conjugacy) and a parabolic subgroup $P$ of $G$ such that $\g_0$ is a Levi factor of $\p=\Lie(P)$. Denote $\r=\Rad(\p)$, so that $\p=\g_0\oplus \r$. By Remark~\ref{R: redtorep}, we may assume $e\in \O\cap(\O_0 + \r)$. Let $e_0 \in \O_0$ and $r\in\r$ be such that $e=e_0+r$ and retain the notation $\chi\in \g^*$ and $\chi_0=\chi\vert_{\g_0}\in \g_0^*$.

Since $e$ lies in a unique sheet, Theorem~\ref{T:abelianquotients} tells us that $\E(\g,e) \cong \mathbb{A}_\k^{c(e)}$. We also have $\dim \hE_{\chi_0 + \z_0^*}(\g_0, e_0) = \dim \z(\g_0)$ by Lemma~\ref{L:dimensionofE}, whilst $\dim \z(\g_0) = c(e)$ by Lemma~\ref{T:sheetstheorem}(ii) and Lemma~\ref{L:celemma}. The parabolic induction morphism $\hE_{\chi_0 + \z_0^*}(\g_0, e_0) \to \E(\g,e)$ is finite by Theorem~\ref{T:parabolicinductiontheorem} and, since finite morphisms are closed and $\E(\g,e)$ is irreducible, we deduce that $\hE_{\chi_0 + \z_0^*}(\g_0, e_0) \onto \E(\g,e)$. Finally, by the construction of the parabolic induction functor, this implies that parabolic induction $\Rep_{p^{d_{\chi_0}}} U_{\chi_0}(\g_0) \to \Rep_{p^{d_\chi}} U_\chi(\g)$ is surjective, proving (1).

To prove (2) we first suppose $\lambda$ is exceptional. Lemma~\ref{L:celemma} implies that $e$ lies in a unique sheet, and so (2) reduces to (1) in this case. A similar argument applies if $\varepsilon=1$ and every part of $\lambda$ is even, as in the proof of Theorem~\ref{T:abelianquotients}.

In the following we use the notation $\Rep_d^H(A)$ to denote the set of $d$-dimensional $A$-modules which are $H$-stable, when $A$ is an algebra and $H\subseteq \Aut(A)$. For this part of the proof, we adopt all of the set-up and notation from Section~\ref{ss:invariantparabolic} (as we may do by Remark~\ref{R: redtorep}).

Suppose that $\lambda$ is not exceptional and that if $\epsilon=1$ then $\lambda$ is not very even, and let $(\g_0, G_0\cdot e_0)\in\Indat(\O)$ be the induction datum defined in Section~\ref{ss:invariantparabolic}. By Theorem~\ref{T:abelianquotients} we see that $\E(\g,e)^\Gamma \cong \mathbb{A}_\k^{c_\Gamma(e)}$. By Proposition~\ref{P:fixedinduction}, we know that the parabolic induction functor \eqref{e:INDUCTION} restricts to a finite morphism $\hE_{\chi_0 + \z_0^*}(\g_0, e_0)^\Gamma \to \E(\g,e)^\Gamma$ and that $\dim \hE_{\chi_0 + \z_0^*}(\g_0, e_0)^\Gamma \ge \dim (\chi + Z)$. By Lemma~\ref{L:extendedslice}, Lemma~\ref{L:celemma}, Theorem \ref{T:abelianquotients} and \eqref{e:spanningzgzero}, we have $\dim (\chi + Z) = \dim \z(\g_0) = c_\Gamma(e)=\dim\E(\g,e)^\Gamma$. Since finite morphisms are closed and $\E(\g,e)^\Gamma$ is irreducible we conclude that $\hE_{\chi_0 + \z_0^*}(\g_0, e_0)^\Gamma \to \E(\g,e)^\Gamma$ surjects. 
It follows  that the parabolic induction map $\Rep_{p^{d_{\chi_0}}} U_{\chi_0}(\g_0) \to \Rep_{p^{d_\chi}} U_\chi(\g)$ restricts to a surjective map $\Rep_{p^{d_{\chi_0}}}^\Gamma U_{\chi_0}(\g_0) \onto \Rep_{p^{d_\chi}}^\Gamma U_\chi(\g) = \Rep_{p^{d_\chi}}^{G^\chi} U_\chi(\g)$. This last equality follows from the fact that the group $\Gamma$ splits the surjection $G^e \onto G^e / (G^e)^\circ$, which follows from \eqref{e:componentgroupreductivecentraliser} and the remarks preceding \eqref{e:Gammasubseteq}.

In order to complete the proof we must show that every element of $\Rep_{p^{d_\chi}}^{G^\chi} U_\chi(\g)$ can be parabolically induced from a minimal module with a rigid $p$-character. By Lemma~\ref{L:uniquesheet}, $e_0$ lies in a unique sheet of $\g_0$. Applying part (1) of Theorem~\ref{T:main} we see that every element of $\Rep_{p^{d_{\chi_0}}}^\Gamma U_{\chi_0}(\g_0)$ is parabolically induced from a minimal module with a rigid $p$-character and so the proof concludes by using the transitivity of parabolic induction.$\hfill\qed$

\begin{Remark}
We conclude the section by mentioning the natural question of understanding the fibres of the parabolic induction map on minimal modules. In \cite{GTmin} the results for $\GL_N$ imply that minimal modules can be parabolically induced, and what is more, that two induced modules are isomorphic if and only if the original modules are conjugate by a shifted action of the normaliser of the Levi subalgebra. This is analogous to the linkage principle for parabolically induced representations, similar to \cite[\textsection 11]{JaLA}. It seems plausible that this generalises to other types, in which case the main theorem of the current paper would subtend a classification of the minimal $U_\chi(\g)$-modules considered in the present article.
\end{Remark}

\section{Exceptional Lie algebras}\label{S: Exceptionals}

In this section, we explore the case where $G$ is of exceptional type.

\subsection{Set-up}\label{ss:Exc setup}

Our arguments in this section will depend on a comparison between certain values over $\C$ and certain values over $\k$. To set this up carefully, we need to redefine certain objects from previous sections.

Fix $\bG$ a simple algebraic group over $\C$ of adjoint type and let $\bg=\Lie(\bG)$. We prefer to work in this section with groups of adjoint type in order to be able to use the tables of \cite{LT}. In view of Remark~\ref{R:standardreductivedecomp}(3), this choice does not impact the generality of our result.

Fix a maximal torus $\bT$ of $\bG$, and let $\Phi=\Phi(\bG,\bT)$ be the corresponding root system of $\bG$. We always assume in this section that $\Phi$ is of exceptional type and that the prime $p$ is good for $\Phi$; we also fix an algebraically closed field $\k$ of characteristic $p$. Let $\Delta=\{\alpha_1,\ldots,\alpha_d\}$ be a choice of simple roots for $\Phi$, in the Bourbaki labelling \cite{Bou}. We then fix a Chevalley basis $$\bB:=\{\be_\alpha \mid \alpha\in\Phi\}\cup \{\bh_i\mid 1\leq i\leq d\}$$ of $\bg$, where $\be_\alpha$ is a root vector for $\alpha\in\Phi$ and $\bh_i=[\be_{\alpha_i},\be_{-\alpha_i}]$. We choose our Chevalley basis to have the structure constants as used in \cite{LT}. One property of such a Chevalley basis is that the $\Z$-span of $\bB$ in $\bg$ is a $\Z$-form of $\bg$, which we denote $\bg_\Z$. We may then define the Lie algebra $\g:=\bg_\Z\otimes_\Z \k$; since $p$ is good for $\Phi$, this is the (simple) Lie algebra of the algebraic group $G$ defined over $\k$ with the same root datum as $\bG$. We set $T$ to be the corresponding maximal torus of $G$. The Lie algebra $\g$ then has a Chevalley basis $$\B:=\{e_\alpha \mid \alpha\in\Phi\}\cup \{h_i\mid 1\leq i\leq d\},$$ where $e_\alpha:=\be_\alpha\otimes 1$ and $h_i:=\bh_i\otimes 1$. Set $\t$ to be the $\k$-span of the $h_i$, so that $\t=\Lie(T)$.

We now fix $\be$ a nilpotent element of $\bg$; specifically, an element listed in \cite[Table 2]{LT}. We write $e$ for the corresponding element of $\g$. By Remark~\ref{R: redtorep}, this choice does not affect the generality of our results. Since $\be$ lies in the Chevalley $\Z$-form of $\g$, we in fact have $e=\be\otimes 1$. Because $\bg$ and $\g$ are simple Lie algebras, the Killing forms $\bkappa:\bg\times\bg\to \C$ and $\kappa:\g\times\g\to\k$ are non-degenerate $\bG$-(resp. $G$-)invariant bilinear forms. There is thus a $\bG$-equivariant isomorphism $\bg\simeq \bg^*$ and a $G$-equivariant isomorphism $\g\simeq \g^*$; we write $\bchi$ for the image of $\be$ under the first isomorphism and $\chi$ for the image of $e$ under the second.

\begin{Remark}
	We have used $\Z$ here to base change from $\C$ to $\k$, however we could have used any domain $R\leq\C$ which is equipped with a map $R\to \k$ restricting to the natural map $\Z\twoheadrightarrow\F_p\hookrightarrow \k$. Regardless of the choice of $R$, the Lie algebra $\g=\g_R\otimes_R \k$ is the same. Later on, we will need to use such a domain $R$ for base change -- doing so will not change the Lie algebra we are considering, nor the Chevalley basis of $\g$.
\end{Remark}

The nilpotent elements $\be$ in $\bg$ and $e$ in $\g$ belong to corresponding nilpotent orbits in $\bg$ and $\g$. We write $\bG^\be$ for the stabiliser of $\be$ in $\bG$ and $G^e$ for the stabiliser of $e$ in $G$. Let $\bgamma_\be$ be a cocharacter associated to $\be$ and let $\gamma_e$ be a cocharacter associated to $e$; specifically, choose these to be the cocharacters denoted by $\tau$ in \cite[\textsection 10]{LT}. We then define, as in Section~\ref{ss:associatedcochar}, $\bC^\be:= \bG^\be\cap C_\bG(\bgamma_\be(\C^\times))$ and $C^e:=G^e\cap C_G(\gamma_e(\k^\times))$. Defining $$\pi=C^e/(C^e)^\circ,$$ there are, by \cite[Theorem 36]{MS}, \cite[Theorem 3.9]{PrKR} and \eqref{e:componentgroupreductivecentraliser}, group isomorphisms $$\bG^\be/(\bG^\be)^\circ\cong \bC^\be/(\bC^\be)^\circ \cong \pi \cong C^e/(C^e)^\circ \cong G^e/(G^e)^\circ.$$ The isomorphism type of each $\pi$ can be found in the tables of \cite[\textsection 8.4]{CM} (noting that one must omit any $\Z/m\Z$-factors, as $G$ is of adjoint type); from these tables, it is clear that $p\nmid\left|\pi\right|$ whenever $p$ is good for $\Phi$.

In fact, for each chosen representative $e$ of each nilpotent orbit the tables in \cite[\textsection 10]{LT} give elements $\bsigma_1,\ldots, \bsigma_r\in\bC^\be$ (resp. $\sigma_1,\ldots, \sigma_r\in C^e$) such that their images $\overline{\bsigma}_1,\ldots, \overline{\bsigma}_r$ in $\bC^\be/(\bC^\be)^\circ$ (resp. $\overline{\sigma}_1,\ldots, \overline{\sigma}_r$ in $C^e/(C^e)^\circ$) generate said group. We further define $\bPi\leq \bC^\be$ to be the subgroup generated by $\bsigma_1,\ldots, \bsigma_r\in\bC^\be$ and $\Pi\leq C^e$ to be the subgroup generated by $\sigma_1,\ldots, \sigma_r\in C^e$.

We now define $\bg^\be$ (resp. $\g^e$) to be the centraliser of $\be$ (resp. $e$) in $\bg$ (resp. $\g$), and define $$\bc:=\bg^\be/[\bg^\be,\bg^\be]\qquad\mbox{and}\qquad \c:=\g^e/[\g^e,\g^e].$$ There are obvious actions of $\bC^\be$ on $\bc$ and $C^e$ on $\c$. It is known by \cite[\textsection 5.1]{PT1} that $(\bC^\be)^\circ$ acts trivially on $\bc$, thus inducing an action of $\pi$ on $\bc$. Over $\k$, we have the following lemma.
\begin{Lemma}\label{L:dagger}
	$(C^e)^\circ$ acts trivially on $\c$.
\end{Lemma}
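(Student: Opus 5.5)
The plan is to show that the action of $(C^e)^\circ$ on $\c$ is trivial at the level of the Lie algebra, and then to pass to the group using connectedness. Since $(C^e)^\circ$ is a connected reductive group, it is generated by a maximal torus $S$ together with the root subgroups relative to $S$. So it will be enough to show two things: every root subgroup acts trivially, and $S$ acts trivially.

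First I will record that the action factors through $\Ad$, and that $\c = \g^e/[\g^e,\g^e]$. The differential of the action of $C^e$ on $\c$ is the adjoint action of $\Lie(C^e) = \g^e \cap \g(0)$ on $\g^e/[\g^e,\g^e]$; this identification uses smoothness of centralisers, Remark~\ref{R:standardreductivedecomp}(1). This differential is zero, because $[\Lie C^e, \g^e] \subseteq [\g^e,\g^e]$. So $\Lie(C^e)$ acts trivially on $\c$.

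Next I pass from the Lie algebra to the group. A linear representation of a connected group with zero differential need not be trivial in characteristic $p$; Frobenius twists are the obstruction. So I argue with the weights instead.

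1. For the torus $S$, the weights of $S$ on $\c$ have zero differential. Each weight is therefore divisible by $p$ in $X^*(S)$.
2. I will bound the weights instead. The weights of $S$ on $\g$ are roots of $G$ restricted to $S$, or zero. A nonzero restricted root divisible by $p$ cannot occur, since root coefficients are bounded by the highest root coefficients, which are less than $p$ in good characteristic. To make this rigorous, I choose $S \subseteq T$ and view its characters as restrictions.
3. For a root subgroup $U_\beta \subseteq (C^e)^\circ$, it acts unipotently on $\c$. Its action is $\exp$-like, $x \mapsto \sum_k t^k \ad(e_\beta)^{(k)} x$ with divided powers. The $k=1$ term vanishes modulo $[\g^e,\g^e]$. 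The higher terms shift the $S$-weight by $k\beta$. Since $S$ acts trivially by step 2, the only nonzero weight space of $\c$ is the zero one, so every such term must vanish on $\c$.

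I expect the main obstacle to be step 2, namely excluding weights of the form $p\mu$ that restrict from roots. If that bound turns out to be delicate, I would fall back on base change. The paper has already fixed Chevalley-basis representatives from \cite{LT}. From \cite[\textsection 5.1]{PT1} over $\C$, and with $p$ good, one can compare $\dim \c$ with $\dim \bc$ and use the explicit generators in \cite{LT} to check directly that a maximal torus of $C^e$ acts trivially on a lift of a basis of $\bc$.
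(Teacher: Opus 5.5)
Your overall architecture is the same as the paper's. The Lie algebra of $C^e$ acts trivially on $\c$, so the weights of a maximal torus $S$ of $(C^e)^\circ$ on $\c$ lie in $pX(S)$. Nonzero $p$-divisible weights are then ruled out using that $p$ is good, and root subgroups are killed by the weight-shifting argument. However, your step 2, which carries all the content, has a genuine gap. The claim that a nonzero restriction of a root to $S$ cannot lie in $pX(S)$ is false for a general subtorus $S\subseteq T$, because the kernel of $X(T)\to X(S)$ depends on $S$. For example, take $S=\lambda(\k^\times)$ for a primitive cocharacter $\lambda$ with $\langle\alpha_1,\lambda\rangle=p$; in an adjoint group of rank $\geq 2$, $\lambda=p\varpi_1^\vee+\varpi_2^\vee$ works. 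Then $X(S)\cong\Z$ via $\mu\mapsto\langle\mu,\lambda\rangle$, and $\alpha_1|_S\in pX(S)\setminus\{0\}$. So bounding the simple-root coefficients of $\alpha$ in $X(T)$ says nothing about divisibility in $X(S)$ until you know what $X(S)$ is. Choosing $S\subseteq T$ and viewing characters as restrictions does not supply this.

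The missing idea is to identify $S$.
\begin{enumerate}
\item Put $L=C_G(S)$, which is a Levi subgroup. Since $S$ fixes $e$ and commutes with $\gamma_e(\k^\times)$, you get $e\in\Lie(L)$ and $\gamma_e(\k^\times)\subseteq L$.
\item Hence $Z(L)^\circ$ fixes $e$ and centralises $\gamma_e(\k^\times)$, so $Z(L)^\circ\subseteq(C^e)^\circ$. As $S\subseteq Z(L)^\circ$, maximality gives $S=Z(L)^\circ$.
\item Choose $T\subseteq L$ and a base $\Pi$ with $I=\Pi\cap\Phi_L$. Because $G$ is adjoint in this section, $X(T)=\Z\Pi$. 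So $X(S)=\Z\Pi/\Z I$ is free on the images of $\Pi\setminus I$, and the coordinates of $\alpha|_S$ are exactly the coefficients of $\alpha$ at the simple roots in $\Pi\setminus I$.
\item Only now does your bound apply. If $\alpha|_S\neq 0$, one of these coefficients is nonzero and has absolute value less than $p$, so $\alpha|_S\notin pX(S)$.
\end{enumerate}
Adjointness, or some control of $X(T)$ relative to $\Z I$, is genuinely used here and is absent from your sketch.

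With this repair, your step 3 is the standard reduction the paper also invokes. You should phrase it for an arbitrary rational representation of $(C^e)^\circ$: $x_\beta(t)$ acts as $\sum_k t^kX_k$, with $X_k$ raising $S$-weights by $k\beta$. Avoid divided powers of $\ad(e_\beta)$, since root subgroups of $(C^e)^\circ$ relative to $S$ need not be root subgroups of $G$. The base-change fallback is only a plan, not an argument.
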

\begin{proof}
	Let $S$ be a maximal torus of $(C^e)^\circ$; it suffices to show that $S$ acts trivially on $\c$ (see \cite[Lemma 15.4]{MT}). By \cite[Lemma 4.6]{JaNO}, $L:=C_G(S)$ is a Levi subgroup of $G$ such that $e$ is distinguished in $\Lie(L)$. In particular, $Z(L)\subseteq G^e$ and, since $\gamma_e(\k^\times)\subseteq L$, we also have $Z(L)\subseteq C_G(\gamma_e(\k^\times))$. This implies $S\subseteq Z(L)^\circ\subseteq (C^e)^\circ$, and hence that $S=Z(L)^\circ$ (by the maximality of $S$). 
	
	Fix now a maximal torus $\widetilde{T}$ in $L$ which contains $S$ (this may be different from the maximal torus $T$ defined earlier), a Borel subgroup $B_L$ of $L$ which contains $\widetilde{T}$, a parabolic subgroup $P$ of $G$ for which $L$ is a Levi factor, and a Borel subgroup $B$ of $G$ with $B_L\subseteq B\subseteq P$. This data determines a root system $\widetilde{\Phi}$ of $G$, a set of simple roots $\Pi\subseteq \widetilde{\Phi}$, a parabolic root subsystem $\widetilde{\Phi}_L$ for $L$, and a set of simple roots $I=\Pi\cap\Phi_L$ for $\Phi_L$. Since $G$ is adjoint, \cite[II.1.18]{JanRAGS} implies that $X(S)=X(Z(L)^\circ)=X(T)/\Z I$; in particular the set of weights of $S$ on $\g$ is the image of $\widetilde{\Phi}\cup\{0\}$ under the natural map $f:X(T)\twoheadrightarrow X(T)/\Z I$ (this is a {\em restricted root system} in the language of \cite{BnG}, once zero is removed). The set of weights of $S$ on $\c$ is thus a subset of $f(\widetilde{\Phi}\cup\{0\})$.
	
	Since $\Lie(S)$ acts trivially on $\c$, to show that $S$ acts trivially on $\c$ it suffices to show that $f(\widetilde{\Phi}\cup\{0\}{})\cap pX(S)=0$. Since $G$ is adjoint, $X(T)=\Z I\oplus \Z (\Pi\setminus I)$ and thus the result follows in a straightforward manner from the fact that $p$ is good and thus does not divide any of the coefficients of any elements in $\widetilde{\Phi}$ when written in terms of simple roots $\Pi$.
\end{proof}

This lemma implies that there is thus an action of $\pi$ on $\c$. Our goal in the next subsection is to show that 
$$ c_\pi(\be):= \dim \bc^\pi \geq \dim \c^\pi=: c_\pi(e).$$

\begin{Remark}
	Note that our definition of $\c$ here is different from that used in the earlier sections of this paper, but for $\Gamma\subseteq C^e$ as in those sections the two notions are isomorphic as $\Gamma$-modules. In particular, $c_\pi(e)=c_\Gamma(e)$ where the latter is defined as in \eqref{e:def cgamma}.
\end{Remark}

Let $\brho:\pi\to\GL(\bc)$ and $\rho:\pi\to\GL(\c)$ be the representations of $\pi$ on $\bc$ and $\c$ respectively, and define $$\bpsi:=\frac{1}{\left|\pi\right|}\sum_{g\in\pi}\brho(g)\in \End(\bc)\qquad \mbox{and}\qquad \psi:=\frac{1}{\left|\pi\right|}\sum_{g\in\pi}\rho(g)\in \End(\c),$$ recalling that $p\nmid\left|\pi\right|$ under our assumptions.

It is then elementary representation theory that $$c_\pi(\be) = \rank \bpsi \qquad\text{ and }\qquad c_\pi(e) = \rank(\psi).$$

\subsection{Bound for $c_\pi(e)$}\label{ss:cpi bound}

For the following results, we make arguments involving base change. It will thus be helpful here to establish some conventions. Suppose that $R\leq \C$ is a unital subring equipped with a map $R\to \k$ restricting to the natural map $\Z\twoheadrightarrow\F_p\hookrightarrow \k$. We write $M_n(R)$ for the ring of all $n\times n$ matrices with entries in $R$. There is a natural inclusion $M_n(R)\hookrightarrow M_n(\C)$, and we denote by $\bA$ the image of a matrix $A\in M_n(R)$ under this map. Furthermore, by construction there is a natural map $M_n(R)\to M_n(\k)$; we write $\overline{A}$ for the image of $A\in M_n(R)$ under this map. Finally, given a basis $B$ of a vector space $V$ and linear map $T:V\to V$, we adopt the notation $[T]_B$ for the matrix of $T$ with respect to the basis $B$.

For the following proposition, note that $\bPi$ acts on $\bg$ and $\Pi$ acts on $\g$; write $\bomega:\bPi\to\GL(\bg)$ and $\omega:\Pi\to\GL(\g)$ for the corresponding representations. 

\begin{Proposition}\label{P: Re for g}
	There exists a unital subring $R_e\leq \C$ with the following properties:
	\begin{itemize}
		\item $R_e$ is a principal ideal domain.
		\item Each bad prime for $\Phi$ is invertible in $R_e$.
		\item $R_e$ admits a homomorphism $R_e\to\k$ extending the natural surjection $\Z\twoheadrightarrow\F_p$.
		\item For each $i=1,\ldots,r$, there exists a matrix $A_i\in M_{\dim\bg}(R_e)$, such that $$\bA_i=[\bomega(\bsigma_i)]_\bB\qquad\mbox{and}\qquad \overline{A}_i=[\omega(\sigma_i)]_\B.$$
	\end{itemize}
\end{Proposition}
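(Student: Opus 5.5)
The plan is to construct $R_e$ as a localisation of a ring of algebraic integers, obtained by adjoining the finitely many coefficients that occur in the tables of \cite[\textsection 10]{LT}. For each $i=1,\ldots,r$ the element $\bsigma_i\in\bC^\be$ is given in \cite{LT} explicitly as a product of root elements $x_\alpha(c)=\exp(c\,\ad\be_\alpha)$ and torus elements $h_\alpha(t)$, or as an element of $\bT$ times representatives of Weyl group elements $n_\alpha=x_\alpha(1)x_{-\alpha}(-1)x_\alpha(1)$. The parameters $c,t$ that appear are algebraic numbers, typically roots of unity and small rationals. The element $\sigma_i\in C^e$ is the element given by the same formula over $\k$, with each parameter replaced by its reduction. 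I would first let $S\subseteq\C$ be the finite set of all these parameters, together with the inverses of the torus parameters.

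Next I would take $K=\Q(S)$, a number field, and set $R'=\mathcal O_K[1/N]$, where $N$ is the product of the bad primes for $\Phi$ and of the denominators of the elements of $S$ and of their inverses. Since $p$ is good and coprime to $N$, I choose a prime $\mathfrak p$ of $\mathcal O_K$ above $p$ and put $R_e=R'_{\mathfrak p}$. This ring is a localisation of a Dedekind domain at a single prime, so it is a DVR and in particular a PID. It contains $1/N$, so the bad primes are invertible. The residue field $\mathcal O_K/\mathfrak p$ is a finite extension of $\F_p$ and therefore embeds in $\k$, which gives a map $R_e\to\k$ extending $\Z\to\F_p$.

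For the matrices, I would use the fact that the structure constants of the Chevalley basis $\bB$ are integers. With respect to $\bB$, the matrix of $\exp(c\,\ad\be_\alpha)$ has entries in $\Z[c]$, because the divided powers $(\ad\be_\alpha)^k/k!$ preserve $\bg_\Z$, and $h_\alpha(t)$ acts diagonally by powers of $t^{\pm1}$. Multiplying these matrices gives $A_i\in M_{\dim\bg}(R_e)$ with $\bA_i=[\bomega(\bsigma_i)]_\bB$. The same formulas interpreted over $\k$ define the Chevalley group action on $\g=\bg_\Z\otimes\k$. Since the reduction map $R_e\to\k$ is a ring homomorphism, it commutes with forming these polynomial matrix entries, so $\overline A_i=[\omega(\sigma_i)]_\B$.

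The step I expect to be the main obstacle is the bookkeeping with the tables of \cite{LT}. There I must confirm that each $\sigma_i$ really is the reduction of $\bsigma_i$ under the chosen map $R_e\to\k$; in particular, a root of unity of order prime to $p$ must map to the corresponding root of unity in $\k$. This requires choosing $\mathfrak p$ compatibly with the identification $\F_p\subseteq\k$ used to define $\sigma_i$ over $\k$. If the representatives in \cite{LT} are instead given as roots of unity whose order could be divisible by $p$, I would replace them by their prime-to-$p$ parts. This is possible because $p\nmid|\pi|$, and the replacement does not change the images $\overline{\sigma}_i$ in the component group.
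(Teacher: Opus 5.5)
Your proof is correct in outline and follows the paper's strategy. Both arguments rest on the same fact, which the paper cites from \cite[\textsection 2]{LT}: the adjoint matrices of $\bx_\alpha(s)$, $\bn_\beta(t)$, $\bh_\gamma(u)$ with respect to the Chevalley basis have entries in the ring generated by the parameters and the inverses of $t,u$. These matrices reduce to the corresponding matrices over $\k$. Both arguments then finish with a check against the tables of \cite{LT}.

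The real difference is the choice of ring.
\begin{itemize}
\item The paper uses explicit rings independent of $p$, such as $\Z[\frac12,\frac13,\varsigma]$ or $\Z[\frac12,\frac13,\frac15,\zeta]$. These are PIDs because $\Z[\varsigma]$, $\Z[i]$ and $\Z[\zeta]$ have class number one, so a single ring serves every good characteristic.
\item Your $R_e=(\mathcal{O}_K)_{\mathfrak p}$ is a DVR, hence a PID with no class-number input. It inverts every integer prime to $p$, so the bad primes and $|\pi|$ are units automatically; $|\pi|$ must be a unit to form $D=\frac{1}{|\pi|}\sum_g A_g$ in Proposition~\ref{P:rankpsi}. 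The cost is that $R_e$ depends on $p$, which is harmless because $\k$ is fixed.
\end{itemize}
The root-of-unity compatibility you worry about is automatic, up to choosing $\mathfrak p$ and the embedding of the residue field into $\k$, as you note.

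You state one step as automatic when it is not: that $p$ is coprime to $N$. $N$ contains the denominators of all parameters and of the inverses of the torus parameters (and of any $n_\beta(t)$ parameters). Nothing in your construction prevents $p$ from dividing one of them. In valuation terms, you must verify $v_{\mathfrak p}\ge 0$ for every parameter and $v_{\mathfrak p}=0$ for every parameter that must be inverted. This is the actual content of the table check, and it is where the paper does concrete work. For $E_8(a_7)$ it exhibits explicit inverses of $\frac{2(1-3\zeta^2-3\zeta^3)}{25}$ and $\frac{3+\zeta^2+\zeta^3}{5}$ whose denominators involve only $2$ and $5$, both bad for $E_8$. You should move this verification into your table bookkeeping rather than assert it.

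Your final contingency, replacing roots of unity by their prime-to-$p$ parts, is unnecessary. The only roots of unity that occur have orders $3$, $4$ and $5$, and order $5$ occurs only for $E_8$, where good means $p\ge 7$. The contingency is also unjustified as stated. Changing a parameter changes $\sigma_i$, and you give no reason the new element would still centralise $e$ and $\gamma_e$.
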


\begin{proof}
	Let $\varsigma$ be a primitive third root of unity, $i$ a primitive fourth root of unity and $\zeta$ a primitive fifth root of unity. We then fix $R_e$ as given in Table~\ref{Tab: Re}; such $R_e$ is generally determined by the Cartan type of $\Phi$ and the group $\pi$, though when $\Phi$ has type $E_8$ and $\pi\cong S_3$ there are two $R_e$ we consider, depending on whether or not $\O$ is the orbit with Bala-Carter label $D_4(a_1)+A_1$. Note that, by construction, each bad prime for $\Phi$ is invertible in $R_e$. Furthermore, each $R_e$ is a principal ideal domain (since localisations of principal ideal domains are principal ideal domains, and it is well known that $\Z$, $\Z[\varsigma]$, $\Z[i]$ and $\Z[\zeta]$ are principal ideal domains), and each $R_e$ maps to $\k$ in such a way that restricts to the natural surjection $\Z\to\F_p$ and sends a primitive $k$-th root of unity to  a primitive $k$-th root of unity (where $k=3,4,5$, as appropriate).
	
	\begin{table}
		\begin{center}
			\caption{The ring $R_e$}\label{Tab: Re}
			\bgroup
			\renewcommand{\arraystretch}{2}
			\begin{tabular}{|c|c|c|}
				\hline 
				Type & $\pi$ & $R_e$ \\
				\hline
				$E_6, E_7, F_4, G_2$ & $1$ & $\Z[\frac{1}{2},\frac{1}{3}]$ \\
				\hline 
				$E_8$ & $1$ & $\Z[\frac{1}{2},\frac{1}{3},\frac{1}{5}]$ \\
				\hline
				$E_6, E_7, F_4$ & $S_2$ & $\Z[\frac{1}{2},\frac{1}{3}]$ \\
				\hline
				$E_8$ & $S_2$ & $\Z[\frac{1}{2},\frac{1}{3},\frac{1}{5},i]$ \\
				\hline
				$E_6, E_7, F_4, G_2$ & $S_3$ & $\Z[\frac{1}{2},\frac{1}{3},\varsigma]$ \\
				\hline
				$E_8$ & $S_3$ ($e\neq D_4(a_1)+A_1$) & $\Z[\frac{1}{2},\frac{1}{3},\frac{1}{5},\varsigma]$\\
				\hline
				$E_8$ & $S_3$ ($e=D_4(a_1)+A_1)$ & $\Z[\frac{1}{2},\frac{1}{3},\frac{1}{5},i]$ \\
				\hline
				$F_4$ & $S_4$ & $\Z[\frac{1}{2},\frac{1}{3},\varsigma]$\\
				\hline
				$E_8$ & $S_5$ & $\Z[\frac{1}{2},\frac{1}{3},\frac{1}{5},\zeta]$\\
				\hline
			\end{tabular}
			\egroup
		\end{center}
	\end{table}
	
	For each $\alpha\in\Phi$, we denote by $\bU_\alpha$  (resp. $U_\alpha$) the root subgroup of $\bG$ (resp. $G$) with respect to $\bT$ (resp. $T$). We then define a homomorphism of algebraic groups $\bx_\alpha:\G_a\to \bU_\alpha$ (where $\G_a$ denotes the additive group over the suitable field) by $\bx_\alpha(t)=\exp(t\ad(\be_\alpha))$ and a homomorphism of algebraic groups $x_\alpha:\G_a\to U_\alpha$ such that $dx_\alpha(1)=e_\alpha$. We then define $\bn_\alpha(t)=\bx_\alpha(t)\bx_{-\alpha}(-t^{-1})\bx_\alpha(t)$, $\bh_\alpha(t)=\bn_\alpha(t)\bn_\alpha(1)^{-1}$ and $\bn_\alpha=\bn_\alpha(1)$, for $t\in\C$, and similarly $n_\alpha(t)=x_\alpha(t)x_{-\alpha}(-t^{-1})x_\alpha(t)$, $h_\alpha(t)=n_\alpha(t)n_\alpha(1)^{-1}$ and $n_\alpha=n_\alpha(1)$, for $t\in \k$. 
	
	Now, let $\bsigma\in \bG$ be a product of some $\bx_\alpha(s)$, $\bn_\beta(t)$ and $\bh_\gamma(u)$ such that each $s,t,u$ lie in $R_e$ and each $t,u$ are invertible in $R_e$, and let $\sigma\in G$ be the corresponding product of $x_\alpha(s)$, $n_\beta(t)$ and $h_\gamma(u)$ (where we technically replace $s,t,u$ with their images under the map $R_e\to\k$). Then it follows from the observations in \cite[\textsection 2]{LT} that the matrix of the adjoint action of $\bsigma$ on $\bg$ with respect to the basis $\bB$ in fact lies in $M_n(R_e)$, and the matrix of the adjoint action of $\sigma$ on $\g$ with respect to the basis $\B$ is precisely the image of this matrix under the induced map $M_n(R_e)\to M_n(\k)$ coming from $R_e\to\k$. Thus the result will follow if each $\sigma_i$ and $\bsigma_i$ can be written (compatibly) in this form. 
	
	This can be checked easily from the tables in \cite{LT} (and indeed is implicit in the fact that the same tables cover the case of both characteristic zero and good characteristic). The only case that requires some comment is the orbit with Bala-Carter label $E_8(a_7)$, in which case we need to check that $\frac{2(1-3\zeta^2-3\zeta^3)}{25}$ and $\frac{3+\zeta^2+\zeta^3}{5}$ are invertible in $R_e$; this follows as the former has inverse $\frac{-11}{2}+\frac{9}{2}\zeta-3\zeta^2-3\zeta^3+\frac{9}{2}\zeta^4$ and the latter has inverse $\frac{11}{5}+\frac{1}{5}\zeta-\frac{4}{5}\zeta^2-\frac{4}{5}\zeta^3+\frac{1}{5}\zeta^4$.
\end{proof}

This proposition says, in essence, that $\bPi$ acts on $\bg$ as $\Pi$ acts on $\g$. What we want to conclude is that $\pi$ acts on $\bc$ as it acts on $\c$. To do this, we first need to establish an $R_e$-form of $\bc$.

In service of this, recall that $\g_{R_e}$ is the $R_e$-submodule of $\g$ generated by $\bB$. We define $\bg_{R_e}^\be:=\bg^\be\cap \bg_{R_e}$; this is an $R_e$-Lie algebra. We then define $[\bg_{R_e}^\be,\bg_{R_e}^\be]$ to be the derived subalgebra of $\bg_{R_e}^\be$. 

\begin{Lemma}\label{L:ReForms}
	The following hold.
	\begin{enumerate}
		\item The $R_e$-module $\bg_{R_e}^\be$ is a direct summand of $\bg_{R_e}$ and thus free of finite rank.
		\item $\bg_{R_e}^\be\otimes_{R_e}\C=\bg^\be$ and $\bg_{R_e}^\be\otimes_{R_e}\k=\g^e$.
		\item The submodule $[\bg_{R_e}^\be,\bg_{R_e}^\be]$ is a direct summand of $\bg_{R_e}^\be$ and thus free of finite rank.
		\item $[\bg_{R_e}^\be,\bg_{R_e}^\be]\otimes_{R_e}\C=[\bg^\be,\bg^\be]$ and $[\bg_{R_e}^\be,\bg_{R_e}^\be]\otimes_{R_e}\k=[\g^e,\g^e]$.
		\item The quotient module $\bg_{R_e}^\be/[\bg_{R_e}^\be,\bg_{R_e}^\be]$ is a free $R$-module of finite rank.
		\item $(\bg_{R_e}^\be/[\bg_{R_e}^\be,\bg_{R_e}^\be])\otimes_{R_e}\C=\bc$ and  $(\bg_{R_e}^\be/[\bg_{R_e}^\be,\bg_{R_e}^\be])\otimes_{R_e}\k=\c$. 		
	\end{enumerate}
\end{Lemma}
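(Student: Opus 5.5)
The plan is to treat everything as linear algebra over the principal ideal domain $R_e$, using that submodules of free modules over a PID are free, and that a submodule $M$ of a free module $F$ is a direct summand exactly when $F/M$ is torsion free. For (1), write $\ad\be$ in the Chevalley basis $\bB$. Its structure constants are integers, so $\ad\be$ is an $R_e$-linear endomorphism of $\bg_{R_e}$, and $\bg^\be_{R_e}=\ker(\ad\be|_{\bg_{R_e}})$. The quotient $\bg_{R_e}/\bg^\be_{R_e}$ embeds in $\bg_{R_e}$, so it is torsion free. Hence $\bg^\be_{R_e}$ is a saturated submodule, therefore a direct summand, therefore free.

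For (2), the kernel of an $R_e$-linear map tensored with $\C$ (a flat $R_e$-module) is the kernel over $\C$, which gives the first equality. Over $\k$ flatness fails, so I would argue by dimension. The image of $\bg^\be_{R_e}\otimes\k\to\g$ lies in $\g^e$ and is injective because of the direct summand property. Its rank is $\dim\bg^\be$, and $\dim\bg^\be=\dim\g^e$ because centraliser dimensions agree in good characteristic (the Bala--Carter comparison of Section~\ref{ss:LSinduction} together with Remark~\ref{R:standardreductivedecomp}(1)). Equivalently, the cokernel of $\ad\be$ on $\bg_{R_e}$ has no $p$-torsion, which is the same dimension statement.

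For (3) and (4), the module $[\bg^\be_{R_e},\bg^\be_{R_e}]$ is the image of the bracket map $\Lambda^2\bg^\be_{R_e}\to\bg^\be_{R_e}$, which is an $R_e$-linear map between free modules, so it is free. Tensoring with $\C$ gives $[\bg^\be,\bg^\be]$ by right exactness. The image over $\k$ is always $[\g^e,\g^e]$ since images commute with base change along $R_e\to\k$, once (2) is known. The real content is saturation, i.e. that $\bg^\be_{R_e}/[\bg^\be_{R_e},\bg^\be_{R_e}]$ is torsion free. Its torsion is supported at finitely many primes. The bad primes are inverted in $R_e$, and at a good prime $\mathfrak p\mid p$ torsion would make $\dim[\g^e,\g^e]<\dim[\bg^\be,\bg^\be]$. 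So the key step is
$$\dim\c=\dim\bc, \qquad\text{equivalently}\qquad \dim[\g^e,\g^e]=\dim[\bg^\be,\bg^\be],$$
for every good $p$.

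I expect this equality to be the main obstacle. My first approach would be to quote the known computation, valid in good characteristic, of $\g^e/[\g^e,\g^e]$ for exceptional types. This uses the tables of \cite{LT}, which give explicit bases of $\g^e$ with the same structure constants in characteristic $0$ and in good characteristic, so the bracket rank can be read off and is independent of $p$. As a fallback, I would use the $\gamma_e$-grading: $\g^e$ is graded by nonnegative degrees, with $\g^e(0)=\Lie(C^e)$ reductive, and $[\g^e,\g^e]$ can be computed degreewise from these data, each of which is characteristic free for good $p$. Once saturation holds, (3) and (4) follow.

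Statements (5) and (6) are then formal. A quotient of a free module by a direct summand is free, and tensoring the split short exact sequence with $\C$ and with $\k$ preserves exactness, so the quotient becomes $\bc$ and $\c$ respectively by (2) and (4).
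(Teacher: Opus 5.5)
Your treatment of (1), (2), (5) and (6) matches the paper: kernel of $\ad\be$ with torsion-free cokernel, flat base change to $\C$, and a dimension count over $\k$. One citation correction in (2): the equality $\dim\g^e=\dim\bg^\be$ for $e=\be\otimes 1$ is \cite[Theorem 2.6(iv)]{PrKR}, which is what the paper uses. It does not follow from Remark~\ref{R:standardreductivedecomp}(1).

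For (3) and (4) you reverse the paper's logic. The paper imports the saturation statement (3) from \cite[Proof of Lemma 4.5(i)]{PT2}, using that bad primes are inverted in $R_e$. It then proves (4) by comparing $\dim_\C[\bg^\be,\bg^\be]$ with $\dim_\k[\g^e,\g^e]$, the latter read off from the good-characteristic tables \cite[Tables 3 and 4]{PS}. You instead observe two things:
\begin{itemize}
\item torsion in $\bg^\be_{R_e}/[\bg^\be_{R_e},\bg^\be_{R_e}]$ at a maximal ideal of residue characteristic $\ell$ is exactly a drop of $\dim[\g^e,\g^e]$ below $\dim[\bg^\be,\bg^\be]$ in characteristic $\ell$, so (3) is equivalent to $\dim\c=\dim\bc$ in every good characteristic;
\item (4) over $\k$ then follows formally from (2) and (3), by injectivity and right exactness, with no separate count.
\end{itemize}
This shows the paper's two inputs are really one. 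The price is that you need the equality at all good primes, not just $p$. For the later applications, saturation at $\ker(R_e\to\k)$ would already suffice after localising $R_e$.

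The weak point is your justification of that equality. The structure constants being the same integers in characteristic $0$ and in good characteristic does not make the rank of the bracket map independent of $p$. That is exactly the torsion question: the rank drops modulo $\ell$ whenever $\ell$ divides an elementary divisor of the integral bracket matrix. So ``reading off'' the rank requires an elementary-divisor check at every good prime, not merely identical constants.

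The grading fallback has the same defect. The brackets $\g^e(i)\times\g^e(j)\to\g^e(i+j)$ are again integral data that can acquire $\ell$-torsion, so the degreewise computation is not automatically characteristic-free.

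What actually closes the argument is the good-characteristic computation of $\g^e/[\g^e,\g^e]$ in \cite[Tables 3 and 4]{PS}, compared with the characteristic-zero values, or the argument of \cite{PT2}. With that cited, your proof goes through.
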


\begin{proof}
	Since $R_e$ is a principal ideal domain, arguing as in \cite[Proof of Lemma 3.7(i)]{PT2} yields that $\bg_{R_e}^\be$ is a direct summand of $\bg_{R_e}$. Since $\bg_{R_e}$ is a free $R_e$-module, and using again that $R_e$ is a principal ideal domain, we get that $\bg_{R_e}^e$ is a free $R_e$-module of finite rank. This proves (1).
	
	The first part of (2) follows by applying the exact  functor $-\otimes_{R_e} \C$ (exact since $R_e$ is a principal ideal domain) to the exact sequence $$0\to \bg_{R_e}^{\be}\to \bg_{R_e}\xrightarrow{\ad(\be)} [\be,\bg_{R_e}]\to 0,$$ while the second part follows as (1) implies that $\bg_{R_e}^\be \otimes_{R_e}\k\hookrightarrow \g^e$ and we know from \cite[Theorem 2.6(iv)]{PrKR} that $\dim_\C(\bg^\be)=\dim_\k(\g^e)$.
	
	Since all bad primes are invertible in $R_e$, the statement that $[\bg_{R_e}^\be,\bg_{R_e}^\be]$ is a direct summmand of $\bg_{R_e}^\be$ follows as in \cite[Proof of Lemma 4.5(i)]{PT2}, proving (3). This then immediately implies (5).
	
	It is easy to see from (2) that there exists an inclusion $[\bg_{R_e}^{\be},\bg_{R_e}^{\be}]\otimes_{R_e}\C\into [\bg^{\be},\bg^{\be}]$ and a surjection $[\bg_{R_e}^{\be},\bg_{R_e}^{\be}]\otimes_{R_e}\k\onto [\g^{e},\g^{e}]$. Since $R_e$ is a principal ideal domain, (3) implies that $[\bg_{R_e}^{\be},\bg_{R_e}^{\be}]$ is free of finite rank. Comparing $\dim_{\C}([\bg^{\be},\bg^{\be}])$ and $\dim_\k([\g^{e},\g^{e}])$ then yields (4) (see the proof of Theorem~\ref{T: exc sing sheet induced} below for more details on how to do this comparison).
	
	Finally, applying the functor $-\otimes_{R_e}\k$ to the split exact sequence $$0\to [\bg_{R_e}^{\be},\bg_{R_e}^{\be}]\to \bg_{R_e}^{\be}\to \bg_{R_e}^{\be}/[\bg_{R_e}^{\be},\bg_{R_e}^{\be}]\to 0$$ yields by (3) and (4) an exact sequence 
	$$0\to [\g^{e},\g^{e}]\to \g^{e}\to (\bg_{R_e}^{\be}/[\bg_{R_e}^{\be},\bg_{R_e}^{\be}])\otimes_{R_e}\k\to 0.$$ Part (6) then follows.
\end{proof}

Since $[\bg_{R_e}^\be,\bg_{R_e}^\be]$ is a direct summand of $\bg_{R_e}^\be$ (and so a free $R_e$-module of finite rank) and $\bg_{R_e}^\be$ is a direct summand of $\bg_{R_e}$ (and so also a free $R_e$-module of finite rank), there exists an $R_e$-basis $\bu_1,\ldots,\bu_{m}$ of $[\bg_{R_e}^\be,\bg_{R_e}^\be]$, which extends to an $R_e$-basis $\bu_1,\ldots,\bu_{m},\bu_{m+1},\ldots,\bu_{m+c}$ of $\bg_{R_e}^\be$, which in turn extends to an $R_e$-basis $\bu_1,\ldots,\bu_{m},\bu_{m+1},\ldots,\bu_{m+c},\bu_{m+c+1},\ldots,\bu_n$ of $\bg_{R_e}$. Setting $\bv_i:=\bu_{m+i} + [\bg_{R_e}^\be,\bg_{R_e}^\be]$, we then get that $\bv_1,\ldots,\bv_c$ is an $R_e$-basis of the free $R_e$-module $\bg_{R_e}^\be/[\bg_{R_e}^\be,\bg_{R_e}^\be]$. Let $v_1:=\bv_1\otimes 1, \ldots, v_c:=\bv_c\otimes 1$ be the corresponding basis of $\c$. Define then $$\bB_\bc=\{\bv_1,\ldots,\bv_c\}\subseteq \bc\qquad \mbox{and}\qquad \B_\c=\{v_1,\ldots,v_c\}\subseteq \c$$ to be bases of $\bc$ and $\c$ respectively.

\begin{Proposition}\label{P:R_e for c}
	For each $g\in\pi$ there exists a matrix $A_g\in M_c(R_e)$ such that $$\bA_g=[\brho(g)]_{\bB_\bc}\qquad\mbox{and}\qquad \overline{A}_g=[\rho(g)]_{\B_\c}.$$
\end{Proposition}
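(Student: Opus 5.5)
The idea is to restrict the integral matrices $A_i$ of Proposition~\ref{P: Re for g} to the $R_e$-forms built in Lemma~\ref{L:ReForms}, and then reduce from the generators $\sigma_i$ to an arbitrary $g\in\pi$.

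First, I would show that each $\bsigma_i$ preserves $\bg_{R_e}$ and $\bg_{R_e}^\be$. By Proposition~\ref{P: Re for g}, $\bA_i=[\bomega(\bsigma_i)]_\bB$ has entries in $R_e$, so $\bsigma_i$ maps $\bg_{R_e}$ into itself. It also fixes $\be$, since $\bsigma_i\in\bC^\be$, so it preserves $\bg^\be$. Hence it preserves $\bg^\be_{R_e}=\bg^\be\cap\bg_{R_e}$, and therefore also the derived subalgebra $[\bg^\be_{R_e},\bg^\be_{R_e}]$. So $\bsigma_i$ induces an $R_e$-linear endomorphism of the free module $\bg^\be_{R_e}/[\bg^\be_{R_e},\bg^\be_{R_e}]$. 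Its matrix $A_{\sigma_i}\in M_c(R_e)$ with respect to $\bv_1,\dots,\bv_c$ is read off from the block-triangular form of $A_i$ in the basis $\bu_1,\dots,\bu_n$. Explicitly, let $Q\in\GL_n(R_e)$ be the change of basis from $\bB$ to $(\bu_j)$. Then $Q^{-1}A_iQ$ is block upper triangular with respect to the flag $[\bg^\be_{R_e},\bg^\be_{R_e}]\subseteq\bg^\be_{R_e}\subseteq\bg_{R_e}$, and $A_{\sigma_i}$ is its middle diagonal block.

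Next, I would check the two specialisations. Tensoring with $\C$ clearly gives $[\brho(\overline{\bsigma}_i)]_{\bB_\bc}$, by Lemma~\ref{L:ReForms}(6). For $\k$ there are three ingredients. Since $Q$ is invertible over $R_e$, its image $\overline Q$ is invertible over $\k$ and sends $\B$ to the basis $(\bu_j\otimes1)$ of $\g$. By Lemma~\ref{L:ReForms}(2),(4), the flag specialises to $[\g^e,\g^e]\subseteq\g^e\subseteq\g$. By Proposition~\ref{P: Re for g}, $\overline{A}_i=[\omega(\sigma_i)]_\B$. Reduction mod the map $R_e\to\k$ commutes with conjugation and taking diagonal blocks. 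Hence $\overline{A_{\sigma_i}}=[\rho(\overline\sigma_i)]_{\B_\c}$, using that $\sigma_i$ acts on $\c$ through its image in $\pi$ (Lemma~\ref{L:dagger}) and similarly over $\C$ (\cite[\textsection 5.1]{PT1}).

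Finally, I would pass to general $g\in\pi$. Write $g$ as a word in the $\overline\sigma_i$; inverses are not needed since $\pi$ is finite. Set $A_g$ to be the corresponding product of the $A_{\sigma_i}$. Both $\bA\mapsto$ and $A\mapsto\overline A$ are ring homomorphisms, and $\brho,\rho$ are group homomorphisms, so $\bA_g=[\brho(g)]_{\bB_\bc}$ and $\overline{A}_g=[\rho(g)]_{\B_\c}$.

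This choice is well defined: if two words represent the same $g$, the corresponding products agree after the injective map $M_c(R_e)\into M_c(\C)$, because both equal $[\brho(g)]$. The main subtlety I expect is the identification $\pi\cong\bC^\be/(\bC^\be)^\circ\cong C^e/(C^e)^\circ$. It must send $\overline{\bsigma}_i\mapsto\overline\sigma_i$, and this should be precisely the compatibility built into the tables of \cite{LT}. Granting that, the remaining steps are routine bookkeeping.
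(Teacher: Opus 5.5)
Your proposal is correct and follows the paper's proof. Like the paper, you conjugate the integral matrices $A_i$ into the adapted basis $\bu_1,\dots,\bu_n$, use that each $\bsigma_i$ preserves the flag $[\bg^\be_{R_e},\bg^\be_{R_e}]\subseteq\bg^\be_{R_e}\subseteq\bg_{R_e}$ to get block upper triangular form, take the middle $c\times c$ block, and extend from the generators $\overline{\sigma}_i$ to all of $\pi$; your extra steps (specialising the flag via Lemma~\ref{L:ReForms}, forming products along words, and the identification $\overline{\bsigma}_i\leftrightarrow\overline{\sigma}_i$ read off from \cite{LT}) only make explicit what the paper leaves implicit.
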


\begin{proof}
	Since the Chevalley basis $\bB$ and the basis $\bB':=\{\bu_1,\ldots,\bu_n\}$ are both free $R_e$-bases of $\bg_{R_e}$, there exists a change of basis matrix $X\in M_n(R_e)$ from $\bB$ to $\bB'$. In particular, setting $\B':=\{\bu_1\otimes 1,\ldots,\bu_n\otimes 1\}\subseteq \g$, we see that $\bX$ and $\overline{X}$ are also change-of-basis matrices from $\bB$ and $\B$ to $\bB'$ and $\B'$ respectively. For $i=1,\ldots,r$, set $A_i':=XA_iX^{-1}$; then by Proposition~\ref{P: Re for g} we have $$\bA'_i=[\bomega(\bsigma_i)]_{\bB'}\qquad \mbox{and}\qquad \overline{A}'_i=[\omega(\sigma_i)]_{\B'}.$$
	
	Since $\bPi\leq \bC^\be$ and $\Pi\leq C^e$, we deduce that $\bPi$ preserves $\bg^\be$ and $[\bg^\be,\bg^\be]$ (under $\bomega$) and $\Pi$ preserves $\g^e$ and $[\g^e,\g^e]$ (under $\omega$). Furthermore, by Proposition~\ref{P: Re for g}, $\bPi$ preserves $\bg_{R_e}^{\be}$ and $[\bg_{R_e}^{\be},\bg_{R_e}^{\be}]$. Therefore, the matrix $A'_i$ is block-upper-triangular, with the first block being an $m\times m$-submatrix, the second a $c\times c$-submatrix and the last an $(n-m-c)\times(n-m-c)$-submatrix. Let $A_{\overline{\bsigma}_i}\in M_c(R_e)$ be the middle of these, so the $c\times c$ submatrix of $A_i$ obtained by taking both the rows and columns of $A_i$ labelled $m+1,\ldots, m+c$. From Lemma~\ref{L:ReForms} and our above observations and constructions, it is then clear that
	$$\bA'_{\overline{\bsigma}_i}=[\bomega(\bsigma_i)\vert_{\bc\to\bc}]_{\bB_\bc}=[\brho(\overline{\bsigma}_i)]_{\bB_\bc}\qquad \mbox{and}\qquad \overline{A}'_{\overline{\bsigma}_i}=[\omega(\sigma_i)\vert_{\c\to\c}]_{\B_\c}=[\rho(\overline{\sigma}_i)]_{\B_\c}.$$ Since $\pi$ is generated by $\overline{\bsigma}_1=\overline{\sigma}_1,\ldots,\overline{\bsigma}_r=\overline{\sigma}_r$, the result then follows.
\end{proof}

To conclude, we need to appeal to the following result.
\begin{Lemma}\label{L:rankAgoesdown}
	Let $R\leq \C$ be a unital subring with the first and third properties of Proposition~\ref{P: Re for g} and let $A\in M_n(R)$. Then $\rank(\bA)\geq \rank(\overline{A})$.
\end{Lemma}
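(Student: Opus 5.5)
The plan is to use the characterisation of rank by minors. The rank of a matrix over a field is the largest $k$ such that some $k\times k$ minor is nonzero. Set $k=\rank(\overline{A})$. There is then a $k\times k$ submatrix of $A$, say with rows $I$ and columns $J$, whose determinant in $\k$ is nonzero. Write $\phi:R\to\k$ for the homomorphism given by the third property of Proposition~\ref{P: Re for g}.

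Determinants are polynomials with integer coefficients in the matrix entries, so they commute with ring homomorphisms. Applying this to $\phi$ gives
$$\det(\overline{A}_{I,J}) = \phi(\det A_{I,J}),$$
where $\det A_{I,J}\in R$. Since $\det(\overline{A}_{I,J})\ne 0$, we get $\phi(\det A_{I,J})\neq 0$, and in particular $\det A_{I,J}\neq 0$ in $R$.

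Next I would use the inclusion $R\hookrightarrow\C$, which is injective and also commutes with determinants. It follows that $\det(\bA_{I,J})$ is the image of $\det A_{I,J}$ in $\C$, and this is nonzero. So $\bA$ has a nonzero $k\times k$ minor, and hence $\rank(\bA)\ge k=\rank(\overline{A})$.

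There is no real obstacle here. The only point to check is that both specialisation maps are ring homomorphisms out of the same ring $R$, so the minors of $\bA$ and $\overline{A}$ are both images of the same elements $\det A_{I,J}\in R$. The principal ideal domain hypothesis is not needed for this inequality; it is only used elsewhere in the paper.
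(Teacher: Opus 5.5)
Your argument is correct, but it takes a different route from the paper's. The paper puts $A$ into Smith Normal Form over the principal ideal domain $R$, writing $SAT=\diag(\lambda_1,\ldots,\lambda_n)$ with $S,T$ invertible in $M_n(R)$. Since $S$ and $T$ remain invertible after both specialisations, $\rank(\bA)$ is the number of $\lambda_i$ that are nonzero in $\C$, and $\rank(\overline{A})$ is the number whose images are nonzero in $\k$. The inequality then follows because $R\hookrightarrow\C$ is injective.

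You instead use the determinantal characterisation of rank. This needs only that both specialisations are ring homomorphisms out of $R$ and that $R\to\C$ is injective. So, as you say, the PID hypothesis is superfluous here, and the lemma holds for any subring $R\le\C$ admitting a homomorphism $R\to\k$.

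The Smith Normal Form route gives a slightly sharper conclusion: the rank drop $\rank(\bA)-\rank(\overline{A})$ equals exactly the number of invariant factors $\lambda_i\neq 0$ lying in the kernel of $R\to\k$. That extra information is not used in Proposition~\ref{P:rankpsi}. The PID property is genuinely needed elsewhere (Lemma~\ref{L:ReForms}), so your more elementary proof leaves the rest of the argument unchanged.
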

\begin{proof}
	Since $R$ is a principal ideal domain, we may put $A$ in Smith Normal Form. In other words, there exist invertible matrices $S,T\in M_n(R)$ such that $SAT$ is diagonal; say $SAT=\diag(\lambda_1,\ldots,\lambda_n)$ for $\lambda_1,\ldots,\lambda_n\in R$. This also implies that $\bS,\bT\in M_n(\C)$ and $\overline{S}, \overline{T}\in M_n(\k)$ are invertible and that $\bS\bA\bT=\diag(\lambda_1,\ldots,\lambda_n)$ and $\overline{S}\overline{A}\overline{T}=\diag(\lambda_1\otimes 1,\ldots,\lambda_n\otimes 1)$. Thus $$\rank(\bA)=\rank(\bS\bA\bT)=\#\{i\mid \lambda_i\neq 0\}\geq \#\{i\mid \lambda_i\otimes 1\neq 0\}=\rank(\overline{S}\overline{A}\overline{T})=\rank(\overline{A}).$$
\end{proof}

Putting this all together, we get the following result.
\begin{Proposition}\label{P:rankpsi}
	$c_\pi(\be)\geq c_\pi(e)$.
\end{Proposition}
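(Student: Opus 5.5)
We will realise the averaging operators $\bpsi$ and $\psi$ as the reduction, over $\C$ and over $\k$ respectively, of a single matrix with entries in $R_e$, and then apply Lemma~\ref{L:rankAgoesdown}. Recall from the end of Section~\ref{ss:Exc setup} that $c_\pi(\be)=\rank\bpsi$ and $c_\pi(e)=\rank\psi$. So it suffices to find $A\in M_c(R_e)$ with $\bA=[\bpsi]_{\bB_\bc}$ and $\overline{A}=[\psi]_{\B_\c}$.

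The candidate is
$$A:=\frac{1}{|\pi|}\sum_{g\in\pi}A_g,$$
where the $A_g\in M_c(R_e)$ are supplied by Proposition~\ref{P:R_e for c}. For this to lie in $M_c(R_e)$ we need $|\pi|$ to be invertible in $R_e$. Inspecting the possible component groups ($1$, $S_2$, $S_3$, $S_4$, $S_5$), the order $|\pi|$ is divisible only by the primes $2$, $3$ and $5$. In each case these primes are bad for $\Phi$ (or absent): for $S_5$ the type is $E_8$, where $5$ is bad; for $S_4$ the type is $F_4$, with bad primes $2,3$; and for $S_3$ and $S_2$ only $2$ and $3$ occur. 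By Proposition~\ref{P: Re for g} every bad prime is inverted in $R_e$, so $1/|\pi|\in R_e$, and indeed this can be read off Table~\ref{Tab: Re}.

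Next we check compatibility with the two specialisations. The map $M_c(R_e)\to M_c(\C)$ is an injective ring homomorphism, so
$$\bA=\frac{1}{|\pi|}\sum_g\bA_g=\frac{1}{|\pi|}\sum_g[\brho(g)]_{\bB_\bc}=[\bpsi]_{\bB_\bc}.$$
The map $M_c(R_e)\to M_c(\k)$ is induced by the ring homomorphism $R_e\to\k$, which sends $1/|\pi|$ to the inverse of $|\pi|\cdot 1\in\k$; this is well defined since $p\nmid|\pi|$. Hence $\overline{A}=\frac{1}{|\pi|}\sum_g[\rho(g)]_{\B_\c}=[\psi]_{\B_\c}$. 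Here we are using that $g\mapsto A_g$ is defined for every $g\in\pi$ and that its specialisations agree with $\brho$ and $\rho$ as stated in Proposition~\ref{P:R_e for c}.

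Finally, $R_e$ is a principal ideal domain admitting a homomorphism to $\k$ extending $\Z\to\F_p$, so Lemma~\ref{L:rankAgoesdown} applies and gives $\rank\bA\ge\rank\overline{A}$, that is, $c_\pi(\be)\ge c_\pi(e)$.

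The only delicate point is the invertibility of $|\pi|$ in $R_e$, since $R_e$ inverts the bad primes rather than $|\pi|$ directly; the case check above should settle it. Everything else is formal once Proposition~\ref{P:R_e for c} is in hand.
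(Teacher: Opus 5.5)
Your proposal is correct and matches the paper's proof exactly: the paper also sets $D=\frac{1}{|\pi|}\sum_{g\in\pi}A_g\in M_c(R_e)$ and combines Proposition~\ref{P:R_e for c} with Lemma~\ref{L:rankAgoesdown} to get $\rank\bD\geq\rank\overline{D}$. You also check explicitly that $|\pi|$ is a unit in $R_e$, since its prime divisors are among the bad primes inverted in Table~\ref{Tab: Re}; the paper uses this silently when it writes $D\in M_c(R_e)$.
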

	
\begin{proof}
	Set $$D=\frac{1}{\left|\pi\right|} \sum_{g\in\pi} A_g\in M_c(R_e).$$ Then by Proposition~\ref{P:R_e for c} and Lemma~\ref{L:rankAgoesdown} we have $$c_\pi(\be)=\rank(\bpsi)=\rank([\bpsi]_{\bB_\bc})=\rank(\bD)\geq \rank(\overline{D})=\rank([\psi]_{\B_\c})=\rank(\psi)=c_\pi(e).$$
\end{proof}	

\subsection{Orbits lying in a single sheet}\label{ss:Exc sing sheet}

Having established these preparatory results, we are now mostly ready to prove Theorem~\ref{T:main}. We first consider part (1) of the theorem, which applies when $e$ lies in a single sheet of $\g$.

For the following proposition, we define $c(e)=\dim \c$ (by Lemma~\ref{L:ReForms}, this coincides with $\dim(\bc)$).

\begin{table}
	\begin{center}
		\caption{Excluded rigid orbits}\label{Tab: Bad Rigid Orbits}
		\bgroup
		\renewcommand{\arraystretch}{2}
		\begin{tabular}{|c|c|c|c|c|c|}
			\hline 
			$G_2$ & $F_4$ & $E_7$ & $E_8$ & $E_8$ & $E_8$ \\
			\hline
			$\widetilde{A}_1$ & $\widetilde{A}_2+A_1$ & $(A_1+A_3)'$ & $A_3+A_1$ & $D_5(a_1)+A_2$ & $A_5+A_1$ \\
			\hline 
		\end{tabular}
		\egroup
	\end{center}
\end{table}

\begin{Theorem}\label{T: exc sing sheet induced}
	Suppose that $e$ lies in a single sheet of $\g$ but that the orbit of $e$ is not listed in Table~\ref{Tab: Bad Induced Orbits} or Table~\ref{Tab: Bad Rigid Orbits}, and let $(\g_0,\O_0)\in \RIndat(G\cdot e)$ be such that there exists a parabolic subalgebra $\p=\g_0\oplus \r$ with $e\in\O_0 + \r$. Fix $e_0\in\O_0$ and $r\in \r$ such that $e=e_0+r$. Then $U(\g,e)^\ab$ is a polynomial algebra in $c(e)$-variables, and every minimal $U_\chi(\g)$-module is parabolically induced from a minimal $U_{\chi_0}(\g_0)$-module.
\end{Theorem}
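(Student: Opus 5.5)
The argument follows the classical case in Section~\ref{ss:proofofmain}; the only new ingredient is a dimension count for $\c$, which is done over $\C$ and transferred to $\k$.

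\emph{Step 1: upper bound on $\E(\g,e)$.} By Proposition~\ref{L:generators}(i) there is a surjection $S(\c)\onto U(\g,e)^\ab$, where $\c$ is a complement to $[\g^e,\g^e]$ in $\g^e$. So $U(\g,e)^\ab$ is a quotient of a polynomial ring in $c(e)=\dim\c$ variables.

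\emph{Step 2: lower bound.} Since $e$ lies in a unique sheet $\S$, Proposition~\ref{L:dimEge} gives
\[
\dim \E(\g,e)=\dim(\chi+X)=\dim\z(\g_0).
\]
By Lemma~\ref{T:sheetstheorem}(ii) this equals $\dim\S-\dim G\cdot e$. So it suffices to show $c(e)=\dim\z(\g_0)$.

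\emph{Step 3: the key equality $c(e)=\dim\z(\g_0)$.} Over $\C$, \cite{PT1} shows that for $\be$ lying in a unique sheet and not in Table~\ref{Tab: Bad Induced Orbits}, $\dim\bc$ equals the dimension of the sheet minus $\dim\bG\cdot\be$.
\begin{itemize}
\item For non-rigid orbits this follows from their computation of $\bc$ via \cite{dGE}-type tables together with the result that $U(\bg,\be)^\ab$ is polynomial.
\item The rigid orbits in Table~\ref{Tab: Bad Rigid Orbits} are exactly those with $\bc\neq 0$; for every other rigid orbit $\dim\bc=0=\dim\z(\g)$, as needed.
\end{itemize}

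It remains to transfer this to $\k$. Lemma~\ref{L:ReForms}(6) gives $\dim\c=\dim\bc$, once the comparison $\dim[\g^e,\g^e]=\dim[\bg^\be,\bg^\be]$ is verified. I would check this comparison case by case from the tables of \cite{LT}: either compute the structure constants of $\g^e$ directly in the Chevalley basis over $R_e$, or argue that the derived subalgebra is spanned by brackets whose coefficients are units in $R_e$. This verification is the main obstacle. The inclusion/surjection in Lemma~\ref{L:ReForms} only gives $\dim[\g^e,\g^e]\le\dim[\bg^\be,\bg^\be]$, i.e.\ $c(e)\ge \dim\bc$, so an explicit check that no bracket degenerates modulo good $p$ is needed.

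Since the classification of sheets and induction data is independent of $p$ by \eqref{e: LS indep p}, we get $\dim\z(\g_0)=\dim\z(\bg_0)$.

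\emph{Step 4: conclusion.} Steps 1–3 give a surjection $S(\c)\onto U(\g,e)^\ab$ onto an algebra of Krull dimension $\dim S(\c)$. Since $S(\c)$ is a domain, this surjection is an isomorphism, so $\E(\g,e)\cong\mathbb{A}^{c(e)}_\k$ is irreducible.

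Lemma~\ref{L:dimensionofE} gives $\dim\hE_{\chi_0+\z_0^*}(\g_0,e_0)=\dim\z(\g_0)=c(e)$. By Theorem~\ref{T:parabolicinductiontheorem}(iii) the parabolic induction morphism $\hE_{\chi_0+\z_0^*}(\g_0,e_0)\to\E(\g,e)$ is finite. A finite morphism is closed and preserves dimension, so its image is closed of dimension $c(e)$ in the irreducible $\E(\g,e)$, hence the morphism is surjective.

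Through Skryabin's equivalence and Premet's equivalence \eqref{e:Premetsequivalence}, points of $\E_\chi(\g,e)$ correspond to minimal $U_\chi(\g)$-modules. Restricting to the fibre over $\chi$ using Theorem~\ref{T:parabolicinductiontheorem}(ii), every minimal $U_\chi(\g)$-module is then induced from a minimal $U_{\chi_0}(\g_0)$-module, exactly as in Section~\ref{ss:proofofmain}.
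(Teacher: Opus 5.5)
Your Steps 1, 2 and 4 are exactly the paper's argument: the proofs of Theorem~\ref{T:abelianquotients}(1) and of Theorem~\ref{T:main}(1) in Section~\ref{ss:proofofmain} carry over verbatim once one knows $c(e)=\dim\z(\g_0)$ over $\k$. That equality is the only non-formal content of the theorem, and your Step~3 does not establish it.

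\begin{itemize}
\item Reduction from $\C$ gives only $c(e)\ge\dim\bc=\dim\z(\g_0)$. That direction is already forced by the surjection $S(\c)\onto U(\g,e)^\ab$ together with Proposition~\ref{L:dimEge}.
\item The direction you actually need is $c(e)\le\dim\z(\g_0)$, i.e.\ that $[\g^e,\g^e]$ does not shrink modulo $p$. You defer this to a case-by-case check that you do not carry out.
\item You cannot close it by quoting Lemma~\ref{L:ReForms}(6). Part~(4) of that lemma, on which (6) rests, is justified in the paper only by the dimension comparison made in the proof of this very theorem, so the appeal would be circular.
\item The same problem arises in the rigid case. There you need $\g^e=[\g^e,\g^e]$ over $\k$, not just over $\C$, for every rigid orbit not in Table~\ref{Tab: Bad Rigid Orbits}.
\end{itemize}

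The missing ingredient is that the computation you propose to redo already exists in good characteristic. The paper reads $c(e)$ directly off \cite[Tables 3 and 4]{PS}, which work over $\k$ with $p$ good. It reads $\dim\z(\g_0)$ off the tables of \cite{DE}, which are valid over $\k$ by the commutativity of \eqref{e: LS indep p}. It then compares the two. This gives $c(e)=\dim\z(\g_0)$ for every orbit in a single sheet outside Tables~\ref{Tab: Bad Induced Orbits} and \ref{Tab: Bad Rigid Orbits}. The argument treats rigid and induced orbits uniformly and needs no detour through \cite{PT1} or through $\C$. The only caveat is that the comparison shows an apparent further exception, $(3A_1)''$ in $E_7$, which the paper attributes to a misprint in \cite{DE}. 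With this citation in place of your Step~3, the rest of your proposal goes through as written.
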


\begin{proof}
	The proof works identically to the proof of Theorem~\ref{T:abelianquotients} and the proof of Theorem~\ref{T:main}(1) in Section~\ref{ss:proofofmain}, except that we may no longer appeal to Lemma~\ref{L:celemma} for the statement that $c(e)=\dim\z(\g_0)$. Instead, we may determine $c(e)$ in all cases by consulting \cite[Tables 3 and 4]{PS} and we may determine $\dim\z(\g_0)$ by consulting the tables of \cite{DE} (which, as relevant here, apply in positive characteristic by our discussion in Section~\ref{ss:LSinduction}). Comparison of these sources yields that $c(e)=\dim(\z(\g_0))$ whenever $e$ lies on a single sheet and is not listed in Table~\ref{Tab: Bad Induced Orbits} or Table~\ref{Tab: Bad Rigid Orbits}. Strictly speaking, this comparison yields another exception to this equality for $(3A_1)''\subseteq E_7$; however, this is due an error in \cite{DE} (easily seen from the correct entry in the ``characteristic'' column of the appropriate table).
\end{proof}

This theorem implies that when $e$ lies in a unique sheet all $G^\chi$-stable minimal modules are parabolically induced from a minimal module with rigid $p$-character. This proves Theorem~\ref{T:main}(2) for a sizeable percentage of nilpotent orbits. The next section tackles the remaining orbits for which we have results: those that are even.

\subsection{Even orbits}\label{ss:Exc even}
	
	Given a nilpotent orbit $\O$ in $\g$, the weighted Dynkin diagram of $\O$ is a labelling of the Dynkin diagram of $G$ by the numbers $0$, $1$ and $2$. The construction of this labelling over $\C$ can be found in \cite[\textsection 3.5]{CM}; over $\k$, one may either construct it directly using associated cocharacters (as in \cite[\textsection 3]{LT}) or from the bijection between nilpotent orbits over $\k$ and over $\C$ (see \cite[\textsection 2]{PrKR}). Both approaches yield the same labelling, since both constructions involve the existence of $\hat{e}\in \O$ and a (unique) associated cocharacter $\gamma_{\hat{e}}$ for $\hat{e}$ with the property that \begin{equation}\label{e: wdd action}
		\gamma_{\hat{e}}(t)\cdot e_{\pm\alpha}=t^{\pm r_\alpha} e_\alpha\quad\mbox{for all}\quad \alpha\in\Delta\qquad \mbox{and} \qquad \gamma_{\hat{e}}(t)\cdot h=h\quad\mbox{for all}\quad h\in\t
	\end{equation} where $r_\alpha\in\{0,1,2\}$ is the label of the node on the weighted Dynkin diagram corresponding to the simple root $\alpha$.	
	
	 A nilpotent orbit $\O$ is called {\it even} if all the labels on its weighted Dynkin diagram are even (so $0$ or $2$); for (a representative $e$ of) such an orbit, we define $$d(e)=d(\O)=\mbox{\# 2s on the weighted Dynkin diagram of }\O.$$
	
	 For even orbits, the following result is known.
	
	\begin{Proposition}\label{P:Even nice setup}
		Let $\O\subseteq \g$ be an even nilpotent orbit with weighted Dynkin diagram $\Omega$, and let $\hat{e}\in \O$ have the property that an associated cocharacter $\gamma_{\hat{e}}$ for $\hat{e}$ satisfies \eqref{e: wdd action}. Let $\Omega_0$ be the subdiagram of $\Omega$ consisting of only those nodes with label $0$, and let $\hat{G}_0$ be the standard Levi subgroup of $G$ corresponding to the simple roots in $\Omega_0$. Write $\hat{\g}_0=\Lie(\hat{G}_0)$ and $\hat{e}_0=0\in\hat{\g}_0$. Then the following hold:
		\begin{enumerate}
			\item $(\hat{\g}_0,\hat{e}_0)$ is a rigid induction datum for $\O$.
			\item $\dim(\z(\hat{\g}_0))=d(\O)$.
			\item There exists a parabolic subgroup $\hat{P}$ of $G$ with $\hat{G}_0$ as a Levi factor such that $G^{\hat{e}}\subseteq \hat{P}$.
			\item $\hat{C}^{\hat{e}}\subseteq \hat{G}_0$ and $\gamma_{\hat{e}}(\k^\times)\subseteq \hat{G}_0$.
			\item $\hat{e}$ lies in the Lie algebra of the unipotent radical of $\hat{P}$.
		\end{enumerate} 
	\end{Proposition}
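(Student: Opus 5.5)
Let $\hat{\p}:=\bigoplus_{i\ge 0}\g(i)$ and $\hat{\r}:=\bigoplus_{i\geq 2}\g(i)$, where $\g=\bigoplus_i\g(i)$ is the grading defined by $\gamma_{\hat{e}}$. The plan is to read all five claims off this grading. Since $\O$ is even, $\g(i)=0$ for odd $i$. By \eqref{e: wdd action}, $\g(0)$ is spanned by $\t$ together with the root spaces $e_\alpha$ whose roots $\alpha$ are combinations of simple roots in $\Omega_0$. Hence $\g(0)=\hat{\g}_0$, $\hat{\p}$ is the standard parabolic subalgebra with Levi factor $\hat{\g}_0$, and $\hat{\r}=\Rad(\hat{\p})$. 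Let $\hat{P}$ be the corresponding standard parabolic subgroup. Then $\hat{G}_0=C_G(\gamma_{\hat{e}}(\k^\times))$ is its Levi factor, as the centraliser of a torus is connected. This gives (2) at once: $\z(\hat{\g}_0)$ is the common kernel in $\t$ of the simple roots in $\Omega_0$, which has dimension $d(\O)$ because $p$ is good and $G$ is adjoint. It also gives (5), since $\hat{e}\in\g(2)\subseteq\hat{\r}$. For (4), $\gamma_{\hat{e}}(\k^\times)\subseteq T\subseteq\hat{G}_0$, and $\hat{C}^{\hat{e}}$ centralises $\gamma_{\hat{e}}(\k^\times)$ by definition, so it lies in $C_G(\gamma_{\hat{e}}(\k^\times))=\hat{G}_0$.

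For (3), I would cite the standard result that $G^{\hat{e}}$ is contained in the parabolic subgroup $P(\gamma_{\hat{e}})$ attached to an associated cocharacter: \cite[\textsection 5]{JaNO}, or equivalently Premet's result on optimal cocharacters \cite{PrKR}, which applies in good characteristic. Here $P(\gamma_{\hat{e}})$ has Lie algebra $\bigoplus_{i\ge0}\g(i)=\hat{\p}$, so $P(\gamma_{\hat{e}})=\hat{P}$. Alternatively, one can use the Levi decomposition $G^{\hat{e}}=\hat{C}^{\hat{e}}\ltimes R_u(G^{\hat{e}})$. The unipotent radical $R_u(G^{\hat{e}})$ is connected with Lie algebra contained in $\bigoplus_{i>0}\g(i)$, so it lies in $R_u(\hat{P})$, and (4) handles the reductive part.

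For (1), we must show that $\Ind_{\hat{\g}_0}^{\g}(0)=\O$, i.e.\ that $\O$ meets $\hat{\r}$ densely. The orbit $\hat{P}\cdot\hat{e}$ is open in $\g(\geq2)=\hat{\r}$, which is the standard density of $P(\gamma)\cdot e$ in $\g(\ge2)$ for an associated cocharacter \cite[\textsection 5]{JaNO}. Therefore the orbit dense in $0+\hat{\r}$ is $\O$. Rigidity of the zero orbit in any Levi subalgebra is immediate, since $0$ cannot be induced from a proper Levi subalgebra: induction strictly increases orbit dimension. Alternatively, one can transfer from $\C$ using the commutative diagram \eqref{e: LS indep p} together with the corresponding statement for Richardson orbits of even weighted Dynkin diagrams \cite[\textsection 7]{CM}.

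I expect the main obstacle to be citing (3) and the density statement in (1) correctly in positive characteristic, since these rely on the theory of associated cocharacters under the standard hypotheses. The rest is bookkeeping with the weighted Dynkin diagram.
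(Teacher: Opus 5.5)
Your proof is correct. For (2)--(5) it agrees with the paper's argument. The paper also identifies $\hat G_0$ with $C_G(\gamma_{\hat e}(\k^\times))$, derives (4) from $\hat C^{\hat e}=G^{\hat e}\cap\hat G_0$, and cites \cite[Proposition 2.3(ii)]{PrKR} for (3). Your observation $\hat e\in\g(2)\subseteq\hat\r$ for (5) is a more direct version of the paper's appeal to Premet's construction.

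The genuine difference is in (1). The paper quotes \cite[Theorem 7.1.6]{CM}, that even orbits are Richardson and induced from $0$ in $\g(0)$, over $\C$. It then transports this to $\k$ through the commutative square \eqref{e: LS indep p} of Premet--Stewart. You argue intrinsically in characteristic $p$: since $\g(1)=0$, the nilradical $\hat\r$ equals $\g(\ge 2)$. The density of $P(\gamma_{\hat e})\cdot\hat e$ in $\g(\ge 2)$ then shows that $\O$ meets $0+\hat\r$ densely. Your rigidity argument for the zero orbit via the dimension count is also fine. Your route avoids the comparison with characteristic zero, and it shows that the single parabolic $P(\gamma_{\hat e})=\hat P$ witnesses (1), (3) and (5) simultaneously. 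The paper's route is shorter, but it depends on the transfer machinery of Section~\ref{ss:LSinduction}.

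One caveat concerns your alternative argument for (3). You infer from ``$R_u(G^{\hat e})$ is connected with Lie algebra in $\bigoplus_{i>0}\g(i)$'' that it lies in $R_u(\hat P)$. In positive characteristic this inference is not valid: inclusion of Lie algebras of connected subgroups does not imply inclusion of the groups. For example, $\{(t,0)\}$ and $\{(t,t^p)\}$ in $\G_a^2$ have the same Lie algebra. To repair it you must also use that $\gamma_{\hat e}(\k^\times)$ normalises $R_u(G^{\hat e})$, and then run a dynamic (limit) argument. Since your main justification for (3) is the citation, correctness is unaffected.
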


\begin{proof}
	Part (1) is \cite[Theorem 7.1.6]{CM} (which holds over $\k$ as in Section~\ref{ss:LSinduction}), while part (2) is clear from the construction. For parts (3) and (4), note that $\hat{G}_0$ is precisely the centraliser in $G$ of $\gamma_{\hat{e}}$. Then part (3) is \cite[Proposition 2.3(ii)]{PrKR} and part (4) follows immediately from the fact that $\hat{C}^{\hat{e}}:=G^{\hat{e}}\cap \hat{G}_0$. Finally, part (5) follows from the construction of $\hat{e}$ in \cite{PrKR} and the description of the Lie algebra of the unipotent radical of $P$.
\end{proof}

\begin{Remark}
	We have used the notation $\hat{e}$ (and similar adornments $\hat{C}, \hat{G}_0,...$) since the unadorned $e$ is reserved in this section for the nilpotent orbit representatives given in \cite{LT}. A conjugation argument allows us to then deduce the following.
\end{Remark}

\begin{Corollary}\label{Cor: even data}
	Let $\O\subseteq \g$ be an even nilpotent orbit and let $e\in \O$ have an associated cocharacter $\gamma_{e}$. Then there exists a Levi subgroup $G_0$ such that the following hold:
	\begin{enumerate}
		\item $(\g_0,0)$ is a rigid induction datum for $\O$.
		\item $\dim(\z(\g_0))=d(\O)$.
		\item There exists a parabolic subgroup $P$ of $G$ with $G_0$ as a Levi factor such that $G^e\subseteq P$.
		\item $C^{e}\subseteq G_0$ and $\gamma_{e}(\k^\times)\subseteq G_0$.
		\item $e$ lies in the Lie algebra $\r$ of the unipotent radical of $P$.
	\end{enumerate} 
\end{Corollary}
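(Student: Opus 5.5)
Since $e$ and $\hat e$ lie in the same orbit $\O$, the plan is to transport the data of Proposition~\ref{P:Even nice setup} along a conjugating element. The one subtlety is that the transport must also match the chosen associated cocharacter $\gamma_e$, not only the nilpotent element.

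Pick $g\in G$ with $g\cdot\hat e=e$. Then $g\gamma_{\hat e}g^{-1}$ is an associated cocharacter for $e$. By the uniqueness statement in Section~\ref{ss:associatedcochar}, any two associated cocharacters for $e$ are $(G^e)^\circ$-conjugate. Hence there is $z\in(G^e)^\circ$ with $zg\gamma_{\hat e}(t)g^{-1}z^{-1}=\gamma_e(t)$ for all $t$. Replacing $g$ by $h:=zg$, we have simultaneously $h\cdot\hat e=e$ and $h\gamma_{\hat e}h^{-1}=\gamma_e$.

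Set $G_0:=h\hat G_0h^{-1}$ and $P:=h\hat Ph^{-1}$. Then $G_0$ is a Levi factor of the parabolic subgroup $P$, and $\r=h\cdot\Rad(\hat\p)$ is the Lie algebra of the unipotent radical of $P$. Each item of Proposition~\ref{P:Even nice setup} then transfers as follows.

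\begin{itemize}
\item Item (1) holds because Lusztig--Spaltenstein induction and rigidity only depend on $G$-orbits of induction data, so $(\g_0,0)=h\cdot(\hat\g_0,0)$ is again a rigid induction datum for $\O$.
\item Item (2) holds because $\Ad(h)$ maps $\z(\hat\g_0)$ isomorphically onto $\z(\g_0)$.
\item Item (3) follows from $G^e=hG^{\hat e}h^{-1}\subseteq h\hat Ph^{-1}=P$.
\item Item (5) follows from $e=h\cdot\hat e\in h\cdot\Rad(\hat\p)=\r$.
\item Item (4) is the only place where the choice of $h$ matters. We have $\gamma_e(\k^\times)=h\gamma_{\hat e}(\k^\times)h^{-1}\subseteq G_0$. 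Moreover $C^e$ is the centraliser of $\gamma_e(\k^\times)$ in $G^e$, and this equals $h\big(C_{G^{\hat e}}(\gamma_{\hat e}(\k^\times))\big)h^{-1}=h\hat C^{\hat e}h^{-1}\subseteq G_0$.
\end{itemize}

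The main (mild) obstacle is the point just used in item (4): conjugating by an arbitrary $g$ would only produce an associated cocharacter for $e$, not the prescribed $\gamma_e$. The correction by $(G^e)^\circ$ described above resolves this. No other step requires more than transport of structure.
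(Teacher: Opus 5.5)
Your proposal is correct and is essentially the conjugation argument the paper uses to deduce this corollary from Proposition~\ref{P:Even nice setup}. The paper leaves one point implicit, and you supply it: you use the $(G^e)^\circ$-conjugacy of associated cocharacters to choose $h$ with $h\gamma_{\hat e}h^{-1}=\gamma_e$, which is exactly what item (4) requires for the prescribed $\gamma_e$.
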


From now on we fix all the data from Corollary~\ref{Cor: even data}. We also fix $\Gamma$ to be a subgroup of $C^e$ which surjects onto $\pi$ and which has the property that $p\nmid \left|\Gamma\right|$.

\begin{Remark}
    Such a choice of $\Gamma$ is possible thanks to \cite[Theorem 10.1 and Lemma 10.2]{BDT} and \cite[\textsection 10]{LT}, noting that Lusztig's canonical quotient coincides with $\pi$ in the cases in which \cite[Lemma 10.2]{BDT} does not apply. Note that while the result in \cite{BDT} is for centralisers of unipotent elements, the same result holds for stabilisers of nilpotent elements due to the existence of a $G$-equivariant Springer isomorphism in good characteristic. In most cases $\Gamma$ splits the surjection $C^e\onto \pi$ - in fact, this always holds when the orbit is even.
\end{Remark}

With this set-up we have that
\begin{eqnarray}
	\label{e:Exc centresubseteq}
	\z(\g_0) \subseteq \g(0, \gamma_e) \cap \g_0
\end{eqnarray}
and that $G_0$ acts trivially on $\z(\g_0)$. Furthermore, by Corollary~\ref{Cor: even data}(4), we have
\begin{eqnarray}
	\label{e:Exc Gammasubseteq}
	\Gamma \subseteq C^{e} \cap G_0.
\end{eqnarray}
These two observations are analogues of \eqref{e:centresubseteq} and \eqref{e:Gammasubseteq} in our current setting. We are then able to obtain an analogue of Proposition~\ref{P:fixedinduction}; unexplained notation is as in earlier sections of the paper.

\begin{Proposition}
	\label{P:Exc fixedinduction}
	We let $\D \subseteq \g^*$ be the decomposition class corresponding to the induction datum $(\g_0, 0)$ and write $\chi + Z := (\chi + \cv) \cap \overline{\D}_\reg$.
	\begin{enumerate}
		\setlength{\itemsep}{4pt}
		\item[(i)] $\Gamma$ preserves $\E_{ \z_0^*}(\g_0, 0)$ and the parabolic induction functor \eqref{e:INDUCTION} restricts to
		\begin{eqnarray}
			\label{e:Exc fixedinduction}
			\E_{\z_0^*}(\g_0, 0)^\Gamma \to \E(\g,e)^\Gamma.
		\end{eqnarray}
		\item[(ii)] Every $p$-character in $\z_0^*$ supports a one dimensional $\Gamma$-fixed $U(\g_0, 0)$-module.
		\item[(iii)] Every $p$-character in $\chi + Z$ supports a one dimensional $\Gamma$-fixed $U(\g, e)$-module in the image of \eqref{e:Exc fixedinduction}.
	\end{enumerate}
\end{Proposition}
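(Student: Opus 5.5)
The plan is to follow the proof of Proposition~\ref{P:fixedinduction} closely. The situation here is simpler: $e_0=0$, so $\chi_0=0$ and $U(\g_0,0)=\hU(\g_0,0)=U(\g_0)$. In particular $\E_{\z_0^*}(\g_0,0)$ is just the variety of one dimensional $U_{\z_0^*}(\g_0)$-modules.

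For (i), I would first check that $\Gamma$ preserves $U(\g_0)$ and $\z_0^*$.
\begin{itemize}
\item By \eqref{e:Exc Gammasubseteq} we have $\Gamma\subseteq G_0$, so $\Gamma$ stabilises $\g_0$.
\item Since $G_0$ acts trivially on $\z(\g_0)$, the set $\z_0^*=\kappa_0\z(\g_0)$ is fixed pointwise.
\end{itemize}
Next I would write the induction morphism as the composition
$$\E_{\z_0^*}(\g_0,0)\isoto \Rep_{1}U_{\z_0^*}(\g_0)\to \Rep_{p^{d_\chi}}U_{\chi+\z^*}(\g)\isoto \hE_{\chi+\z^*}(\g,e)\to \E(\g,e)$$
and check that each arrow is $\Gamma$-equivariant.
\begin{itemize}
\item The first arrow is the identity.
\item The induction arrow is equivariant by the remark at the end of Section~\ref{ss:twists}, because $\Gamma\subseteq G_0\cap C^e$ stabilises $\p$. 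Since $\Gamma\subseteq G^e\subseteq P$, the parabolic $P$ is $\Gamma$-stable, and hence so is $\r$.
\item The Skryabin arrow is equivariant by Theorem~\ref{T:equivariantSkryabin}. It applies because \eqref{e:Exc centresubseteq} gives $\z^*\subseteq(\bigoplus_{i<0}\g(i,\gamma_e))^\perp$, so $\chi+\z^*\subseteq\chi+\n_0^\perp$, and induced minimal modules are simple.
\item The last arrow is equivariant because $U(\g,e)\to\hU_{\chi+\z^*}(\g,e)$ is $C^e$-equivariant.
\end{itemize}
Since the morphism is equivariant, it restricts to $\Gamma$-fixed points, which gives \eqref{e:Exc fixedinduction}.

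For (ii), let $\eta\in\z_0^*$. Because $\eta$ vanishes on $[\g_0,\g_0]$, there is a one dimensional $U_\eta(\g_0)$-module: take $\lambda\in\g_0^*$ vanishing on $[\g_0,\g_0]$ with $\lambda(x)^p-\lambda(x^{[p]})=\eta(x)^p$. Such a $\lambda$ exists by solving Artin--Schreier type equations on the torus part $\g_0/[\g_0,\g_0]$; this is the same argument as for $E$ in Step~3 of Theorem~\ref{P:nilpotentreduction}. For $\Gamma$-stability, note that $\Gamma\subseteq G_0$ acts on $\g_0$ through inner automorphisms of $G_0$. Any such $g$ acts trivially on $\g_0/[\g_0,\g_0]$, so twisting by $g$ fixes the character $\lambda$. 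This step is routine.

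For (iii), I would combine (i) and (ii) with Theorem~\ref{T:parabolicinductiontheorem}(ii) and Corollary~\ref{cor: full image of char map}. The hypotheses of that corollary hold: $\Gamma$ surjects onto $\pi\cong G^e/(G^e)^\circ$ by choice, $\Gamma$ fixes $e_0=0$, and $\Gamma$ fixes $\z(\g_0)$ pointwise. So the image of \eqref{e:slicemap2} is all of $\chi+Z$. By (ii) every point of $\z_0^*$ carries a $\Gamma$-fixed one dimensional module, and by (i) its image under the induction morphism is a $\Gamma$-fixed one dimensional $U(\g,e)$-module. Its $p$-character is the image of the original $p$-character under \eqref{e:slicemap2}, which completes (iii).

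The main obstacle I expect is the equivariance in (i) through Skryabin's equivalence and the parabolic induction step. This requires confirming that $\Gamma$ normalises both the chosen $\p$ and $N_0$, which is guaranteed by Corollary~\ref{Cor: even data}(3),(4).
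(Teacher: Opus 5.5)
Your proposal is correct and follows essentially the same route as the paper: the same factorisation of the induction morphism with the same equivariance justifications for (i), existence of a character of $\g_0/[\g_0,\g_0]$ for (ii), and Theorem~\ref{T:parabolicinductiontheorem}(ii) together with Corollary~\ref{cor: full image of char map} for (iii). The only cosmetic difference is in (ii): you argue that $G_0$ acts trivially on $\g_0/[\g_0,\g_0]$ directly, whereas the paper deduces this by duality from $\Gamma$ fixing $\z(\g_0)$ pointwise.
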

\begin{proof}
	In this context, $U(\g_0,0)=U(\g_0)$ and thus $\Gamma\subseteq G_0$ preserves $U(\g_0,0)$ and $\z_0^*$. The parabolic induction map in this context is then $$\Rep_{1} U_{\z_0^*}(\g_0) \to \Rep_{p^{d_{\chi}}} U_{\chi + \z^*}(\g) \isoto \hE_{\chi + \z^*}(\g,e) \to \E(\g,e).$$ The first of these maps is $\Gamma$-equivariant by the final remarks of Section~\ref{ss:twists}, the second is $\Gamma$-equivariant by Theorem~\ref{T:equivariantSkryabin} and \eqref{e:Exc centresubseteq}, and the third is $C^e$-equivariant as the map $U(\g,e)\to \hU_{\chi + \z^*}(\g,e)$ is. This proves (i).
	
	For (ii), let $\zeta\in\z_0^*$. Note that $[\g_0,\g_0]$ is a restricted ideal of $\g_0$ under our assumptions and so $\g_0/[\g_0,\g_0]$ is a restricted Lie algebra. Furthermore $\Gamma$ acts trivially on $\g_0/[\g_0,\g_0]$, since \eqref{e:Exc Gammasubseteq} implies that $\Gamma$ acts trivially on $\z(\g_0)$ and thus on $\z_0^*$ and $\z_0^{**}=\g_0/[\g_0,\g_0]$.
	
	Since $\zeta([\g_0,\g_0])=0$, we may define $\hat{\zeta}:\g_0/[\g_0,\g_0]\to \k$ by $\hat{\zeta}(x+[\g_0,\g_0])=\zeta(x)$ for $x\in\g_0$. A one dimensional $\Gamma$-stable $U(\g_0,0)$-module with $p$-character $\zeta$ is then the same thing as a one dimensional $U_{\hat{\zeta}}(\g_0/[\g_0,\g_0])$-module. Since all simple $U_{\hat{\zeta}}(\g_0/[\g_0,\g_0])$-modules are one dimensional (as $\g_0/[\g_0,\g_0]$ is abelian) and simple modules always exist, (ii) follows.
	
	Finally, as in the proof of \ref{P:fixedinduction}(iii), part (iii) follows by combining (i) and (ii) with Theorem~\ref{T:parabolicinductiontheorem}(ii) and Corollary~\ref{cor: full image of char map}. 
\end{proof}

\begin{Corollary}\label{Cor: Kd fWa even}
	The Krull dimension of $U(\g,e)^\ab_\Gamma$ is greater than or equal to $d(e)$.
\end{Corollary}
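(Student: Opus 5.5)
The argument follows the classical case, namely the proof of Theorem~\ref{T:abelianquotients}(2). Write $K_\Gamma$ for the kernel of the composite $Z_p(\g,e)\to U(\g,e)^\ab\to U(\g,e)^\ab_\Gamma$, and identify $\Spec Z_p(\g,e)$ with $\chi+\cv$ as in Section~\ref{ss:pcentreandPBWforfiniteW}. Lemma~\ref{L:basiclemma}(iv) says that the closed points of $\Spec Z_p(\g,e)/K_\Gamma$ are exactly the $p$-characters supporting a $\Gamma$-stable one dimensional $U(\g,e)$-module. By Proposition~\ref{P:Exc fixedinduction}(iii), every point of $\chi+Z=(\chi+\cv)\cap\overline{\D}_\reg$ supports such a module. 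It follows that $\chi+Z\subseteq\Spec Z_p(\g,e)/K_\Gamma$.

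Next I turn this containment into a bound on Krull dimension. The algebra $U(\g,e)$ is finite over $Z_p(\g,e)$, so $U(\g,e)^\ab_\Gamma$ is finite over $Z_p(\g,e)/K_\Gamma$, and this map is injective by the definition of $K_\Gamma$. Injective finite extensions preserve Krull dimension, so
$$\dim U(\g,e)^\ab_\Gamma=\dim Z_p(\g,e)/K_\Gamma\geq\dim(\chi+Z).$$

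It remains to compute $\dim(\chi+Z)$. The class $\D$ corresponds to the induction datum $(\g_0,0)$ from Corollary~\ref{Cor: even data}. By Corollary~\ref{Cor: even data}(1), this datum induces $G\cdot e$, so $\chi\in\overline{\D}_\reg$ by Lemma~\ref{T:sheetstheorem}(i). Lemma~\ref{L:extendedslice}(ii) then gives $\dim(\chi+Z)=\dim\z(\g_0)$. Corollary~\ref{Cor: even data}(2) gives $\dim\z(\g_0)=d(e)$, and combining these yields the claim.

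Most of these steps are formal. The point that needs the most care is matching hypotheses: Lemma~\ref{L:extendedslice}(ii) must be applied with $\chi\in\overline{\D}_\reg$, and the construction behind Proposition~\ref{P:Exc fixedinduction}(iii) needs the setup of Section~\ref{ss:constructingthefunctor} to hold for the data of Corollary~\ref{Cor: even data}. Concretely, this means $e\in 0+\r$ (part (5)) and $\gamma_e(\k^\times)\subseteq G_0$ (part (4)). Both are supplied by Corollary~\ref{Cor: even data}, so once these are checked the proof is complete.
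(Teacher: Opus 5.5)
Your proposal is correct and follows essentially the same route as the paper, which proves this corollary by rerunning the argument of Theorem~\ref{T:abelianquotients}(2). That argument places $\chi+Z$ inside the support of the $\Gamma$-stable one dimensional modules, passes to Krull dimension via finiteness over the $p$-centre, and computes $\dim(\chi+Z)=\dim\z(\g_0)=d(e)$ from Lemma~\ref{L:extendedslice} and Corollary~\ref{Cor: even data}. You also correctly invoke Proposition~\ref{P:Exc fixedinduction}(iii); the paper's citation of Proposition~\ref{P:fixedinduction}(iii) at this point should refer to that exceptional-type result.
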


\begin{proof}
	Arguing as in the proof of Theorem~\ref{T:abelianquotients}(2), combining	Proposition~\ref{P:fixedinduction}(iii), Lemma~\ref{L:extendedslice} and Corollary~\ref{Cor: even data} implies that $\dim(U(\g,e)^\ab_\Gamma)\geq \dim(\chi+ Z)=\dim \z(\g_0)=d(e)$.
\end{proof}

Recall that in this section, $\c:=\g^e/[\g^e,\g^e]$ and $C^e$ acts on $\c$. With $\Gamma$ as above, we may then consider the fixed point space $\c^\Gamma$; note that as a vector space this is isomorphic to the vector space with the same notation considered in Proposition~\ref{P:semiclassical}(ii). Denote $$c_\Gamma(e):=\dim\c^\Gamma.$$

\begin{Lemma}\label{L: cgammae}
	Suppose that $\O\subseteq \g$ is an even nilpotent orbit with representative $e$ as in \cite[Table 2]{LT}. Then $c_\Gamma(e)=d(e)$.
\end{Lemma}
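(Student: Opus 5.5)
We prove $c_\Gamma(e)=d(e)$ by combining a lower bound coming from the $W$-algebra side with an upper bound coming from characteristic zero.

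\emph{Lower bound.} Proposition~\ref{L:generators}(ii) gives a surjection $S(\c^\Gamma)\onto U(\g,e)^\ab_\Gamma$, so the Krull dimension of $U(\g,e)^\ab_\Gamma$ is at most $\dim\c^\Gamma=c_\Gamma(e)$. Corollary~\ref{Cor: Kd fWa even} says this Krull dimension is at least $d(e)$. Together these give $c_\Gamma(e)\geq d(e)$.

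\emph{Upper bound.} Since $\Gamma\subseteq C^e$ surjects onto $\pi$ and $(C^e)^\circ$ acts trivially on $\c$ by Lemma~\ref{L:dagger}, the $\Gamma$-action on $\c$ factors through $\pi$. Hence $\c^\Gamma=\c^\pi$ and $c_\Gamma(e)=c_\pi(e)$. Proposition~\ref{P:rankpsi} then gives $c_\Gamma(e)=c_\pi(e)\le c_\pi(\be)$.

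It therefore suffices to show $c_\pi(\be)\le d(e)$ for every even orbit in exceptional type. Over $\C$ this is known:
\begin{itemize}
\item Running the same lower-bound argument in characteristic zero (via \cite{PT1}) gives $c_\pi(\be)\geq d(\O)$.
\item \cite[\textsection 5]{PT1} computes $\dim\bc^\pi$ for exceptional nilpotent orbits. For even orbits (excluding those in Table~\ref{Tab: Bad Induced Orbits}) this dimension equals the dimension of the $\pi$-invariant part of the relevant sheet data.
\end{itemize}
We then check directly against the tables of \cite{PT1} and the weighted Dynkin diagrams in \cite{CM} that $\dim\bc^\pi$ equals the number of $2$s on the weighted Dynkin diagram. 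Equivalently, $\dim\bc^\pi=\dim\z(\hat\g_0)$, which is the dimension of the Dixmier sheet through $\O$ minus $\dim\O$.

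\emph{Main obstacle.} The difficulty is the characteristic-zero equality $c_\pi(\be)=d(\O)$. Should a uniform argument be unavailable, we fall back on a case-by-case comparison over the finitely many exceptional even orbits. Here the orbits in Table~\ref{Tab: Bad Induced Orbits} that are even, for example $E_8$ $D_6(a_2)$ or $E_7(a_5)$, need care. If equality fails there, the lemma should be read as applying only to the remaining orbits, consistent with the exclusions in Theorem~\ref{T:main}.
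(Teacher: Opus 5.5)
Your argument is the paper's. The lower bound $c_\Gamma(e)\ge d(e)$ comes from Proposition~\ref{L:generators}(ii) and Corollary~\ref{Cor: Kd fWa even}, and the upper bound $c_\Gamma(e)=c_\pi(e)\le c_\pi(\be)=d(\be)=d(e)$ comes via Lemma~\ref{L:dagger}, Proposition~\ref{P:rankpsi} and the characteristic-zero equality. That equality is exactly \cite[Proposition~13]{PT1}, which holds for every even orbit, so no table check or weakening of the lemma is needed (and the orbits you flag, $D_6(a_2)$ and $E_7(a_5)$ in $E_8$, are not even anyway).
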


\begin{proof}
	By Proposition~\ref{P:semiclassical}(ii) there exists a surjective homomorphism $S(\c^\Gamma)\onto U(\g,e)^\ab_\Gamma$; thus, $c_\Gamma(e)$, which is the Krull dimension of $S(\c^\Gamma)$, is at least the Krull dimension of $U(\g,e)^\ab_\Gamma$. Corollary~\ref{Cor: Kd fWa even} then yields that $c_\Gamma(e)\geq d(e)$.
	
	On the other hand, the action of $\Gamma$ on $\c$ passes (surjectively) through the action of $\pi$ on $\c$. In particular, $c_\pi(e)=c_\Gamma(e)$. Furthermore, note that $c_\pi(\be)=d(\be)$ by \cite[Proposition 13]{PT1}. Since the weighted Dynkin diagram is independent of good characteristic, we also know $d(\be)=d(e)$.
	
	Combining these results with Proposition~\ref{P:rankpsi} thus yields
	$$d(e)=d(\be)=c_\pi(\be)\geq c_\pi(e)=c_\Gamma(e)\geq d(e),$$ which gives the result.
\end{proof}

We are now in a position to complete the proof of Theorem~\ref{T:main}(2) in the case of exceptional groups.

\begin{Corollary}\label{Cor: even Main Thm}
	Suppose that $\O\subseteq \g$ is an even nilpotent orbit with representative $e$ as in \cite[Table 2]{LT}, and fix the data from Corollary~\ref{Cor: even data}. Then $U(\g,e)^\ab_\Gamma$ is a polynomial algebra in $c_\Gamma(e)$-variables, and every $G^\chi$-stable minimal $U_\chi(\g)$-module is parabolically induced from a one dimensional $U_{0}(\g_0)$-module.
\end{Corollary}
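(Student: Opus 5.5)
The plan is to copy the classical argument of Theorem~\ref{T:abelianquotients}(2) and Section~\ref{ss:proofofmain}, substituting the even-orbit data of Corollary~\ref{Cor: even data} and Proposition~\ref{P:Exc fixedinduction}.

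\emph{Polynomial algebra.} Proposition~\ref{L:generators}(ii) gives a surjection $S(\c^\Gamma)\onto U(\g,e)^\ab_\Gamma$. The source is a polynomial ring in $c_\Gamma(e)$ variables, and it is an integral domain. Corollary~\ref{Cor: Kd fWa even} shows that the Krull dimension of the target is at least $d(e)$, and Lemma~\ref{L: cgammae} gives $d(e)=c_\Gamma(e)$. A surjection from a polynomial domain onto a ring of the same Krull dimension has zero kernel, since any nonzero prime strictly drops the dimension. Hence the surjection is an isomorphism and $\E(\g,e)^\Gamma\cong\mathbb{A}_\k^{c_\Gamma(e)}$ is irreducible.

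\emph{Surjectivity of induction.} Proposition~\ref{P:Exc fixedinduction}(i) says the parabolic induction functor restricts to a morphism $\E_{\z_0^*}(\g_0,0)^\Gamma\to\E(\g,e)^\Gamma$. This morphism is finite: it is the restriction to closed $\Gamma$-fixed loci of the finite morphism in Theorem~\ref{T:parabolicinductiontheorem}(iii). The source is the space of one dimensional $U_{\z_0^*}(\g_0)$-modules, namely restricted-type characters of the abelian quotient $\g_0/[\g_0,\g_0]$. Its dimension is $\dim\z(\g_0)=d(e)$, because $\Gamma$ acts trivially on $\g_0/[\g_0,\g_0]$, so the whole space is fixed. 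Thus we have a finite morphism between varieties of equal dimension $d(e)=c_\Gamma(e)$ whose target is irreducible affine space. Since the image is closed, it is all of $\E(\g,e)^\Gamma$.

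Alternatively, one can see surjectivity on $p$-characters directly. Proposition~\ref{P:Exc fixedinduction}(iii) shows the image meets every $p$-character in $\chi+Z$, which is already of full dimension.

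\emph{Translation to modules.} Points of $\E(\g,e)^\Gamma$ correspond, through the construction of the functor, to $\Gamma$-stable minimal $U_\chi(\g)$-modules. Because $\Gamma$ surjects onto $\pi\cong G^\chi/(G^\chi)^\circ$ and the connected component acts trivially on isoclasses, these are exactly the $G^\chi$-stable minimal modules. Unwinding \eqref{eq:composefunctors} then shows that every such module is $U_\chi(\g)\otimes_{U_\chi(\p)}V$ for a one dimensional $U_{\chi_0+\z_0^*}(\g_0)$-module $V$. The module lands in $U_\chi(\g)$ precisely when $V$ has $p$-character $\chi|_{\g_0}=0$, which puts $V$ in $U_0(\g_0)\lmod$. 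Here we use $e\in\r$ and that the $p$-character map \eqref{e:slicemap2} sends $0$ to $\chi$.

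\emph{Main obstacle.} I expect the delicate step to be the equivariance and identification needed in this last translation. First, one must check that the fibre over $\chi$ in the surjection comes only from $V$ with $p$-character $0$. This should follow from Lemma~\ref{L:finitequotient}, since $e+z$ is nilpotent only for $z=0$. Second, one must check that the Skryabin identifications are $\Gamma$-equivariant, which is Theorem~\ref{T:equivariantSkryabin} applied via \eqref{e:Exc centresubseteq}.
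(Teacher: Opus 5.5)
Your proposal is correct and follows essentially the same route as the paper. For the first part, you use the surjection $S(\c^\Gamma)\onto U(\g,e)^\ab_\Gamma$ of Proposition~\ref{L:generators}(ii) together with Corollary~\ref{Cor: Kd fWa even} and Lemma~\ref{L: cgammae}; for the second, you rerun the argument of Section~\ref{ss:proofofmain}, a finite morphism onto the irreducible $\E(\g,e)^\Gamma$ of equal dimension. The only minor differences are that you bound the source dimension by noting all of $\E_{\z_0^*}(\g_0,0)$ is $\Gamma$-fixed, and you spell out the fibre-over-$\chi$ step (forcing $p$-character $0$ via the nilpotency argument of Lemma~\ref{L:finitequotient}), which the paper leaves implicit.
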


\begin{proof}
	From the proof of Lemma~\ref{L: cgammae}, the surjective homomorphism of algebras $S(\c^\Gamma)\onto U(\g,e)^\ab_\Gamma$ from Proposition~\ref{P:semiclassical} is between algebras of the same Krull dimension; it is thus an isomorphism, proving the first part of the result.
	
	The second part of the result follows by an argument essentially identical to the proof of part (2) of Theorem~\ref{T:main} in Section~\ref{ss:proofofmain}. The only changes are to the referenced theorems: one must replace Lemma~\ref{L:celemma} with Lemma~\ref{L: cgammae}, Proposition~\ref{P:fixedinduction} with Proposition~\ref{P:Exc fixedinduction}, Theorem~\ref{T:abelianquotients} with the first part of this result, and references to \eqref{e:spanningzgzero} with Corollary~\ref{Cor: even data}(2). Furthermore, since $(\g_0,0)$ is already a rigid induction datum, one does not need an argument as in the final paragraph of Section~\ref{ss:proofofmain}.
\end{proof}

\end{document}